\newcommand{\C}{\mathbb{C}}
\newcommand{\R}{\mathbb{R}}
\newcommand{\Z}{\mathbb{Z}}
\newcommand{\N}{\mathbb{N}}
\newcommand{\RP}{\mathbb{RP}}
\newcommand{\DD}{\mathbb{D}}
\newcommand{\ep}{\varepsilon}
\newcommand{\dbar}{\bar\partial}
\newcommand{\tl}{\tilde}
\newcommand{\wt}{\widetilde}
\newcommand{\X}{X_{\lambda}}
\newcommand{\Xf}{X_{f}}
\newcommand{\A}{\mathbf{A}}
\newcommand{\winfty}{\wind_{\infty}}
\newcommand{\B}{\mathcal{B}}
\newcommand{\M}{\mathcal{M}}
\newcommand{\J}{\mathcal{J}}
\newcommand{\F}{\mathcal{F}}
\newcommand{\dz}{\partial_{z}}
\newcommand{\dx}{\partial_{x}}
\newcommand{\dy}{\partial_{y}}
\newcommand{\dth}{\partial_{\theta}}
\newcommand{\drh}{\partial_{\rho}}
\newcommand{\dph}{\partial_{\phi}}
\newcommand{\dR}{\partial_{R}}
\newcommand{\dTh}{\partial_{\Theta}}
\newcommand{\pathcz}{\mu_{cz}}
\newcommand{\wind}{\operatorname{wind}}
\newcommand{\Hom}{\operatorname{Hom}}
\newcommand{\End}{\operatorname{End}}
\newcommand{\ind}{\operatorname{ind}}
\newcommand{\inum}{\operatorname{int}}
\newcommand{\ain}{i_{\infty}}
\newcommand{\lne}{\sigma^{-}_{max}}
\newcommand{\vspan}{\operatorname{span}}
\def\ndbar#1{\dbar_{N}^{\nabla}(#1)}
\def\bp#1{\left(#1\right)}
\def\bbr#1{\left[#1\right]}
\def\br#1{\left\{#1\right\}}
\def\fl#1{\lfloor#1\rfloor}
\def\abs#1{\left|#1\right|}
\newtheorem{lemma}{Lemma}[section]
\newtheorem{theorem}[lemma]{Theorem}
\newtheorem{corollary}[lemma]{Corollary}
\newtheorem{proposition}[lemma]{Proposition}
\theoremstyle{definition}
\newtheorem{definition}[lemma]{Definition}
\newtheorem*{ack}{Acknowledgements}
\newtheorem{remark}[lemma]{Remark}
\numberwithin{equation}{section}
\begin{document}
\title[Connected Sum Foliations I]{Connected sums and finite energy foliations I: Contact connected sums}
\author[J.\ W.\ Fish]{Joel W.\ Fish}
\address{Department of Mathematics \\ University of Massachusetts Boston \\ Boston, MA 02125 \\ U.S.A.}
\email{\href{mailto:joelfish@umb.edu}{joelfish@umb.edu}}
\urladdr{\url{http://www.joelfish.com}}
\thanks{J.\ W.\ Fish's research was partially supported by NSF grant DMS-0844188 and the Ellentuck Fund.}
\author[R.\ Siefring]{Richard Siefring}
\address{Fakult\"at f\"ur Mathematik \\ Ruhr-Universit\"at Bochum \\
     44780 Bochum \\
    Germany}
\email{\href{mailto:richard.siefring@ruhr-uni-bochum.de}{richard.siefring@ruhr-uni-bochum.de}}
\urladdr{\url{http://homepage.ruhr-uni-bochum.de/richard.siefring}}
\thanks{Richard Siefring is supported by DFG grant BR 5251/1-1.}
\date{\today}

\begin{abstract}
We consider a $3$-manifold $M$ equipped with nondegenerate contact form $\lambda$
and compatible almost complex structure $J$.
We show that if the data $(M, \lambda, J)$ admits a stable finite energy foliation, then for a
generic choice of distinct points $p$, $q\in M$,
the manifold $M'$
formed by taking the contact connected sum at $p$ and $q$ admits a
nondegenerate contact
form $\lambda'$ and compatible almost complex structure $J'$ so that the data
$(M', \lambda', J')$ also admits a stable finite energy foliation.
Along the way, we develop some general theory for the study of finite energy foliations.
\end{abstract}

\maketitle
\tableofcontents

\pagebreak

\section{Introduction and the main result}

Let $(M, \xi=\ker\lambda)$ be a closed contact $3$-manifold
equipped with a nondegenerate contact form $\lambda$.
Recall a complex structure $J$ on $\xi$ is said to be
compatible with the data $(M, \lambda)$ if
$d\lambda(\cdot, J\cdot)$ is a bundle metric on $\xi$.
We denote the set of complex structures on $J$ compatible with $(M, \lambda)$
by $\J(M, \lambda)$.  Given a $J\in\J(M, \lambda)$ we can extend it in the usual
way to an $\R$-invariant almost complex structure $\tl J$ on $\R\times M$
by requiring
\[
\tl J\partial_{a}=\X \qquad\text{ and }\qquad \tl J|_{\pi_{M}^{*}\xi}=\pi_{M}^{*}J
\]
where $\partial_{a}$ is the coordinate field along $\R$ and
$\pi_{M}:\R\times M\to M$ is the canonical projection onto the second factor.

Given data $(M, \lambda, J)$ where $(M, \lambda)$ is a $3$-manifold $M$
with contact form $\lambda$ and $J\in\J(M, \lambda)$ is a compatible complex multiplication on
$\xi=\ker\lambda$, a finite-energy pseudoholomorphic map
in $\R\times M$ is a quadruple
$(\Sigma, j, \Gamma, \tl u)$ where
$(\Sigma, j)$ is a compact Riemann surface, $\Gamma\subset\Sigma$ is a finite set,
and $\tl u=(a, u):\Sigma\setminus\Gamma\to \R\times M$ is a map
satisfying
\[
\tl J\circ d\tl u=d\tl u\circ j
\]
and
\[
0<E(\tl u)<\infty
\]
where the energy $E(\tl u)$ of the map is defined by
\[
E(\tl u)=\sup_{\varphi\in\Xi}\int_{\Sigma\setminus\Gamma}\tl u^{*}d(\varphi\lambda)
\]
where $\Xi$ is defined by
\[
\Xi=\br{\varphi\in C^{\infty}(\R, [0, 1])\,|\, \varphi'(x)\ge 0}.
\]
A finite-energy pseudoholomorphic curve in $\R\times M$
is then an equivalence class $C=[\Sigma, j, \Gamma, \tl u]$ under
the equivalence relation of holomorphic reparametrization.
Since the energy of a pseudoholomorphic map is invariant under holomorphic reparametrization of 
the domain, pseudoholomorphic curves have a well-defined energy.
A now well-known result of Hofer \cite{hofer1993} tells us that near the
(nonremovable) punctures, finite-energy
pseudoholomorphic curves are asymptotic to periodic orbits of the Reeb vector field.

A \emph{finite energy foliation} $\F$ for the data $(M, \lambda, J)$
is
a collection of connected finite-energy pseudoholomorphic curves
with uniformly bounded energies
whose images form
a smooth foliation of $\R\times M$.
We define the energy $E(\F)$ of a foliation $\F$ to be
the supremum of the energies of the curves in the foliation,
that is
\[
E(\F)=\sup_{C\in\F}E(C).
\]
A finite energy foliation $\F$ for the data $(M, \lambda, J)$ is said to be \emph{stable} if:
\begin{enumerate}
\item For any $C\in\F$, $C$ is either a trivial cylinder over a periodic orbit or
the Fredholm index $\ind(C)$ (see Section \ref{s:fredholm} below)
is either $1$ or $2$.
\item For any two curves $C_{1}$, $C_{2}\in\F$ with $\ind(C_{i})\in\br{1, 2}$,
the holomorphic intersection number from \cite{siefring2011}
(see Section \ref{s:intersections} below) $C_{1}*C_{2}$ vanishes.
\end{enumerate}
The word ``stable'' here is meant to connote the fact that both the existence of
such a finite energy foliation and its basic structure will persist under suitable
sufficiently small perturbations of the data $(\lambda, J)$.
As we will discuss in Section \ref{s:foliating-curves} below,
a stable finite energy foliation necessarily
consists of only punctured spheres and is
invariant under the $\R$-action on $\R\times M$ given by shifting
the $\R$-coordinate.
The $\R$-invariance
in turn lets us conclude that the projections of the curves in the foliation
to the $3$-manifold $M$ are embedded, transverse to the flow of the Reeb vector field, and
foliate the complement of a finite collection of periodic orbits in $M$.

The study of finite energy foliations was initiated by Hofer, Wysocki, and Zehnder in
\cite{hwz:convex} in which they use the existence of a finite energy foliation
to construct a global surface of section of disk type for a
$3$-dimensional strictly convex energy surface in $(\R^{4}, \sum_{i=1}^{2}dx_{i}\wedge dy_{i})$.
This work was extended in \cite{hwz:foliations} where the same authors show that 
any nondegenerate star-shaped hypersurface in $\R^{4}$ admits a 
stable finite energy foliation.  Using this fact, they then show that
there exists a Baire set of star-shaped hypersurfaces in $\R^{4}$ so that any
given hypersurface in this set has either precisely two or infinitely many periodic orbits.

Recently, Bramham has introduced the use of finite energy foliations
to the study of area-preserving maps of the disk \cite{bramham2008, bramham-foliations}.
Using the foliations that he constructs in \cite{bramham-foliations}, Bramham
proves in \cite{bramham2015-1} that 
every smooth, irrational pseudorotation of the $2$-disk
is the uniform limit of a sequence of maps which are each conjugate to a rotation
about the origin.
In \cite{bramham2015-2}, these foliations are again used to prove
there is a dense subset $\mathcal{L}_{*}\subset\mathcal{L}$ of the 
Liouville numbers so that a pseudorotation of the disk
with rotation number in $\mathcal{L}_{*}$ has a sequence of iterates which converge
uniformly to the identity map and thus such a pseudorotation can't exhibit
strong mixing.
A discussion of further applications of finite energy foliations to the study of disk maps
can be found in the survey \cite{bramhamhofer}.

The existence of finite energy foliations has also had applications in contact
and symplectic topology.
Among these are
Hind's work on Lagrangian unknottedness in Stein surfaces \cite{hind2012}
and Wendl's work on fillabilty of contact $3$-manifolds \cite{wendl2010-fillable}.
Further work either addressing existence of finite energy foliations
or in which the existence of finite energy foliations
play a role in dynamical or contact/symplectic topological results can be found in
\cite{abbas2011,
abb-cie-hof,
contrerasoliveira,
depaulosalamao,
etnyreghrist,
hind2003,
hind2004,
hindlisi,
hwz:s3-1,
hwz:s3-2,
hryniewicz2012,
hryniewiczsalamao2011,
hryniewicz2014,
hutchingstaubes,
latschevwendl,
niederkruegerwendl,
wendl2005,
wendl2008,
wendl2010-openbook,
wendl2013}.

In the present series of papers we develop abstract tools for extending previously
known existence results for stable finite energy foliations.
One motivation for this work comes from  the study of the planar, circular, restricted
three-body problem.
Albers, Frauenfelder, van Koert and Paternain show in
\cite{afkp} that near the two massive primaries,
the regularized energy levels below and slightly above the first Lagrange point
are diffeomorphic respectively to two copies of $\RP^{3}$ and the connected
sum of two copies of $\RP^{3}$.
In \cite{affhk}, Albers, Frauenfelder, Hofer, van Koert, and the first author apply
techniques from \cite{hwz:convex} to construct finite energy foliations for
many mass ratios and regularized energy levels below the first Lagrange point.
Since many classical techniques to study the restricted three-body problem fail above the first
Lagrange point, it's of interest to know whether the existence of
finite energy foliations for regularized energy surfaces below the first Lagrange point
can be used to deduce the existence of 
finite energy foliations for regularized energy surfaces above the first Lagrange point.

The results from \cite{afkp,affhk} and the associated problem of attempting to construct a
finite energy foliation for regularized energy surfaces above the first Lagrange point
naturally lead to the general question of whether
the existence of finite energy foliations persist under
the formation of a contact connected sums
as in \cite{meckert1982,weinstein1991}.
Our main theorem, which we state now, answers this question by showing
that finite energy foliations do indeed persist after forming the
contact connected sum of a contact manifold.
 
\begin{theorem}\label{t:0-surg-main}
Let $(M, \xi)$ be a contact $3$-manifold with contact structure induced by
a nondegenerate contact form $\lambda$,
and let $J\in\J(M, \lambda)$ be a complex multiplication for which
the triple $(M, \lambda, J)$ admits a stable finite energy foliation $\F$
of energy $E(\F)$.
Then, there exists an open, dense set
$\mathcal{U}\subset M\times M\setminus\Delta(M)$ so that for
any $(p, q)\in\mathcal{U}$ the contact manifold $(M', \xi')$
obtained by performing a contact
connected sum at $(p, q)$ as in \cite{meckert1982,weinstein1991}
admits a nondegenerate contact form $\lambda'$ with $\xi'=\ker\lambda'$,
a compatible $J\in\J(M', \lambda')$ and
a stable finite energy foliation $\F'$ for the data $(M', \lambda', J')$ with 
energy $E(\F')=E(\F)$.
\end{theorem}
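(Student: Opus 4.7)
The plan is to produce both the contact connected sum data $(M', \lambda', J')$ and the new foliation $\F'$ by a surgery/gluing construction localized to small neighborhoods of $p$ and $q$. The first step is to identify the open dense set $\mathcal{U}$: its complement is a union of lower-dimensional strata, consisting of pairs $(p, q)$ where either coordinate lies on one of the finitely many asymptotic Reeb orbits underlying $\F$, or on an $\F$-leaf whose Fredholm index differs from $2$ (the union of such leaves projects to a subset of $M$ of dimension at most $2$, while the generic index-$2$ leaves sweep out an open dense subset), together with a closed transversality condition on the pair itself that will emerge from the gluing argument. For $(p, q) \in \mathcal{U}$, each point lies on a unique index-$2$ projected leaf of $\F$, which I denote $\overline{C}_p$ and $\overline{C}_q$ respectively.

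The second step is the construction of the contact handle. After choosing Darboux-type charts near $p$ and $q$ in which the projected leaves of $\F$ appear as a standard linear foliation transverse to the Reeb flow, I remove small balls and glue in a model tube of the form $[-T, T] \times S^{2}$ equipped with a specific contact form $\lambda'$ extending $\lambda$ across the resulting boundary. The model is designed so that (i) $\lambda'$ is smooth and nondegenerate; (ii) its Reeb field on the tube has exactly two new periodic orbits, both nondegenerate and hyperbolic, which will serve as the new asymptotic orbits for the modified curves; (iii) the symplectization $\R\times([-T, T]\times S^{2})$ carries an explicit family of $\tl J'$-holomorphic pieces asymptotic to these new orbits whose boundary behavior matches the local pieces of $\F$ coming in from either end. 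I then extend $J$ to $J'\in\J(M', \lambda')$ agreeing with the model on the handle.

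The third step is to assemble $\F'$. Curves of $\F$ whose projections avoid both $p$ and $q$ carry over unchanged under the identification $M'\setminus(\text{tube})\cong M\setminus(\text{two balls})$. The distinguished leaves through $p$ and $q$ are modified by excising their local pieces inside the removed balls and attaching the corresponding pieces of the handle model, producing either a single joined leaf in $\F'$ or two new leaves each carrying additional asymptotic ends at the new orbits. The remaining curves are those supplied directly by the handle model, filling in the foliation near the new hyperbolic orbits. One then checks that the curves are pairwise disjoint, globally pseudoholomorphic with respect to $\tl J'$, have vanishing $*$-intersection numbers (by applying the intersection theory of \cite{siefring2011} piecewise, using exact coincidence on the unchanged region), and have Fredholm indices in $\{0, 1, 2\}$ (the new hyperbolic ends are designed to shift the index by a controlled amount). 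The energy identity $E(\F')=E(\F)$ is arranged by taking the periods of the new hyperbolic orbits sufficiently small and noting that the modification is otherwise localized.

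The principal obstacle lies in step three, namely producing from the matched local pieces an honest smooth pseudoholomorphic curve that sits inside a smooth family foliating $\R\times M'$. This demands that the handle model supply a family of holomorphic pieces rich enough to match any generic boundary data coming from $\overline{C}_p$ and $\overline{C}_q$ (which both fixes what ``sufficiently generic'' means and defines the transversality condition on $\mathcal{U}$), and it requires an implicit-function-theorem/gluing argument near the new hyperbolic orbits that upgrades matched approximate solutions to true $\tl J'$-holomorphic curves lying in a foliating family. The Fredholm theory of Section \ref{s:fredholm} supplies the linearized setup, while the intersection theory of \cite{siefring2011} controls embeddedness and the foliating property of the output.
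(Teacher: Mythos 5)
Your proposal takes a genuinely different route from the paper, and it contains a few substantive errors worth pointing out.

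The biggest structural difference is that you propose a constructive gluing/implicit-function-theorem argument — cut the leaves through $p$ and $q$, glue in handle-supplied pseudoholomorphic pieces, and then upgrade the matched approximate solutions to honest $\tl J'$-holomorphic curves. The paper never performs a gluing. Instead it works with the curves $C_{p,\pm}$, $C_{q,\pm}$ of the old foliation that are slightly displaced along the Reeb flow from the surgery region (so they survive the surgery unchanged), and then shows that the moduli spaces $\M_{p,\pm}$, $\M_{q,\pm}$ of foliating curves lying in the surgered region degenerate, in the SFT compactification, to height-$2$ buildings of the form $Z_p\odot(P^{\pm}\amalg\text{trivial cylinders})$. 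The new index-$1$ curves $Z_p$, $Z_q$ thus appear as limits, not as glued objects, and the machinery needed is compactness (Wendl's nicely-embedded compactness, Theorem~\ref{t:spec-cpct}) plus intersection theory (the direction-of-approach result, Theorem~\ref{t:dir-of-approach}), not a gluing theorem. This sidesteps precisely the ``principal obstacle'' you correctly identify at the end of your proposal — that obstacle is real and you have not resolved it, whereas the paper's degeneration argument renders it moot. It also removes the need for any additional transversality condition on the pair $(p,q)$: the paper's $\mathcal{U}$ is simply the set of pairs lying on distinct index-$2$ leaves, and that turns out to be enough.

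A second issue is the description of the model neck. You posit \emph{two} new hyperbolic orbits; the construction in Theorem~\ref{t:contact-connect-sum} produces exactly \emph{one} new simple orbit $\gamma_0$, and it is essential that $\gamma_0$ be even (positive hyperbolic). The evenness of $\gamma_0$ is what makes Lemma~\ref{l:same-or-opposite} and Theorem~\ref{t:dir-of-approach} applicable, and it is these that single out which plane ($P^+$ or $P^-$) appears in each limiting building and that force the uniqueness of $Z_p$, $Z_q$. With two distinct hyperbolic orbits in the neck you would lose the geometry of the $C^1$-sphere $P^+\cup\gamma_0\cup P^-$ dividing the neck, and the direction-of-approach argument would not even apply as stated. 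Finally, the energy identity $E(\F')=E(\F)$ is not arranged by taking new-orbit periods small; it follows from the Stokes-type observation (Lemma~\ref{l:finite-index-0-and-1}) that $E(C)$ equals the sum of positive-puncture periods and that the $d\lambda$-energy is nonnegative, so $E(Z_p)$ equals $E(C_p)$ exactly and $E(P^{\pm})\le E(Z_p)$.
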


We briefly discuss some of the key steps of the proof of this theorem.
Given a finite energy foliation $\F$ for the data $(M, \lambda, J)$
we choose any two distinct points $p$ and $q\in\M$
lying on distinct (up to $\R$-translation) index-$2$ leaves of the foliation.
We then form the connected sum $M'$ by $S^{2}$-compactifying
$M\setminus\br{p, q}$ and gluing along the newly created boundary.
We denote the new manifold by $M'$, the induced inclusion
$M\setminus\br{p, q}\hookrightarrow M'$ by $i$, and the embedded sphere
$M'\setminus i(M\setminus\br{p, q})$ by $S$.
It is well known from
\cite{meckert1982,weinstein1991}
that the gluing can be done in such a way that
$M'$ is a smooth  manifold and the induced contact structure
continues smoothly across $S$.
We show that, in addition, we can find a contact form $\lambda'$ and
compatible $J'$ which agree respectively with $\lambda$ and $J$
outside of any desired sufficiently-small
neighborhood $U$ of $S$ so that
there is precisely one simple periodic orbit $\gamma_{0}\subset S$ contained in $U$ and
so that $\gamma_{0}$ divides
$S$ into two disks, each of which is the projection to 
$M'$ of an index-$1$ $\tl J'$-holomorphic plane.
Using the fact that curves in a stable finite energy foliation must satisfy
so-called automatic transversality conditions, we investigate the boundaries
of the moduli spaces of curves surrounding the neighborhood $U$ of $S$.
Using intersection theory arguments, and specifically a result concerning the direction
of approach of a curve to an orbit with even Conley--Zehnder index,
we show that these families of curves
converge to height-$2$ pseudoholomorphic buildings with one of the planes
in $S$ as one of the nontrivial components and that the resulting collection of curves
forms a finite energy foliation.

Since the finite energy foliation we construct on the connected sum always contains
a pair of rigid (i.e.\ index-$1$) planes asymptotic to same periodic orbit, it is natural to ask
the question of whether the operation can be reversed anytime one has a foliation
with a similar configuration of curves in it.
We show in \cite{fs2} that this in fact can be done.  Specifically, assuming the data
$(M, \lambda, J)$ admits a finite energy foliation $\F$
containing two distinct (up to $\R$-translation)
index-$1$ planes asymptotic to the same periodic orbit, the manifold $M'$ obtained
by doing surgery on the $2$-sphere formed by the orbit and 
the projections of the planes to $M$ admits a contact form $\lambda'$
and a compatible $J'\in\J(M', \lambda')$ so that the data $(M', \lambda', J')$ admits
a finite energy foliation $\F'$ with $E(\F')=E(\F)$.
We further show in \cite{fs3} that a Weinstein cobordism connecting
the two contact manifolds admits a finite energy foliation which is
asymptotic to the foliations $\F$ and $\F'$ on the boundaries.

Our main result in this paper
can be combined with previous existence results for finite energy foliations
to produce new finite energy foliations.
We recall that for contact manifolds whose contact structures are supported by
planar open book decompositions, Abbas \cite{abbas2011} and Wendl \cite{wendl2010-openbook}
have constructed finite energy foliations consisting of curves which
correspond via the projection
$\R\times M\to M$ to pages of
the open book decomposition.  Moreover, it is known that performing a contact connected sum
at two points in a contact manifold supported by a planar open book decomposition produces
a contact manifold whose contact structure
is also supported by a planar open book decomposition with one
additional binding component.\footnote{
This can be seen from the following argument which was explained to the second author by
O.\ van Koert.
Forming a contact connected sum
at two points in a given connected manifold $(M, \xi)$ is the same
as forming a contact connected sum of $(M, \xi)$ with
$(S^{1}\times S^{2}, \xi_{0})$ where $\xi_{0}$ is the contact structure
arising as the kernel of the $S^{1}$-invariant contact form
$\lambda_{0}=\cos\theta\,dt+\sin^{2}\theta\,d\phi$ with $t\in\R/\Z$ the coordinate along $S^{1}$, and
with
$\phi\in\R/2\pi\Z$ and $\theta\in[0, \pi]$, respectively, the polar and azimuthal coordinates on $S^{2}$.
Forming the book connected sum (see e.g.\ Section 4.5 in \cite{quinn}
or Section 5.2.3 in \cite{vankoert-thesis})
of an open book decomposition for $(M, \xi)$ with the open book decomposition 
for $(S^{1}\times S^{2}, \xi_{0})$ by cylinders of the form $S^{1}\times \br{\phi=c}$
yields an open book decomposition for the contact connected sum
$(M, \xi)\#(S^{1}\times S^{2}, \xi_{0})$ 
with pages having the same genus as those of the given open book decomposition
for $(M, \xi)$
and one additional boundary component.
}
Combining these facts with our construction leads to the following theorem.
\begin{theorem}
Let $(M_{0}, \xi_{0})$ be a contact manifold with contact structure supported
by a planar open book decomposition with $b$ binding components, and let
$(M_{k}, \xi_{k})$ denote the contact manifold obtained from performing
$k$ successive contact connected sums on $(M_{0}, \xi_{0})$.
Then for any integer $\ell\in [1, k]$ there exist
a contact form $\lambda_{\ell}$ with $\ker\lambda_{\ell}=\xi_{k}$,
a compatible complex structure $J_{\ell}$ on $\xi_{k}$,
and a stable finite energy foliation $\F_{\ell}$ for the data $(M, \lambda_{\ell}, J_{\ell})$ consisting of:
\begin{itemize}
\item $b+k-\ell$ trivial cylinders over elliptic orbits $\br{\gamma^{e}_{1}, \dots, \gamma^{e}_{b+k-\ell}}$,
\item $\ell$ trivial cylinders over even orbits $\br{\gamma^{h}_{1}, \dots, \gamma^{h}_{\ell}}$,
\item $\ell$ pairs of index-$1$ families of planes, with one such pair of families asymptotic
to each of the periodic orbits $\gamma^{h}_{i}$,
\item $\ell$ pairs of index-$1$ families of curves, $1$ pair for each hyperbolic orbit
$\gamma^{h}_{i}$, having $b+k-\ell$ positive punctures with the collection of elliptic orbits
$\br{\gamma^{e}_{1}, \dots, \gamma^{e}_{b+k-\ell}}$
as asymptotic limits and one negative puncture with $\gamma^{h}_{i}$ as an asymptotic limit, and
\item $2\ell$ families of index-$2$ curves in which each curve has 
$b+k-\ell$ positive punctures with the collection of elliptic orbits
$\br{\gamma^{e}_{1}, \dots, \gamma^{e}_{b+k-\ell}}$
as asymptotic limits.
\end{itemize}
\end{theorem}
\begin{proof}
Given an integer $\ell\in[1, k]$ we consider the contact
manifold $(M_{k-\ell}, \xi_{k-\ell})$ obtained by performing
$k-\ell$ successive contact connected sums on $(M_{0}, \xi_{0})$.
According to the observation above,
since $(M_{0}, \xi_{0})$ is supported by a planar open book decomposition
with $b$ binding components,
$(M_{k-\ell}, \xi_{k-\ell})$ is supported by a planar open book decomposition with
$b+k-\ell$ binding components.
The constructions of Abbas \cite{abbas2011} and Wendl \cite{wendl2010-openbook}
then provide a finite energy foliation whose leaves
are all either trivial cylinders over elliptic orbits
corresponding to bindings of the open book decomposition
or index-$2$ curves having only positive punctures which project to $M$
to correspond with the pages of the open book decomposition.

Given this, we then carry out our construction $\ell$ times on the given open book decomposition.
Since varying the points at which the connected sum occurs leads to contactomorphic contact manifolds,
we are free to choose the points at each step to lie on an index-$2$ curve in the foliation.
As is shown in Section \ref{s:main-proof}, each time we apply our construction
we add one even orbit to the foliation with two planes (modulo the $\R$-action) asymptotic to it and
two index-$1$ curves (modulo the $\R$-action) asymptotic to that orbit with a negative puncture.
Moreover the
four different height-$2$ holomorphic buildings that can be formed by these curves are
each a boundary component of one of the surrounding index-$2$ curves, all of which
have the same number of punctures and asymptotic behavior as the curves in the original foliation.
Since these curves form $1$-dimensional families modulo the $\R$-action, and each such family has
$2$ boundary components, there must be exactly $2\ell$ of them.
\end{proof}
\noindent
This theorem shows that manifolds of the form $(M_{k}, \xi_{k})$ satisfying the
hypotheses above admit at least $k+1$ different stable finite energy foliations:
the holomorphic open book decomposition $\F_{0}$ constructed by
Abbas \cite{abbas2011} and Wendl \cite{wendl2010-openbook}
and the $k$ foliations $\F_{\ell}$ constructed by the theorem.
An interesting question for further investigation is whether or not the
various foliations on the manifold $(M_{k}, \xi_{k})$ are,
in language introduced in \cite{wendl2005}, concordant; that is whether or not
two given foliations $\F_{i}$ and $\F_{j}$ on $(M_{k}, \xi_{k})$
can be related by a non-$\R$-invariant pseudoholomorphic foliation
$\F_{ij}$ of
$\R\times M_{k}$ which is asymptotic
at the positive and negative ends to $\F_{i}$ and $\F_{j}$ respectively.

Our construction can also be used in
combination with the results of Hofer, Wysocki, and Zehnder \cite{hwz:foliations} to 
 make local changes to the structure of a
given finite energy foliation on a manifold.
Indeed,
we recall that forming the contact connected sum of a contact manifold $(M, \xi)$ with
a copy of tight $S^{3}$ produces a contact structure on $M$ contactomorphic to the original.
Given then data $(M, \lambda, J)$ admitting a foliation, we can
apply our construction to the given foliation on $M$ and any
of the foliations on tight $S^{3}$ constructed by Hofer, Wysocki, and Zehnder in \cite{hwz:foliations}
to produce a new foliation on $M$ for a contact form inducing a contact structure contactomorphic to the
original.	
For example, by applying our construction to
any number of copies of tight $S^{3}$ equipped with an open book decomposition consisting
of pseudoholomorphic
planes and a planar contact manifold equipped with a holomorphic planar open book decomposition
constructed by Abbas \cite{abbas2011} and Wendl \cite{wendl2010-openbook},
one can construct a stable finite energy foliation for a given planar contact manifold having
any desired number of even Conley--Zehnder index orbits appearing as asymptotic limits of
leaves of the foliation.

We make two observations about the new foliations produced
by forming a connected sum
of a foliation $\F$ for data $(M, \lambda, J)$ with a 
Hofer--Wysocki--Zehnder foliation $\F_{HWZ}$ on tight $S^{3}$.
The first is that the new foliation
$\F'=\F\#\F_{HWZ}$
can, with some additional work,
be shown to be
concordant to the original foliation $\F$.
A second observation about this construction is that,
since Hofer--Wysocki--Zehnder foliations exist for any nondegenerate contact form on
$S^{3}$, there is a good deal of freedom in the choice of contact form and almost complex
structure on the $S^{3}$ portion of the connected sum.
Indeed, forming the connected sum of a foliation with a Hofer--Wysocki--Zehnder foliation
creates a $3$-ball in the manifold bounded by a $2$-sphere
composed of a pair of pseudoholomorphic planes.  The foliation created will persist under arbitrary
perturbations of the contact form and almost complex structure which are compactly supported in
the interior of this three-ball, provided the resulting contact form is nondegenerate and the
almost complex structure is regular.  Thus, the existence of a foliation for a given set of
data $(M, \lambda, J)$ is a property which is persistent under a large class of
localized perturbations to the data $(\lambda, J)$.

In regards to the original motivation for this work from the three-body problem,
we point out that our main theorem here does not immediately
imply the existence of a finite energy foliation for level sets of the Hamiltonian having energies just
above the first Lagrange point.
However, given the existence of finite energy foliations for level sets below the first Lagrange point,
our result allows one to construct a Hamiltonian on the same phase space
having an interval of regular level sets
which admit finite energy foliations and 
which are homotopic to a level set of the
original Hamiltonian for the three-body problem having energy just above the first Lagrange point.
This fact along with some deformation results for finite energy foliations
currently being developed by the second author
would then allow one to construct a finite energy foliation for any small
nondegenerate perturbation of a level set of the original Hamiltonian.
In some cases these foliations could then be used to construct foliations for
level sets of the Hamiltonian 
via a limiting argument as in \cite{hwz:convex}.

Finally, we remark that for simplicity of presentation, and for the convenience of being
able to quote results from other papers, we have chosen to focus on the
case of contact manifolds equipped with a nondegenerate contact form
and we don't impose any conditions on the rates of convergence of curves in the foliation
to their asymptotic limits.
However,
with the use of the in-progress work \cite{siefringwendl},
which generalizes the intersection theory
of \cite{siefring2011} to include exponential weights and Morse--Bott nondegenerate orbits,
it is straightforward to adapt our arguments to
somewhat more general situations.  The essential point 
is that appropriate generalizations of the necessary results from 
intersection theory \cite{siefring2011} and Fredholm theory \cite{hwz:prop3,wendl2010-automatic}
are true provided the curves in question approach their asymptotic limits exponentially fast.
Given this, one can consider $\R$-invariant finite energy foliations in a manifold with a
degenerate contact form, provided all curves converge exponentially to their asymptotic limits.
The arguments in the proofs of
\cite[Theorem 2.4]{siefring2011} and
\cite[Theorem 2.6]{siefring2011}
show that, for appropriate choices of exponential weights, the exponentially
weighted version of the $*$-product, developed in \cite{siefringwendl},
must vanish between any two of the nontrivial curves of such a foliation.
Such a foliation would then be called a \emph{weighted, stable finite energy foliation}
if all weighted Fredholm indices of nontrivial curves are $1$ or $2$.
Given results that will be proved in \cite{siefringwendl},
it is straightforward to adapt our arguments to work for weighted, stable finite
energy foliations.

\begin{remark}
In the recent work \cite{depaulosalamao}, de Paulo and Salom\~ao study
Hamiltonians $H$ on $\R^{4}$ having a saddle-center equilibrium point 
lying on a strictly convex singular subset $S_{0}\subset H^{-1}(0)$.  They show
that for all sufficiently small positive energies $E$, there is a subset
$S_{E}\subset H^{-1}(E)$ diffeomorphic to the closed three ball
so that the symplectization $\R\times S_{E}$ admits a finite energy foliation.
The structure of the finite energy foliation that they construct is the same
as that which would result from our construction when taking a connected
sum with $S^{3}$ equipped with one of the pseudoholomorphic open book decompositions
constructed by Hofer, Wysocki, and Zehnder in \cite{hwz:convex}.
\end{remark}

\begin{ack}
We'd like to thank Peter Albers for posing the question that lead to this series of papers.
We'd also like to thank the Max Planck Institute for Mathematics in the Sciences, and
in particular J\"urgen Jost and Matthias Schwarz, for providing a supportive research environment.
\end{ack}

\subsection{Outline of the paper}
While most earlier results on finite energy foliations deal with relatively concrete constructions,
the results we prove in the present series of papers deal with finite energy foliations abstractly.
The proofs of our results thus require us to develop some general theory for finite energy foliations.
To assist in this, we review some background about Reeb dynamics and pseudoholomorphic
curves in Sections \ref{s:reeb-dynamics} and \ref{s:curves}, with a special focus
on giving precise statements and references for facts that we will need in this paper and its sequels.

We start by recalling relevant facts about contact geometry and Reeb dynamics in
Section \ref{s:reeb-dynamics}, primarily focusing on material concerning properties of the
Conley--Zehnder index from \cite{hwz:prop2}.
Then in Section \ref{s:curves} we review background on finite-energy pseudoholomorphic
curves.  First, in Section \ref{s:asymptotics} we recall the basic asymptotic convergence
to a periodic orbit, established by Hofer in \cite{hofer1993}, as well as the refined
relative asymptotic formula of the second author from \cite{siefring2008}.
Then in Section \ref{s:compactness}, we recall the compactification of the space
of finite-energy pseudoholomorphic curves.  Of particular relevance here
is the work of Wendl \cite{wendl2010-compactness} which focuses on what sort of
limiting objects can arise as sequences of so-called nicely-embedded curves.
After that, in Section \ref{s:intersections}, we review results related to the intersection product
for finite-energy pseudoholomorphic curves introduced by the second author in \cite{siefring2011}.
An adaptation of a result from \cite{siefring2011} concerning the direction of approach
of curves to even orbits
will be key for the proof of our main theorem.
Finally, in Section \ref{s:fredholm} we recall facts about the Fredholm theory
of embedded finite-energy pseudoholomorphic curves from \cite{hwz:prop3} and review
so-called automatic transversality conditions \cite{hofer-lizan-sikorav, hwz:prop3, abb-cie-hof, wendl2010-automatic}
which give topological criteria that guarantee the moduli space of curves is a smooth
manifold of dimension equal to the Fredholm index.

General discussion of finite energy foliations begins in Section \ref{s:foliating-curves}.
After giving a definition of stable finite energy foliations
we establish some basic properties of stable finite energy foliations that follow from this definition.
We then discuss some facts about the structure of the moduli spaces of curves which appear
in finite energy foliations.
In Section \ref{s:connect-sum} we show that contact connected sums can be
formed in a way which gives us properties necessary to prove our main theorem.
In order to focus on the main ideas, some of the more straightforward but tedious
computations needed to support claims in this section are delayed to Appendix \ref{a:details}.
Finally in Section \ref{s:main-proof}, we give the proof of our main theorem.

\section{Background in contact geometry and Reeb dynamics}\label{s:reeb-dynamics}

In this section we review some basic notions from contact geometry and Reeb dynamics that we will need,
and fix some notation.  Much of the material from this section,
particularly that material pertaining to the Conley--Zehnder index of periodic orbits, is adapted from
\cite{hwz:prop2}.

Let $M$ be a closed, oriented $3$-manifold.  Recall that a \emph{contact form} on $M$ is a
$1$-form $\lambda$ for which
\begin{equation}\label{e:contact-condition}
\text{$\lambda\wedge d\lambda$ is a volume form on $M$.} 
\end{equation}
This condition implies that there is a unique vector field $\X$, called the
\emph{Reeb vector field} associated to $\lambda$, satisfying the conditions
\begin{equation}\label{e:reeb-conditions}
i_{\X}\lambda=1 \qquad\text{ and }\qquad i_{\X}d\lambda=0.
\end{equation}
The \emph{contact structure} $\xi$ determined by $\lambda$ is defined by
$\xi=\ker\lambda$.  As a result of condition \eqref{e:contact-condition} the contact
structure is necessarily a $2$-plane bundle transverse to $\X$, and
$d\lambda$ restricts to a nondegenerate form on $\xi$.  The contact form $\lambda$ thus
determines a splitting
\begin{equation}\label{e:splitting}
TM=\R\X\oplus(\xi, d\lambda)
\end{equation}
of the tangent space $TM$ of $M$ into a framed line bundle and a symplectic $2$-plane bundle.
Moreover, the defining conditions \eqref{e:reeb-conditions} for $\X$ used with
the formula $L_{X}=i_{X}\circ d+d\circ i_{X}$ imply that
\[
L_{\X}\lambda=0 \qquad\text{ and }\qquad L_{\X}d\lambda=0
\]
and thus the flow of $\X$ preserves the splitting \eqref{e:splitting}.

It will be convenient for our purposes here to think of periodic orbits
of the Reeb vector field as maps from the circle $S^{1}=\R/\Z$.
In particular,
for $T> 0$
we consider a $T$-periodic orbit to be
a map $\gamma:S^{1}\to M$ satisfying
\[
\dot\gamma(t)=T\cdot\X(\gamma(t)).
\]
An unparametrized periodic orbit is a collection of parametrized orbits
that differ by reparametrization via the $S^{1}$-action on the domain.
We will generally use the same notation for a parametrized orbit and its associated unparametrized
orbit, allowing the context or specific language to distinguish between the two.

Let $\psi_{\cdot}:\R\times M\to M$ be the flow generated by the Reeb vector field $\X$, that is
\[
\dot\psi_{t}(x)=\X\circ\psi_{t}(x),
\]
and let $\gamma:S^{1}\to M$ be a parametrized $T$-periodic orbit.
Since the flow of $\X$ preserves the splitting \eqref{e:splitting}, we obtain
for any $t\in S^{1}$ a symplectic map
\[
d\psi_{T}(\gamma(t))\in Sp(\xi_{\gamma(t)}, d\lambda),
\]
and, since
the group property of the flow and its linearization can be used to 
show that
\begin{equation}\label{e:return-map}
d\psi_{T}(\gamma(t))=[d\psi_{-tT}(\gamma(t))]^{-1}d\psi_{T}(\gamma(0))d\psi_{-tT}(\gamma(t)),
\end{equation}
the spectrum of $d\psi_{T}(\gamma(t))$ is independent of $t\in S^{1}$.
We will thus say that an unparametrized $T$-periodic orbit $\gamma$ is \emph{nondegenerate}
if for a representative parametrization $\gamma:S^{1}\to M$,
the map
$d\psi_{T}(\gamma(0))$ does not have $1$ in the spectrum.
A contact form $\lambda$ on $M$ is said to \emph{nondegenerate} if all periodic orbits are nondegenerate.

A nondegenerate $T$-periodic orbit $\gamma$ is said to be:
\begin{itemize}
\item elliptic if $d\psi_{T}(\gamma(t))$ has complex eigenvalues, or
\item hyperbolic if $d\psi_{T}(\gamma(t))$ has real eigenvalues.
\end{itemize}
Moreover, $\gamma$ is said to be:
\begin{itemize}
\item odd if $\gamma$ is elliptic, or if $\gamma$ is hyperbolic and $d\psi_{T}(\gamma(t))$ has negative eigenvalues, or 
\item even $\gamma$ is hyperbolic and $d\psi_{T}(\gamma(t))$ has positive eigenvalues.
\end{itemize}
As a result of \eqref{e:return-map} the designation of a nondegenerate orbit as even/odd, positive/negative
is a well-defined property associated to the unparametrized orbit.
The parity of a periodic orbit as defined here agrees with the parity of the orbit's
Conley--Zehnder index, which we will now define.

Given a trivialization of the contact structure along a nondegenerate periodic orbit, one can assign
a number, called the Conley--Zehnder index, to the orbit which can be thought of as a measure of
the winding with respect to the given trivialization of the linearized flow along the orbit
\cite{conleyzehnder, SalamonZehnder, RobbinSalamon, hwz:prop2}.
We review the key information now.
As a starting point we recall information about
the Maslov index and Conley--Zehnder index for, respectively, loops and paths in
$Sp(1)=Sp(\R^{2}, \omega_{0}=dx\wedge dy)$.
We first recall that the fundamental group
$\pi_{1}(Sp(1))$ of the symplectic group is
isomorphic to $\Z$ (see e.g.\ \cite[Section 1.2.1]{abbondandolo}).
The \emph{Maslov index} of a (homotopy class of) loop(s) of matrices in
$Sp(1)$ based at the identity
is, by definition, the isomorphism
\[
m:\pi_{1}(Sp(1))\to\Z
\]
determined by assigning a value of $1$ to the (homotopy class of the) loop
\[
t\in S^{1}=\R/\Z\mapsto
\begin{bmatrix}
\cos 2\pi t & -\sin 2\pi t \\ \sin 2\pi t & \cos 2\pi t
\end{bmatrix}
\]
which is a generator of $\pi_{1}(Sp(1))$.
Given this, we can define the Conley--Zehnder index for (homotopy classes of) paths
in $Sp(1)$ that start at the identity and end at a matrix without $1$ in the spectrum
via the following axiomatic characterization from \cite[Theorem 3.2]{hwz:prop2}.
\begin{theorem}\label{t:conley-zehnder-axioms}
Let
\begin{equation}\label{e:cz-symplectic-paths}
\Sigma(1)=
\br{\Psi\in C^{0}([0, 1], Sp(1)) \,|\, \Psi(0)=I \text{ and } \det(\Psi(1)-I)\ne 0}
\end{equation}
denote the space of continuous paths
in $Sp(1)$ which start at the identity and end at a matrix without $1$ in the spectrum.
There exists a unique map
\[
\pathcz:\Sigma(1)\to\Z,
\]
called the Conley--Zehnder index, determined by the following axioms:
\begin{enumerate}
 \item Homotopy invariance: The Conley--Zehnder index of a path in $\Sigma(1)$ is invariant under homotopies
 of paths in $\Sigma(1)$.
 \item Maslov compatibility: If $\Psi\in\Sigma(1)$ and $g:[0, 1]\to Sp(1)$ is a loop based at the identity, then
 \[
 \pathcz(g\Psi)=2m(g)+\pathcz(\Psi)
 \]
 where $g\Psi\in\Sigma(1)$ is the path defined by $(g\Psi)(t)=g(t)\Psi(t)$.
 \item Inverse axiom: If $\Psi\in\Sigma(1)$ and $\Psi^{-1}\in\Sigma(1)$ is the inverse path defined
 by $\Psi^{-1}(t)=[\Psi(t)]^{-1}$, then
 \[
 \pathcz(\Psi^{-1})=-\pathcz(\Psi).
 \]
\end{enumerate}
\end{theorem}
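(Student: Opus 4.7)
The plan is to prove uniqueness first, using only the axioms, and then prove existence by an explicit construction using the universal cover of $Sp(1)$.

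For uniqueness, suppose $\mu$ and $\mu'$ both satisfy the axioms and set $\delta := \mu - \mu'$. From axiom (1) $\delta$ is homotopy invariant in $\Sigma(1)$, from axiom (2) applied to both maps we get $\delta(g\Psi) = \delta(\Psi)$ for every loop $g$ in $Sp(1)$ based at the identity, and from axiom (3) $\delta(\Psi^{-1}) = -\delta(\Psi)$. The first observation is that any path $\Psi' \in \Sigma(1)$ with the same endpoint as $\Psi$ can be written in the form $g\Psi$ for the loop $g(t) := \Psi'(t)\Psi(t)^{-1}$, so $\delta$ depends only on $\Psi(1)$, and moreover only on the path-component of $Sp^{*}(1) := \{A \in Sp(1) : \det(A - I) \neq 0\}$ containing $\Psi(1)$. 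A short direct calculation identifies these components: since $\det(A - I) = 2 - \operatorname{tr} A$ for $A \in Sp(1) = SL(2,\R)$, one finds exactly two components $Sp^{\pm}(1)$ distinguished by the sign of $\operatorname{tr} A - 2$, and each is path-connected. Finally, since $\det(A^{-1} - I) = \det(A - I)$, inversion preserves endpoint components, so on each component $\delta$ equals its own negative and hence vanishes.

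For existence, I would pass to the universal cover $\pi\colon\widetilde{Sp}(1) \to Sp(1)$, whose deck group is $\pi_1(Sp(1)) \cong \Z$ generated by the rotation loop $t \mapsto R(2\pi t)$. Each $\Psi \in \Sigma(1)$ lifts uniquely to a path $\tilde\Psi$ starting at a chosen basepoint $\tilde I$ above $I$, and the endpoint $\tilde\Psi(1)$ lies in the open set $\pi^{-1}(Sp^{*}(1))$, which has countably many components indexed by $\pi_0(Sp^{*}(1)) \times \Z = \{+,-\} \times \Z$. I would assign an integer label to each of these components so that (a) the generating deck transformation shifts labels by $+2$, (b) components over $Sp^-(1)$ receive odd labels and those over $Sp^+(1)$ receive even labels, and (c) the involution induced by the map $\Psi \mapsto \Psi^{-1}$ on paths (which descends to $A \mapsto A^{-1}$ on $Sp(1)$ and lifts to an involution of $\widetilde{Sp}(1)$ fixing $\tilde I$) sends the label $k$ to the label $-k$. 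The index $\pathcz(\Psi)$ is then defined as the label of the component containing $\tilde\Psi(1)$. Axiom (1) holds because homotopies lift; axiom (2) holds because concatenation by a loop of Maslov index $m(g)$ applies the generating deck transformation $m(g)$ times; and axiom (3) holds by construction of the labeling.

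The main obstacle is the existence half, and within it the compatibility of the three desired properties of the labeling: one must verify that the generator of $\pi_1(Sp(1))$ really shifts component labels by exactly $+2$, and that this is consistent with the inverse axiom fixing the overall sign normalization. Geometrically, the factor of two arises because the generating rotation loop $t \mapsto R(2\pi t)$ crosses the singular hypersurface $\{\det(A - I) = 0\}$ twice as $t$ traverses $[0,1]$, so its lift traverses two adjacent components of $\pi^{-1}(Sp^{*}(1))$ before closing up under the deck action; this forces the shift to be $2$ rather than $1$ and is precisely what the coefficient in axiom (2) encodes. Once this is verified on concrete representative paths (such as rotation paths ending at $-I$ for $Sp^+$ and short paths into the positive-hyperbolic region for $Sp^-$), the labeling extends uniquely to all components by equivariance, and the three axioms are automatic.
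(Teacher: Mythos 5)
The paper does not prove this statement; it is quoted directly from \cite[Theorem~3.2]{hwz:prop2}, so there is no internal proof to compare against, and the remarks below assess your proposal on its own. The uniqueness half is sound: you use Maslov compatibility to reduce $\delta:=\mu-\mu'$ to a function of the endpoint, homotopy invariance to reduce further to a function of the component of $Sp^{*}(1)$ containing the endpoint, the identity $\det(A^{-1}-I)=\det(A-I)$ to see that inversion preserves each component, and then the inverse axiom gives $\delta=-\delta=0$. The universal-cover strategy for existence is also the natural one.

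The existence half has a genuine gap at the step where you assert that $\pi^{-1}(Sp^{*}(1))$ has components indexed by $\{+,-\}\times\Z$. That assertion is equivalent to the claim that the inclusion-induced maps $\pi_{1}(Sp^{\pm}(1))\to\pi_{1}(Sp(1))$ are trivial, which is not automatic: the positive-hyperbolic region $\{A\in SL(2,\R):\operatorname{tr}A>2\}$ fibers over $(\RP^{1}\times\RP^{1})\setminus\Delta$ (ordered pair of eigenlines) and is homotopy equivalent to $S^{1}$, so you must actually verify that the generator of its fundamental group becomes nullhomotopic in $SL(2,\R)$. If this map on $\pi_1$ were nontrivial, the labeling scheme would have too few components and no index satisfying axiom~(2) could exist, so this fact is essential and needs to be supplied, not assumed. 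Separately, the geometric heuristic you offer for the coefficient $2$ is wrong: for the generating loop $g(t)=R(2\pi t)$ one computes $\det(g(t)-I)=2-2\cos(2\pi t)>0$ for all $t\in(0,1)$, so $g$ meets the singular hypersurface $\{\det(A-I)=0\}$ only at the common endpoint $I$, and its lift traverses a single component of $\pi^{-1}(Sp^{*}(1))$, not two adjacent ones. The factor of $2$ is instead forced by the parity check you correctly flag at the end: for the reference path $\Psi_{1}(t)=R(\pi t)$ one has $\Psi_{1}=g\Psi_{1}^{-1}$ with $m(g)=1$, so the lifted endpoints of $\Psi_{1}$ and $\Psi_{1}^{-1}$ differ by an odd power of the generating deck transformation; this odd offset is incompatible with a shift-by-$1$ labeling respecting the inverse axiom and compatible precisely with the shift-by-$2$ labeling that axiom~(2) records.
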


Now, let $\gamma:S^{1}\to M$ by a nondegenerate, $T$-periodic orbit, and let
$\Phi:S^{1}\times \R^{2}\to \gamma^{*}\xi$
by a symplectic trivialization, that is, assume that
\[
d\lambda_{\gamma(t)}\bp{\Phi(t)\cdot, \Phi(t)\cdot}
=
dx\wedge dy
\]
for all $t\in S^{1}$.
Again, recalling that $L_{\X}d\lambda=0$, the flow $\psi_{t}$ of $\X$
gives for any $t\in\R$ a symplectic map
\[
d\psi_{tT}(\gamma(0)):(\xi_{\gamma(0)}, d\lambda_{\gamma(0)})\to (\xi_{\psi_{tT}(\gamma(0))}, d\lambda_{\psi_{tT}(\gamma(0))})=(\xi_{\gamma(t)}, d\lambda_{\gamma(t)})
\]
and thus the map
\begin{equation}\label{e:triv-path}
t\in[0, 1]\to \Phi^{-1}(t)d\psi_{tT}(\gamma(0))\Phi(0)
\end{equation}
gives a path of matrices in $Sp(1)$ starting at the identity and ending
at
\[
\Phi^{-1}(1)d\psi_{T}(\gamma(0))\Phi(0)=\Phi^{-1}(0)d\psi_{T}(\gamma(0))\Phi(0)
\]
which doesn't have $1$ in the spectrum by the assumption that $\gamma$ is nondegenerate.
We define the \emph{Conley--Zehnder index} $\mu^{\Phi}(\gamma)$ of the orbit $\gamma$
relative to the trivialization $\Phi$ by
\begin{equation}\label{e:cz-orbit-definition}
\mu^{\Phi}(\gamma):=\pathcz(\Psi)
\end{equation}
with $\Psi\in\Sigma(1)$ the path \eqref{e:triv-path}, and $\pathcz(\Psi)$ the Conley--Zehnder index of
the path $\Psi$ as characterized in Theorem \ref{t:conley-zehnder-axioms}.
We note that, as a result of the homotopy invariance axiom from Theorem \ref{t:conley-zehnder-axioms},
the Conley--Zehnder index of an orbit is invariant under homotopies of the trivialization.
Furthermore, the homotopy invariance axiom can be used to show that the Conley--Zehnder index 
relative to a given trivialization is independent of the choice of parametrization of the orbit.
Finally, we note that, as result of the Maslov compatibility axiom, the parity of
the Conley--Zehnder index of an orbit does not depend on the choice of trivialization.
Further, this parity can be shown to agree with that defined above in terms of the eigenvalues of
the linearized flow.

We will need the characterization of the Conley--Zehnder index in terms
of the spectrum of
a certain self-adjoint operator acting on sections of the contact structure along the orbit
from \cite{hwz:prop2}.
Let $\gamma$ be a parametrized $T$-periodic orbit, and let
$h:S^{1}\to \xi$ be a smooth section of the contact structure along $\gamma$, 
i.e.\ $h(t)\in \xi_{\gamma(t)}$ for all $t\in S^{1}$.
We observe that since $h$ is defined along a flow line of $\X$, it has a well-defined
Lie derivative $L_{\X}h$ defined by
\begin{equation}\label{e:lie-derivative}
L_{\X}h(t)=\left.\frac{d}{ds}\right|_{s=0}d\psi_{-s}(\gamma(t+s/T))h(t+s/T)
\end{equation}
and, since the flow $\psi_{t}$ of $\X$ preserves the splitting \eqref{e:splitting},
$L_{\X}h$ is also a section of the contact structure along $\gamma$.
Given any symmetric connection $\nabla$ on $TM$,
we use that $\dot\gamma(t)=T\cdot\X(\gamma(t))$ to write
\[
T\cdot L_{\X}h=\nabla_{t}h-T\nabla_{h}\X.
\]
Thus $\nabla_{t}\cdot-T\nabla_{\cdot}\X$ give a first-order differential operator
on $C^{\infty}(\gamma^{*}\xi)$ which is independent of choice of symmetric connection.

Next, recall that given a symplectic vector bundle $(E, \omega)$ a complex
structure $J$ on $E$ is said to be compatible with $\omega$ if the bilinear form
\[
g_{J}(\cdot, \cdot)=\omega(\cdot, J\cdot)
\]
is a metric on $E$.  It is a well known fact the space of compatible almost complex structure on
a given symplectic vector bundle is nonempty and contractible in the $C^{\infty}$ topology
(see e.g.\ Proposition 5 and discussion thereafter in Section 1.3 of \cite{hoferzehnder}).
Recalling that $(\xi, d\lambda)$ is a symplectic vector bundle, we define the set
$J(M, \lambda)\subset\End(\xi)$ to be the set of complex structures on $\xi$ compatible with $d\lambda$.
Given a $T$-periodic orbit $\gamma$ and a $J\in J(M, \lambda)$, 
we define the \emph{asymptotic operator} $\A_{\gamma, J}$
associated to $\gamma$ and $J$
by
\begin{equation}\label{e:asymptotic-operator-definition}
\A_{\gamma, J}h=-J(\nabla_{t}h-T\nabla_{h}\X),
\end{equation}
and note that, by the discussion of the previous paragraph, $\A_{\gamma, J}$
gives a first-order differential operator on $C^{\infty}(\gamma^{*}\xi)$ which is independent of the choice
of symmetric connection used to define it.

We define an inner product $\langle\cdot, \cdot\rangle_{J}$
on $C^{\infty}(\gamma^{*}\xi)$ by
\[
\langle h, k \rangle_{J}
=\int_{S^{1}}d\lambda_{\gamma(t)}\bp{h(t), J(\gamma(t))k(t)}\,dt.
\]
Recalling that $L_{\X}d\lambda=0$, we have for any $h$, $k\in C^{\infty}(\gamma^{*}\xi)$
that
\[
\frac{d}{dt}\bbr{d\lambda_{\gamma(t)}\bp{h(t), k(t)}}
=d\lambda_{\gamma(t)}\bp{T(L_{\X}h)(t), k(t)}+ d\lambda_{\gamma(t)}\bp{h(t), T(L_{\X} k)(t)}.
\]
Using that compatibility of $J$ with $(\xi, d\lambda)$ implies that
$d\lambda(J\cdot, J\cdot)=d\lambda$ on $\xi\times\xi$,
we can integrate the above equation to give
\[
\langle h, \A_{\gamma, J}k\rangle_{J}=\langle\A_{\gamma, J}h, k\rangle_{J}.
\]
Thus $\A_{\gamma, J}$ is formally self-adjoint, and induces
a self-adjoint operator
\[
\A_{\gamma,J}:D(\A_{\gamma,J})=H^{1}(\gamma^{*}\xi)
\subset L^{2}(\gamma^{*}\xi, \langle\cdot, \cdot\rangle_{J})
\to L^{2}(\gamma^{*}\xi, \langle\cdot, \cdot\rangle_{J}).
\]
Since for any value in the resolvent set of $\A_{\gamma, J}$,
the associated resolvent operator factors
through the compact embedding
$H^{1}(\gamma^{*}\xi)\hookrightarrow L^{2}(\gamma^{*}\xi)$, we know
from the spectral theorem for compact self-adjoint operators that 
the spectrum of $\A_{\gamma, J}$ consists of real, isolated eigenvalues of finite multiplicity
accumulating only at $\pm\infty$.

We recall the observation from \cite{hwz:prop1} that
$\ker\A_{\gamma, J}$ is trivial if and only if $\gamma$ is a nondegenerate orbit.
Indeed, if $h$ is a section of $\gamma^{*}\xi$ in the kernel of $\A_{\gamma, J}$
then $L_{\X}h=0$ and thus $d\psi_{tT}h(t_{0})=h(t_{0}+t)$ for any $t\in\R$
and $t_{0}\in S^{1}$.  In particular $d\psi_{T}(\gamma(t_{0}))h(t_{0})=h(t_{0}+1)=h(t_{0})$
so $d\psi_{T}(\gamma(t_{0}))$ has $1$ as an eigenvalue and $\gamma$
must be a degenerate orbit.
Conversely, if the orbit is degenerate, then $d\psi_{T}(\gamma(t_{0}))$ has $1$ as an eigenvalue.
Letting $v_{0}\in\xi_{\gamma(0)}\setminus\br{0}$ be a vector with
$d\psi_{T}(\gamma(0))v_{0}=v_{0}$,
the map $v:\R\to\xi$ defined by $v(t)=d\psi_{tT}(\gamma(0))v_{0}\in\xi_{\gamma(t)}$ will be
$1$-periodic and
satisfy $L_{\X}v=0$, thus determining a section of $\gamma^{*}\xi$
in the kernel of $\A_{\gamma, J}$.

In a unitary trivialization of $(\gamma^{*}\xi, d\lambda, J)$ --- 
that is, a symplectic trivialization $\Phi:S^{1}\times\R^{2}\to\gamma^{*}\xi$
of $(\gamma^{*}\xi, d\lambda)$
satisfying
\[
\Phi\circ J_{0}=J\circ \Phi
\]
with 
\[
J_{0}=
\begin{bmatrix}
0 & -1 \\ 1 & 0
\end{bmatrix}
\]
--- the operator $\A_{\gamma, J}$ takes the form
\[
\Phi^{-1}\circ \A_{\gamma, J}\circ \Phi=-i\tfrac{d}{dt}-S(t)
\]
with $S(t)$ a symmetric matrix.
An eigenvector of $\A_{\gamma,J}$ satisfies a linear, first-order ordinary differential equation
 and therefore never vanishes
since it doesn't vanish identically.
Hence every eigenvector gives a map from $S^{1}\to\R^{2}\setminus\{0\}$
and thus has a well-defined winding number.
Since $-i\frac{d}{dt}-S(t)$ is a compact perturbation of $-i\frac{d}{dt}$,
it can be shown using perturbation theory in \cite{kato} that
that the winding is monotonic in the eigenvalue
and that to any $k\in\Z$
the span of the set of eigenvectors having winding $k$ is two dimensional.
These facts are proved in Section 3 of \cite{hwz:prop2}, and we restate them here as a lemma.

\begin{lemma}\label{l:operator-spectrum}
Let $\gamma$ be a $T$-periodic orbit of $\X$, let $\A_{\gamma, J}$
denote the asymptotic operator of $\gamma$, and let
$\mathfrak{T}(\gamma^{*}\xi)$ denote
the set of homotopy classes of symplectic trivializations of $(\gamma^{*}\xi, d\lambda)$.  There exists a map
$w:\sigma(\A_{\gamma, J})\times\mathfrak{T}(\gamma^{*}\xi)\to\Z$ which satisfies
\begin{enumerate}
\item If $e:S^{1}\to \gamma^{*}\xi$ is an
eigenvector of $\A_{\gamma, J}$ with eigenvalue $\lambda$, then $w(\lambda, [\Phi])=\wind(\Phi^{-1}e)$, 
that is, $w(\lambda, [\Phi])$ measures the winding with respect to $\Phi$ of any eigenvector of
$\A_{\gamma, J}$ with eigenvalue $\lambda$.

\item For any fixed $[\Phi]\in\mathfrak{T}(\gamma^{*}\xi)$ we have that
\[
w(\lambda, [\Phi])<w(\mu, [\Phi])\Rightarrow \lambda<\mu,
\]
that is, the winding of eigenvectors of $\A_{\gamma, J}$ is (not necessarily strictly)
monotonic in the eigenvalue.

\item If $m(\lambda)=\dim\ker(\A_{\gamma, J}-\lambda)$ denotes the multiplicity of $\lambda$ as an eigenvalue we have for every
$k\in\Z$ and $[\Phi]\in\mathfrak{T}(\gamma^{*}\xi)$ that
\[
\sum_{\{\lambda \,|\, w(\lambda, [\Phi])=k\}}m(\lambda)=2,
\]
that is, the span of the set of eigenvectors of $\A_{\gamma, J}$
with any given winding has dimension $2$.
\end{enumerate}
\end{lemma}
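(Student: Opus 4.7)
\emph{Proof plan.} The plan is to fix a unitary trivialization $\Phi$ of $(\gamma^{*}\xi, d\lambda, J)$, conjugating $\A_{\gamma, J}$ to the concrete operator $L = -i\frac{d}{dt} - S(t)$ on $L^{2}(S^{1}, \R^{2})$ with $S(t)$ a loop of symmetric matrices and $i$ denoting $J_{0}$. Since winding numbers of nowhere-vanishing loops $S^{1} \to \R^{2} \setminus \{0\}$ are invariant under homotopies of the trivialization, all three claims reduce to corresponding statements about $L$.

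For part (1), any eigenvector $e$ of $L$ with eigenvalue $\lambda$ satisfies the first-order linear ODE $e'(t) = i(\lambda + S(t))e(t)$; by uniqueness of ODE solutions either $e \equiv 0$ or $e$ is nowhere zero, so a nontrivial eigenvector defines a loop $S^{1} \to \R^{2} \setminus \{0\}$ with a well-defined winding number. To see that this winding depends only on the eigenvalue, let $e_{0}, e_{1}$ be two nonzero eigenvectors for the same $\lambda$. If they are linearly dependent the claim is trivial; otherwise the family $e_{s} = \cos(s)\,e_{0} + \sin(s)\,e_{1}$ is a nonzero eigenvector for every $s \in [0, \pi/2]$, hence a continuously varying family of nowhere-vanishing loops. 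Its integer-valued winding is then constant in $s$, yielding $\wind(\Phi^{-1}e_{0}) = \wind(\Phi^{-1}e_{1})$.

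For parts (2) and (3), I would deform $L$ to the model operator $L_{0} = -i\frac{d}{dt}$ via the real-analytic family $L_{\tau} = -i\frac{d}{dt} - \tau S(t)$, $\tau \in [0,1]$. The spectrum of $L_{0}$ consists of the eigenvalues $\br{2\pi k : k \in \Z}$, each carrying a two-dimensional real eigenspace spanned by $e^{2\pi i k t} v$ with $v \in \R^{2}$, whose nonzero elements all have winding $k$. By Kato's analytic perturbation theory \cite{kato}, the spectrum of $L_{\tau}$ organizes into piecewise real-analytic eigenvalue curves $\lambda_{n}(\tau)$ with corresponding continuous eigenvector curves $e_{n}(\tau)$. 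Applying part (1) to each $L_{\tau}$ shows that $e_{n}(\tau)$ is nowhere vanishing, so the winding of $e_{n}(\tau)$ is an integer-valued continuous, hence constant, function of $\tau$. Thus every eigenvalue of $L$ inherits the winding of the $L_{0}$-eigenvalue from which it deforms.

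To finish (2), I observe that if two eigenvalue curves $\lambda_{m}(\tau)$ and $\lambda_{n}(\tau)$ meet at some $\tau_{0}$, then $e_{m}(\tau_{0})$ and $e_{n}(\tau_{0})$ lie in a common eigenspace of $L_{\tau_{0}}$, and part (1) forces them to carry the same winding label; hence eigenvalue curves with distinct windings cannot cross, so the $L_{0}$-ordering of eigenvalues of different winding survives to $\tau = 1$. Part (3) then follows because algebraic multiplicities are preserved along the deformation, and each winding class $k$ contributes total multiplicity $2$ at $\tau = 0$. The principal technical obstacle is the careful invocation of Kato's theory to organize the full spectrum into continuous eigenvalue/eigenvector families even across multiplicity changes; once that is granted, the non-vanishing from Step 1 drives the rest of the argument.
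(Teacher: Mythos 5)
Your proposal is correct and follows essentially the same route the paper indicates: the paper itself does not reprove the lemma but cites Section 3 of \cite{hwz:prop2}, and the paragraph immediately preceding the statement sketches precisely the argument you give — pass to a unitary trivialization so that the operator becomes $-i\frac{d}{dt}-S(t)$, use the first-order-ODE structure to get nowhere-vanishing eigenvectors with a well-defined winding, and then invoke Kato/Rellich perturbation theory along $L_{\tau}=-i\frac{d}{dt}-\tau S(t)$ to transfer the winding bookkeeping from $-i\frac{d}{dt}$ to $L_{1}$. Your flesh-out of the winding-preservation and non-crossing steps is consistent with that sketch, with the standard Rellich orthonormal-eigenvector-branch theorem filling the one point you flagged as a technical obstacle.
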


We now describe the characterization of the Conley--Zehnder in terms of the asymptotic operator
from \cite{hwz:prop2}.
Given a $T$-periodic orbit $\gamma$ and a $J\in\J(M, \lambda)$
let $\lne(\gamma)\in\sigma(\A_{\gamma, J})$ denote the largest negative eigenvalue of $\A_{\gamma, J}$.
Given a trivialization $\Phi$ of $\gamma^{*}\xi$, we define
\begin{equation}\label{e:alpha}
\alpha^{\Phi}(\gamma)=w(\lne(\gamma); [\Phi])
\end{equation}
so that $\alpha^{\Phi}(\gamma)$ is the winding relative to $\Phi$ of any eigenvector of
$\A_{\gamma, J}$
having the largest possible negative eigenvalue.  We define the parity of $p(\gamma)$ of
$\gamma$ by
\begin{equation}\label{e:parity}
p(\gamma)=
\begin{cases}
0  & \text{ if $\exists\mu\in\sigma(\A_{\gamma, J})\cap\R^{+}$ with $w(\mu,[\Phi])=\alpha^{\Phi}(\gamma)$ }\\
1  & \text{ otherwise }
\end{cases}
\end{equation}
i.e.\ the parity is $0$ if there is a positive eigenvalue with eigenvectors having winding equal
to that of those eigenvectors having largest negative eigenvalue, and the parity is $1$ otherwise.
The following theorem then gives a formula for the Conley--Zehnder index
of $\gamma$ in terms of the quantities $\alpha^{\Phi}$ and $p$ just defined.

\begin{theorem}[Hofer--Wysocki--Zehnder \cite{hwz:prop2}]\label{t:spectral-conley-zehnder}
Let $\gamma$ be a $T$-periodic orbit of the Reeb vector field $\X$,
let $\Phi:S^{1}\times\R^{2}\to\gamma^{*}\xi$ be a symplectic trivialization of
$(\gamma^{*}\xi, d\lambda)$ and let $\alpha^{\Phi}(\gamma)$ and $p(\gamma)$
be as defined in \eqref{e:alpha}--\eqref{e:parity} above.  Then
the Conley--Zehnder index of $\gamma$ relative to $\Phi$ is given by the formula
\[
\mu^{\Phi}(\gamma)=2\alpha^{\Phi}(\gamma)+ p(\gamma).
\]
\end{theorem}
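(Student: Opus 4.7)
The plan is to prove the formula by invoking the axiomatic uniqueness of the Conley--Zehnder index from Theorem \ref{t:conley-zehnder-axioms}. Through the trivialization $\Phi$, the linearized Reeb flow along $\gamma$ produces a path $\Psi(t) := \Phi^{-1}(t) d\psi_{tT}(\gamma(0))\Phi(0) \in \Sigma(1)$. Taking $\Phi$ to be unitary, the asymptotic operator reads $\A_{\gamma,J} = -J_{0}\partial_t - S(t)$ with $S(t) = -J_{0}\dot\Psi(t)\Psi(t)^{-1}$ symmetric; in particular the spectrum of $\A_{\gamma,J}$, and hence $\alpha^{\Phi}(\gamma)$ and $p(\gamma)$, depend only on $\Psi$. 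Writing $\tilde\mu(\Psi) := 2\alpha^{\Phi}(\gamma) + p(\gamma)$, it then suffices to verify that $\tilde\mu : \Sigma(1) \to \Z$ satisfies the three axioms characterizing $\pathcz$.

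For homotopy invariance, I would exploit the identification that $\Psi_{s}(1)$ has $1$ as an eigenvalue precisely when $0 \in \sigma(\A_{s})$, since a kernel element of the asymptotic operator in the trivialization integrates to a $+1$-eigenvector of the time-$1$ flow. Thus the condition $\Psi_{s} \in \Sigma(1)$ forbids $0$ from entering the spectrum along the homotopy. Standard perturbation theory for self-adjoint operators \cite{kato} gives continuous variation of the spectrum, so $\lne(\gamma_{s})$ varies continuously and never crosses $0$; by Lemma \ref{l:operator-spectrum}(1)--(2) its winding $\alpha^{\Phi}$ is an integer depending continuously on $s$, hence locally constant. By Lemma \ref{l:operator-spectrum}(3) there is exactly one other eigenvalue (counted with multiplicity) sharing the winding $\alpha^{\Phi}$, and by the same continuity argument it too cannot cross $0$, so $p(\gamma_{s})$ is locally constant. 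Hence $\tilde\mu$ is homotopy invariant.

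For Maslov compatibility, I would interpret the premultiplication $\Psi \mapsto g\Psi$ by a loop $g$ based at the identity as the change of trivialization $\Phi \mapsto \Phi g^{-1}$. Under this change, the winding in the new trivialization of any nowhere-vanishing section of $\gamma^{*}\xi$ shifts by $m(g)$, so $\alpha^{\Phi g^{-1}}(\gamma) = \alpha^{\Phi}(\gamma) + m(g)$, while the parity $p(\gamma)$ is preserved because the windings of both eigenvalues sharing the extremal winding shift by the same amount $m(g)$. Consequently $\tilde\mu(g\Psi) = \tilde\mu(\Psi) + 2m(g)$, as required. For the inverse axiom, a direct computation shows that the operator $\A_{\Psi^{-1}}$ is related to $\A_{\Psi}$ by a conjugation together with a sign reversal on $S(t)$; tracking the effect of this transformation on the spectrum and eigenvector windings yields $\tilde\mu(\Psi^{-1}) = -\tilde\mu(\Psi)$.

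I expect the main obstacle to be sign and convention bookkeeping, particularly for the Maslov compatibility axiom and even more so for the inverse axiom --- any error in tracking how windings, Maslov indices, and parities transform under change of trivialization or path inversion would invalidate the argument. A secondary subtlety is the homotopy invariance of the parity $p$: it relies crucially on the dimension-$2$ clause of Lemma \ref{l:operator-spectrum}(3), without which the partner eigenvalue to $\lne(\gamma)$ could be created or destroyed along a homotopy, thereby flipping the parity. Once these issues are settled, the desired formula $\mu^{\Phi}(\gamma) = 2\alpha^{\Phi}(\gamma) + p(\gamma)$ follows immediately from the axiomatic uniqueness of $\pathcz$.
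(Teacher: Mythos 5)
The paper does not give a proof of this statement; it is quoted directly from \cite{hwz:prop2}, so there is no in-paper argument to compare against. Your strategy --- define $\tilde\mu(\Psi) := 2\alpha^{\Phi}(\gamma) + p(\gamma)$, observe via $S(t) = -J_{0}\dot\Psi(t)\Psi(t)^{-1}$ that $\tilde\mu$ depends only on the path $\Psi\in\Sigma(1)$, and then verify the three axioms of Theorem \ref{t:conley-zehnder-axioms} to invoke uniqueness --- is exactly the route taken by Hofer--Wysocki--Zehnder in the cited source. Your reading of the homotopy axiom is sound: $0\in\sigma(\A_{s})$ if and only if $\Psi_{s}(1)$ has $1$ as an eigenvalue (a kernel element of $\A_{s}$ in the trivialization satisfies $\dot\eta = \dot\Psi_{s}\Psi_{s}^{-1}\eta$, hence $\eta(t)=\Psi_{s}(t)\eta(0)$, periodic iff $\Psi_{s}(1)\eta(0)=\eta(0)$), and the continuity-plus-integrality argument for $\alpha^{\Phi}$, together with the observation that the two eigenvalues sharing the extremal winding (Lemma \ref{l:operator-spectrum}(3)) are barred from crossing $0$, does pin down $p$ along the homotopy. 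The Maslov axiom also goes through as you describe once one verifies that $\Phi\mapsto\Phi g^{-1}$ turns $\Psi$ into $g\Psi$ and shifts all eigenvector windings by $m(g)$ while fixing the spectrum.

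Two things are genuinely incomplete. First, and most importantly, the inverse axiom is left entirely as a sketch (``a direct computation shows...''); this is not merely sign bookkeeping. The operator $\A_{\Psi^{-1}}$ has symbol $S_{\Psi^{-1}} = J_{0}\Psi^{-1}\dot\Psi$, which is \emph{not} a conjugate of $-S_{\Psi}$ in any obvious way, and relating the two spectra (with their windings) requires a concrete intertwining or a reduction via homotopy invariance to normal-form paths --- the route HWZ actually take. Without this, the argument cannot close, and you should not expect the inverse axiom to fall out of ``the same kind of computation.'' Second, you implicitly extend $\tilde\mu$ from paths arising from Reeb flows to arbitrary elements of $\Sigma(1)$: for this you need $\Psi$ at least $C^{1}$ to form $S(t)$, followed by a density/homotopy argument to treat merely continuous paths. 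This is routine but should be flagged, since $\Sigma(1)$ as defined in \eqref{e:cz-symplectic-paths} consists of continuous paths.
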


Finally we close this section by stating a formula for how the Conley--Zehnder index of
iterates of an orbit.  This lemma follow from facts about $Sp(1)$ which can be found
in e.g.\ \cite[Appendix 8.1]{hwz:foliations} or \cite[Section 1.2]{abbondandolo}.

\begin{lemma}\label{l:cz-iteration}
Let $\gamma$ be a periodic orbit of the Reeb vector field $\X$
and let $\Phi:S^{1}\times\R^{2}\to\gamma^{*}\xi$ be a symplectic trivialization.
Assume that for each positive integer $m$, the periodic orbit
$\gamma^{m}$ defined by $\gamma^{m}(t)=\gamma(mt)$
is nondegenerate.
Then:
\begin{itemize}
\item If $\gamma$ is a hyperbolic orbit
\[
\mu^{\Phi}(\gamma^{m})=m\mu^{\Phi}(\gamma).
\]
\item If $\gamma$ is an elliptic orbit, there exists an irrational number $\theta$ so that
\[
\mu^{\Phi}(\gamma^{m})=2\fl{m\theta}+1.
\]
\end{itemize}
\end{lemma}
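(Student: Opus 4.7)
The plan is to reduce Lemma \ref{l:cz-iteration} to facts about iteration of paths in $Sp(1)$ of the kind catalogued in \cite[Appendix 8.1]{hwz:foliations} or \cite[Section 1.2]{abbondandolo}, and then invoke those facts. The reduction proceeds in two preparatory moves followed by a case analysis.

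I first unwind definition \eqref{e:cz-orbit-definition}. Setting $\Psi(t) := \Phi^{-1}(t)d\psi_{tT}(\gamma(0))\Phi(0) \in \Sigma(1)$, we have $\mu^{\Phi}(\gamma) = \pathcz(\Psi)$. The iterate $\gamma^m(t) = \gamma(mt)$ has period $mT$, and the induced trivialization of $(\gamma^m)^*\xi$ is the pullback $\Phi^m(t) := \Phi(mt \bmod 1)$. Using the group property $d\psi_{mtT}(\gamma(0)) = d\psi_{(mt-k)T}(\gamma(0))\circ d\psi_{kT}(\gamma(0))$ (valid for $t\in[k/m,(k+1)/m]$ because $\psi_{kT}(\gamma(0)) = \gamma(0)$) together with the identity $[d\psi_T(\gamma(0))]^k = \Phi(0)\Psi(1)^k\Phi^{-1}(0)$ (which follows from $\Phi(1)=\Phi(0)$), a short computation identifies the path associated to $(\gamma^m,\Phi^m)$ as the $m$-fold iteration
\[
\Psi^{(m)}(t) = \Psi(mt-k)\cdot\Psi(1)^k, \quad t\in[k/m,(k+1)/m], \quad k=0,1,\ldots,m-1,
\]
so $\mu^{\Phi}(\gamma^m) = \pathcz(\Psi^{(m)})$. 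Next, any $\Psi\in\Sigma(1)$ is $\Sigma(1)$-homotopic to $g_j\cdot\Psi_{\mathrm{std}}$ for some $j\in\Z$ and some convenient model $\Psi_{\mathrm{std}}$ depending only on the conjugacy type of $\Psi(1)$, with $g_j$ a loop at $I$ of Maslov index $j$. A direct calculation from the iteration formula gives $(g_j\Psi_{\mathrm{std}})^{(m)} = h_{mj}\cdot\Psi_{\mathrm{std}}^{(m)}$, where $h_{mj}(t):=g_j(mt-k)$ is a well-defined continuous loop at $I$ (using $g_j(0)=g_j(1)=I$) of Maslov index $mj$. Applying Maslov compatibility twice yields
\[
\pathcz(\Psi) = 2j + \pathcz(\Psi_{\mathrm{std}}) \quad\text{and}\quad \pathcz(\Psi^{(m)}) = 2mj + \pathcz(\Psi_{\mathrm{std}}^{(m)}),
\]
so it suffices to verify the formula on one representative in each conjugacy class.

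For positive hyperbolic $\Psi(1)$, take $\Psi_{\mathrm{std}}(t) = \mathrm{diag}(\lambda^t,\lambda^{-t})$; the iteration formula gives $\Psi_{\mathrm{std}}^{(m)}(t) = \mathrm{diag}(\lambda^{mt},\lambda^{-mt})$, again standard, with $\pathcz = 0$. For negative hyperbolic $\Psi(1)$, take $\Psi_{\mathrm{std}}(t) = R(\pi t)\cdot \mathrm{diag}(\lambda^t,\lambda^{-t})$ (with $R(\vartheta)$ counterclockwise rotation by $\vartheta$); a direct computation yields $\pathcz(\Psi_{\mathrm{std}}) = 1$ and, tracking the half-turn contributed by the rotation factor on each of the $m$ subintervals, $\pathcz(\Psi_{\mathrm{std}}^{(m)}) = m$. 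Substitution into the displayed formulas gives $\pathcz(\Psi^{(m)}) = m\pathcz(\Psi)$ in both hyperbolic cases. For elliptic $\Psi(1)$, nondegeneracy of every $\gamma^m$ forces $\Psi(1)$ conjugate to $R(2\pi\theta)$ with $\theta$ irrational; the model $\Psi_{\mathrm{std}}(t) = R(2\pi\theta t)$ iterates---since rotations commute---to $\Psi_{\mathrm{std}}^{(m)}(t) = R(2\pi m\theta t)$, and Maslov compatibility applied to the decomposition $R(2\pi m\theta t) = R(2\pi t)^{\fl{m\theta}}\cdot R(2\pi(m\theta-\fl{m\theta})t)$ (noting the first factor is a loop at $I$ of Maslov index $\fl{m\theta}$ and the second has Conley--Zehnder index $1$) gives $\pathcz(\Psi_{\mathrm{std}}^{(m)}) = 2\fl{m\theta}+1$. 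Hence $\pathcz(\Psi^{(m)}) = 2mj + 2\fl{m\theta}+1 = 2\fl{m(\theta+j)}+1$, so $\theta':=\theta+j$ is the irrational number called for in the statement.

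The main obstacle is the negative hyperbolic computation $\pathcz(\Psi_{\mathrm{std}}^{(m)}) = m$: the product form $R(\pi t)\cdot\mathrm{diag}(\ldots)$ is not preserved by $\Psi\mapsto\Psi^{(m)}$, and the value must be verified either by a crossing-form analysis tracking $1\in\mathrm{spec}(\Psi_{\mathrm{std}}^{(m)}(t))$ across the concatenation points, or by a $\Sigma(1)$-homotopy that isolates the rotation factor. This is the $Sp(1)$ content imported from \cite{hwz:foliations,abbondandolo}.
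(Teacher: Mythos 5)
The paper offers no proof for this lemma; it refers the reader to \cite[Appendix 8.1]{hwz:foliations} and \cite[Section 1.2]{abbondandolo} for the relevant $Sp(1)$ facts.  Your reduction to a normal-form computation in $Sp(1)$ is the standard route those sources take, and your iteration formula $\Psi^{(m)}(t) = \Psi(mt-k)\Psi(1)^k$, the identity $(g_j\Psi_{\mathrm{std}})^{(m)} = h_{mj}\Psi_{\mathrm{std}}^{(m)}$ with $m(h_{mj})=mj$, and the model computations in all three cases are correct.  There is, however, one step that needs to be tightened.  You use the implication ``$\Psi \simeq g_j\Psi_{\mathrm{std}}$ in $\Sigma(1)$ $\Rightarrow$ $\pathcz(\Psi^{(m)}) = \pathcz\bigl((g_j\Psi_{\mathrm{std}})^{(m)}\bigr)$,'' but this is not automatic: an arbitrary $\Sigma(1)$-homotopy $\Psi_s$ can move the endpoint $\Psi_s(1)$ within $\{\det(A-I)\neq 0\}$ in a way that sends some power $\Psi_s(1)^m$ into the Maslov cycle (for instance, through an elliptic matrix whose eigenvalues are $m$-th roots of unity), so $\Psi_s^{(m)}$ need not stay in $\Sigma(1)$ and homotopy invariance of $\pathcz$ cannot be invoked.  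The fix is cheap and should be stated: first conjugate $\Psi$ by a path $C_s$ in $Sp(1)$ so that its endpoint becomes exactly $A_0:=\Psi_{\mathrm{std}}(1)$; conjugation preserves the spectrum of every power, so the homotopy of iterates stays in $\Sigma(1)$.  Then homotope rel endpoints to $g_j\Psi_{\mathrm{std}}$ (possible because the two paths have the same endpoint and the same Conley--Zehnder index), and this homotopy lifts to a homotopy of iterates with fixed endpoint $A_0^m\in Sp(1)^*$.

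A second, smaller point: the ``obstacle'' you flag in the negative-hyperbolic case is not actually there.  Since $\Psi_{\mathrm{std}}(1) = -\mathrm{diag}(\lambda,\lambda^{-1})$ and $-I$ is central in $Sp(1)$, plugging into the iteration formula gives
\[
\Psi_{\mathrm{std}}^{(m)}(t) = R(\pi m t)\,\mathrm{diag}(\lambda^{mt},\lambda^{-mt}) = \Psi_{\mathrm{std}}(mt),
\]
so the product form is preserved exactly.  From this closed form, $\pathcz(\Psi_{\mathrm{std}}^{(m)}) = m$ follows directly from Maslov compatibility: peel off $R(\pi m t)$ as a loop of Maslov index $m/2$ when $m$ is even, or $R(\pi(m-1)t)$ as a loop of Maslov index $(m-1)/2$ when $m$ is odd (leaving the base standard negative-hyperbolic path, of index $1$).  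No crossing-form analysis at the concatenation points is needed.
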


\section{Background on pseudoholomorphic curves}\label{s:curves}

In this section we review some basic facts about punctured pseudoholomorphic curves.
First, in Section \ref{s:asymptotics} we review the basic set-up and
review some facts about the asymptotic behavior of finite-energy curves
from \cite{hofer1993,hwz:prop1,mora,siefring2008}.
Next, in Section \ref{s:compactness}, we recall the compactification of the space
of finite-energy curves \cite{behwz}, focusing on
a result from \cite{wendl2010-compactness}
concerning
the extra properties
that can be
proved about the compactification when restricting attention to sequences of
curves which project to embeddings in the $3$-manifold.
In Section \ref{s:intersections} we recall facts about the intersection theory of
finite-energy curves from \cite{siefring2011}.
Of particular importance here is a slight generalization of a result from \cite{siefring2011}
concerning curves which approach an even orbit in the same direction.
Finally, in Section \ref{s:fredholm},
we recall facts about the Fredholm theory of embedded finite-energy curves from
\cite{hwz:prop3}.

\subsection{Basic set-up and asymptotic behavior}\label{s:asymptotics}
Let $(M, \lambda)$ be $3$-manifold equipped with a nondegenerate contact from,
and recall from the previous section that we defined $\J(M, \lambda)$ to be
the collection of complex structures on the contact structure $\xi$ compatible with $d\lambda$.
Given a $J\in\J(M, \lambda)$ we can extend it
in the usual manner
to an $\R$-invariant almost complex structure $\tl J$ on
$\R\times M$ by requiring
\begin{equation}\label{e:tilde-J-def}
\tl J\partial_{a}=\X \qquad\text{ and }\qquad \tl J|_{\pi_{M}^{*}\xi}=\pi_{M}^{*}J
\end{equation}
where $a$ is the coordinate in $\R$, and
$\pi_{M}:\R\times M\to M$ is the canonical projection onto the second factor.
We consider quintuples
$(\Sigma, j, \Gamma, a, u)$ where
\begin{itemize}
\item $(\Sigma, j)$ is a compact Riemann surface,
\item $\Gamma\subset\Sigma$ is a finite set called the set of \emph{punctures}, and
\item $\tl u:=(a, u):\Sigma\setminus\Gamma\to\R\times M$ is a smooth map.
\end{itemize}
We define the energy of such a quintuple by
\begin{equation}\label{e:energy-def}
E(\tl u)=\sup_{\varphi\in\Xi}\int_{\Sigma\setminus\Gamma}\tl u^{*}d(\varphi\lambda)
\end{equation}
where $\Xi$ is defined by
\[
\Xi=\br{\varphi\in C^{\infty}(\R, [0, 1])\,|\, \varphi'(x)\ge 0}.
\]
The data $(\Sigma, j, \Gamma, a, u)$ is said to be a \emph{finite-energy pseudoholomorphic map}
if the map $\tl u$ has finite energy and is $\tl J$-holomorphic, that is, if
\begin{equation}\label{e:tilde-j-hol-eqn}
\tl J\circ d\tl u=d\tl u\circ j
\end{equation}
and
\[
E(\tl u)<\infty.
\]
A \emph{finite-energy pseudoholomorphic curve} is an equivalence class
$C=[\Sigma, j, \Gamma, a, u]$
of finite-energy pseudoholomorphic maps $(\Sigma, j, \Gamma, a, u)$ under the equivalence relation
of holomorphic reparametrization of the domain.
For a given $3$-manifold $M$ equipped with a nondegenerate contact form
$\lambda$, and compatible $J\in\J(M, \lambda)$, we will denote the moduli space
of finite-energy $\tl J$-holomorphic curves by $\M(\lambda, J)$.

If $(\Sigma, j, \Gamma, \tl u=(a, u))$ is a $\tl J$-holomorphic map, then we can use the  
$\R$-invariance of $\tl J$ defined by \eqref{e:tilde-J-def} to conclude
that the map $\tl u_{c}:=(a+c, u)$ obtained by translating
the $\R$-coordinate by a constant is also a $\tl J$-holomorphic map, and it is moreover
easily shown that $E(\tl u)=E(\tl u_{c})$.
Thus there is an $\R$-action
on the space of finite-energy
$\tl J$-holomorphic curves given by translating the $\R$-coordinate by a constant
and, in fact, the $M$-component $u$ of $\tl u=(a, u)$ determines the $\R$-component $a$ up
to a constant.
To see this, we define
$\pi_{\xi}:TM=\R\X\oplus\xi\to\xi$ to be the projection of $TM$ onto $\xi$ determined by the splitting
\eqref{e:splitting}.
It then follows from the definition of $\tl J$ that the equation \eqref{e:tilde-j-hol-eqn}
is equivalent to the pair
of equations
\begin{subequations}\label{e:J-hol-eqn-projected}
\begin{align}
u^{*}\lambda\circ j&=da \label{e:J-hol-eqn-R-comp}  \\
J\circ \pi_{\xi}\circ du&=\pi_{\xi}\circ du\circ j  \label{e:J-hol-eqn-M-comp}
\end{align}
\end{subequations}
and from the first of these equations it's clear that the map $u$ determines
$da$, and thus $a$ up to a constant.
We will define a \emph{projected (finite-energy) pseudoholomorphic map} to be
a quintuple $(\Sigma, j, \Gamma, da, u)$ 
satisfying equations \eqref{e:J-hol-eqn-projected}
for which the associated map $\tl u=(a, u)$ to $\R\times M$ has finite energy.
A  \emph{projected (finite-energy) pseudoholomorphic curve} is then an equivalence
class $C=[\Sigma, j, \Gamma, da, u]$
of projected pseudoholomorphic maps
under the equivalence relation of holomorphic reparametrization of the domain.
For a given $3$-manifold $M$ equipped with a nondegenerate contact form
$\lambda$, and compatible $J\in\J(M, \lambda)$, we will denote the moduli space
of projected, finite-energy $\tl J$-holomorphic curves by $\M(\lambda, J)/\R$.

In his work on the Weinstein conjecture \cite{hofer1993}, Hofer showed that 
near the nonremovable punctures of a finite-energy pseudoholomorphic curves, there are
sequences of loops whose images under $u$ converge to periodic orbits of the Reeb vector field.
In the case that the periodic orbit of the Reeb vector field is nondegenerate, then more can be said
about this convergence.
Suppose that $\lambda$ is a nondegenerate contact form and
$(\Sigma, j, \Gamma, a, u)$ is a finite-energy pseudoholomorphic map.
Then, for each puncture $z_{0}\in\Gamma$ there are three possibilities:
\begin{enumerate}
\item Removable punctures: The map $\tl u=(a, u)$ is bounded near $z_{0}$, in which case
$\tl u$ admits a smooth, $\tl J$-holomorphic extension over the puncture.
\item Positive punctures: The function $a$ is bounded from below near $z_{0}$ but not from above.
In this case there exists a nondegenerate periodic orbit $\gamma$ with period $T\le E(\tl u)$
and a holomorphic coordinate system
\[
\phi:[R, \infty)\times S^{1}\subset \R\times S^{1} \approx \C/i\Z\to\Sigma\setminus\br{z_{0}}
\]
on a punctured neighborhood of $z_{0}$
so that 
the maps
$\tl v_{c}:[R, \infty)\times S^{1}\to\R\times M$ defined by
\[
\tl v_{c}(s, t)=(a(s+c/T, t)-c, u(s+c/T, t))
\]
converge in $C^{\infty}([R, \infty)\times S^{1}, \R\times M)$ as $c\to\infty$ to the map
\[
(s, t)\mapsto (Ts, \gamma(t)).
\]
\item Negative punctures: The function $a$ is bounded from above near $z_{0}$ but not from below.
In this case there exists a nondegenerate periodic orbit $\gamma$ with period $T\le E(\tl u)$
and a holomorphic coordinate system
\[
\phi:(-\infty, -R]\times S^{1}\subset \R\times S^{1} \approx \C/i\Z\to\Sigma\setminus\br{z_{0}}
\]
on a punctured neighborhood of $z_{0}$
so that 
the maps
$\tl v_{c}:(-\infty, -R]\times S^{1}\to\R\times M$ defined by
\[
\tl v_{c}(s, t)=(a(s-c/T, t)+c, u(s-c/T, t))
\]
converge in $C^{\infty}((-\infty, -R]\times S^{1}, \R\times M)$ as $c\to\infty$ to the map
\[
(s, t)\mapsto (Ts, \gamma(t)).
\]
\end{enumerate}
We will henceforth assume that all removable punctures have been removed, and thus 
that all punctures are either positive or negative punctures at which the curves in question
are asymptotic to cylinders over periodic orbits.

We will need more precise information about the asymptotic behavior of curves near a puncture,
in particular that convergence is exponential in nature and
the finer behavior of the map (and differences between two maps)
can be described
in terms of eigenvectors of the asymptotic operator associated to the orbit.
Before stating the appropriate result, we first establish some language.
Let $(\Sigma, j, \Gamma, a, u)$ be a pseudoholomorphic map
and assume that $\tl u=(a, u)$ has a positive puncture at $z_{0}\in \Gamma$
where $\tl u$ is asymptotic to a cylinder over the $T$-periodic orbit $\gamma$.
A map $U:[R, \infty)\times S^{1}\to\gamma^{*}\xi$ with $U(s, t)\in\xi_{\gamma(t)}$ for all
$(s, t)\in [R, \infty)\times S^{1}$ is called an
\emph{asymptotic representative of $\tl u$ near $z_{0}$} if there exists
a map $\phi:[R, \infty)\times S^{1}\to\Sigma\setminus\br{z_{0}}$ with
$\lim_{s\to\infty}\phi(s, t)=z_{0}$ so that
\[
\tl u\circ \phi(s, t)=\bp{Ts, \exp_{\gamma(t)}U(s, t)}
\]
where $\exp$ is the exponential map of some metric on $M$.\footnote{
In \cite{siefring2008}
a specific metric is used in the definition of asymptotic representative
but that specific choice of metric is not essential for
Theorem \ref{t:relative-asymptotics} to remain true.
}
Asymptotic representatives at negative punctures are defined
similarly but as maps from negative half-cylinders
of the form $(-\infty, -R]\times S^{1}$.
The following theorem, concerning the asymptotic behavior of
differences of asymptotic representatives,
is proved in \cite{siefring2008}.

\begin{theorem}\label{t:relative-asymptotics}
Let $U$, $V:[R, \infty)\times S^{1}\to\gamma^{*}\xi$
be smooth maps with
$U(s, t)$, $V(s, t)\in\xi_{\gamma(t)}$
representing positive pseudoholomorphic half-cylinders
(or, respectively, 
let $U$, $V:(-\infty, -R]\times S^{1}\to\gamma^{*}\xi$
be smooth maps with
$U(s, t)$, $V(s, t)\in\xi_{\gamma(t)}$
representing negative pseudoholomorphic half-cylinders).
Then either $U-V$ vanishes identically or
\[
U(s, t)-V(s, t)=e^{\sigma s}[e(t)+r(s, t)]
\]
where
\begin{itemize}
\item $\sigma$ is a negative (resp.\ positive) eigenvalue of 
the asymptotic operator $\A_{\gamma, J}$ (defined in \eqref{e:asymptotic-operator-definition}),
\item $e\in\ker(\A_{\gamma, J}-\sigma)\setminus\br{0}$ is an eigenvector of $\A_{\gamma, J}$ with eigenvalue $\sigma$, and
\item $\nabla_{s}^{i}\nabla_{t}^{j}r(s, t)\to 0$ 
as $s\to\infty$ (resp.\ $s\to-\infty$) exponentially for all $(i, j)\in\N^{2}$.
\end{itemize}
\end{theorem}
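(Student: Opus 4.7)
The plan is to prove this by linearizing the nonlinear Cauchy--Riemann equation along $V$ and analyzing the resulting linear equation using the spectral theory of $\A_{\gamma, J}$. I focus on the positive puncture case; the negative case follows by the obvious symmetry.

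First, fix a unitary trivialization $\Phi:S^{1}\times\R^{2}\to\gamma^{*}\xi$ compatible with $J$, in which $\A_{\gamma, J}$ is represented by $-J_{0}\partial_{t}-S_{\infty}(t)$ for a smooth loop of symmetric matrices $S_{\infty}(t)$. Writing $U=\Phi\tl U$ and $V=\Phi\tl V$, the $\tl J$-holomorphic condition on the half-cylinders represented by $U$ and $V$ translates into a nonlinear PDE of the form
\[
\partial_{s}\tl U+J_{0}\partial_{t}\tl U+S_{\infty}(t)\tl U+N(t,\tl U,\nabla\tl U)=0,
\]
with $N$ smooth and vanishing appropriately at $(\tl U,\nabla\tl U)=0$, together with an analogous equation for $\tl V$. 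By the exponential decay estimates of \cite{hofer1993, hwz:prop1, mora}, both $\tl U$, $\tl V$ and all their derivatives decay exponentially as $s\to\infty$.

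Set $W=\tl U-\tl V$. Applying the fundamental theorem of calculus to $N$ along the segment from $\tl V$ to $\tl U$ in parameter space yields a linear PDE
\[
\partial_{s}W+J_{0}\partial_{t}W+S(s,t)W+C(s,t)\cdot\nabla W=0,
\]
whose coefficients satisfy $S(s,t)\to S_{\infty}(t)$ and $C(s,t)\to 0$ in $C^{\infty}$ with exponential rates as $s\to\infty$. Reformulating this as a Banach-space ODE $\partial_{s}W=A(s)W$ on $L^{2}(S^{1},\R^{2})$, with $A(s):H^{1}\to L^{2}$ converging to $\A_{\gamma, J}$, I would expand $W(s,\cdot)$ in the orthonormal basis of eigenvectors of $\A_{\gamma, J}$ provided by Lemma \ref{l:operator-spectrum}; each modal coefficient then satisfies a scalar ODE whose leading behavior is $e^{\lambda s}$, perturbed by exponentially small coupling terms coming from the difference $A(s)-\A_{\gamma, J}$.

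The crux of the argument is a unique continuation principle at infinity: using a Carleman-type estimate for the $\dbar$-type operator $\partial_{s}+J_{0}\partial_{t}+S(s,t)$, one shows that if $W$ decays in $L^{2}$ on cylinders $[s,s+1]\times S^{1}$ faster than every exponential, then $W\equiv 0$. Under the alternative hypothesis $W\not\equiv 0$, the sharp $L^{2}$-decay rate $\sigma:=\limsup_{s\to\infty}s^{-1}\log\|W(s,\cdot)\|_{L^{2}(S^{1})}$ must then coincide with a negative eigenvalue of $\A_{\gamma, J}$, and the limit $e(t):=\lim_{s\to\infty}e^{-\sigma s}W(s,t)$ exists in $L^{2}$ and is a nonzero eigenvector with eigenvalue $\sigma$ (obtained by first peeling off any modes with strictly smaller decay rate and iterating). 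Elliptic regularity bootstrapping applied to the linear PDE satisfied by $W$ finally upgrades the $L^{2}$-convergence of $r(s,t):=e^{-\sigma s}W(s,t)-e(t)$ to $C^{\infty}$-convergence with all derivatives decaying exponentially.

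The main obstacle will be the unique continuation principle at infinity. Energy estimates alone produce only \emph{some} exponential decay rate for $W$, whereas identifying that rate with an actual eigenvalue of $\A_{\gamma, J}$ and extracting a nonzero eigenvector limit requires ruling out super-exponential decay via a Carleman inequality sensitive to the spectral gaps of the asymptotic operator. This is the fundamentally new analytic input compared to the single-curve asymptotic results of \cite{hofer1993, hwz:prop1}, and is the heart of the proof of the corresponding theorem in \cite{siefring2008}.
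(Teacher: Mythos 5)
The paper does not itself prove this theorem: it is stated with an explicit citation to \cite{siefring2008}, which also builds on the single-cylinder asymptotics from \cite{hwz:prop1,mora}. So what follows compares your blind proposal against the proof in that reference, not against anything written out in the present paper.

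Your high-level outline is the right one and matches the cited source in its essential structure: write both half-cylinders in a unitary trivialization, linearize the nonlinear Cauchy--Riemann equation across the segment from $\tl V$ to $\tl U$, obtain a linear equation $\partial_s W = A(s)W$ with $A(s)\to\A_{\gamma,J}$ exponentially, and then show that a nonvanishing solution must decay at a rate given exactly by an eigenvalue while its normalized profile converges to a corresponding eigenvector, with $C^\infty$ convergence upgraded by elliptic bootstrapping. One thing you gloss over but should flag explicitly is the reparametrization needed so that $U$ and $V$ are defined over the same graph coordinates; the definition of asymptotic representatives hides a choice of embedding $\phi$, and comparing two half-cylinders requires reconciling these.

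The one genuine methodological departure is your identification of the ``heart of the proof'' with a Carleman-type unique continuation at infinity. The argument in \cite{siefring2008} (following \cite{hwz:prop1,mora}) does not run through Carleman inequalities. Instead it studies the quotient $\alpha(s)=\langle A(s)W(s),W(s)\rangle/\|W(s)\|^2$ and the quantity $\|W(s)\|_{L^2}$ directly, proving via a differential inequality (a log-convexity/Gronwall-type argument exploiting the self-adjointness and spectral gap of $\A_{\gamma,J}$) that $\alpha(s)$ converges to an eigenvalue $\sigma$ and that $W(s)/\|W(s)\|$ converges in $L^2$ to an eigenvector; the decay rate and the dichotomy ``$W\equiv 0$ or exact exponential decay'' fall out of this without Carleman machinery. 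Your approach (Carleman weight $e^{\tau s}$ with $\tau$ ranging over spectral gaps, ruling out super-exponential decay) is in principle a viable alternative route, but it is not what the cited source does, and if you pursue it you would need to prove the weighted a priori estimate in the spectral gaps and show it survives the exponentially-small coupling term $C(s,t)\cdot\nabla W$; the differential-inequality argument handles this perturbation more transparently. In short: same skeleton, different spine for the unique-continuation step.
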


The special case of this theorem
where $V\equiv 0$ recovers the asymptotic results for single half-cylinders from
\cite{hwz:prop1, mora}.
As is shown in \cite{hwz:prop2}, the asymptotic formula for a single half-cylinder allows one
to assign a local invariant to each puncture, known as the asymptotic winding.
Indeed, as a result of this formula,
the $M$-portion $u$ of a given pseudoholomorphic map
$(\Sigma, j, \Gamma, a, u)$
can be written near some given puncture $z_{0}\in\Gamma$
\[
u\circ \psi_{z_{0}}(s, t)=\exp_{\gamma(t)}U_{z_{0}}(s, t)
\]
with the asymptotic representative $U$ satisfying a formula of the form
\[
U(s, t)=e^{\sigma s}[e(t)+r(s, t)]
\]
with $\sigma$, $e$, and $r$ satisfying the conditions listed above.
Since eigenvectors of the asymptotic operator $\A_{\gamma, J}$ are nowhere vanishing,
the fact that $r$ converges to $0$ as $|s|\to\infty$ implies that
$U(s, t)$ is nonvanishing for all sufficiently large $|s|$, or equivalently that
in some neighborhood of the puncture,
$u$ does not intersect its asymptotic limit
$\gamma$.
Choosing a trivialization of $\gamma^{*}\xi$, we define the asymptotic winding of $\tl u$
at $z_{0}$ by
\[
\winfty^{\Phi}(u; z_{0})=\wind(\Phi^{-1}U_{z_{0}}(s, \cdot))
\]
with the right-hand side being well defined and independent of all sufficiently large $|s|$.
Using the asymptotic results of
\cite{hwz:prop1} and the characterization of the Conley--Zehnder index in terms
of the spectrum of $\A_{\gamma, J}$ from \cite{hwz:prop2}
(reviewed as Theorem \ref{t:spectral-conley-zehnder} above),
the following inequality for the asymptotic winding is deduced in \cite{hwz:prop2}.

\begin{theorem}\label{t:wind-infinity-bound}
Let $C=[\Sigma, j, \Gamma, a, u]\in\M(\lambda, J)$ and let $z\in\Gamma$.
Then
\begin{equation}\label{e:wind-infinity-inequality}
\pm_{z}\winfty^{\Phi}(u; z)\le \fl{\pm_{z}\mu^{\Phi}(\tl u; z)/2}
\end{equation}
where $\pm_{z}$ is the sign of the puncture $z$.
\end{theorem}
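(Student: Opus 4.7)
The plan is to derive the inequality directly from the asymptotic formula of Theorem \ref{t:relative-asymptotics} applied with $V\equiv 0$, which expresses the asymptotic representative $U$ of $\tl u$ near $z$ in terms of a single eigenvector of the asymptotic operator $\A_{\gamma,J}$ of the asymptotic limit $\gamma$. Once $\winfty^{\Phi}(u;z)$ is identified as the winding of an eigenvector of $\A_{\gamma,J}$, the conclusion will follow by combining the monotonicity of winding in the eigenvalue (Lemma \ref{l:operator-spectrum}(2)--(3)) with the spectral formula $\mu^{\Phi}(\gamma)=2\alpha^{\Phi}(\gamma)+p(\gamma)$ of Theorem \ref{t:spectral-conley-zehnder}.

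More precisely, I would choose holomorphic cylindrical coordinates near $z$ so that $\tl u$ has an asymptotic representative $U$ with $U(s,t)\in\xi_{\gamma(t)}$, and then apply Theorem \ref{t:relative-asymptotics} with $V\equiv 0$ to write
\[
U(s,t)=e^{\sigma s}\bigl[e(t)+r(s,t)\bigr],
\]
where $\sigma$ is an eigenvalue of $\A_{\gamma,J}$, $e\in\ker(\A_{\gamma,J}-\sigma)$ is an eigenvector, and $r\to 0$ uniformly. Crucially, the sign of $\sigma$ is opposite to the sign of the puncture, i.e.\ $\sigma<0$ at positive punctures and $\sigma>0$ at negative punctures. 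Since $e$ is nowhere vanishing and $r$ tends to $0$, for $|s|$ large the loops $t\mapsto U(s,t)$ are homotopic as maps into $\gamma^{*}\xi\setminus\{0\}$ to $e$, so
\[
\winfty^{\Phi}(u;z)=\wind\bigl(\Phi^{-1}e\bigr)=w(\sigma,[\Phi]),
\]
using the notation of Lemma \ref{l:operator-spectrum}.

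At a positive puncture, $\sigma<0$ implies $\sigma\le\lne(\gamma)$, so the monotonicity of winding gives $\winfty^{\Phi}(u;z)=w(\sigma,[\Phi])\le w(\lne(\gamma),[\Phi])=\alpha^{\Phi}(\gamma)$, and since $\mu^{\Phi}(\gamma)=2\alpha^{\Phi}(\gamma)+p(\gamma)$ with $p(\gamma)\in\{0,1\}$ we conclude $\winfty^{\Phi}(u;z)\le \lfloor \mu^{\Phi}(\gamma)/2\rfloor$. At a negative puncture, $\sigma>0$, so $\sigma$ is at least the smallest positive eigenvalue $\sigma_{\min}^{+}$, and $\winfty^{\Phi}(u;z)\ge w(\sigma_{\min}^{+},[\Phi])$. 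To finish I need to identify $w(\sigma_{\min}^{+},[\Phi])$ in terms of $\alpha^{\Phi}(\gamma)$ and $p(\gamma)$: if $p(\gamma)=0$, by definition there is a positive eigenvalue with winding equal to $\alpha^{\Phi}(\gamma)$, and monotonicity forces $w(\sigma_{\min}^{+},[\Phi])=\alpha^{\Phi}(\gamma)=\lceil\mu^{\Phi}(\gamma)/2\rceil$; if $p(\gamma)=1$, no positive eigenvalue has winding $\alpha^{\Phi}(\gamma)$, and Lemma \ref{l:operator-spectrum}(3) (which says that each integer winding is attained by a $2$-dimensional eigenspace when it is attained at all) combined with monotonicity forces $w(\sigma_{\min}^{+},[\Phi])\ge\alpha^{\Phi}(\gamma)+1=\lceil\mu^{\Phi}(\gamma)/2\rceil$. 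In either case, $\winfty^{\Phi}(u;z)\ge\lceil\mu^{\Phi}(\gamma)/2\rceil=-\lfloor -\mu^{\Phi}(\gamma)/2\rfloor$, which is precisely the inequality at negative punctures.

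The only subtle point, and in my view the main obstacle, is the identification of the winding of the eigenvectors of $\sigma_{\min}^{+}$ with $\alpha^{\Phi}(\gamma)$ or $\alpha^{\Phi}(\gamma)+1$ depending on parity; all the rest is bookkeeping with signs and monotonicity, once Theorem \ref{t:relative-asymptotics} has been invoked to replace a geometric winding number by a spectral quantity.
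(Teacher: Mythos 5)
Your proof is correct and reconstructs the argument the paper only cites from \cite{hwz:prop2}: the asymptotic formula of Theorem~\ref{t:relative-asymptotics} (with $V\equiv 0$) identifies $\winfty^{\Phi}(u;z)$ with the winding of an eigenvector, and the non-strict monotonicity of Lemma~\ref{l:operator-spectrum}(2) together with the spectral formula of Theorem~\ref{t:spectral-conley-zehnder} yields the bound. One small remark: in the $p(\gamma)=1$ case you do not actually need Lemma~\ref{l:operator-spectrum}(3) — monotonicity gives $w(\sigma_{\min}^{+},[\Phi])\ge\alpha^{\Phi}(\gamma)$, and $p(\gamma)=1$ rules out equality, which already forces $w(\sigma_{\min}^{+},[\Phi])\ge\alpha^{\Phi}(\gamma)+1$ since windings are integers.
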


\subsection{Compactness}\label{s:compactness}
It is shown in \cite{behwz} that the space of punctured
pseudoholomorphic curves
with energy below any given value
can be compactified
by including more general objects, known as pseudoholomorphic buildings.
In \cite{wendl2010-compactness}, it's shown
that the space of curves which project to embeddings in the $3$-manifold $M$
can be compactified by considering only those buildings
whose components are 
either
pairwise disjoint or identical when projected to the $3$-manifold $M$ and are all
either trivial cylinders or project to embeddings in $M$.  We recall the result here.

We start with some definitions.
First, for $i\in \br{1, \dots, k}$, consider a collection of (possibly disconnected)
pseudoholomorphic curves
$C_{i}=[\Sigma_{i}, j_{i}, \Gamma_{i}, a_{i}, u_{i}]\in\M(\lambda, J)$
and write $\Gamma_{i}=\Gamma^{+}_{i}\cup\Gamma^{-}_{i}$
to indicate the decomposition into positive and negative punctures.
Assume there are bijections
$I_{i}:\Gamma^{-}_{i}\to\Gamma^{+}_{i+1}$ between the negative punctures
of one curve and the positive punctures of the next in the sequence.
We say that the data $(C_{1}, \dots, C_{k}; I_{1}, \dots, I_{k-1})$
form a \emph{height-$k$ non-nodal pseudoholomorphic building}
when pairs of punctures identified via the bijections $I_{i}$ have the same asymptotic limit.
We will denote such a height-$k$ pseudoholomorphic building by
\[
C_{1}\odot_{I_{1}} \dots \odot _{I_{k-1}}C_{k}
\]
or simply
\[
C_{1}\odot\dots \odot C_{k}
\]
when the specific bijections are not important, and we will refer to the curves
$C_{i}$ as the \emph{levels} of the building.
Given a height-$k$ pseudoholomorphic building $C_{1}\odot_{I_{1}} \dots \odot_{I_{k-1}} C_{k}$
with $C_{i}=[\Sigma_{i}, j_{i}, \Gamma_{i}, a_{i}, u_{i}]$,
we can circle-compactify each of the domain surfaces $\Sigma_{i}\setminus\Gamma_{i}$
at the punctures and glue these compactified surfaces together 
along circles corresponding to punctures identified via the bijections $I_{i}$ to form a 
a topological surface with boundary.
Due to the asymptotic behavior of the curves, this identification can be done in
such a way that the maps $u_{i}$ extend to the circle-compactifications
and glue together to give 
a continuous map from the glued surface into $M$.
In the event that any of the levels are asymptotic to multiply covered periodic orbits,
the operation of gluing the circle-compactified surfaces is only
uniquely determined when further
choices, namely that of so-called asymptotic markers, are made.
The specifics won't be important here,
so we won't address this issue any further.

The structure of a non-nodal pseudoholomorphic building
$C_{1}\odot_{I_{1}} \dots \odot_{I_{k-1}} C_{k}$
can be encoded in a graph
with a vertex for each smooth connected component of the domains
of the levels $C_{i}$
and an edge for
each pair of punctures identified via the bijections $I_{i}$.
We will say that a non-nodal
pseudoholomorphic building is \emph{connected} if the corresponding graph is connected.
This is equivalent to requiring the surface obtained from circle-compactifying and gluing the
levels, as described in the previous paragraph, to be connected.
The \emph{arithmetic genus} of a connected pseudoholomorphic building is
the genus of the glued surface.  The arithmetic genus can be computed in terms of
the graph modeling the building by the formula
\[
g=\#E-\#V+\sum_{v_{i}\in V}g(v_{i})+1
\]
where $\#E$ is the number of edges, $\#V$ is the number of vertices,
and $g(v_{i})$ is the genus of a given smooth connected surface in the building corresponding
to the vertex $v_{i}$ (see \cite[Equation (6)]{behwz}).
In particular, a connected pseudoholomorphic building has arithmetic genus zero
precisely when each component has genus $0$ and $\#E=\#V-1$ or,
equivalently, precisely when each component has genus zero and
the modeling graph is a tree.

Following \cite{wendl2010-compactness},
we refer to a connected pseudoholomorphic curve
$C=[\Sigma, j, \Gamma, a, u]$ as a \emph{nicely-embedded pseudoholomorphic curve}
if the map $u:\Sigma\setminus\Gamma\to M$ is an embedding, that is, if the curve projects
to an embedding in the $3$-manifold $M$.
We will say that a non-nodal pseudoholomorphic building is \emph{nicely embedded}
if:\footnote{
The definition in
\cite{wendl2010-compactness} also includes a condition on some of the periodic orbits which
connect the levels, but this condition 
(in fact a slightly stronger condition)
is a consequence of the above two conditions.  See
Lemma \ref{l:bidirectional-orbits} below.
}
\begin{enumerate}
\item Each $C\in\M(\lambda, J)$ occurring as a connected component
of the building is either nicely embedded or a trivial cylinder (i.e.\ a curve of the form
$\R\times\gamma$ for some periodic orbit $\gamma$).

\item If $C$ and $D\in\M(\lambda, J)$ occur as connected components of the building,
the projections of $C$ and $D$ to $M$ are either identical or disjoint.
\end{enumerate}
We will call a nicely-embedded, non-nodal pseudoholomorphic building, \emph{stable}
if no level consists entirely of trivial cylinders.\footnote{
The general definition of stable from \cite{behwz} allows for levels
which contain only trivial cylinders or constant maps provided the domains
of these maps are stable curves, i.e.\  twice the genus plus the number of special points
(marked points and nodes) is greater than or equal to $3$.  Since
we only consider buildings with no nodes or marked points here, our simpler definition
is equivalent.
}
The following theorem is proved as the main theorem in \cite{wendl2010-compactness}

\begin{theorem}\label{t:spec-cpct}\cite[Theorem 1]{wendl2010-compactness}
Let $C_{k}\in\M(\lambda, J)$ be a sequence of nicely-embedded pseudoholomorphic curves
with uniformly bounded energy.  Then there is a subsequence which converges in the sense
of \cite{behwz} to a stable, nicely-embedded pseudoholomorphic building.
\end{theorem}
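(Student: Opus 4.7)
The plan is to start with the general SFT compactness theorem of \cite{behwz}, which, applied to the sequence $C_{k}$ with uniformly bounded energy, yields a subsequence (which we relabel) converging to some stable pseudoholomorphic building $\mathcal{B}$. A priori $\mathcal{B}$ might contain nodes and its smooth components might fail to be embedded in $M$ or might project to overlapping but non-identical subsets of $M$. The task is to promote $\mathcal{B}$ to a nicely-embedded building: namely, to show (a) $\mathcal{B}$ has no nodes, (b) every smooth component is either a trivial cylinder or projects to an embedding in $M$, and (c) any two smooth components project to sets which are either disjoint or identical in $M$.

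The main tool is the Siefring intersection pairing $C_{1}*C_{2}$ reviewed in Section \ref{s:intersections}, which is a nonnegative integer when $C_{1}$ and $C_{2}$ are distinct curves and which is well-behaved under convergence to buildings: geometric intersections in $\R\times M$ contribute nonnegatively by positivity of intersections, and intersections that escape to infinity along shared asymptotic orbits contribute nonnegatively through the relative asymptotic formula of Theorem \ref{t:relative-asymptotics}. The key computation is to compare, for large $k$, the quantity $C_{k}*(C_{k})_{c}$ (where $(C_{k})_{c}$ denotes the $\R$-translate by a small fixed $c\neq 0$) with what its analogue must be for $\mathcal{B}$. Since $u_{k}:\Sigma_{k}\setminus\Gamma_{k}\to M$ is an embedding, $C_{k}$ and $(C_{k})_{c}$ have identical $M$-projections but disjoint images in $\R\times M$, so $C_{k}*(C_{k})_{c}$ is accounted for entirely by the asymptotic contributions at the punctures, which are computable via the winding bounds of Theorem \ref{t:wind-infinity-bound} and depend only on the asymptotic data — which stabilizes along the convergent subsequence, giving a constant value $N$ independent of $k$.

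On the limit side, $C_{k}*(C_{k})_{c}$ splits as a sum over: (i) pairs of distinct smooth components $C,C'$ of $\mathcal{B}$ (and their translates), (ii) the failure of each individual component $C$ to be embedded in $\R\times M$ up to $\R$-translation, and (iii) asymptotic contributions at each orbit where two levels are glued. Each summand is nonnegative, and their total must equal $N$. The topological part of this sum is already $N$ by matching of winding and trivialization data at the gluing orbits, so \emph{all} geometric terms must vanish: any double point of a component $C$, or any isolated intersection of components with non-identical projections in $M$, would force an $\R$-family of intersections in $\R\times M$ via shifting and hence a strictly positive contribution, contradicting the total. This forces (b) and (c), and a similar local positivity argument at nodes rules out (a).

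The main obstacle is the asymptotic bookkeeping at orbits where multiple components on different levels meet: a single orbit $\gamma$ can simultaneously be the asymptotic limit of many half-cylinders, and one must split the intersection pairing into local contributions at each such orbit, verify via Theorem \ref{t:relative-asymptotics} that each local contribution matches the relative winding data computed at finite $k$, and ensure that the constraint $C_{k}*(C_{k})_{c}=N$ passes cleanly to the limit. Carrying out this accounting — especially keeping track of how the asymptotic formula between pairs of half-cylinders at the same orbit exactly absorbs the intersections that escape to infinity as $k\to\infty$ — is the technical heart of the argument; once it is in place, the geometric conclusions (a)--(c) follow from positivity of intersections.
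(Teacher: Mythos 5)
The paper does not prove this theorem; it is quoted as Theorem 1 of \cite{wendl2010-compactness}, so there is no proof in the paper to compare against. Your strategy --- SFT compactness followed by intersection theory --- is the right shape, and the observation that $C_k*(C_k)_c$ is a topological invariant whose algebraic part vanishes by nicely-embeddedness is correct. But there are two genuine gaps in the sketch.

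First, the decomposition of $C_k*(C_k)_c$ that you invoke on the limit side does not hold as an identity. The building $*$-product satisfies only the inequality $(C_1\odot C_2)*(D_1\odot D_2)\ge C_1*D_1+C_2*D_2$ (Theorem \ref{t:hin-prop}(4)), with strict inequality precisely at shared odd breaking orbits, so $N$ need not equal the sum of the level-wise contributions you list, and the excess is not a ``purely topological'' correction you can set aside. The claim that ``the topological part of this sum is already $N$'' is exactly the nontrivial thing to establish, and as stated the bookkeeping does not close.

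Second, node exclusion is not a corollary of the $*$-product accounting. Nodes in the BEHWZ limit arise from circle collapse in the domain, and excluding them requires a separate argument --- e.g.\ that a node would force, via $C^\infty_{loc}$ convergence (Proposition \ref{p:compactness-local-C-infinity}), a failure of embeddedness of $\tl u_k$ in $\R\times M$ for large $k$, contradicting Theorem \ref{t:embedded-projection}(4a). The phrase ``a similar local positivity argument'' leaves that step unwritten, and one also has to address the case where the two local branches of a putative node coincide, so that positivity of intersections is silent.

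A cleaner route, and the one actually used elsewhere in this paper (Lemma \ref{l:building-isect}), avoids the building $*$-product: nicely embeddedness gives $\inum(\tl u_k,(\tl u_k)_c)=0$ for all $c\ne 0$ directly (Theorem \ref{t:embedded-projection}(2)), a statement about actual intersections in $\R\times M$. Then $C^\infty_{loc}$ convergence plus positivity of intersections shows that any isolated intersection between components of the limit building (or between a component and an $\R$-translate of itself) would force $C_k$ to intersect $(C_k)_c$ for some $c$ and large $k$, a contradiction. This yields properties (b) and (c) without ever tracking the asymptotic defect $\delta_\infty$ at breaking orbits.
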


For our purposes, the complete definition of SFT-convergence from \cite{behwz} is not
necessary, but we will need the following facts which we state as a proposition.

\begin{proposition}\label{p:compactness-local-C-infinity}
Assume a sequence $C_{k}=[\Sigma_{k}, j_{k}, \Gamma_{k}, a_{k}, u_{k}]\in\M(\lambda, J)$
converges in the sense of \cite{behwz} to a non-nodal pseudoholomorphic building
$C_{\infty, 1}\odot \dots \odot C_{\infty, \ell}$
with
$C_{\infty, i}=[\Sigma_{\infty, i}, j_{\infty, i}, \Gamma_{\infty, i}, a_{\infty, i}, u_{\infty, i}]$.
Then:
\begin{enumerate}
\item  There exist  sequences of embeddings
$\psi_{k, i}:\Sigma_{\infty, i}\setminus\Gamma_{\infty, i}\to\Sigma_{k}\setminus\Gamma_{k}$
and sequences of constants $c_{k, i}$ so that
\[
a_{k}\circ\psi_{k, i}+c_{k, i}\to a_{\infty, i}
\text{ in } C^{\infty}_{loc}(\Sigma_{\infty, i}\setminus\Gamma_{\infty, i}, \R) 
\]
and
\[
u_{k}\circ\psi_{k, i}\to u_{\infty, i}
\text{ in } C^{\infty}_{loc}(\Sigma_{\infty, i}\setminus\Gamma_{\infty, i}, M).
\]

\item There exists a punctured surface $\Sigma_{\infty}\setminus\Gamma_{\infty}$
and a $k_{0}$ so that for all $k\ge k_{0}$,
$\Sigma_{k}\setminus\Gamma_{k}$ is diffeomorphic to
$\Sigma_{\infty}\setminus\Gamma_{\infty}$.  Moreover, there exists a sequence of
diffeomorphisms
$\psi_{k}:\Sigma_{\infty}\setminus\Gamma_{\infty}\to\Sigma_{k}\setminus\Gamma_{k}$
so that the maps $u_{k}\circ \psi_{k}$ converge in
$C^{0}(\Sigma_{\infty}\setminus\Gamma_{\infty}, M)$. 
\end{enumerate}

\end{proposition}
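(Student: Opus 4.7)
The plan is to unpack the definition of SFT-convergence from \cite{behwz} level by level and then patch the level-wise information together using the exponential asymptotic behavior of Theorem \ref{t:relative-asymptotics}.

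Part (1) is essentially a rephrasing of the definition of convergence to a height-$\ell$ building. By construction, SFT-convergence of $C_k$ to $C_{\infty,1}\odot\dots\odot C_{\infty,\ell}$ supplies, for each level $i\in\{1,\dots,\ell\}$, a sequence of constants $c_{k,i}\in\R$ (the $\R$-shift isolating the $i$-th level) together with embeddings $\psi_{k,i}:\Sigma_{\infty,i}\setminus\Gamma_{\infty,i}\to\Sigma_k\setminus\Gamma_k$ coming from the chosen decorations on the limit nodal curve, such that the shifted, reparametrized maps $(a_k\circ\psi_{k,i}+c_{k,i},\,u_k\circ\psi_{k,i})$ converge in $C^\infty_{loc}$ to $(a_{\infty,i},u_{\infty,i})$ on $\Sigma_{\infty,i}\setminus\Gamma_{\infty,i}$. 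Extracting the two coordinates yields exactly the two convergences asserted in (1).

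For part (2), the key structural input is that the limit is non-nodal: all degenerations of the domains $\Sigma_k$ occur along disjoint embedded loops that correspond bijectively to the matched punctures of the building under the identifications $I_1,\dots,I_{\ell-1}$, and no additional loops in the interior of a level get pinched. Hence if we circle-compactify each $\Sigma_{\infty,i}\setminus\Gamma_{\infty,i}$ and glue the resulting surfaces along the circles paired by the bijections $I_i$, we obtain a fixed closed surface; removing the circles corresponding to unmatched (positive or negative) punctures of the building gives the desired $\Sigma_\infty\setminus\Gamma_\infty$. For $k$ large enough, $\Sigma_k\setminus\Gamma_k$ is diffeomorphic to this punctured surface, with the diffeomorphism arising from the level-wise embeddings $\psi_{k,i}$ together with long cylindrical neck regions in $\Sigma_k$ that separate the images of consecutive levels.

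The global diffeomorphism $\psi_k:\Sigma_\infty\setminus\Gamma_\infty\to\Sigma_k\setminus\Gamma_k$ will be built by starting from the level-wise $\psi_{k,i}$ on the compact bodies of the levels and interpolating across the neck regions in holomorphic cylindrical coordinates, using a standard cut-off in the cylindrical variable; because the neck lengths in $\Sigma_k$ diverge as $k\to\infty$, there is ample room to carry out the patching smoothly and make $\psi_k$ a genuine diffeomorphism. The $C^0$-convergence $u_k\circ\psi_k\to u_\infty$ (where $u_\infty$ denotes the continuous glued map on $\Sigma_\infty\setminus\Gamma_\infty$) then follows from two pieces of information: on the body of each level the convergence is already $C^\infty_{loc}$ by part (1), while on the neck region joining levels $i$ and $i+1$ both $u_k\circ\psi_k$ and $u_\infty$ take values in a $C^0$-neighborhood of the common asymptotic periodic orbit $\gamma$ whose size tends to $0$ as $k\to\infty$, by the exponential convergence supplied by Theorem \ref{t:relative-asymptotics} applied both to the approach of $\tilde u_k$ to $\gamma$ and to the approach of $\tilde u_{\infty,i}$, $\tilde u_{\infty,i+1}$ to the same orbit. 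The main obstacle is managing the neck interpolation so that $\psi_k$ is a globally smooth diffeomorphism while still achieving uniform $C^0$ control of $u_k\circ\psi_k$; since we cannot hope for $C^0$ control of the $\R$-coordinate (the shifts $c_{k,i}$ differ between levels), it is essential that the claim in (2) only concerns the projection to $M$, where the asymptotic estimates give the required uniform smallness in the necks.
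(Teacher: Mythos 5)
The paper does not prove this proposition; it is explicitly introduced as a collection of ``facts'' drawn from the definition of SFT-convergence in \cite{behwz} and is used without further justification, so there is no paper argument to compare against. Your reconstruction has the right architecture: level-wise $C^\infty_{loc}$ convergence comes from the definition, the model surface $\Sigma_\infty\setminus\Gamma_\infty$ is obtained by circle-compactifying the levels and gluing along matched punctures (this is where non-nodality is used, as you note), the global diffeomorphism is built by patching the $\psi_{k,i}$ on the bodies with an interpolation across long neck cylinders, and the key observation is that one can only hope to control the $M$-projection in $C^0$ because the $\R$-shifts $c_{k,i}$ diverge between levels.

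One step is mis-sourced, though. To get $C^0$ smallness on the necks you invoke Theorem \ref{t:relative-asymptotics} ``applied both to the approach of $\tilde u_k$ to $\gamma$ and to the approach of $\tilde u_{\infty,i}$, $\tilde u_{\infty,i+1}$.'' That theorem controls a single finite-energy half-cylinder near \emph{its own} asymptotic orbit, and it does apply to the ends of the levels $\tilde u_{\infty,i}$, $\tilde u_{\infty,i+1}$, which are genuinely asymptotic to $\gamma$. But the neck regions of $\Sigma_k\setminus\Gamma_k$ for finite $k$ are long \emph{compact} cylinders in the interior of the domain; they are not half-cylinders asymptotic to $\gamma$, and the $\tilde u_k$ are not asymptotic to $\gamma$ at any of their punctures. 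Theorem \ref{t:relative-asymptotics} gives no estimate there. What is needed is the standard ``long cylinder of small $d\lambda$-energy'' estimate (as in \cite{behwz} or \cite{hofer1993}): a $\tilde J$-holomorphic cylinder $[-R,R]\times S^1\to\R\times M$ with sufficiently small $d\lambda$-energy has $M$-projection uniformly $C^0$-close to a single periodic orbit on $[-R+h,R-h]\times S^1$, with closeness improving as $R,h\to\infty$. Combined with the fact that the $d\lambda$-energy carried by the necks tends to zero, this supplies the missing control; alternatively, this neck control is built directly into the definition of SFT-convergence in \cite{behwz} and need not be re-derived at all.
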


Finally, we will need to know what sorts of periodic orbits can appear in the SFT-limit
of sequences of nicely-embedded curves.  We start with a definition.
In the following definition, we will use the notation $\gamma^{m}$ to denote the
$m$-fold cover a periodic orbit $\gamma$.

\begin{definition}\label{d:bidirectional-limit}
Let $\gamma$ be a simply covered orbit and let $m_{+}$ and $m_{-}$ be positive integers.
We say that $(\gamma, m_{+}, m_{-})$ is a
\emph{bidirectional asymptotic limit} of a given non-nodal pseudoholomorphic building, if
there are (possibly identical) nontrivial components $C_{+}$, $C_{-}$ in the building
so that $\gamma^{m_{+}}$ is a positive asymptotic limit of
$C_{+}$ and $\gamma^{m_{-}}$ is a negative asymptotic limit of $C_{-}$.
\end{definition}
We remark that
nontrivial breaking orbits as defined in \cite{wendl2010-compactness}
always give rise to a bidirectional limit, but the converse is not true.

\begin{lemma}\label{l:bidirectional-orbits}
Let $(\gamma, m_{+}, m_{-})$ be a bidirectional limit
of a nicely-embedded pseudoholomorphic building.
Then either:
\begin{itemize}
\item $\gamma$ is even and $m_{+}=m_{-}=1$, or
\item $\gamma$ is odd, hyperbolic and $m_{+}=m_{-}=2$.
\end{itemize}
\end{lemma}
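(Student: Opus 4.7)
The plan is to combine the two defining properties of a nicely-embedded building---each nontrivial component projects to an embedding in $M$, and distinct nontrivial components have pairwise identical or disjoint projections---with the refined asymptotic analysis of Theorem \ref{t:relative-asymptotics} to pin down $(\gamma, m_+, m_-)$. The basic idea is that the embedding condition constrains the possible leading asymptotic eigenvectors of $\A_{\gamma^{m_\pm}, J}$, and the parity of $\gamma$ then dictates which covering multiplicities are compatible.

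First I would work in a tubular neighborhood of $\gamma$ and, via Theorem \ref{t:relative-asymptotics} applied with $V \equiv 0$, express each projected map $u_\pm$ near its bidirectional puncture as an asymptotic representative $U_\pm$ whose leading behavior is that of an eigenvector $e_\pm$ of $\A_{\gamma^{m_\pm}, J}$ (with $e_+$ having negative eigenvalue, $e_-$ positive). The space of sections of $(\gamma^m)^*\xi$ decomposes under the $\Z/m$ deck-transformation action for the covering $t \mapsto mt$ of $S^1$ into character-isotypic components; the trivial character gives exactly the sections pulled back from $\gamma^*\xi$. An embedded approach requires that the loop $t \mapsto U_\pm(s, t)$, for large $\pm s$, project injectively into a normal disk to $\gamma$, which forces $e_\pm$ to lie in a nontrivial character component whenever $m_\pm > 1$; otherwise the $m_\pm$ preimages of a point of $\gamma$ in $S^1$ would carry identical values of $e_\pm$, violating injectivity.

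Next I would split on the parity of $\gamma$ using Lemma \ref{l:cz-iteration} together with the structure of the spectrum of $\A_{\gamma^m, J}$ described in Lemma \ref{l:operator-spectrum}. If $\gamma$ is \emph{even hyperbolic}, every eigenvector of $\A_{\gamma^m, J}$ pulls back from one of $\A_{\gamma, J}$, so the injectivity requirement forces $m_\pm = 1$, matching the first case of the conclusion. If $\gamma$ is \emph{odd hyperbolic}, eigenvectors in nontrivial character components of $\Z/m$ exist only for even $m$ (the sign-switch of the monodromy after one period becomes trivial only after squaring), and the injectivity constraint eliminates all but $m = 2$, yielding $m_+ = m_- = 2$. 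If $\gamma$ is \emph{elliptic}, the irrational rotation $\theta$ in the formula $\mu^\Phi(\gamma^m) = 2\lfloor m\theta\rfloor + 1$ of Lemma \ref{l:cz-iteration} prevents any $\Z/m$-character component from producing an eigenvector compatible with injectivity when $m > 1$, forcing $m_\pm = 1$; this residual sub-case is then excluded using that the one-dimensionality of the leading eigenspace (Lemma \ref{l:operator-spectrum}) forces $e_+$ and $e_-$ proportional, and the adapted direction-of-approach result from \cite{siefring2011} (alluded to in Section \ref{s:intersections} as key to the main theorem) contributes nontrivially to the intersection invariant of $C_+$ with $C_-$ near $\gamma$, contradicting the identical-or-disjoint condition on projections for odd $\gamma$.

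Finally, since in each surviving case the value of $m_\pm$ is determined by the parity of $\gamma$ alone, we obtain $m_+ = m_-$, completing the dichotomy. The main obstacle is the elliptic exclusion: the hyperbolic cases reduce to a clean structural analysis of the character decomposition of $\A_{\gamma^m, J}$ under the $m$-fold cover, but ruling out a simple-elliptic bidirectional limit requires the refined direction-of-approach machinery of \cite{siefring2011} rather than just the bare leading-order statement of Theorem \ref{t:relative-asymptotics}, and care must be taken to combine it with the identical-or-disjoint condition when $C_+$ and $C_-$ happen to coincide as a single component.
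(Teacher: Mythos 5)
The paper's argument is purely arithmetic and proceeds in two steps: (i) the \emph{disjoint-or-identical projections} condition gives the cross-relation
$-\lfloor -\mu^{\Phi}(\gamma^{m_-})/2\rfloor/m_- = \lfloor\mu^{\Phi}(\gamma^{m_+})/2\rfloor/m_+$,
which by the iteration formulas of Lemma \ref{l:cz-iteration} holds if and only if $\gamma^{m_+}$ and $\gamma^{m_-}$ are both even; and (ii) the \emph{embeddedness} of each individual projection gives $\gcd(m_\pm, \winfty^{\Phi}(u_\pm;z_\pm)) = 1$, which combined with the extremal-winding formula pins down $m_\pm$. Your proposal attempts a genuinely different route, via the $\Z/m$ character decomposition of sections along $\gamma^m$ and the leading asymptotic eigenvector, but it has several gaps.

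First, the claim that for even hyperbolic $\gamma$ ``every eigenvector of $\A_{\gamma^m,J}$ pulls back from one of $\A_{\gamma,J}$'' is false; the spectrum of the covered operator is strictly larger, and eigenvectors in nontrivial character components exist for every $m>1$. What is actually true, and what the paper uses, is that the \emph{specific} extremal winding number $\lfloor m\mu^{\Phi}(\gamma)/2\rfloor = m\mu^{\Phi}(\gamma)/2$ is divisible by $m$ in the even case, so the $\gcd$-condition from embeddedness fails unless $m=1$; this is a winding computation, not a character-theoretic fact. Second, and more seriously, in the odd hyperbolic case the injectivity of each individual projection does \emph{not} eliminate $m_\pm = 1$: for odd $m$ one computes $\gcd\bigl(m,\lfloor m\mu^{\Phi}(\gamma)/2\rfloor\bigr)=1$ automatically, so $m=1$ survives the $\gcd$-test. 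What rules it out is the cross-relation from step (i), forced by the disjoint-or-identical condition between the positive end of $C_+$ and the negative end of $C_-$; your argument for the hyperbolic cases never invokes this pairwise condition. Third, to exclude the elliptic case you appeal to the direction-of-approach result, but Theorem \ref{t:dir-of-approach} and Lemma \ref{l:same-or-opposite} in this paper apply only to \emph{even} orbits; for elliptic (odd) orbits the relevant eigenspaces need not be one-dimensional and ``same/opposite direction'' has no comparable dichotomy. The paper instead excludes the elliptic case by the same cross-relation, observing that for elliptic $\gamma$ one always has the strict inequality
$\lfloor\mu^{\Phi}(\gamma^{m_+})/2\rfloor/m_+ < \theta < -\lfloor -\mu^{\Phi}(\gamma^{m_-})/2\rfloor/m_-$.
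So the essential missing idea is the cross-condition between $C_+$ and $C_-$ encoded in condition (3c) of Theorem \ref{t:no-isect} (or (2c)(iii) of Theorem \ref{t:gin-zero}); without it the lemma cannot be proved from the asymptotics of the individual ends alone.
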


\begin{proof}
This is equivalent to Proposition 4.4 in \cite{wendl2010-compactness}
which references \cite{siefring2011} for proof.
While this result is easily deduced from facts in \cite{siefring2011},
this fact is not stated explicitly,
so we outline the proof here.

Let $C_{+}=[\Sigma_{+}, j_{+}, \Gamma_{+}, a_{+}, u_{+}]$ and
$C_{-}=[\Sigma_{-}, j_{-}, \Gamma_{-}, a_{-}, u_{-}]$ be nontrivial components of the building
so that $C_{+}$ has $\gamma^{m_{+}}$ as an asymptotic limit at a puncture
$z_{+}\in\Gamma_{+}$ and
$C_{-}$ has $\gamma^{m_{-}}$ as an asymptotic limit at a puncture $z_{-}\in\Gamma_{-}$.
The assumption that $C_{\pm}$ are nicely embedded and
have either identical or disjoint projections to $M$ imply
via, as appropriate,
either condition 2(c) in \cite[Theorem 2.4]{siefring2011}/Theorem \ref{t:no-isect}
or
condition 3(b) in \cite[Theorem 2.6]{siefring2011}/Theorem \ref{t:embedded-projection}
that\footnote{
The sign difference between the equations given here and those in
\cite{siefring2011} are due to a convention difference for computing
Conley--Zehnder indices and $\winfty$.  Here we compute both by always traversing
an orbit in the direction of the Reeb vector field, while in \cite{siefring2011}
both are computed by traversing the orbit in a direction determined by
the boundary of the $S^{1}$-compactified surface, which means negative asymptotic limits
are traversed in the direction opposite of the flow of the Reeb vector field.
}
\begin{equation}\label{e:wind-cz-pos-neg}
\frac{\winfty^{\Phi}(u_{-}; z_{-})}{m_{-}}
=
\frac{-\fl{-\mu^{\Phi}(\gamma^{m_{-}})/2}}{m_{-}}
=
\frac{\fl{\mu^{\Phi}(\gamma^{m_{+}})/2}}{m_{+}}
=
\frac{\winfty^{\Phi}(u_{+}; z_{+})}{m_{+}},
\end{equation}
while condition 4(c) of \cite[Theorem 2.6]{siefring2011}/Theorem \ref{t:embedded-projection}
tells us that
\begin{equation}\label{e:wind-gcd}
\gcd(m_{+}, \winfty^{\Phi}(u_{+}, z_{+}))=\gcd(m_{-}, \winfty^{\Phi}(u_{-}, z_{-}))=1
\end{equation}
for any trivialization $\Phi$ of $\xi|_{\gamma}$.
However, it's proved in \cite[Theorem 2.4]{siefring2011}
using the
iteration formulas for the Conley--Zehnder index
(Lemma \ref{l:cz-iteration})
that
\[
\frac{-\fl{-\mu^{\Phi}(\gamma^{m_{-}})/2}}{m_{-}}
=
\frac{\fl{\mu^{\Phi}(\gamma^{m_{+}})/2}}{m_{+}}
\]
if and only if
$\gamma^{m_{+}}$ and $\gamma^{m_{-}}$ are both even orbits.
This is equivalent to requiring either that
$\gamma$ is even, or $\gamma$ is odd hyperbolic and $m_{+}$ and $m_{-}$ are both even.
In either case,
we can use that the Conley--Zehnder index iterates linearly for
hyperbolic orbits (Lemma \ref{l:cz-iteration}).
In the case that $\gamma$ is even
we then have from \eqref{e:wind-cz-pos-neg} that
\begin{align*}
\gcd(m_{\pm}, \winfty^{\Phi}(u_{\pm}, z_{\pm}))
&=\gcd(m_{\pm}, \fl{\pm\mu^{\Phi}(\gamma^{m_{\pm}})/2}) \\
&=\gcd(m_{\pm}, \fl{\pm m_{\pm }\mu^{\Phi}(\gamma)/2}) \\
&=\gcd(m_{\pm}, m_{\pm }\mu^{\Phi}(\gamma)/2) \\
&=m_{\pm}\gcd(1, \mu^{\Phi}(\gamma)/2) \\
&=m_{\pm}
\end{align*}
so we must have $m_{\pm}=1$ for \eqref{e:wind-gcd} to hold.
On the other hand, if $\gamma$ is odd hyperbolic
and $m_{\pm}=2n_{\pm}$ are even,
we have from \eqref{e:wind-cz-pos-neg} that
\begin{align*}
\gcd(m_{\pm}, \winfty^{\Phi}(u_{\pm}, z_{\pm}))
&=\gcd(m_{\pm}, \fl{\pm\mu^{\Phi}(\gamma^{m_{\pm}})/2}) \\
&=\gcd(2n_{\pm}, \fl{\pm\mu^{\Phi}(\gamma^{2n_{\pm}})/2}) \\
&=\gcd(2n_{\pm}, \fl{\pm 2n_{\pm}\mu^{\Phi}(\gamma)/2}) \\
&=\gcd(2n_{\pm}, \fl{\pm n_{\pm}\mu^{\Phi}(\gamma)}) \\
&=\gcd(2n_{\pm}, n_{\pm}\mu^{\Phi}(\gamma)) \\
&=n_{\pm}\gcd(2, \mu^{\Phi}(\gamma)) \\
&=n_{\pm}
\end{align*}
where we've used the $\mu^{\Phi}(\gamma)$ is odd in the last line.
This lets us conclude that \eqref{e:wind-gcd} holds precisely when $n_{\pm}=1$
and hence $m_{\pm}=2n_{\pm}=2$.
This completes the proof.
\end{proof}

\subsection{Intersection theory}\label{s:intersections}
Here we review some facts about the intersection 
theory of punctured pseudoholomorphic curves from \cite{siefring2011}.

We continue to assume $\lambda$ is a nondegenerate contact form on $M$
and $J\in\J(M, \lambda)$ is a compatible almost complex structure.
Let $C_{1}=[\Sigma, j, \Gamma, a, u]$ and $C_{2}=[\Sigma', j', \Gamma', b, v]$ be
pseudoholomorphic curves.
We write $\Gamma=\Gamma_{+}\cup\Gamma_{-}$ and $\Gamma'=\Gamma'_{+}\cup\Gamma'_{-}$
to indicate the signs of the punctures.
We assume that at
$z\in\Gamma$, $\tl u=(a, u)$ is asymptotic to $\gamma_{z}^{m_{z}}$
where $\gamma_{z}$ is a simply-covered, unparametrized periodic orbit, $m_{z}$ is a positive integer,
and $\gamma_{z}^{m_{z}}$ denotes the $m_{z}$-fold cover of $\gamma_{z}$.
Similarly we assume that
at $w\in\Gamma'$, $\tl v=(b, v)$ is asymptotic to $\gamma_{w}^{m_{w}}$ with $\gamma_{w}$ simply covered.
We let $\Phi$ denote a choice of trivialization of the contact structure
along all simply covered periodic orbits of $\X$
with covers appearing as asymptotic limits of $C_{1}$ or $C_{2}$.
We define a map
$\tl v_{\Phi}$ by perturbing the $M$-portion $v$ of the map slightly near the
ends by the flow of a section of the contact structure defined near the orbits
which has zero winding relative to the trivialization $\Phi$.  It can be shown
that for suitably small such perturbations, the algebraic intersection number $\inum(\tl u, \tl v_{\Phi})$
of the maps $\tl u$ and $\tl v_{\Phi}$ is well defined and depends only on the homotopy
classes of the maps $\tl u$, $\tl v$ and the trivialization $\Phi$.
We thus define the
\emph{relative intersection number} $i^{\Phi}(C_{1}, C_{2})$
of $C_{1}$ and $C_{2}$ relative to the trivialization $\Phi$
by
\[
i^{\Phi}(C_{1}, C_{2})=\inum(\tl u, \tl v_{\Phi}).
\]
For more background on the definition and properties of the relative intersection number,
see \cite[Section 2.4]{hutchings2002} or \cite[Section 4.1.1]{siefring2011}

Given the relative intersection number of two curves, we define the
\emph{holomorphic intersection number}\footnote{
This is called the \emph{generalized intersection number} in \cite{siefring2011}.
}
by\footnote{
This definition appears slightly different from that given in \cite{siefring2011}
since there Conley--Zehnder indices
of orbits at negative punctures are computed by traversing the orbit backwards.
}
\begin{equation}\label{e:gin-def}
\begin{aligned}
C_{1}*C_{2}
=i^{\Phi}(C_{1}, C_{2})
&+\sum_{\stackrel{(z, w)\in\Gamma_{+}\times\Gamma_{+}'}{\gamma_{z}=\gamma_{w}}}
m_{z}m_{w}\max\br{\tfrac{\fl{\mu^{\Phi}(\gamma_{z}^{m_{z}})/2}}{m_{z}}, \tfrac{\fl{\mu^{\Phi}(\gamma_{w}^{m_{w}})/2}}{m_{w}}}\\
&+\sum_{\stackrel{(z, w)\in\Gamma_{-}\times\Gamma_{-}'}{\gamma_{z}=\gamma_{w}}}
m_{z}m_{w}\max\br{\tfrac{\fl{-\mu^{\Phi}(\gamma_{z}^{m_{z}})/2}}{m_{z}}, \tfrac{\fl{-\mu^{\Phi}(\gamma_{w}^{m_{w}})/2}}{m_{w}}}.
\end{aligned}
\end{equation}
We note that the sums here are taken over all pairs of ends with the same sign which are
asymptotic to coverings of the same underlying simply covered orbit;
the quantities in these sums correspond to the negation of the minimum number of
intersections that must appear between a pair of such ends when one is perturbed in the
prescribed direction (see Section 3.2 and specifically Corollary 3.21 in \cite{siefring2011}). 
As our notation indicates, the holomorphic intersection product of two
curves is independent of the choice of trivialization used to define
the quantities on the right hand side of \eqref{e:gin-def}.
For proof of this fact and the other basic properties of the holomorphic intersection number
collected in the following theorem, we refer the reader to \cite{siefring2011}.

\begin{theorem}[Properties of the generalized intersection number]\label{t:hin-prop}
Let $(M, \lambda, J)$ be a nondegenerate contact manifold with compatible $J\in\J(M, \lambda)$,
and let $\M(\lambda, J)$ denote the
moduli space of finite-energy pseudoholomorphic curves in $M$.
\begin{enumerate}
\item  If $C=[\Sigma, j, \Gamma, \tl u]$, $D=[\Sigma', j', \Gamma', \tl v]\in\M(\lambda, J)$
are pseudoholomorphic curves
then the generalized intersection
number $C*D$ depends only on the relative homotopy classes of the maps $\tl u$ and $\tl v$.

\item For any $C$, $D\in\M(\lambda, J)$
\[
C*D=D*C.
\]

\item If $C_{1}$, $C_{2}$, $D\in\M(\lambda, J)$
then
\[
(C_{1}+C_{2})*D=C_{1}*D+C_{2}*D
\]
where ``$+$'' on the left hand side denotes the disjoint union of the curves $C_{1}$ and $C_{2}$.

\item
If $C_{1}\odot C_{2}$ and $D_{1}\odot D_{2}$ are asymptotically cylindrical buildings
then
\[
(C_{1}\odot C_{2})*(D_{1}\odot D_{2})\ge C_{1}*D_{1}+C_{2}*D_{2}.
\]
Moreover, strict inequality occurs if and only if there is a periodic orbit $\gamma$ so
that $C_{1}$ has a negative puncture asymptotic to $\gamma^m$, $D_{1}$
has a negative puncture asymptotic
to $\gamma^n$, and both $\gamma^m$ and $\gamma^n$ are odd orbits.
\end{enumerate}
\end{theorem}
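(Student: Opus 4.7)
The plan is to verify each of (1)--(4) directly from the definition \eqref{e:gin-def}, using the properties of the relative intersection number $i^{\Phi}$ and the asymptotic analysis recalled earlier.

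Properties (2) and (3) are essentially formal. Symmetry holds because $i^{\Phi}$ is the algebraic intersection number of maps (hence symmetric in its two arguments), while the correction sums in \eqref{e:gin-def} are manifestly invariant under swapping the roles of $(z,m_{z})$ and $(w,m_{w})$. For additivity under disjoint unions one uses $i^{\Phi}(\tl u_{1}\sqcup \tl u_{2},\tl v)=i^{\Phi}(\tl u_{1},\tl v)+i^{\Phi}(\tl u_{2},\tl v)$ together with the fact that each correction sum decomposes according to the partition of the punctures into the two components. Property (1) then reduces to two independent claims: that $i^{\Phi}(C,D)$ depends only on the relative homotopy classes of $\tl u$ and $\tl v$ (classical, as reviewed above), and that the right side of \eqref{e:gin-def} is $\Phi$-independent. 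For the latter I would twist $\Phi$ by $w\in\Z$ along a single simply-covered orbit $\gamma$: the relative intersection number picks up a contribution linear in $w$ and the signed cover-weight of ends asymptotic to $\gamma$, each $\mu^{\Phi}(\gamma^{m_{z}})$ shifts by $2wm_{z}$ so that each $\fl{\pm\mu^{\Phi}(\gamma^{m_{z}})/2}/m_{z}$ shifts by $\pm w$, and careful sign-tracking in \eqref{e:gin-def} shows the two shifts cancel.

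The substantive property is (4), which is where I would concentrate most of the effort. By (2) and (3) one may assume each of $C_{i}$, $D_{j}$ is connected. I would fix a trivialization $\Phi$ defined along all underlying simple orbits appearing in the building, including those at the breaking level, and then preglue the levels to obtain smooth finite-energy representatives $\tl u_{\ep}$ of $C_{1}\odot C_{2}$ and $\tl v_{\ep}$ of $D_{1}\odot D_{2}$ lying in the same relative homotopy classes as the limiting buildings. By (1), the intersection number $(C_{1}\odot C_{2})*(D_{1}\odot D_{2})$ may then be computed from $\tl u_{\ep},\tl v_{\ep}$. The intersections split into those far from the neck, which limit to $i^{\Phi}(C_{1},D_{1})+i^{\Phi}(C_{2},D_{2})$ up to boundary adjustments, and a nonnegative neck count $N$ of intersections between asymptotically approaching pairs of ends.

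The main obstacle is analyzing $N$ and organizing the floor/ceiling bookkeeping in \eqref{e:gin-def} finely enough to pin down the equality case. For a single matched pair of middle ends---say negative ends of $C_{1},D_{1}$ at $\gamma^{m},\gamma^{n}$---Theorem \ref{t:relative-asymptotics} expresses the leading-order difference of the two approaches as $e^{\sigma s}(e(t)+o(1))$ with $\sigma>0$ an eigenvalue of $\A_{\gamma,J}$. Using Lemma \ref{l:operator-spectrum} and Theorem \ref{t:spectral-conley-zehnder}, the winding of $e$ relative to $\Phi$ is bounded below by the smallest winding of any positive-eigenvalue eigenvector of $\A_{\gamma,J}$, which is controlled by $\ceil{\mu^{\Phi}(\gamma^{\bullet})/2}$-type data; accounting for $\gcd$'s with the covering multiplicities yields an explicit lower bound on the local intersection contribution. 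The elementary identity $\ceil{x/2}-\fl{x/2}=0$ or $1$ according as $x$ is even or odd then shows that this lower bound exactly matches the middle-orbit correction terms in \eqref{e:gin-def} when $\mu^{\Phi}(\gamma^{m})$ and $\mu^{\Phi}(\gamma^{n})$ are both even, and produces a strict surplus precisely when both are odd. Summing over matched middle orbits (noting that the $C_{1}/D_{1}$ negative pairing and the $C_{2}/D_{2}$ positive pairing at the same breaking orbit impose the same parity condition) completes the proof of (4).
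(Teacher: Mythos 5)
The paper does not prove this theorem: it is quoted directly from \cite{siefring2011}, and the paragraph preceding it says explicitly ``For proof of this fact and the other basic properties of the holomorphic intersection number collected in the following theorem, we refer the reader to \cite{siefring2011}.'' So there is no in-paper argument to compare against, and the right question is whether your sketch would succeed on its own.

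For items (1)--(3) your approach is reasonable. The $\Phi$-independence is indeed the nontrivial part of (1), and the cancellation you describe (twist by $w$ along one simple orbit, $\mu^{\Phi}(\gamma^{m_z})$ shifts by $2wm_z$, $i^{\Phi}$ picks up a compensating term) is the correct bookkeeping, though one needs to track the sign conventions at negative punctures with care, since $i^{\Phi}$ changes by a signed sum of $wm_z m_w$ over ends at $\gamma$ and the floor terms appear with opposite signs at positive versus negative ends.

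The problems are in (4), which you rightly identify as the substantive part. First, the generalized intersection number is a homotopy invariant, so the entire content of (4) is a combinatorial identity obtained by (i) extending $i^{\Phi}$ to buildings, (ii) establishing $i^{\Phi}(C_{1}\odot C_{2}, D_{1}\odot D_{2}) = i^{\Phi}(C_{1}, D_{1}) + i^{\Phi}(C_{2}, D_{2})$ topologically, and (iii) directly comparing the floor-function correction terms that appear on each side of \eqref{e:gin-def}; the only ``surplus'' comes from the middle-orbit corrections in $C_{1}*D_{1}$ and $C_{2}*D_{2}$ that have no counterpart for the glued building. There is no need to preglue to approximate solutions, no need for positivity of intersections, and no need for the asymptotic formula --- and in fact those tools do not apply: the preglued representatives $\tl u_{\ep}$, $\tl v_{\ep}$ are obtained with cutoff functions and are not pseudoholomorphic in the neck, so neither positivity of intersections nor Theorem \ref{t:relative-asymptotics} justifies the assertion that your neck count $N$ is nonnegative or has the claimed leading-order form. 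Second, Theorem \ref{t:relative-asymptotics} is stated for two pseudoholomorphic half-cylinders converging to the \emph{same} parametrized orbit; here the ends of $C_{1}$ and $D_{1}$ converge to $\gamma^{m}$ and $\gamma^{n}$ with $m$ and $n$ possibly different, so the theorem does not apply as stated (one would have to pass to a common cover and redo the winding bookkeeping, which is exactly the content hiding in the $\max$'s and the $m_z m_w$ prefactors of \eqref{e:gin-def}). Third, the $\gcd(m,n)$ considerations you invoke are a red herring here: the equality condition in (4) is purely a parity statement about $\mu^{\Phi}(\gamma^{m})$ and $\mu^{\Phi}(\gamma^{n})$, and comes from the elementary identity $\fl{x/2}+\fl{-x/2} = 0$ if $x$ is even and $-1$ if $x$ is odd, applied to the middle-orbit correction pairs, not from any divisibility argument. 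Finally, a one-sided analytic bound of the type you sketch can at best give the inequality; the ``if and only if'' in (4) requires an exact accounting, so even if the positivity claim could be salvaged it would not close the equality case. Concretely: you would still have to show that the neck contribution is exactly $mn(|a_{m}-a_{n}|)$ with $a_{m}=\fl{\mu^{\Phi}(\gamma^{m})/2}/m$ when both orbits are even, and exactly one more when both are odd, and nothing in the sketch pins that down.
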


One of the main motivations for the definition of the holomorphic intersection number
is that certain well-known theorems concerning the homological intersection number of
holomorphic curves generalize nicely to facts about the holomorphic intersection number,
albeit with an additional complication.
The first such theorem is a generalization of the fact that for a pair of closed
pseudoholomorphic curves
having no common components (i.e.\ no components having identical image)
the homological intersection number is nonnegative, and equal to zero if and only if the two curves
are disjoint.  For punctured curves a statement almost as strong can be made,
but we have to allow for the possibility that intersections disappear at the punctures
when the curves have ends approaching the same orbit.  In this case, the disappearance 
of intersections is traded for a higher degree of ``tangency at infinity'' with this notion being
made precise in terms of the asymptotic relative asymptotic formula from \cite{siefring2008} reviewed
above as Theorem \ref{t:relative-asymptotics}.  The total measure of ``tangency at infinity'' between
two curves $C$, $D$ without common components
is called the \emph{total asymptotic intersection number} and denoted
$\delta_{\infty}(C, D)$.  For a precise definition of the total asymptotic intersection number, and
for proof and further discussion, we refer the reader to \cite[Theorem 4.4/2.2]{siefring2011}.

\begin{theorem}\label{t:intpositivity}
Let $C$, $D\in\M(\lambda, J)$ 
be pseudoholomorphic curves.
If $C$ and $D$ have no common components then
\begin{equation}\label{e:intpositivity}
C*D=\inum(C, D)+\delta_\infty(C, D)
\end{equation}
where $\inum(C, D)$ is the algebraic intersection number of $C$ and $D$, and
$\delta_{\infty}(C, D)$ is the asymptotic intersection index of $C$ and $D$.
In particular
\[
C*D\ge\inum(C, D)\ge 0,
\]
and
\[
C*D=0
\]
if and only if $C$ and $D$ don't intersect, and the total asymptotic intersection index vanishes,
i.e.\
$\delta_\infty(C, D)=0$.
\end{theorem}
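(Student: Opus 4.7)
The plan is to establish the identity $C*D = \inum(C,D) + \delta_\infty(C,D)$ by splitting the algebraic intersection $\inum(\tl u, \tl v_{\Phi}) = i^{\Phi}(C,D)$ into interior and puncture-neighborhood contributions, comparing the intersection counts for $\tl v$ and for its perturbation $\tl v_{\Phi}$, and then reading off the formula directly from the definition \eqref{e:gin-def} of $C*D$. Non-negativity then drops out because each term on the right-hand side is manifestly non-negative.

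First I would choose pairwise disjoint cylindrical neighborhoods of the punctures of $\tl v$, small enough that $\tl v_{\Phi} = \tl v$ outside of them, and likewise neighborhoods of the punctures of $\tl u$. Away from these neighborhoods, Micallef--White positivity of intersections for the distinct $\tl J$-holomorphic maps $\tl u$ and $\tl v$ gives a finite set of isolated intersections, each of which contributes $+1$ to both $\inum(C,D)$ and to $i^{\Phi}(C,D)$. What remains is to compare the two intersection counts inside each pair of matched puncture neighborhoods.

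For each pair $(z,w)$ of punctures with matching sign whose curves are asymptotic to covers $\gamma^{m_z}, \gamma^{m_w}$ of a common simple orbit $\gamma$, I would pass to the $m_{z}m_{w}$-fold cover of a tubular neighborhood of $\gamma$ so that the asymptotic representatives $U_{z}$ and $V_{w}$ lift to $1$-periodic sections of $\gamma^{*}\xi$. The relative asymptotic formula (Theorem \ref{t:relative-asymptotics}) writes their difference as $e^{\sigma s}[e(t)+r(s,t)]$ with $\sigma$ a nonzero eigenvalue of $\A_{\gamma,J}$ and $e$ a nowhere-vanishing eigenvector of $\Phi$-winding $w(\sigma,[\Phi])$. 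This simultaneously confines the intersections of $\tl u$ with $\tl v$ near the pair to a compact annulus (yielding a non-negative count $\inum_{z,w}(\tl u, \tl v)$ by positivity) and computes the asymptotic winding of $U_{z}-V_{w}$. A direct winding-degree calculation --- using monotonicity of winding in the eigenvalue (Lemma \ref{l:operator-spectrum}), the fact that the perturbation direction defining $\tl v_{\Phi}$ has $\Phi$-winding zero, and the extremal bound of Theorem \ref{t:wind-infinity-bound} --- then yields
\[
i^{\Phi}_{z,w}(C,D) \;=\; \inum_{z,w}(\tl u,\tl v) \;-\; m_{z}m_{w}\,\max\br{\tfrac{\fl{\pm_{z}\mu^{\Phi}(\gamma^{m_{z}})/2}}{m_{z}},\tfrac{\fl{\pm_{w}\mu^{\Phi}(\gamma^{m_{w}})/2}}{m_{w}}} \;+\; \delta_{z,w}(\tl u,\tl v),
\]
where the non-negative integer $\delta_{z,w}$ records the deficit between the actual winding $w(\sigma,[\Phi])$ and the extremal winding $m_{z}m_{w}\cdot\max\{\ldots\}$ allowed by the Conley--Zehnder bound. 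Summing these local formulas across all matched pairs and the interior contribution and comparing with \eqref{e:gin-def} produces $C*D = \inum(C,D)+\sum_{(z,w)}\delta_{z,w}(\tl u,\tl v)$, so defining $\delta_\infty(C,D)$ to be this sum proves \eqref{e:intpositivity}. The inequality $C*D \ge \inum(C,D) \ge 0$ and the characterization of $C*D = 0$ are then immediate.

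The main obstacle is the bookkeeping at a pair of ends with $m_{z}\ne m_{w}$: one must work in a common cover where both asymptotic representatives become $1$-periodic, verify that the perturbation direction pulls back to a section of $\Phi$-winding zero in this cover, and correctly determine which of the two Conley--Zehnder winding bounds is the ``active'' one --- this is precisely what produces the $\max$ in \eqref{e:gin-def}. The subsidiary points (positivity in the interior, the positivity of each isolated intersection, and $\delta_{z,w}\ge 0$) are direct consequences of Micallef--White positivity and Theorem \ref{t:wind-infinity-bound}.
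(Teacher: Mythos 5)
The paper does not actually prove this theorem: it states it and then defers explicitly to \cite[Theorem 4.4/2.2]{siefring2011}, where the definition of $\delta_\infty$ and the proof live. So there is no in-paper argument to compare against. What you have sketched is, as far as it goes, a faithful reconstruction of the strategy in that reference: localize $i^{\Phi}(C,D)$ into an interior piece (where $\tl v_{\Phi}=\tl v$ and Micallef--White positivity identifies it with $\inum$) plus end contributions; in each matched pair of puncture neighborhoods, pass to a common cover, invoke the relative asymptotic formula of Theorem \ref{t:relative-asymptotics} to confine intersections to a compact annulus and to extract the controlling eigenvector; and then convert the winding count of the eigenvector, together with the winding-zero perturbation direction and the extremal bound of Theorem \ref{t:wind-infinity-bound}, into the correction term appearing in \eqref{e:gin-def}, with the surplus over the extremum becoming the non-negative local $\delta_{z,w}$.

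Two small caveats. First, the phrase ``the deficit between the actual winding $w(\sigma,[\Phi])$ and the extremal winding $m_z m_w\cdot\max\{\ldots\}$'' mixes quantities computed at different covers of $\gamma$; the comparison has to be carried out consistently in a fixed $\mathrm{lcm}$- or $m_z m_w$-fold cover (where both ends become $1$-periodic), and the $m_z m_w$ factor enters through the covering degree of the pair of ends, not as a multiplier on a winding already taken in the covered setting. Second, your local decomposition implicitly treats every intersection near the ends as isolated; this is legitimate precisely because $C$ and $D$ have no common components, so $U_z - V_w$ is not identically zero and the asymptotic formula applies --- worth stating rather than assuming. With those bookkeeping points tightened, the outline matches the argument in \cite{siefring2011} and would give a correct proof.
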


It will be of use to us here to be able to identify situations in which the holomorphic intersection
number of two curves vanishes.  A set of necessary and sufficient conditions is proved
is proved in \cite[Corollary 5.9]{siefring2011}.  We quote that result here with appropriate
adjustments to the notation and conventions.

\begin{theorem}\label{t:gin-zero}
Let $C=[\Sigma, j, \Gamma, \tl u=(a, u)]$
and $D=[\Sigma', j', \Gamma', \tl v=(b, v)]\in\M(\lambda, J)$
be pseudoholomorphic curves,
and assume that no component of $C$ or $D$ lies in a trivial cylinder.
Then the following are equivalent:
\begin{enumerate}
\item\label{i:gin-zero-1} The generalized intersection number $C*D=0$.

\item\label{i:gin-zero-2} All of the following hold:
     \begin{enumerate}
     \item The map $u$ does not intersect any of the positive asymptotic limits of $v$.
     \item The map $v$ does not intersect any of the negative asymptotic limits of $u$.
     \item Let $\gamma$ be a periodic orbit so that at $z\in\Gamma$, $\tl u$ is asymptotic
     to $\gamma^{m_z}$ and at $w\in\Gamma'$, $\tl v$ is asymptotic to $\gamma^{m_w}$,
     and let $\Phi$ be a trivialization of $\xi|_{\gamma}$.  Then:
          \begin{enumerate}
          \item If $z$ and $w$ are both positive punctures,
          \[
          \winfty^{\Phi}(\tl u; z)=\fl{\mu^{\Phi}(\gamma^{m_{z}})/2}
          \]
          and
          \begin{equation}\label{e:gin-zero-cz-ineq}
          \tfrac{\fl{\mu^{\Phi}(\gamma^{m_z})/2}}{m_z}\ge\tfrac{\fl{\mu^{\Phi}(\gamma^{m_w})/2}}{m_w}.
          \end{equation}
          \item If $z$ and $w$ are both negative are both negative punctures
          \[
          -\winfty^{\Phi}(\tl v; w)=\fl{-\mu^{\Phi}(\gamma^{m_{w}})/2}
          \]
          and
          \[
          \tfrac{\fl{-\mu^{\Phi}(\gamma^{m_w})/2}}{m_{w}}\ge\tfrac{\fl{-\mu^{\Phi}(\gamma^{m_z})/2}}{m_{z}}.
          \]
          \item\label{item:gin-zero-pos-neg} If $z$ is a negative puncture and $w$ is a positive puncture,
          \[
          -\winfty^{\Phi}(\tl u; z)-\fl{-\mu^{\Phi}(\gamma^{m_{z}})/2}=\winfty^{\Phi}(\tl v; w)-\fl{\mu^{\Phi}(\gamma^{m_{w}})/2}=0
          \]
          and
          $\gamma^{m_z}$ and $\gamma^{m_w}$ are both even orbits; or equivalently
          \[
          \tfrac{\winfty^{\Phi}(\tl u; z)}{m_{z}}=\tfrac{\winfty^{\Phi}(\tl v; w)}{m_{w}}.
          \]
     \end{enumerate}
     \end{enumerate}

\item\label{i:gin-zero-3} All of the following hold:
     \begin{enumerate}
     \item The map $u$ does not intersect any of the asymptotic limits of $v$.
     \item The map $v$ does not intersect any of the asymptotic limits of $u$.
     \item If $\gamma$ is a periodic orbit so that at $z\in\Gamma$, $\tl u$ is asymptotic
     to $\gamma^{m_z}$ and at $w\in\Gamma'$, $\tl v$ is asymptotic to $\gamma^{m_w}$, then
     \[
     \pm_{z}\winfty^{\Phi}(\tl u; z)-\fl{\pm_{z}\mu^{\Phi}(\gamma^{m_{z}})/2}=\pm_{w}\winfty^{\Phi}(\tl v; w)-\fl{\pm_{w}\mu^{\Phi}(\gamma^{m_{w}})/2}=0.
     \]
     Further
     \begin{enumerate}
          \item if $\gamma$ is elliptic, then $z$ and $w$ are either both positive punctures,
          or both negative punctures, and
          \[
          \tfrac{\fl{\pm_{z}\mu^{\Phi}(\gamma^{m_z})/2}}{m_z}=\tfrac{\fl{\pm_{w}\mu^{\Phi}(\gamma^{m_w})/2}}{m_w}.
          \]
          \item if $\gamma$ is odd, hyperbolic then either $m_z$ and $m_w$ are both even, or
          the punctures have the same sign and
          $m_z=m_w$.
     \end{enumerate}
\end{enumerate}
\end{enumerate}
\end{theorem}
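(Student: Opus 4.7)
The plan is to leverage the decomposition $C*D = \inum(C, D) + \delta_\infty(C, D)$ from Theorem \ref{t:intpositivity} together with a puncture-by-puncture analysis of the asymptotic intersection index. Since both terms are non-negative, $C*D = 0$ forces both to vanish. Moreover, $\delta_\infty(C,D)$ decomposes as a sum of local contributions $\delta_\infty^{(z,w)}(C,D) \geq 0$ indexed by pairs of punctures whose asymptotic orbits cover a common simply-covered orbit, so condition (\ref{i:gin-zero-1}) is equivalent to the assertion that $u(\Sigma \setminus \Gamma) \cap v(\Sigma' \setminus \Gamma') = \emptyset$ together with the vanishing of every local asymptotic intersection index.

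For the implication (\ref{i:gin-zero-1}) $\Rightarrow$ (\ref{i:gin-zero-2}), I would translate the vanishing of each $\delta_\infty^{(z,w)}$ into explicit winding conditions via the relative asymptotic formula (Theorem \ref{t:relative-asymptotics}). That formula shows that either two half-cylinders asymptotic to covers of the same orbit coincide, or their difference has the form $e^{\sigma s}(e(t) + r(s,t))$ with $\sigma$ a nonzero eigenvalue of $\A_{\gamma, J}$ and $e$ a corresponding eigenvector. Using Lemma \ref{l:operator-spectrum}, a local computation expresses $\delta_\infty^{(z,w)}(C,D)$ as a non-negative combination of differences between winding numbers of controlling eigenvectors and the extremal values $\fl{\pm \mu^\Phi/2}$. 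Its vanishing forces the asymptotic winding bound of Theorem \ref{t:wind-infinity-bound} to hold with equality and the averaged Conley--Zehnder quantities $\fl{\pm \mu^\Phi(\gamma^m)/2}/m$ to be dominated in the direction dictated by the signs of the punctures. For a mixed-sign pair, vanishing additionally forces
\[
\tfrac{\fl{-\mu^{\Phi}(\gamma^{m_z})/2}}{m_z} + \tfrac{\fl{\mu^{\Phi}(\gamma^{m_w})/2}}{m_w} = 0,
\]
which by the iteration formulas of Lemma \ref{l:cz-iteration} occurs precisely when both $\gamma^{m_z}$ and $\gamma^{m_w}$ are even orbits.

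For (\ref{i:gin-zero-2}) $\Rightarrow$ (\ref{i:gin-zero-3}), the strengthening replaces ``$u$ avoids the positive asymptotic limits of $v$'' and its dual by ``$u$ avoids every asymptotic limit of $v$'' and vice versa. To obtain this, I would pair $C$ with the trivial cylinder over each asymptotic limit of $D$ and apply Theorem \ref{t:intpositivity}: the hypotheses of (\ref{i:gin-zero-2}) together with the winding computations from the previous step force these auxiliary holomorphic intersection numbers to vanish, and positivity of intersection then yields the required non-intersection. The refined dichotomy in part (c) of (\ref{i:gin-zero-3}), separating elliptic orbits (for which mixed-sign puncture pairs cannot arise) from odd hyperbolic orbits (where $m_z = m_w$), follows again from Lemma \ref{l:cz-iteration}: for an elliptic orbit with irrational rotation number $\theta$, the averaged quantity $\fl{m\theta}/m$ is never equal to $-\fl{-n\theta}/n$ for any positive $m, n$, so the mixed-sign winding equality is incompatible with ellipticity.

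The reverse implication (\ref{i:gin-zero-3}) $\Rightarrow$ (\ref{i:gin-zero-1}) is obtained by retracing these winding computations: absence of finite intersections gives $\inum(C, D) = 0$, and the exact matching of windings and extremal Conley--Zehnder quantities at each puncture pair forces every local asymptotic intersection index to vanish. The main obstacle throughout is the bookkeeping of eigenvector windings at each puncture relative to an arbitrary trivialization, together with the combinatorics of the CZ iteration formulas that distinguishes the elliptic, even-hyperbolic, and odd-hyperbolic cases, rather than any genuinely new analytic input beyond what is already assembled in Theorems \ref{t:relative-asymptotics} and \ref{t:intpositivity}.
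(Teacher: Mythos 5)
The paper does not prove this theorem: it is quoted verbatim (with notational adjustments) from \cite[Corollary~5.9]{siefring2011}, so there is no in-paper argument to compare against. Assessed on its own terms, your sketch has a genuine gap at its foundation. You assert that $C*D=0$ is equivalent to ``$u(\Sigma\setminus\Gamma)\cap v(\Sigma'\setminus\Gamma')=\emptyset$ together with the vanishing of every local asymptotic intersection index.'' But Theorem~\ref{t:intpositivity} gives $C*D = \inum(\tl u,\tl v)+\delta_\infty(C,D)$, where $\inum(\tl u,\tl v)$ counts intersections of the maps into $\R\times M$, not of the projections $u,v$ into $M$. Disjointness of the projections is a consequence of $C*D=0$ only after an additional homotopy-invariance argument (perturb $D$ by an $\R$-shift), and in any case it is not one of the conditions listed in (2) or (3). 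What actually appears in (2a)--(2b) and (3a)--(3b) is avoidance of the \emph{asymptotic orbits} of the other curve, which cannot be read off from the coarse decomposition $\inum + \delta_\infty$: a separate computation is needed to isolate these contributions inside the relative intersection number $i^\Phi$. The route in \cite{siefring2011} goes through a refinement that rewrites $C*D$ explicitly as a sum of (i) intersections of $u$ with the positive orbits of $v$, (ii) intersections of $v$ with the negative orbits of $u$, (iii) winding defects at same-sign puncture pairs at shared orbits, and (iv) contributions at mixed-sign pairs; each summand is nonnegative, and equivalence of (1), (2), (3) falls out by forcing each to vanish and then applying the Conley--Zehnder iteration formulas. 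Your decomposition omits the step that peels off the orbit-avoidance terms.

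The direct implication $(2)\Rightarrow(3)$ via pairing with trivial cylinders is also not established. You would need $C*(\R\times\gamma)=0$ for $\gamma$ a negative asymptotic limit of $v$, but condition (2) says nothing about the end behavior of $u$ near such a $\gamma$ when $\gamma$ is not already an orbit of $u$, and when it is, the correction terms at the two punctures of $\R\times\gamma$ need not cancel just from (2c). The clean logical structure is $(1)\Leftrightarrow(2)$ and $(1)\Leftrightarrow(3)$, each via the refined positivity formula, with $(2)\Leftrightarrow(3)$ as a corollary; attempting $(2)\Rightarrow(3)$ without first re-deriving (1) is the hard way around. The parts of your sketch that invoke Theorem~\ref{t:relative-asymptotics}, Lemma~\ref{l:operator-spectrum}, and Lemma~\ref{l:cz-iteration} to translate vanishing local asymptotic indices into extremal winding and to sort out the elliptic / odd-hyperbolic / even dichotomy are pointing in the right direction; what is missing is the correct bookkeeping identity for $C*D$ that makes those translations add up to the theorem.
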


It is observed in the discussion following Corollary 5.9 in \cite{siefring2011} 
that if holomorphic intersection number
of two connected curves vanishes, then the projections
of those curves to the $3$-manifold are either disjoint or identical.
Indeed, assume that the projections of $C$ and $D$ have neither identical nor disjoint image.
Then, arguing as in \cite{hwz:prop2}, an intersection point between the projections can be
seen as an intersection between one curve $C$ and an $\R$-shift of the other
$c\cdot D$.  Thus $C*(c\cdot D)>0$ by Theorem \ref{t:intpositivity}.  But
homotopy invariance of the $*$-product from Theorem \ref{t:hin-prop} then tells us the
$C*D=C*(c\cdot D)>0$, and we can conclude that $C*D=0$ if and only if the projections of
$C$ and $D$ to $M$ have either identical or disjoint image.
While the converse of this statement is not true,
a fairly complete set of necessary and sufficient conditions
for the projections of two curves to the $3$-manifold to not intersect is given in
\cite[Theorem 2.4/5.12]{siefring2011}.
We recall that theorem here with appropriate adjustments to the notation and conventions.

\begin{theorem}\label{t:no-isect}
Let $[\Sigma, j, \Gamma, \tl u=(a, u)]$ and $[\Sigma', j', \Gamma', \tl v=(b, v)]\in\M(\lambda, J)$
be pseudoholomorphic curves,
and assume that no component of $\tl u$ or $\tl v$ lies in a trivial cylinder,
and that the projected curves $u$ and $v$ do not have identical image on any component of their domains.
Then the following are equivalent:
\begin{enumerate}
\item The projected curves $u$ and $v$ do not intersect.

\item\label{i:no-isect-2} All of the following hold:
     \begin{enumerate}
     \item The map $u$ does not intersect any of the positive asymptotic limits of $v$.
     \item The map $v$ does not intersect any of the negative asymptotic limits of $u$.
     \item If $\gamma$ is a periodic orbit so that at $z\in\Gamma$, $\tl u$ is asymptotic
     to $\gamma^{m_z}$ and at $w\in\Gamma'$, $\tl v$ is asymptotic to $\gamma^{m_w}$, then:
          \begin{enumerate}
          \item If $z$ and $w$  are either both positive punctures or both negative punctures then
          \[
          \tfrac{\winfty(\tl u; z)}{m_z}\ge\tfrac{\winfty(\tl v; w)}{m_w}.
          \]
          \item If $z$ is a negative puncture and $w$ is a positive puncture then
          \[
          \tfrac{\winfty^\Phi(\tl u; z)}{m_z}=\tfrac{-\fl{-\mu^\Phi(\gamma^{m_z})/2}}{m_z}
          =\tfrac{\fl{\mu^\Phi(\gamma^{m_w})/2}}{m_w}=\tfrac{\winfty^\Phi(\tl v; w)}{m_w}
          \]
          (this is only possible if $\gamma^{m_z}$ and $\gamma^{m_w}$ are both even orbits).
    \end{enumerate}
     \end{enumerate}
\item\label{i:no-isect-3} All of the following hold:
     \begin{enumerate}
     \item The map $u$ does not intersect any of the asymptotic limits of $v$.
     \item The map $v$ does not intersect any of the asymptotic limits of $u$.
     \item If $\gamma$ is a periodic orbit so that at $z\in\Gamma$, $\tl u$ is asymptotic
     to $\gamma^{m_z}$ and at $w\in\Gamma'$, $\tl v$ is asymptotic to $\gamma^{m_w}$, then
          \[
          \tfrac{\winfty(\tl u; z)}{m_z} = \tfrac{\winfty(\tl v; w)}{m_w}.
          \]
     \end{enumerate}
\end{enumerate}
\end{theorem}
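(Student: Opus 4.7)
The plan is to exploit the holomorphic intersection number and positivity of intersections, following the strategy outlined in the discussion preceding the theorem. The starting observation is that an intersection $u(z)=v(w)$ in $M$ corresponds to an intersection of $C$ with the translated curve $c\cdot D$ in $\R\times M$, where $c=b(w)-a(z)$. Homotopy invariance of the $*$-product (Theorem \ref{t:hin-prop}(1)) gives $C*(c\cdot D)=C*D$ for every $c\in\R$, and Theorem \ref{t:intpositivity} then yields
\[
C*D=\inum(C,c\cdot D)+\delta_{\infty}(C,c\cdot D)
\]
for all $c$. Consequently, the projections $u$ and $v$ fail to intersect precisely when $\inum(C,c\cdot D)=0$ for every $c\in\R$.

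For the implication $(1)\Rightarrow(3)$, I would argue by contraposition. If $u$ passes transversally through an asymptotic limit $\gamma$ of $\tl v$ at some point $p$, then because $v$ converges to a cylinder over $\gamma$ at the corresponding puncture, translations $c\cdot D$ for $c$ in an appropriate range meet $u$ near $p$, so the projections intersect; this yields (3)(a)--(b). For (3)(c), whenever $\tl u$ has a puncture $z$ asymptotic to $\gamma^{m_{z}}$ and $\tl v$ has a puncture $w$ asymptotic to $\gamma^{m_{w}}$, the relative asymptotic formula (Theorem \ref{t:relative-asymptotics}) applied to suitably reparametrized asymptotic representatives of the two ends expresses their difference in the form $e^{\sigma s}(e(t)+r(s,t))$; the winding of $e$ with respect to $\Phi$ controls whether $C$ and $c\cdot D$ intersect near $\gamma$ for nearby $c$, and a mismatch between $\winfty^{\Phi}(\tl u;z)/m_{z}$ and $\winfty^{\Phi}(\tl v;w)/m_{w}$ forces transverse intersections to appear for some shift. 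Combined with the constraints of Theorem \ref{t:wind-infinity-bound}, this forces the windings to be extremal and equal as in (3)(c). Conversely, for $(3)\Rightarrow(1)$, conditions (3)(a)--(c) place us exactly in the setting of Theorem \ref{t:gin-zero}, applied to the pair $C$ and $c\cdot D$ for every $c$; one checks that $C*(c\cdot D)=\delta_{\infty}(C,c\cdot D)$ for every $c$, so positivity forces $\inum(C,c\cdot D)=0$ for all $c$, ruling out intersections of the projections.

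The equivalence $(2)\Leftrightarrow(3)$ is essentially bookkeeping. Applying the same-sign winding inequality of (2)(c)(i) with the roles of $C$ and $D$ exchanged yields the reverse inequality, so equality holds and the extremal values of (3)(c)(i) are forced via Theorem \ref{t:wind-infinity-bound}; the strengthened missing-limit conditions of (3)(a)--(b) then follow from the winding equalities together with the relative asymptotic formula. The main obstacle in carrying out this plan is the delicate sign and trivialization bookkeeping at common asymptotic limits, particularly because the conventions of \cite{siefring2011} traverse negative ends against the Reeb flow (see the footnote to Lemma \ref{l:bidirectional-orbits} above); once the floor-function identities and sign flips are translated into the conventions of the present paper, the argument reduces to that of \cite[Theorems 2.4 and 5.12]{siefring2011}.
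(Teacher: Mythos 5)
The paper does not actually prove this statement: it is quoted verbatim (modulo notational adjustments) from \cite[Theorem 2.4/5.12]{siefring2011}, as the surrounding text makes explicit. Your proposal attempts a genuine reconstruction, and the opening reduction --- observing that $u,v$ intersect iff $\tl u$ meets some $\R$-shift $c\cdot \tl v$, so by homotopy invariance and Theorem \ref{t:intpositivity} non-intersection of the projections is equivalent to $\inum(C,c\cdot D)=0$ for every $c\in\R$ --- is the correct starting point and matches the strategy of that reference.

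There is, however, a genuine error in the direction $(3)\Rightarrow(1)$. You claim that conditions (3)(a)--(c) ``place us exactly in the setting of Theorem \ref{t:gin-zero}.'' They do not. Theorem \ref{t:gin-zero}(2)(c) imposes, in addition to the winding comparison, that the windings $\winfty^{\Phi}$ at the punctures are \emph{extremal}, i.e.\ equal to $\fl{\pm\mu^{\Phi}/2}$, and its conclusion is the much stronger statement $C*D=0$. Condition (3)(c) of Theorem \ref{t:no-isect} only matches the normalized windings and does not force extremality. Indeed the whole point of having two separate theorems, emphasized in the paper's discussion following Theorem \ref{t:gin-zero}, is that $C*D=0$ implies disjoint (or identical) projections but \emph{the converse fails}: one can have disjoint projections with $C*D>0$. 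Passing through $C*D=0$ therefore cannot be the route for $(3)\Rightarrow(1)$. The actual argument in \cite{siefring2011} shows directly that conditions (3) force $\inum(C,c\cdot D)=0$ for every $c$ by controlling how $\inum$ and $\delta_{\infty}$ can trade off under the shift, without ever concluding $C*D=0$.

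A secondary issue is the $(2)\Leftrightarrow(3)$ ``bookkeeping'': your role-swap of $C$ and $D$ to upgrade the inequality in (2)(c)(i) to equality is not available from (2) alone, since condition (2) is manifestly asymmetric in $C$ and $D$ (compare (2)(a) with (2)(b)). The swapped version of (2) must be deduced by passing through the symmetric statement (1), so this step cannot be carried out independently of establishing $(1)\Leftrightarrow(2)$ first. The core asymptotic intersection analysis underlying $(1)\Rightarrow(3)$(c) and $(3)\Rightarrow(1)$ is also left as a pointer to \cite{siefring2011}, which is reasonable given that the paper itself imports the result, but it means the proposal does not stand alone as a proof.
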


The following corollary will be of use in the proof of our main theorem.

\begin{corollary}\label{c:no-isect-gin-zero}
Let $C=[\Sigma, j, \Gamma, \tl u=(a, u)]$
and $D=[\Sigma', j', \Gamma', \tl v=(b, v)]\in\M(\lambda, J)$
be connected pseudoholomorphic curves.
Assume that $C$ or $D$ are not trivial cylinders, that
$C$ and $D$ have distinct projections to $M$, and that
at every puncture of $C$ and $D$ the winding bound from
\eqref{e:wind-infinity-inequality} is achieved.
Then $C*D=0$ if and only if the projections of $C$ and $D$ to $M$
are disjoint.
\end{corollary}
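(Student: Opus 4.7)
The plan is to deduce the corollary by combining Theorem \ref{t:no-isect} with Theorem \ref{t:gin-zero}, using the winding-bound assumption to bridge the gap between their characterizations. The two theorems give structurally similar necessary-and-sufficient conditions for ``projections disjoint'' and ``$C*D=0$'', and the difference between them is precisely the ``extremality'' condition that each winding number equals its upper bound from \eqref{e:wind-infinity-inequality}.

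For the forward implication, I would invoke the observation recalled immediately after Theorem \ref{t:gin-zero}: for connected curves, the vanishing of the $*$-product forces the projections to $M$ to be either disjoint or identical. Since by hypothesis the projections are distinct, they must be disjoint.

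For the reverse implication, I would start from the assumption that the projections of $C$ and $D$ are disjoint and apply the implication (1)$\Rightarrow$(3) of Theorem \ref{t:no-isect}. This yields that $u$ meets none of the asymptotic limits of $\tl v$, that $v$ meets none of the asymptotic limits of $\tl u$, and that whenever a simply-covered orbit $\gamma$ arises as the underlying orbit at a puncture $z\in\Gamma$ of $\tl u$ (with multiplicity $m_z$) and at a puncture $w\in\Gamma'$ of $\tl v$ (with multiplicity $m_w$), the winding ratios agree:
\[
\winfty^\Phi(\tl u;z)/m_z = \winfty^\Phi(\tl v;w)/m_w.
\]
I would then verify condition (3) of Theorem \ref{t:gin-zero}, which by that theorem gives $C*D=0$. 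Conditions (3)(a) and (3)(b) there are the first two facts just listed, and the extremality equations in (3)(c), namely $\pm_z\winfty^\Phi(\tl u;z)=\fl{\pm_z\mu^\Phi(\gamma^{m_z})/2}$ and its analogue at $w$, are exactly the hypothesis that the winding bound \eqref{e:wind-infinity-inequality} is attained at every puncture of $C$ and $D$.

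What remains is to check the auxiliary clause (3)(c)(i) of Theorem \ref{t:gin-zero} when $\gamma$ is elliptic and (3)(c)(ii) when $\gamma$ is odd hyperbolic (for even orbits no further condition is imposed). Both are parity computations using Lemma \ref{l:cz-iteration}. When $\gamma$ is elliptic one has $\mu^\Phi(\gamma^m)=2\fl{m\theta}+1$ for some irrational $\theta$; substituting into the extremality and equal-ratio equations and assuming $z$ and $w$ have opposite signs yields an equation of the form $m_z\{m_w\theta\}=m_w(\{m_z\theta\}-1)$, which is impossible since the left side is nonnegative and the right side strictly negative by irrationality of $\theta$. Hence $z$ and $w$ must have the same sign, and the ratio equality in (3)(c)(i) is then immediate from extremality. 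When $\gamma$ is odd hyperbolic one has $\mu^\Phi(\gamma^m)=m\mu^\Phi(\gamma)$ with $\mu^\Phi(\gamma)$ odd, and a short case analysis on the parities of $m_z,m_w$ and on the signs $\pm_z,\pm_w$ forces either both multiplicities even or else the punctures to share a sign with $m_z=m_w$, which is (3)(c)(ii). I expect the odd-hyperbolic case analysis to be the most tedious step, but it is essentially the same computation carried out inside the proof of Lemma \ref{l:bidirectional-orbits}, so I foresee no substantive obstacle.
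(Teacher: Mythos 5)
Your proof is correct, and both you and the paper deduce the corollary by combining Theorem \ref{t:no-isect} with Theorem \ref{t:gin-zero} via the extremal-winding hypothesis, but you choose a slightly more laborious bridge between them. The paper compares condition \eqref{i:no-isect-2} of Theorem \ref{t:no-isect} against condition \eqref{i:gin-zero-2} of Theorem \ref{t:gin-zero}: once the winding bound is saturated at every puncture, the two lists become literally identical clause by clause (the extra displayed equalities in \eqref{i:gin-zero-2} are exactly the extremality hypothesis, and the winding-ratio inequalities then transcribe directly), so the equivalence of each with its respective condition (1) gives both directions at once with no case analysis. You instead route through condition \eqref{i:no-isect-3} and condition \eqref{i:gin-zero-3}, and because \eqref{i:gin-zero-3} carries the extra elliptic and odd-hyperbolic clauses (3)(c)(i)--(ii) that \eqref{i:no-isect-3} does not, you are forced into the iteration-formula parity computations; these are correct (your elliptic contradiction $m_w(\{m_z\theta\}-1)=m_z\{m_w\theta\}$ and the odd-hyperbolic parity cases both check out, mirroring the computation inside Lemma \ref{l:bidirectional-orbits}), but they are avoidable. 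Your forward implication via the remark following Theorem \ref{t:gin-zero} together with the distinct-projections hypothesis is also a perfectly good alternative to reading the forward direction off the matched condition (2)s.
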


\begin{proof}
In the event that the winding bound from \eqref{e:wind-infinity-inequality} is achieved
at each puncture,
i.e.\ that
\[
\pm_{z}\winfty^{\Phi}(\tl u; z)=\fl{\pm_{z}\mu^{\Phi}(\gamma^{m_{z}})/2}
\]
for all $z\in\Gamma$ and
\[
\pm_{w}\winfty^{\Phi}(\tl v; w)=\fl{\pm_{w}\mu^{\Phi}(\gamma^{m_{w}})/2}
\]
for all $w\in\Gamma'$, then
condition \eqref{i:no-isect-2} in Theorem \ref{t:no-isect} and
condition \eqref{i:gin-zero-2} in Theorem \ref{t:gin-zero}
reduce to the same thing.
Thus the two theorems together imply that $C*D=0$ if and only if
the projection of $C$ and $D$ to $M$ are disjoint.
\end{proof}

In addition to Theorem \ref{t:intpositivity}, a second fact which motivates
the definition of the holomorphic
intersection number is that, like the homological self-intersection number of a closed curve,
the holomorphic self-intersection number identifies
those relative homotopy classes of simple curves
(i.e.\ those that don't factor through a branched cover) which
must be embedded \cite[Theorem 2.3]{siefring2011}.
This result can be combined with Theorem \ref{t:no-isect}
and some results
and techniques
from \cite{hwz:prop2} to state
a fairly exhaustive set of necessary and sufficient
conditions of the vanishing of the holomorphic self-intersection number of a curve.
We will
summarize the information we need from
this result below after recalling the definitions of some relevant invariants
associated to a punctured pseudoholomorphic curve.

Let $C=[\Sigma, j, \Gamma, a, u]\in\M(\lambda, J)$ be a pseudoholomorphic curve,
and assume that at $z\in\Gamma$ the map $u$ is asymptotic to a cover
of the periodic orbit $\gamma_{z}$.
Let $\Phi$ denote a trivialization of $\xi$ in a neighborhood of each periodic orbit $\gamma$
appearing as an asymptotic limit of $C$, and note that $\Phi$ induces a trivialization
of $u^{*}\xi$ in a neighborhood of each puncture.
We then define the \emph{total Conley--Zehnder index} of the curve
$C$ to be
\begin{equation}\label{e:total-cz-index}
\mu(C)=2c^{\Phi}_{1}(u^{*}\xi)+\sum_{z\in\Gamma^{+}}\mu^{\Phi}(u; z)-\sum_{z\in\Gamma^{-}}\mu^{\Phi}(u; z)
\end{equation}
where $c_{1}^{\Phi}(u^{*}\xi)$ is the relative first Chern number, defined to be
algebraic count of zeroes of a section of $u^{*}\xi$
which is nonzero and constant in the trivialization $\Phi$ in a neighborhood of each puncture
(see \cite[Section 2.2]{hutchings2002} or \cite[Section 4.2.1]{siefring2011} for more details
on the properties of the relative first chern number).
As a result of the respective change-of-trivialization formulas for the Conley--Zehnder index
and $c_{1}^{\Phi}(u^{*}\xi)$, it follows that the total Conley--Zehnder index of a curve is independent
of any choice of trivialization.
We then the define the \emph{index} $\ind(C)$ of $C$ by
\begin{equation}\label{e:index-formula}
\ind(C)=\mu(C)-\chi(\Sigma)+\#\Gamma
\end{equation}
where $\chi(\Sigma)$ is the Euler characteristic of $\Sigma$ and $\#\Gamma$ is the number of
puncture of the curve.  This index represents the Fredholm index of
the operator describing the local deformations of the curve $C$ 
(the relevant facts from \cite{hwz:prop3}
are reviewed in Section \ref{s:fredholm} below).

We now have the following theorem, summarizing relevant information from
\cite[Corollary 5.17]{siefring2011} and the discussion thereafter.

\begin{theorem}\label{t:self-gin-zero}
Let
$C=[\Sigma, j, \Gamma, \tl u=(a, u)]\in\M(\lambda, J)$
be a simple, connected pseudoholomorphic curve,
and assume that
$C*C=0$ and that
$C$ does not lie in a trivial cylinder.
Then:
\begin{enumerate}
\item The map $\tl u:\Sigma\setminus\Gamma\to\R\times M$ is an embedding.

\item The map $u:\Sigma\setminus\Gamma\to M$ is embedding, everywhere transverse to
Reeb flow, which is disjoint from all the asymptotic limits of $u$.

\item For each $z\in\Gamma$, the bound from \eqref{e:wind-infinity-inequality} is achieved, i.e.\
\[
\pm_{z}\winfty^{\Phi}(\tl u; z)= \fl{\pm_{z}\mu^{\Phi}(\tl u; z)/2}
\]
where $\pm_{z}$ denotes the sign of the puncture $z$.

\item The index $\ind(C)$ satisfies
\[
\ind(C)-\chi(\Sigma)+\#\Gamma_{even}=0
\]
where $\chi(\Sigma)$ is the Euler characteristic of the surface $\Sigma$ and $\#\Gamma_{even}$
is the number of punctures of $C$ asymptotic to even periodic orbits.
\end{enumerate}
\end{theorem}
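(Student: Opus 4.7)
The plan is to derive the four conclusions separately by feeding $C*C=0$ into the intersection-theoretic machinery of Theorems~\ref{t:intpositivity}, \ref{t:gin-zero}, and \ref{t:no-isect}, using the hypothesis that $C$ is simple and not a trivial cylinder to exclude degenerate behavior. Parts~(1) and~(2) will come from combining positivity of intersection with an analysis of $C$ against its own $\R$-translates; part~(3) is a direct reading of the winding characterization of vanishing $*$-product in Theorem~\ref{t:gin-zero}; part~(4) becomes an adjunction-style bookkeeping computation once the winding numbers are pinned down by~(3).

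For~(1), I apply the $*$-product analogue of automatic embeddedness for simple closed pseudoholomorphic curves with vanishing self-intersection (proved as Theorem~2.3 of~\cite{siefring2011}) to conclude that $\tl u$ is an embedding. For~(2), Theorem~\ref{t:hin-prop}(1) gives $C*(c\cdot C) = C*C = 0$ for every $c\in\R$, and for $c\neq 0$ the curves $C$ and $c\cdot C$ share no connected components (a shared component would force $C$ into a trivial cylinder, contrary to hypothesis). Theorem~\ref{t:intpositivity} then yields $\inum(\tl u, c\cdot\tl u) = 0$ and $\delta_{\infty}(C, c\cdot C) = 0$ for every $c\neq 0$. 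If $u(p)=u(q)$ for some $p\neq q$, then $\tl u(q)$ lies on the $(a(q)-a(p))$-translate of $\tl u$, and local positivity produces an intersection with that translate contradicting the above vanishing; similarly, if the tangent plane of $\tl u$ at some $p$ contained $\partial_{a}$, then since $\tl J\partial_{a}=\X$ this plane would equal $\vspan(\partial_{a}, \X)$, tangent to the Reeb cylinder through $\tl u(p)$, and local positivity would yield intersections with all sufficiently small $\R$-translates. Hence $u$ is injective and transverse to $\X$. The disjointness of $u$ from its asymptotic limits is then conditions~(a)--(b) of item~(2) in Theorem~\ref{t:gin-zero} applied to $(C,C)$.

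For~(3), I apply Theorem~\ref{t:gin-zero} to $C*C=0$, pairing each puncture $z$ of the first copy with the same puncture of the second. When $z$ is positive, condition~2(c)(i) forces $\winfty^{\Phi}(\tl u; z) = \fl{\mu^{\Phi}(\gamma_{z}^{m_{z}})/2}$; when $z$ is negative, condition~2(c)(ii) forces $-\winfty^{\Phi}(\tl u; z) = \fl{-\mu^{\Phi}(\gamma_{z}^{m_{z}})/2}$. In both cases the inequality \eqref{e:wind-infinity-inequality} is saturated, which is~(3). For~(4), Theorem~\ref{t:spectral-conley-zehnder} combined with~(3) yields $\pm_{z}\mu^{\Phi}(\tl u; z) = 2\bp{\pm_{z}\winfty^{\Phi}(\tl u;z)} + p(z)$ at each puncture, with $p(z)\in\br{0,1}$ equal to $1$ at odd punctures and $0$ at even ones. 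Substituting into \eqref{e:total-cz-index}--\eqref{e:index-formula} and simplifying reduces the identity in~(4) to
\[
c_{1}^{\Phi}(u^{*}\xi) + \sum_{z\in\Gamma} \pm_{z}\winfty^{\Phi}(\tl u;z) + \#\Gamma = \chi(\Sigma).
\]
This adjunction-type identity becomes available precisely because, by~(2), $u$ is embedded and transverse to $\X$: the map $\pi_{\xi}\circ du : T\Sigma \to u^{*}\xi$ is then a bundle isomorphism, so the two bundles have equal relative first Chern numbers with respect to any compatible choice of trivializations, and a Poincar\'e--Hopf calculation expressing $\chi(\Sigma)$ as the count of zeros of a section of $T\Sigma$ asymptotic to a $\Phi$-constant section of $u^{*}\xi$ near each puncture picks up exactly the correction $\#\Gamma + \sum_{z}\pm_{z}\winfty^{\Phi}$.

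The main obstacle I anticipate is~(4): while (1)--(3) are fairly direct consequences of results already quoted in the excerpt, the adjunction identity above requires careful bookkeeping of framings near the punctures via the isomorphism $\pi_{\xi}\circ du$, and the embeddedness and transversality delivered by~(2) are essential for this identification to make sense in the first place.
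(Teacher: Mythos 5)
The paper does not prove this theorem; it is introduced as a restatement of \cite[Corollary 5.17]{siefring2011} and the discussion thereafter, so there is no in-paper proof to compare against. Your reconstruction from the other intersection-theoretic results quoted in the paper is essentially correct and follows the same chain of reasoning as the reference. Two remarks on the details. For~(2), it is cleaner to proceed through Theorem~\ref{t:embedded-projection} rather than an ad hoc positivity argument: $C*C=0$ and homotopy invariance give $C*(c\cdot C)=0$ for all $c$; for $c\neq0$ the translate shares no component with $C$ (else $C$ would lie in a trivial cylinder), so Theorem~\ref{t:intpositivity} yields $\inum(\tl u, \tl u_c)=0$, which is condition~\eqref{i:embedded-projection-2} of Theorem~\ref{t:embedded-projection}, and that theorem then delivers all of~(2) --- embeddedness of $u$, transversality to $\X$, and disjointness from the asymptotic limits --- in one stroke. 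The local-positivity sketch you give is re-deriving part of that equivalence. For~(4), the identity you reduce to is exactly the vanishing of the zero count $\windpi(u)$ of $\pi_\xi\circ du$ from \cite{hwz:prop2}, which characterizes immersions transverse to the Reeb flow; the sentence asserting that $T\Sigma$ and $u^*\xi$ have equal relative first Chern numbers ``with respect to any compatible choice of trivializations'' glosses over the winding discrepancy between the cylindrical framing of $T\Sigma$ and the pullback of $\Phi$ through $\pi_\xi\circ du$, and that discrepancy is precisely where the correction $\sum_{z}\pm_z\winfty^{\Phi}(\tl u;z)$ enters. The formula you land on is right, but this change-of-trivialization computation needs to be carried out rather than asserted. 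Finally, note that your derivation of~(4) consumes~(3), so the four conclusions are logically interdependent rather than parallel.
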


We recall from the discussion following Corollary 5.17 in \cite{siefring2011}
that  for a connected curve $C=[\Sigma, j, \Gamma, a, u]$
satisfying the hypotheses of the previous result,
if $C*C=0$ then the projection of the curve to $M$ is an embedding.
Indeed, the result shows that $u$ must be an immersion which doesn't intersect
any of its asymptotic limits.
Moreover, since for the $\R$-translates  $c\cdot C=[\Sigma, j, \Gamma, a+c, u]$, we have
\[
0\le\inum(C, c\cdot C)\le C*(c\cdot C)=C*C=0,
\]
it follows from positivity of intersections
that $\tl u$ doesn't intersect any of its $\R$-translates, and hence that the projection $u$ is injective.
As observed in \cite{hwz:prop2}, the asymptotic behavior of
the curve then allows us to conclude that
$u$ is an embedding.
As with the discussion of intersections of curves with distinct projections to the three-manifold,
the converse is not true: it is possible for curves to project
to embeddings in $M$ but have positive self-intersection number.
However, various sets of necessary and sufficient conditions for the projection of a curve
of $M$ to be embedded are given in \cite[Theorem 2.6/5.20]{siefring2011}.
We quote that result
here, making appropriate adjustments to notation and conventions.

\begin{theorem}\label{t:embedded-projection}
Let $[\Sigma, j, \Gamma, \tl u=(a,u)]\in\M(\lambda, J)$
be a connected, simple pseudoholomorphic curve, and assume
that $\tl u$ does not have image contained in a trivial cylinder.
Then the following are equivalent:
\begin{enumerate}
\item The projected map $u:\Sigma\setminus\Gamma\to M$ is an embedding.

\item\label{i:embedded-projection-2} The algebraic intersection number $\inum(\tl u, \tl u_{c})$ between $\tl u$ and an $\R$-translate
     $\tl u_{c}=(a+c, u)$ is zero for all $c\in\R\setminus\br{0}$.

\item\label{i:embedded-projection-3} All of the following hold:
     \begin{enumerate}
          \item $u$ does not intersect any of its asymptotic limits.
          \item If $\gamma$ is a periodic orbit so that $u$ is asymptotic at $z\in\Gamma$ to $\gamma^{m_z}$
          and $u$ is asymptotic at $w\in\Gamma$ to $\gamma^{m_w}$, then
          \[
          \tfrac{\winfty(\tl u; z)}{m_z}=\tfrac{\winfty(\tl u; w)}{m_w}.
          \]
     \end{enumerate}

\item\label{i:embedded-projection-4} All of the following hold:
     \begin{enumerate}
          \item The map $\tl u$ is an embedding.
          \item The projected map $u$ is an immersion which is everywhere transverse to $\X$
          \item For each $z\in \Gamma$, we have
          \[
          \gcd(m_z, \winfty(\tl u; z))=1.
          \]
          \item If $\gamma$ is a simple periodic orbit so that
          $u$ is asymptotic at $z$ to $\gamma^{m_z}$,
          $u$ is asymptotic at $w\ne z$ to $\gamma^{m_w}$, and the punctures have the same signs,
          then the
          relative asymptotic intersection number
          (see Lemma 3.19 and discussion following in \cite{siefring2011} for definition)
          of the ends $[\tl u; z]$ and $[\tl u; w]$ satisfies
          \[
          \ain^\Phi([\tl u; z], [\tl u; w])=-m_{z}m_{w}\max\br{\tfrac{\pm_{z}\winfty^\Phi(\tl u; z)}{m_z}, \tfrac{\pm_{w}\winfty^\Phi(\tl u; w)}{m_w}}.
          \]
     \end{enumerate}
\end{enumerate}
\end{theorem}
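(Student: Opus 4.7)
The plan is to establish the four equivalences as a cycle of implications, with the relative asymptotic formula (Theorem \ref{t:relative-asymptotics}), positivity of intersection (Theorem \ref{t:intpositivity}), and Theorem \ref{t:no-isect} applied to the pair $(C, c\cdot C)$ providing the main tools. Since the paper has already outlined the $C*C=0$ case (in the discussion immediately before the theorem statement), the goal here is really to extract the sharper embedding criteria when $C*C$ is allowed to be nonzero but the algebraic count against $\R$-translates vanishes.

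First I would prove (1) $\Leftrightarrow$ (2). The forward implication is immediate: if $\tl u(z_1)=\tl u_c(z_2)$ then $u(z_1)=u(z_2)$ forces $z_1=z_2$ by injectivity of $u$, whence $a(z_1)=a(z_2)+c$ contradicts $c\ne 0$. For the reverse, since $\tl u$ is simple, it and its translate $\tl u_c$ have no common components for $c\ne 0$, so positivity of intersection gives that $\tl u$ and $\tl u_c$ have disjoint images in $\R\times M$ for every $c\ne 0$. Any failure of injectivity of $u$, say $u(z_1)=u(z_2)$ with $z_1\ne z_2$, would produce an intersection between $\tl u$ and $\tl u_c$ at $c=a(z_1)-a(z_2)$ (the case $c=0$ would contradict simplicity of $\tl u$). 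A critical point of $du$ would similarly persist to intersections with all nearby translates, since at such a point the tangent plane must contain the Reeb direction. Thus $u$ is an injective immersion; the asymptotic formula then rules out limits of sequences in the image accumulating at non-punctures, promoting $u$ to an embedding.

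Next I would establish (2) $\Leftrightarrow$ (3) by specializing Theorem \ref{t:no-isect} to the pair $(C,c\cdot C)$, noting that although these maps share their projection to $M$, the criterion for no intersections between $\tl u$ and $\tl u_c$ in $\R\times M$ translates, via the asymptotic formula, into exactly the conditions in (3): non-intersection of $u$ with each of its asymptotic limits (i.e.\ trivial cylinders over them), together with equality $\winfty(\tl u;z)/m_z=\winfty(\tl u;w)/m_w$ at every pair of punctures on the same underlying simple orbit. Equality in the winding ratio comes from running the $\geq$-inequality obtained from condition~(c)(i) of Theorem \ref{t:no-isect} in both orderings $(z,w)$ and $(w,z)$; the mixed-sign case involving even orbits falls out of condition~(c)(ii).

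Finally, for (1) $\Leftrightarrow$ (4) the subtlety is the asymptotic bookkeeping. Immersion and transversality to $\X$ for embedded $u$ follow from the fact that for a $\tl J$-holomorphic map with $\tl J\partial_a=\X$, the tangent plane of $\tl u$ is $\tl J$-invariant, so $u$ is immersed exactly where the tangent plane is transverse to $\R\partial_a$, equivalently where the $M$-projection is transverse to $\X$. The gcd condition at a single puncture is the asymptotic analogue of injectivity of a single end: by Theorem \ref{t:relative-asymptotics} applied to $\tl u$ at $z$ with itself reparametrized by the deck transformation of the $m_z$-fold cover, a common factor $k>1$ of $m_z$ and $\winfty^\Phi(\tl u;z)$ forces self-intersections of the end near infinity, contradicting embeddedness of $\tl u$. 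The relative asymptotic intersection number condition for two punctures on the same orbit is the corresponding refinement for pairs of distinct ends, and is what Theorem \ref{t:relative-asymptotics} directly computes once one combines it with the definition of $\ain^\Phi$ as described in \cite{siefring2011}. The main obstacle is precisely this last step: translating algebraic intersection data between $\tl u$ and $\tl u_c$ into the refined invariants $\gcd(m_z,\winfty)$ and $\ain^\Phi$, which requires that the relative asymptotic formula behave well under the $\R$-action and under change of trivialization, and that one keep careful track of winding contributions against the trivialization $\Phi$ at each puncture.
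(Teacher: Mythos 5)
The paper does not prove this theorem: it is quoted verbatim (with notational adjustments) from \cite{siefring2011}, where it appears as Theorem 2.6/5.20. So there is no ``paper's own proof'' to compare against, and your sketch must stand on its own.

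Your outline of (1) $\Leftrightarrow$ (2) is sound in spirit, and the tools you name (the relative asymptotic formula, positivity of intersections, the local structure of tangencies to $\X$) are the right ones. But the proposed proof of (2) $\Leftrightarrow$ (3) has a genuine gap. You say you will ``specialize Theorem \ref{t:no-isect} to the pair $(C, c\cdot C)$,'' but Theorem \ref{t:no-isect} explicitly requires that the projected curves ``do not have identical image on any component of their domains.'' When $\tl v = \tl u_c = (a+c, u)$, the projections coincide identically; the hypothesis fails and the theorem simply does not apply. You flag this (``although these maps share their projection to $M$'') but then assert that the criterion ``translates, via the asymptotic formula, into exactly the conditions in (3).'' That translation is not a corollary of anything you've established---it is essentially the whole content of the proof in \cite{siefring2011}. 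In particular, controlling how intersections between $\tl u$ and $\tl u_c$ appear and disappear as $c$ varies requires redoing the relative-asymptotic analysis for pairs of ends of the \emph{same} map (including ends asymptotic to covers of a common orbit with arbitrary multiplicities), which is what produces both the winding-ratio equalities in (3) and the refined $\gcd$/$\ain^\Phi$ data in (4). The deck-transformation argument you sketch for the $\gcd$ condition is the correct idea, but to close the loop between (2), (3), and (4) you need to build that argument directly from Theorem \ref{t:relative-asymptotics} and the definition of $\ain^\Phi$, rather than routing through Theorem \ref{t:no-isect}.
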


The following corollary will be of use in the proof of our main theorem.

\begin{corollary}\label{c:embedding-gin-zero}
Let $C=[\Sigma, j, \Gamma, \tl u=(a, u)]\in\M(\lambda, J)$
be a connected pseudoholomorphic curve.
Assume $C$  is not a trivial cylinder and that
at every puncture of $C$ the winding bound from
\eqref{e:wind-infinity-inequality} is achieved.
Then $C*C=0$ if and only if the 
map $u:\Sigma\setminus\Gamma\to M$ is an embedding.
\end{corollary}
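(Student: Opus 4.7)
The strategy is to parallel the proof of Corollary \ref{c:no-isect-gin-zero}, where the analogous equivalence for mutual intersection was established by observing that the conditions characterizing $C_1 * C_2 = 0$ (Theorem \ref{t:gin-zero}) and the conditions characterizing disjoint projections (Theorem \ref{t:no-isect}) become identical under the hypothesis that the winding bound is achieved at every puncture. In the present situation the corresponding pair of theorems is Theorem \ref{t:self-gin-zero} (for $C*C=0$) and Theorem \ref{t:embedded-projection} (for $u$ embedded).

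For the forward direction, Theorem \ref{t:self-gin-zero}(2) gives directly that $u$ is an embedding once $C$ is known to be simple. Simpleness of $C$ need not be hypothesized separately: if $C$ were a $k$-fold branched cover of some simple curve $D$ with $k\ge 2$, the achieved winding bound at every puncture of $C$ imposes relations (through the iteration formulas of Lemma \ref{l:cz-iteration} and the winding inequality \eqref{e:wind-infinity-inequality}) that rule out $k\ge 2$: for instance, at an elliptic puncture the identity $\ell\fl{m\theta}=\fl{\ell m\theta}$ between the winding of $C$ and that of $D$ would have to hold, which fails generically.

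For the reverse direction, assume $u$ is an embedding, so that $u$ is injective and $C$ is necessarily simple. Fix any $c\ne 0$; the translate $c\cdot C$ has the same $M$-projection as $C$, so injectivity of $u$ together with condition \eqref{i:embedded-projection-2} of Theorem \ref{t:embedded-projection} gives $\inum(\tl u,\tl u_c)=0$. Combining positivity of intersections (Theorem \ref{t:intpositivity}) with homotopy invariance of the $*$-product (Theorem \ref{t:hin-prop}(1)) yields
\[
C*C \;=\; C*(c\cdot C) \;=\; \inum(\tl u,\tl u_c)+\delta_\infty(C,c\cdot C) \;=\; \delta_\infty(C,c\cdot C),
\]
and it remains to show $\delta_\infty(C,c\cdot C)=0$.

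This last step is the main obstacle. The asymptotic intersection index decomposes as a nonnegative sum, with one contribution for each pair of same-sign punctures of $C$ and $c\cdot C$ asymptotic to covers of a common simple orbit; each summand measures the deviation of the actual asymptotic winding from the extremal winding combination appearing in the definition \eqref{e:gin-def} of the $*$-product. Since $C$ and $c\cdot C$ have identical $M$-projection, condition \eqref{i:embedded-projection-3}(b) of Theorem \ref{t:embedded-projection} gives that the winding ratios $\winfty^{\Phi}/m$ match at corresponding ends; combined with the standing hypothesis that the winding bound \eqref{e:wind-infinity-inequality} is achieved at every puncture, each summand of $\delta_\infty$ attains its extremal value and therefore vanishes. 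This gives $C*C=\delta_\infty(C,c\cdot C)=0$, completing the proof.
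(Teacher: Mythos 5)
Your overall instinct — parallel Corollary~\ref{c:no-isect-gin-zero} by matching the characterizations in a ``self-intersection'' pair of theorems against an ``embedded projection'' pair — is the right one, but the execution diverges from the paper's proof, and the reverse direction has a genuine gap. The paper's proof is a single observation: once the winding bound \eqref{e:wind-infinity-inequality} is saturated at every puncture, condition~\eqref{i:gin-zero-2} of Theorem~\ref{t:gin-zero} specialized to $\tl u=\tl v$ \emph{is} condition~\eqref{i:embedded-projection-3} of Theorem~\ref{t:embedded-projection}; the biconditional then follows by chaining the two equivalences. No computation involving $\delta_\infty$ is needed.

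Your forward direction is essentially equivalent (Theorem~\ref{t:self-gin-zero}(2) is just the simple-curve specialization of the same machinery), but the paragraph attempting to rule out multiple covers does not work as written. ``Fails generically'' is not a proof, and the corollary does not assume genericity; moreover, for elliptic orbits the identity $\ell\fl{m\theta}=\fl{\ell m\theta}$ \emph{can} hold for particular $\theta$, $m$, $\ell$, so the claimed contradiction is not forced by the hypotheses. In fact the paper is also silent on simpleness here — Theorem~\ref{t:embedded-projection} requires $C$ simple — and the honest resolution is that if $u$ is an embedding then $C$ is automatically simple (a branched cover of degree $\geq 2$ has a non-injective $M$-projection), so the ``$\Leftarrow$'' direction is unambiguous, and the ``$\Rightarrow$'' direction implicitly assumes $C$ simple (which is how the corollary is used in the paper, always in the ``$\Leftarrow$'' direction).

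Your reverse direction is where the real gap is. Reducing $C*C$ to $\delta_\infty(C, c\cdot C)$ via $\inum(\tl u, \tl u_c)=0$ is a correct and pleasant step, but the claim that ``each summand of $\delta_\infty$ attains its extremal value and therefore vanishes'' is an unsupported assertion. The paper nowhere states the decomposition of the total asymptotic intersection index nor its exact relationship to the extremal-winding condition; that analysis is exactly what \cite[Corollary~5.9]{siefring2011} (quoted here as Theorem~\ref{t:gin-zero}) packages up, and the intended argument is simply to cite it rather than re-derive a fragment of it. To make your route rigorous you would need to import, from \cite{siefring2011}, the formula for the asymptotic intersection number of two same-sign ends in terms of winding and the relative asymptotic intersection index $\ain^\Phi$, and combine it with condition~(4)(d) of Theorem~\ref{t:embedded-projection}; that is strictly more work than the paper's one-line observation, and as written it is not there.
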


\begin{proof}
In the event that the winding bound from \eqref{e:wind-infinity-inequality} is achieved
at each puncture,
i.e.\ that
\[
\pm_{z}\winfty^{\Phi}(\tl u; z)=\fl{\pm_{z}\mu^{\Phi}(\gamma^{m_{z}})/2}
\]
for all $z\in\Gamma$ then
the special case of 
condition \eqref{i:gin-zero-2} in Theorem \ref{t:gin-zero} in which $\tl u=\tl v$
and
condition \eqref{i:embedded-projection-3} in Theorem \ref{t:embedded-projection}
reduce to the same thing.
Thus the two theorems together imply that $C*C=0$ if and only if
the map $u:\Sigma\setminus\Gamma\to M$ is an embedding.
\end{proof}

It is shown in \cite{siefring2011} that
when two curves have ends approaching coverings of the same
hyperbolic orbit, there is a ``direction-of-approach'' condition that will guarantee
the two curves have positive holomorphic intersection number.
We review the relevant definitions and results here.

We will first define what it means for two pseudoholomorphic half-cylinders to approach
an orbit in the same (or opposite) direction.  The definition we use here will apply to
any nondegenerate periodic orbit and, when the orbit is even, will be stricter than 
the definition used in \cite{siefring2011}.  In exchange for using a slightly stricter definition
we will be able to make a slightly stronger conclusion via essentially the same argument used in
\cite{siefring2011}.

Let $\tl u$, $\tl v:[R, \infty)\times S^1\to\R\times M$ be positive pseudoholomorphic
half-cylinders asymptotic to the same nondegenerate periodic orbit $\gamma$,
and assume that the 
the asymptotic formulas for
asymptotic representatives
$U$, $V$ of $\tl u$ and $\tl v$ respectively are given by
\begin{gather*}
U(s, t)=e^{\lambda_{u} s}[e_{u}(t)+r_{u}(s, t)] \\
V(s, t)=e^{\lambda_{v} s}[e_{v}(t)+r_{v}(s, t)].
\end{gather*}
We say that \emph{the half cylinders $\tl u$ and $\tl v$ approach
$\gamma$ in the same direction} if the eigenvectors $e_{u}$ and $e_{v}$ are positive scalar
multiples of each other, and similarly we say that 
we say that \emph{the half cylinders $\tl u$ and $\tl v$ approach
$\gamma$ in the opposite direction} if the eigenvectors $e_{u}$ and $e_{v}$ are negative
scalar multiples of each other.  We note that in either case we must have
$\lambda_{u}=\lambda_{v}$.
For a pair of negative half-cylinders asymptotic to the same periodic orbit, the definitions
are exactly analogous: the cylinders are said to approach in the same direction if the
eigenvectors controlling the approach are positive multiples of each other, and are said
to approach in the opposite direction if they are negative scalar multiples of each other.

The notion of approaching an orbit in the same (or opposite) direction
is of particular use
at an even orbit due to the following lemma, which shows that pairs of pseudoholomorphic ends
approaching an even orbit with the same sign and extremal winding
(i.e.\ the bound in \eqref{e:wind-infinity-inequality} is achieved)
always either approach 
in the same or opposite direction.

\begin{lemma}\label{l:same-or-opposite}
Let $\gamma$ be an even periodic orbit.
Let $\tl u$, $\tl v:[R, \infty)\times S^{1}\to\R\times M$ be
either both positive or both negative
pseudoholomorphic half-cylinders
asymptotic to $\gamma$, and assume that
\[
\winfty^{\Phi}(\tl u)=\winfty^{\Phi}(\tl v)=\mu^{\Phi}(\gamma)/2.
\]
for any symplectic trivialization $\Phi$ of $\gamma^{*}\xi$.  Then $\tl u$ and $\tl v$ either
approach $\gamma$ in the same direction, or opposite direction.
\end{lemma}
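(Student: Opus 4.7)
The plan is to apply the relative asymptotic formula Theorem~\ref{t:relative-asymptotics} to each of $\tl u$ and $\tl v$ separately (setting one of the two cylinders to zero) and then use the dimension count for eigenvectors of the asymptotic operator in Lemma~\ref{l:operator-spectrum} together with the spectral description of the Conley--Zehnder index in Theorem~\ref{t:spectral-conley-zehnder} to pin down the eigenspace controlling the approach.

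I will treat the positive puncture case; the negative case is identical after reversing signs. By Theorem~\ref{t:relative-asymptotics} applied with the second cylinder identically zero, asymptotic representatives of $\tl u$ and $\tl v$ satisfy
\[
U(s,t)=e^{\lambda_{u} s}\bbr{e_{u}(t)+r_{u}(s,t)},\qquad V(s,t)=e^{\lambda_{v} s}\bbr{e_{v}(t)+r_{v}(s,t)},
\]
where $\lambda_{u},\lambda_{v}$ are negative eigenvalues of $\A_{\gamma,J}$ with eigenvectors $e_{u},e_{v}$. Because the winding of $e_{u}$ with respect to $\Phi$ is exactly $\winfty^{\Phi}(\tl u)$, and similarly for $e_{v}$, the hypothesis together with Theorem~\ref{t:spectral-conley-zehnder} and the fact that $p(\gamma)=0$ for an even orbit gives
\[
w(\lambda_{u},[\Phi])=w(\lambda_{v},[\Phi])=\mu^{\Phi}(\gamma)/2=\alpha^{\Phi}(\gamma)=w(\lne(\gamma),[\Phi]).
\]

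The key structural step is then to show that $\ker(\A_{\gamma,J}-\lne(\gamma))$ is one-dimensional. By Lemma~\ref{l:operator-spectrum}(3), the total multiplicity of eigenvalues with winding equal to $\alpha^{\Phi}(\gamma)$ is $2$. Since $p(\gamma)=0$, by definition of the parity there exists a positive eigenvalue of $\A_{\gamma,J}$ whose eigenvectors also have winding $\alpha^{\Phi}(\gamma)$, and this contributes at least one dimension to that total. Thus at most one dimension of eigenvectors with winding $\alpha^{\Phi}(\gamma)$ can correspond to negative eigenvalues. By monotonicity (Lemma~\ref{l:operator-spectrum}(2)) and maximality of $\lne(\gamma)$ among negative eigenvalues, the remaining dimension must be $\ker(\A_{\gamma,J}-\lne(\gamma))$, so this kernel is exactly one-dimensional and in particular $\lambda_{u}=\lambda_{v}=\lne(\gamma)$.

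Both $e_{u}$ and $e_{v}$ therefore lie in the one-dimensional space $\ker(\A_{\gamma,J}-\lne(\gamma))$, so $e_{v}=c\, e_{u}$ for some $c\in\R\setminus\br{0}$. By definition, $\tl u$ and $\tl v$ approach $\gamma$ in the same direction if $c>0$ and in the opposite direction if $c<0$. The only subtle point in carrying this out is the dimension bookkeeping for the $\alpha^{\Phi}(\gamma)$-winding eigenspace, which rests essentially on the even-parity hypothesis; once this is clean, the rest is mechanical.
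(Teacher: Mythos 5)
Your proof is correct and follows essentially the same route as the paper: apply the spectral characterization (Theorem~\ref{t:spectral-conley-zehnder}) together with the even-parity hypothesis and the winding multiplicity count of Lemma~\ref{l:operator-spectrum}(3) to conclude that the extremal eigenspace $\ker(\A_{\gamma,J}-\lne(\gamma))$ (resp.\ the smallest positive eigenspace, in the negative-puncture case) is one-dimensional, and hence the controlling eigenvectors of $\tl u$ and $\tl v$ are nonzero scalar multiples of each other. The paper's proof makes the same dimension count, phrased symmetrically for both $\lambda_{-}$ and $\lambda_{+}$, but the argument is the same in substance.
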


\begin{proof}
Let
$\lambda_{-}<0$ denote the largest negative eigenvalue of $\A_{\gamma, J}$.
Then, according to Theorem \ref{t:spectral-conley-zehnder}
and the fact that the parity of an orbit is equal to the parity of its Conley--Zehnder index,
in any symplectic trivialization
$\Phi$ of $\gamma^{*}\xi$ the winding of an eigenvector of $\A_{\gamma, J}$ with eigenvalue
$\lambda_{-}$ is $\fl{\mu^{\Phi}(\gamma)/2}=\mu^{\Phi}(\gamma)/2$.
It further follows from the same theorem and Lemma \ref{l:operator-spectrum}
that eigenvectors of $\A_{\gamma, J}$ having smallest possible positive eigenvalue
also have winding equal to $\mu^{\Phi}(\gamma)/2$.
Since Lemma \ref{l:operator-spectrum} tells us that the span of the collection of eigenvectors
having winding equal to $\mu^{\Phi}(\gamma)/2$ is two dimensional,
we can conclude that
\[
\dim\ker(\A_{\gamma, J}-\lambda_{\pm})=1
\]
and that all eigenvectors $e$ of $\A_{\gamma, J}$ with negative (resp.\ positive)
eigenvalue
satisfying
\[
\wind\Phi^{-1}e=\mu^{\Phi}(\gamma)/2
\]
have eigenvalue $\lambda_{-}$ (resp.\ $\lambda_{+}$).

Now, if $\tl u$ and $\tl v$ are positive (resp.\ negative) half-cylinders
and have winding
\[
\winfty^{\Phi}(\tl u)=\winfty^{\Phi}(\tl v)=\mu^{\Phi}(\gamma)/2.
\]
then the eigenvector controlling the approach of each cylinder must have eigenvalue
$\lambda_{-}$ (resp.\ $\lambda_{+}$).  Since we've argued above that the eigenspaces
$\ker(\A_{\gamma, J}-\lambda_{\pm})$ are $1$-dimensional,
we conclude that the eigenvectors associated to each cylinder are scalar multiples of each other
which is
equivalent to saying
that $\tl u$ and $\tl v$ approach $\gamma$ in either the same or opposite direction.
\end{proof}

We now state the main theorem concerning the intersection properties of pseudoholomorphic
half-cylinders that approach an even orbit with extremal winding in the same
direction.

\begin{theorem}(c.f.\ \cite[Theorem 5.15]{siefring2011})\label{t:dir-of-approach}
Let $\tl u=(a, u)$, $\tl v=(b, v):[R, \infty)\times S^{1}\to\R\times M$ be either
both positive or both negative pseudoholomorphic half-cylinders asymptotic
to an even periodic orbit $\gamma$.
Assume that $\tl u$ and $\tl v$ have extremal winding, i.e.\
\begin{equation}\label{e:winding-assumption}
\winfty^{\Phi}(\tl u)=\winfty^{\Phi}(\tl v)=\mu^{\Phi}(\gamma)/2,
\end{equation}
and that $\tl u$ and $\tl v$ approach $\gamma$ in the same direction.
Then the projections $u$, $v$ of the maps $\tl u$, $\tl v$ to the $3$-manifold $M$
intersect.
\end{theorem}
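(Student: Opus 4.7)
The plan is to argue by contradiction, assuming the projections $u,v$ to $M$ are disjoint and deriving a contradiction from the asymptotic behavior forced by the hypotheses. I treat the positive puncture case; the negative case is analogous. Let $\lambda_-$ denote the largest negative eigenvalue of $\A_{\gamma,J}$ and write the asymptotic representatives as
\[
U(s,t)=e^{\lambda_- s}\bbr{e_u(t)+r_u(s,t)}, \qquad V(s,t)=e^{\lambda_- s}\bbr{e_v(t)+r_v(s,t)}.
\]
The extremal winding and even-orbit assumptions, together with the analysis in the proof of Lemma \ref{l:same-or-opposite}, force $\dim\ker(\A_{\gamma,J}-\lambda_-)=1$, so the same-direction hypothesis yields $e_u=\alpha e_v$ for some $\alpha>0$. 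Disjointness of $u$ and $v$ implies $\tl u\cap\tl v_c=\emptyset$ for every $\R$-shift $\tl v_c:=(b+c,v)$. After reparametrization to standardize the $\R$-component, the asymptotic representative $V_c$ of $\tl v_c$ has leading coefficient $e^{-\lambda_- c/T}e_v$; choosing $c^*:=-T\ln(\alpha)/\lambda_-$ makes this equal to $\alpha e_v=e_u$, so that $U$ and $V_{c^*}$ share the same leading asymptotic term.

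Next I would apply the relative asymptotic formula (Theorem \ref{t:relative-asymptotics}) to the pair $\tl u$ and $\tl v_{c^*}$. Either $U\equiv V_{c^*}$, in which case $\tl u$ and $\tl v_{c^*}$ coincide on an entire end and their projections certainly intersect (contradicting disjointness), or
\[
U(s,t)-V_{c^*}(s,t)=e^{\mu s}\bbr{f(t)+\tl r(s,t)}
\]
with $\mu<\lambda_-$ and $f\in\ker(\A_{\gamma,J}-\mu)\setminus\br{0}$. The monotonicity of winding in Lemma \ref{l:operator-spectrum}(2), combined with the observation from the proof of Lemma \ref{l:same-or-opposite} that at an even orbit the two-dimensional space of winding-$\mu^{\Phi}(\gamma)/2$ eigenvectors is exactly $\ker(\A_{\gamma,J}-\lambda_-)\oplus\ker(\A_{\gamma,J}-\lambda_+)$, then gives the \emph{strict} inequality $\wind(\Phi^{-1}f)<\mu^{\Phi}(\gamma)/2$.

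Finally I would convert this strict winding drop into a contradiction, following the template of \cite[Theorem 5.15]{siefring2011}. The disjointness assumption combined with Theorem \ref{t:intpositivity} gives $\tl u*\tl v_c=\delta_\infty(\tl u,\tl v_c)$ for every $c$, with the left hand side a homotopy invariant; hence $\delta_\infty(\tl u,\tl v_c)$ is constant in $c$. On the other hand, for $c'\ne 0$ small the leading coefficient $\alpha e_v(t)(1-e^{-\lambda_- c'/T})\ne 0$ of $U-V_{c^*+c'}$ keeps the asymptotic winding of the difference at the extremal value $\mu^{\Phi}(\gamma)/2$, whereas at $c'=0$ this winding drops strictly to $\wind(\Phi^{-1}f)$; through the direction-of-approach formulae for $\delta_\infty$ from \cite{siefring2008,siefring2011}, this produces a strict jump in $\delta_\infty(\tl u,\tl v_c)$ as $c$ crosses $c^*$, contradicting the constancy just established. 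The main obstacle is this last step: quantifying the strict winding inequality as a strict jump in $\delta_\infty$ requires a careful adaptation of the asymptotic intersection machinery of \cite{siefring2011} to the stronger notion of ``same direction'' defined for even orbits above, essentially mirroring the proof of \cite[Theorem 5.15]{siefring2011} while exploiting the one-dimensionality of $\ker(\A_{\gamma,J}-\lambda_\pm)$ supplied by Lemma \ref{l:same-or-opposite}.
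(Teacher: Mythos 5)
Your proposal follows essentially the same strategy as the paper's first proof of this result: shift $\tl v$ by a constant to match leading asymptotic terms, invoke the relative asymptotic formula to detect a strict winding drop, and convert that into an intersection via positivity and homotopy invariance. The one-dimensionality of $\ker(\A_{\gamma,J}-\lambda_\pm)$ at an even orbit, which you correctly identify as the lever, is exactly what the paper extracts in the proof of Lemma \ref{l:same-or-opposite}.

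Two remarks on the gap you flag at the end. First, the quantity $\tl u*\tl v_c$ is not literally defined here, since $\tl u$, $\tl v$ are half-cylinders rather than finite-energy curves on punctured Riemann surfaces; the paper's Proof~1 handles this by restricting domains so that intersections stay isolated and then defining \emph{local} versions of $\inum$, $i^\Phi$, and the $*$-product, noting that the local $*$-product (not the global one) is the quantity invariant under small perturbations of $c$. Without this domain-restriction step your invocation of ``homotopy invariance'' is not quite meaningful. Second, and more usefully: the paper's Proof~2 shows that you do not actually need the $\delta_\infty$-jump machinery at all. Once you know $\wind\Phi^{-1}[U(s,\cdot)-V_{c^*}(s,\cdot)]<\mu^\Phi(\gamma)/2$ for all $s\ge R''$ (because $U-V_{c^*}$ has no zeros, by assumed disjointness), you simply observe that for $c$ close to but different from $c^*$, the leading coefficient of $U-V_c$ is the nonzero multiple $(1-e^{-\lambda_-(c-c^*)/T})e_u$ of $e_u$, so the winding of $U(s,\cdot)-V_c(s,\cdot)$ equals $\mu^\Phi(\gamma)/2$ for $s$ large while still being $<\mu^\Phi(\gamma)/2$ for $s$ close to $R''$; the winding of a nowhere-vanishing family of loops cannot change, so $U-V_c$ has a zero. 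This gives the contradiction with no appeal to $\delta_\infty$ or to any intersection-product formalism, and would let you close the gap you identified.
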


\begin{proof}[Proof 1]
The proof is a combination of the proofs of Theorem 5.14, Theorem 5.15,
Lemma 5.10 in \cite{siefring2011},
and a local version of Theorem 2.2 in \cite{siefring2011}/Theorem \ref{t:intpositivity} above.
If the images of
$\tl u$ and $\tl v$ differ by the $\R$-action on some neighborhood
of infinity, then the projections to $M$ will be identical on the same neighborhood of
infinity so there is nothing more to prove.
We thus, without loss of generality, assume that the images of $\tl u$ and $\tl v$ do not differ
by the $\R$-action on any neighborhood of infinity.
Given this assumption
Theorems 5.14-5.15 in \cite{siefring2011} shows that the there is a constant
$c_{0}\in\R$ so that the asymptotic intersection number $\delta_{\infty}(\tl u, \tl v_{c})$ is
positive, where $\tl v_{c}=(b+c, v)$ is half-cylinder.
On the other hand, the argument in Lemma 5.10 in \cite{siefring2011} shows that
$\delta_{\infty}(\tl u, \tl v_{c})=0$ for all $c\ne c_{0}$ nearby to $c_{0}$
(in fact, the proof there reveals, for all $c\ne c_{0}$).
Since intersections are isolated, we can, after perhaps restricting the domains, define 
an algebraic intersection number $\inum(\tl u, \tl v_{c_{0}})$, relative intersection number
$i^{\Phi}(\tl u, \tl v_{c_{0}})$, and holomorphic intersection number
$[\tl u]*[\tl v_{c_{0}}]=i^{\Phi}(\tl u, \tl v_{c_{0}})+\mu^{\Phi}(\gamma)/2$.
Moreover, $i^{\Phi}(\tl u, \tl v_{c_{0}})$ and $[\tl u]*[\tl v_{c_{0}}]$ will be invariant under small
perturbations of $\tl v_{c_{0}}$, and the analogy of formula \eqref{e:intpositivity}
\[
[\tl u]*[\tl v_{c_{0}}]=\inum(\tl u, \tl v_{c_{0}})+\delta_{\infty}(\tl u, \tl v_{c_{0}})
\]
holds as well for the localized versions of the intersection product.
As $c_{0}$ changes to a sufficiently nearby $c\ne c_{0}$ in the   equation, the local
holomorphic intersection product $[\tl u]*[\tl v_{c}]$ remains unchanged,
while we've just argued that the
asymptotic intersection number $\delta_{\infty}(\tl u, \tl v_{c})$ changes from a positive number
to zero.  Thus the algebraic intersection number $\inum(\tl u, \tl v_{c})$ must increase,
and since $\inum(\tl u, \tl v_{c})\ge 0$ for all $c$, we conclude that $\inum(\tl u, \tl v_{c})>0$ for
$c$ very near to $c_{0}$.  Since $\tl u=(a, u)$ and $\tl v_{c}=(b+c, v)$ intersecting implies
$u$ and $v$ intersect, this completes the proof.
\end{proof}

For the convenience of the reader we provide a self-contained presentation of the above
argument below.

\begin{proof}[Proof 2]
 For simplicity we will carry out the proof assuming that both cylinders are positive.
The proof in the case that both are negative is completely analogous.
As in the previous proof, we continue to assume that the images of
$\tl u$ and $\tl v$
not differ
by the $\R$-action on any neighborhood of infinity.

Proceeding now with the above assumptions, we let
$U$, $V:[R', \infty)\times S^{1}\to \gamma^{*}\xi$
be asymptotic representatives of $\tl u$ and $\tl v$ respectively, that is $U$ and $V$
satisfy
\begin{gather*}
\tl u\circ \phi(s, t)=\bp{Ts, \exp_{\gamma(t)}U(s, t)} \\
\tl v\circ \psi(s, t)=\bp{Ts, \exp_{\gamma(t)}V(s, t)} 
\end{gather*}
for some proper embeddings $\phi$, $\psi:[R', \infty)\times S^{1}\to [R, \infty)\times S^{1}$.
We further observe that if $\tl v_{c}$ is the map $\tl v_{c}(z)=(b(z)+c, v(z))$ obtained by shifting
the $\R$-component of $\tl v$ by $c$, then
\[
\tl v_{c}\circ \psi_{c}(s, t)=\bp{Ts, \exp_{\gamma(t)} V_{c}(s, t)},
\]
where $\psi_{c}(s, t):=\psi(s-c/T, t)$ and $V_{c}(s, t):=V(s-c/T, t)$.

According to our winding assumption \eqref{e:winding-assumption} and the fact
that $\gamma$ is an even orbit,
the asymptotic formulas for $U$ and $V$ must
be of the form
\begin{gather}
U(s, t)=e^{\lambda s}[e_{u}(t)+r_{1}(s, t)] \label{e:U-asymp-formula} \\ 
V(s, t)=e^{\lambda s}[e_{v}(t)+r_{2}(s, t)] 
\end{gather}
with $\lambda$ the largest negative eigenvalue of $\A_{\gamma, J}$, and
$r_{i}(s, t)\to 0$ exponentially in $s$.
Hence the asymptotic formula for $V_{c}=V(\cdot-c/T, \cdot)$ is of the form
\begin{equation}\label{e:Vc-asymp-formula}
\begin{aligned}
V_{c}(s, t)&=e^{\lambda s}e^{-\lambda c/T}[e_{v}(t)+r_{2}(s-c/T, t)] \\
&=e^{\lambda s}[K_{c}e_{v}(t)+r_{c}(s, t)] 
\end{aligned}
\end{equation}
where $K_{c}=e^{-\lambda c/T}>0$ and $r_{c}=K_{c}r_{2}(\cdot -c/T, \cdot)$
decays exponentially in $s$.

We seek to understand how the intersection behavior of two cylinders
$\tl u$ and $\tl v_{c}$ changes as $c$ changes.
We first observe that by the assumption that $\tl u$ and $\tl v$ approach
$\gamma$ in the same direction, there is a $c_{0}$ so that
$e_{u}=K_{c_{0}}e_{v}$.
If $\tl u$ and $\tl v_{c_{0}}$ intersect, the projections
$u$ and $v$ intersect, so there is nothing more to prove.  We assume then
that $\tl u$ and $\tl v_{c_{0}}$ don't intersect and consider the difference
$U(s, t)-V_{c_{0}}(s, t)$ for $(s, t)\in [R'', +\infty)$ for some large $R''$.
According to Theorem \ref{t:relative-asymptotics} we have that
\begin{equation}\label{e:U-Vc0-diff-form}
U(s, t)-V_{c_{0}}(s, t)=e^{\lambda_{1}s}[e_{1}(t)+r(s, t)]
\end{equation}
with $r(s, t)\to 0$ exponentially.
Meanwhile, direct computation using formulas \eqref{e:U-asymp-formula}
and \eqref{e:Vc-asymp-formula}
shows that
\[
U(s, t)-V_{c_{0}}(s, t)=e^{\lambda s}[r_{1}(s, t)-r_{c_{0}}(s, t)].
\]
Since $r_{1}-r_{c_{0}}$ converges exponentially to $0$,
comparing above two equations shows that $\lambda_{1}<\lambda$.
Since the orbit is even,
Lemma \ref{l:operator-spectrum} with Theorem \ref{t:spectral-conley-zehnder}
tells us that we
must have that $\wind \Phi^{-1}e_{1}<\wind\Phi^{-1}e_{u}=\mu^{\Phi}(\gamma)/2$
in any trivialization $\Phi$ of $\gamma^{*}\xi$.
We thus conclude from this observation and \eqref{e:U-Vc0-diff-form} that
\begin{equation}\label{e:U-Vc0-winding}
\wind\Phi^{-1}[U(s, \cdot)-V_{c_{0}}(s, \cdot)]=\wind\Phi^{-1}e_{1}<\mu^{\Phi}(\gamma)/2
\end{equation}
for all sufficiently large $s$.  Moreover,
since we assume that $\tl u$ and $\tl v_{c_{0}}$ don't intersect,
that \eqref{e:U-Vc0-winding} holds for all $s\in [R'', \infty)$.

Meanwhile, we can choose $c\ne c_{0}$ sufficiently close to
$c_{0}$ so that $U(s, t)-V_{c}(s, t)$ is defined for $(s, t)\in [R''+1, \infty)\times S^{1}$
and so that for all $s\in [R''+1, R''+2]$
\[
\wind\Phi^{-1}[U(s, \cdot)-V_{c}(s, \cdot)]
=\wind\Phi^{-1}[U(s, \cdot)-V_{c_{0}}(s, \cdot)]
<\mu^{\Phi}(\gamma)/2.
\]
On the other hand,
computation using \eqref{e:U-asymp-formula}, \eqref{e:Vc-asymp-formula}
$K_{c}=e^{-\lambda c/T}$, and $e_{u}=K_{c_{0}}e_{v}$ gives us
\begin{align*}
e^{-\lambda s}[U(s, t)-V_{c}(s, t)]
&=e_{u}(t)-K_{c}e_{v}(t)+r_{4}(s, t) \\
&=e_{u}(t)-e^{-\lambda (c-c_{0})/T}K_{c_{0}}e_{v}(t)+r_{4}(s, t) \\
&=e_{u}(t)-e^{-\lambda (c-c_{0})/T}e_{u}(t)+r_{4}(s, t)  \\
&=\sigma_{c_{0}}e_{u}(t)+r_{4}(s, t)
\end{align*}
where $r_{4}:=r_{1}-r_{c}$ decays exponentially in $s$
and $\sigma_{c_{0}}:=1-e^{-\lambda (c-c_{0})/T}\ne 0$.
We conclude that for all $s$ sufficiently large,
\[
\wind\Phi^{-1}[U(s, t)-V_{c}(s, t)]=\wind\Phi^{-1}e_{u}=\mu^{\Phi}(\gamma)/2.
\]
Since the winding of $\Phi^{-1}[U(s, t)-V_{c}(s, t)]$ changes as $s$ changes,
$U(s, t)-V_{c}(s, t)$ must have at least one zero.
This implies that
$\tl u$ and $\tl v_{c}$ must intersect at least once,
which in turn implies that the projections $u$ and $v$ intersect.
This completes the proof.
\end{proof}

Finally, we close this section with a result concerning intersections between
components of a holomorphic building resulting as the limit of a sequences of holomorphic curves
having intersection number equal to zero.

\begin{lemma}\label{l:building-isect}
Let $C_{k}$, $D_{k}\in\M(\lambda, J)$ be sequences of holomorphic curves
satisfying $C_{k}\ne D_{k}$ and
$C_{k}*D_{k}=0$ for all $k$,
and assume that
$C_{k}$ and $D_{k}$ converge (in the sense of \cite{behwz})
respectively to a holomorphic buildings $C_{\infty}$, $D_{\infty}$.
Then for every component $C'$ of $C_{\infty}$ 
and $D'$ of $D_{\infty}$,
the projections of $C'$ and $D'$ to $M$ are either disjoint or identical.
\end{lemma}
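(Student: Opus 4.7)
The plan is to derive from $C_k * D_k = 0$ the vanishing of $C' * D'$ for every pair of components of the two limiting buildings, and then invoke the standard fact that a zero $*$-product of two distinct pseudoholomorphic curves forces their projections to $M$ to be either disjoint or identical.

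First I would apply part (4) of Theorem~\ref{t:hin-prop} iteratively along the height structure of $C_\infty$ and $D_\infty$, and then use the bilinearity from part (3) to decompose each level into its connected components. Combined with the homotopy invariance from part (1), this should yield
\[
0 \;=\; C_k * D_k \;\geq\; \sum_{C' \in \mathrm{comp}(C_\infty)} \; \sum_{D' \in \mathrm{comp}(D_\infty)} C' * D'
\]
for all sufficiently large $k$. For any pair with $C' \ne D'$, Theorem~\ref{t:intpositivity} ensures $C' * D' \geq 0$, so the inequality forces each such summand to vanish.

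Next, for each such pair, I would invoke the argument recalled in the paragraph preceding Theorem~\ref{t:embedded-projection}: if the projections $\pi_M(C')$ and $\pi_M(D')$ were neither identical nor disjoint, then positivity of intersections for pseudoholomorphic curves in $M$ would produce an isolated intersection of the projections, which lifts to an honest intersection between $C'$ and an $\R$-translate $c\cdot D'$ in $\R \times M$ for a suitable $c \in \R$. Theorem~\ref{t:intpositivity} then gives $C' * (c \cdot D') > 0$, while homotopy invariance (part (1) of Theorem~\ref{t:hin-prop}) yields $C' * (c \cdot D') = C' * D' = 0$, a contradiction. For pairs with $C' = D'$ the projections coincide trivially, and the lemma follows.

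The main obstacle will be making precise the lower-semicontinuity inequality for the $*$-product in the limit of a sequence of smooth curves that degenerate to a building with multiple levels and multiple connected components per level: part (4) of Theorem~\ref{t:hin-prop} handles only the base case of a height-two building with one component at each level, and the general statement requires a careful induction on the height together with bilinearity, with some bookkeeping around the breaking orbits (whose possible structure is constrained by Lemma~\ref{l:bidirectional-orbits}).
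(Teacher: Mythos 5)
Your plan to reduce the lemma to $C' * D' = 0$ for every pair of components runs into a genuine obstruction at step one, and the obstacle you flag at the end is not a matter of bookkeeping but a failure of the inequality you are trying to establish. Part (4) of Theorem~\ref{t:hin-prop} is inherently ``diagonal'': iterating it along the levels (after inserting trivial levels to equalize the heights) only yields
\[
C_\infty * D_\infty \;\geq\; \sum_i C_{\infty,i} * D_{\infty,i},
\]
a sum over matched levels $i$, and bilinearity then decomposes each summand over connected components \emph{within} those matched levels. What you cannot recover this way is the $*$-product of a component $C'$ in level $i$ of $C_\infty$ with a component $D'$ in level $j\neq i$ of $D_\infty$, and yet the lemma explicitly requires the conclusion for every such cross-level pair. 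Worse, the unrestricted double sum you write down is simply not bounded above by $C_k*D_k$: the $*$-product of the buildings is blind to intersections between the $M$-projections of components sitting on different levels (they occur at different ``heights'' in $\R\times M$ and only appear after an $\R$-translation of one of the components), so one can arrange for cross-level $C'*D'$ to be strictly positive while $C_\infty * D_\infty = 0$. So there is no way to push the claimed inequality through, even in principle.

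The paper's proof avoids computing any limiting $*$-product at all. If some pair $C'$, $D'$ of components had projections that were neither disjoint nor identical, one produces an isolated intersection between $C'$ and a suitable $\R$-translate $d'\cdot D'$; then, crucially, one uses the $C^\infty_{loc}$ convergence guaranteed by Proposition~\ref{p:compactness-local-C-infinity} to transport that intersection back to an intersection between $c_k\cdot C_k$ and $(d_k d')\cdot D_k$ for large $k$. Positivity of intersections (Theorem~\ref{t:intpositivity}) gives $(c_k\cdot C_k)*((d_k d')\cdot D_k)>0$, while homotopy invariance of the $*$-product identifies this with $C_k*D_k=0$, a contradiction. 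This works uniformly for all pairs of components, including cross-level pairs, because it never needs a decomposition of the $*$-product along the building structure. You should replace your first step with this direct argument; your second step (the lift-an-intersection-to-an-$\R$-translate trick) is already the right idea, it just needs to be applied at the level of the sequences $C_k$, $D_k$ rather than to the limits.
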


\begin{proof}
Assume that the projections of
some components
$C'$ and $D'$  of the limit buildings to $M$ are neither disjoint
nor identical.
Then for some value of $d'\in\R$,
$C'$ and the $\R$-translate $d'\cdot D'$ have at least one isolated intersection.
But according to the definition of SFT-convergence from \cite{behwz}
there exist sequences of constants $c_{k}$, $d_{k}\in\R$
so that $c_{k}\cdot C_{k}$ and $d_{k}\cdot D_{k}$ converge
respectively in $C^{\infty}_{loc}$ to $C'$ and $D'$
(see Proposition \ref{p:compactness-local-C-infinity}),
and thus
$(d_{k}d')\cdot D_{k}$ converges in $C^{\infty}_{loc}$ to $d'\cdot D$.
But, since $C'$ and $D'$ have at least one isolated intersection,
we can conclude from the $C^{\infty}_{loc}$ convergence that
$c_{k}\cdot C_{k}$ and $(d_{k}d')\cdot D_{k}$ have at least one isolated intersection 
for sufficiently large values of $k$.
Theorem \ref{t:intpositivity} then allows us to conclude that
$(c_{k}\cdot C_{k})*(d_{k}d'\cdot D_{k})>0$ for sufficiently large $k$.  But
by homotopy invariance of the $*$-product, we have that
\[
(c_{k}\cdot C_{k})*(d_{k}d'\cdot D_{k})=C_{k}*D_{k}=0.
\]
This contradiction completes the proof.
\end{proof}

\subsection{Fredholm theory and transversality}\label{s:fredholm}
We will briefly review the Fredholm theory for embedded (or immersed)
pseudoholomorphic curves from
\cite{hwz:prop3}.

First, given $(M, \lambda)$ and compatible $J\in \J(M, \lambda)$,
we define a metric $g_{J}$ on $M$ by
\[
g_{J}(v, w)=\lambda(v)\lambda(w)+d\lambda\bp{\pi_{\xi}(v), J\pi_{\xi}(w)}
\]
where $\pi_{\xi}:TM\approx\R\X\oplus\xi\to\xi$ is the projection to $\xi$ along
$\X$.  That $g_{J}$ defined this way is a metric on $TM$ follows from the
definition of compatibility of $J$.
We extend this to a metric $\tl g_{J}$ on $\R\times M$ by defining
\[
\tl g_{J}\bp{(h, v), (k, w)}=h\cdot k+g_{J}(v, w)
\]
where $(h, v)$, $(k, w)\in\R\oplus TM\approx T(\R\times M)$.
Compatibility of $J$ with $(\xi, d\lambda)$ and the definition of the extension
of $J$ to an almost complex structure $\tl J$ on $\R\times M$ implies that
$\tl g_{J}$ is a hermitian metric on the $(\R\times M, \tl J)$, that is
$\tl J$ is a $\tl g_{J}$-orthogonal endomorphism of $T(\R\times M)$.

Now, let $C=[\Sigma, j, \Gamma, \tl u=(a, u)]\in\M(\lambda, J)$ be
an embedded $\tl J$-holomorphic curve, and choose
a model parametrization $(\Sigma, j, \Gamma, \tl u=(a, u))$.
Then $C$ has a well-defined normal bundle $N_{C}$ which
can be realized as a subbundle of $T(\R\times M)|_{C}=\tl u^{*}T(\R\times M)$
by letting
\[
(N_{C})_{\tl u(z)}=d\tl u(z)(T_{z}\Sigma)^{\perp}
\]
with $\perp$ denoting the $\tl g_{J}$ orthogonal complement within $T_{\tl u(z)}(\R\times M)$.
We consider curves
which are parametrized by mapping
sections of the normal bundle $N_{C}$ of the curve $C$
to $\R\times M$ via the exponential map $\wt\exp$ of the metric $\tl g_{J}$, that is,
those curves $C'=[\Sigma', j', \Gamma', \tl v=(b, v)]\in\M(\lambda, J)$
for which there exists a smooth map $\psi:\Sigma\setminus\Gamma\to\Sigma'\setminus\Gamma'$
and a smooth section $V$ of $N_{C}$ so that
\[
\tl v(\psi(z))=\wt\exp_{\tl u(z)}V(z).
\]

In order to to do this, we first recall that the asymptotic behavior of the curve $C$ implies that
$\wt\exp$ is an immersion on some $\ep$-neighborhood
$N_{C}^{\ep}$
of the zero section of $N_{C}$ with respect to the metric on $N_{C}$ induced from $\tl g_{J}$
(see e.g.\ Corollary 2.7 in \cite{siefring2005}).
We can thus define an almost complex structure $\bar J$ on this $\ep$-neighborhood of the
zero section of $N_{C}$ by pulling back $\tl J$ via the exponential map.
Give a connection $\nabla$ on $N_{C}$, we get a splitting
\[
T_{(z, V)}N_{C}\approx T_{z}(\Sigma\setminus\Gamma)\oplus (N_{C})_{z}
\]
of the tangent space of $N_{C}$ into horizontal and vertical distributions.  This splitting
is canonical along the zero section.  With respect to the splitting induced by a given connection
we can write
\begin{equation}\label{e:bar-J}
\bar J(z, V)
=
\begin{bmatrix}
i(z, V) & \tl\Delta(z, V) \\ \Delta(z, V) & J(z, V)
\end{bmatrix}
\end{equation}
with
$i\in\End(T(\Sigma\setminus\Gamma))$,
$\tl\Delta\in\Hom(N_{C},T(\Sigma\setminus\Gamma))$,
$\Delta\in\Hom(T(\Sigma\setminus\Gamma), N_{C}) $, and
$J\in\End(N_{C})$.
Moreover, along the zero section of $N_{C}$ we have
\[
\bar J(z, 0)=
\begin{bmatrix}
i(z, 0) & \tl\Delta(z, 0) \\ \Delta(z, 0) & J(z, 0)
\end{bmatrix}
=
\begin{bmatrix}
j(z) & 0 \\ 0 & J_{N}(z)
\end{bmatrix}
\]
with $j$ the complex structure on $T\Sigma$ and $J_{N}$ the complex structure on
$N_{C}$ induced from $\tl J$.
Note that squaring \eqref{e:bar-J} and using $\bar J^{2}=-I$ we get that
\[
\Delta\circ i=-J\circ \Delta.
\]
Letting $\Delta':N_{C}^{\ep}\to \Hom(N_{C}, \Hom(T(\Sigma\setminus\Gamma), N_{C}))$ denote the
map obtained from differentiating $\Delta$ in the fiber direction, we can differentiate the
above equation in the fiber direction and use that $\Delta$ vanishes along the zero section
to conclude that
\[
[\Delta'(0)V]\circ i(0)=-J(0)\circ[\Delta'(0)V]
\]
or equivalently
\[
[\Delta'(0)V]\circ j=-J_{N}\circ[\Delta'(0)V]
\]
for any section $V$ of $N_{C}$.
Thus, for any section $V$ of $N_{C}$,
$[\Delta'(0)V]\circ j$
is a $j$-$J_{N}$ anti-linear map from $T(\Sigma\setminus\Gamma)$ to $N_{C}$.

\begin{definition}
The linearized normal $\dbar$-operator $\ndbar{C}$ at an
embedded curve $C\in\M(\lambda, J)$ relative to the connection $\nabla$ on
$N_{C}$ is the operator
$\ndbar{C}:C^{\infty}(N_{C})\to
C^{\infty}(\Hom^{0, 1}(T(\Sigma\setminus\Gamma), N_{C}))$
defined by
\[
\ndbar{C}V=\nabla V+J_{N}\nabla_{j\cdot }V+[\Delta'(0)V]\circ j.
\]
\end{definition}

The following theorem summarizes results
about the linearized normal $\dbar$-operator proved in \cite{hwz:prop3} using
results from \cite{schwarz1995}.

\begin{theorem}
There exists a measure and metric on $\Sigma\setminus\Gamma$ and connection
on $N_{C}$ so that the extensions of $\ndbar{C}$ to maps
\[
\ndbar{C}:W^{k, p}(N_{C})\to W^{k-1, p}(\Hom^{0, 1}(T(\Sigma\setminus\Gamma), N_{C}))
\]
and
\[
\ndbar{C}:C_{0}^{k, \alpha}(N_{C})\to C_{0}^{k-1, \alpha}(\Hom^{0, 1}(T(\Sigma\setminus\Gamma), N_{C}))
\]
are Fredholm.  Moreover
each of the above operators has the same kernel, and
the Fredholm index $\ind(\ndbar{C})$
of each of the above operators is given by
\[
\ind(\ndbar{C})
=\ind(C)
\]
with $\ind(C)$ as defined in \eqref{e:index-formula}.
\end{theorem}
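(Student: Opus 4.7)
The plan is to exhibit the normal Cauchy--Riemann operator $\ndbar{C}$ as a compact perturbation of a direct sum of model operators on cylindrical ends, and then invoke the standard Fredholm/index theory for such asymptotically cylindrical operators developed by Lockhart--McOwen and adapted to the pseudoholomorphic setting in \cite{schwarz1995,hwz:prop3}.

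First I would fix near each puncture $z\in\Gamma$ a biholomorphic identification $\phi_{z}$ of a punctured neighborhood with a half-cylinder $\Z_{\pm}\times S^{1}$ (positive or negative according to the sign of $z$) and put on $\Sigma\setminus\Gamma$ the cylindrical metric and area measure coming from these coordinates. On the normal bundle $N_{C}$, I would use the asymptotic convergence of $\tl u$ to the cylinder $\R\times\gamma_{z}$ (Hofer's theorem together with Theorem~\ref{t:relative-asymptotics}) to choose a unitary trivialization of $N_{C}$ over the end that agrees with a given symplectic trivialization $\Phi$ of $\gamma_{z}^{*}\xi$; I would take the connection $\nabla$ to be the trivial connection in these trivializations near infinity and extend arbitrarily over a compact set. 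In this gauge, a short computation starting from the expression \eqref{e:bar-J} for $\bar J$, together with the fact that the curve $C$ is itself $\tl J$-holomorphic and asymptotic to the cylinder $\R\times\gamma_z$, shows that the principal symbol of $\ndbar{C}$ on the end is $\partial_{s}+J_{0}\partial_{t}$ and that the zeroth-order term converges exponentially as $s\to\pm\infty$ to a loop of symmetric matrices $S_{z}(t)$ for which $-J_{0}\partial_{t}-S_{z}(t)$ is precisely the matrix expression for $\A_{\gamma_{z}^{m_{z}},J}$ in the chosen trivialization.

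Next I would invoke the standard Fredholm theorem for Cauchy--Riemann type operators on surfaces with cylindrical ends: such an operator is Fredholm on $W^{k,p}$ (or on $C_{0}^{k,\alpha}$) precisely when each asymptotic operator has $0$ in its resolvent set, which holds here because $\lambda$ is nondegenerate and hence $\ker\A_{\gamma_{z}^{m_{z}},J}=0$ by the discussion following Lemma~\ref{l:operator-spectrum}. To identify the two kernels I would use an elliptic regularity/bootstrap argument: any $W^{k,p}$ element of $\ker\ndbar{C}$ satisfies a linear elliptic system whose coefficients are smooth and asymptotically translation--invariant, so by interior regularity it is smooth, and by the exponential decay estimates for solutions of asymptotically cylindrical Cauchy--Riemann operators (essentially Theorem~\ref{t:relative-asymptotics} applied to the linearization, or the abstract version in \cite{hwz:prop1}) it decays to zero at each puncture, hence lies in $C_{0}^{k,\alpha}$. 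The reverse inclusion is immediate since elements of $C_{0}^{k,\alpha}$ lie in every $W^{k,p}$.

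Finally, the index is a homotopy invariant of the operator through Fredholm asymptotically cylindrical operators, and a standard Riemann--Roch calculation (following \cite{schwarz1995}) in terms of the relative first Chern number of $N_{C}$ with respect to $\Phi$ and the Conley--Zehnder indices of the asymptotic operators yields
\[
\ind(\ndbar{C})=2c_{1}^{\Phi}(N_{C})+\sum_{z\in\Gamma^{+}}\mu^{\Phi}(u;z)-\sum_{z\in\Gamma^{-}}\mu^{\Phi}(u;z)-\chi(\Sigma)+\#\Gamma.
\]
To finish, I would observe that for an embedded curve the splitting $\tl u^{*}T(\R\times M)\cong T\Sigma\oplus N_{C}$ and the identification $T\Sigma\oplus \R\oplus\R\X\cong T\Sigma\oplus\C$ along the curve give $c_{1}^{\Phi}(N_{C})=c_{1}^{\Phi}(u^{*}\xi)$, so the right-hand side matches the definition \eqref{e:index-formula} of $\ind(C)$. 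The main obstacle is organizing the linearization computation cleanly enough to identify the asymptotic operator with $\A_{\gamma,J}$ in the chosen trivialization and to justify the exponential decay needed for the kernel coincidence; once this is done, everything else is essentially a quotation of the abstract theory.
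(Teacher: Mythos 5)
Your overall plan coincides with the paper's approach, which is to cite the asymptotically cylindrical Fredholm and Riemann--Roch theory of Schwarz and Hofer--Wysocki--Zehnder: cylindrical gauges at the ends, identification of the limiting translation-invariant operator with $\A_{\gamma_z^{m_z},J}$, nondegeneracy $\Rightarrow$ invertible asymptotics $\Rightarrow$ Fredholm, elliptic regularity plus exponential decay for kernel coincidence, and Riemann--Roch for the index. The paper itself gives no proof and simply refers to \cite{hwz:prop3,schwarz1995}, so up to the final computation your sketch is a faithful expansion of what those references do.

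However, the last paragraph contains two compensating errors. First, the Riemann--Roch formula for a rank-one asymptotically cylindrical Cauchy--Riemann operator over $\Sigma\setminus\Gamma$ reads
\[
\ind(\ndbar{C})=\chi(\Sigma\setminus\Gamma)+2c_{1}^{\Phi}(N_{C})+\sum_{z\in\Gamma^{+}}\mu^{\Phi}(u;z)-\sum_{z\in\Gamma^{-}}\mu^{\Phi}(u;z),
\]
with $\chi(\Sigma\setminus\Gamma)=\chi(\Sigma)-\#\Gamma$ entering with a plus sign, not $-\chi(\Sigma)+\#\Gamma$ as you wrote. Second, the assertion $c_{1}^{\Phi}(N_{C})=c_{1}^{\Phi}(u^{*}\xi)$ is false: the correct relation, used in the paper's Lemma \ref{l:normal-zero-count} via \cite[Prop.\ 3.1]{hutchings2002}, is
\[
c_{1}^{\Phi}(N_{C})=c_{1}^{\Phi}(u^{*}\xi)-\chi(\Sigma\setminus\Gamma),
\]
because the splitting $T(\Sigma\setminus\Gamma)\oplus N_{C}\cong\tl u^{*}T(\R\times M)\cong\C\oplus u^{*}\xi$ gives $c_{1}^{\Phi}(T(\Sigma\setminus\Gamma))+c_{1}^{\Phi}(N_{C})=c_{1}^{\Phi}(u^{*}\xi)$, and in the cylindrical trivialization $c_{1}^{\Phi}(T(\Sigma\setminus\Gamma))=\chi(\Sigma\setminus\Gamma)\ne 0$. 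The two mistakes differ by $2\chi(\Sigma\setminus\Gamma)$ in opposite directions and hence cancel, so you arrive at $\ind(\ndbar{C})=\ind(C)$ by accident; as written, though, each intermediate step is incorrect and would fail to reproduce the right answer if combined with a correct version of the other.
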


In \cite{hwz:prop3}, it is shown that the moduli space of pseudoholomorphic curves
near a given embedded $C\in\M(\lambda, J)$ can be given
as the zero set of a smooth, nonlinear section
$H:\mathcal{B}\to\mathcal{E}$ of a Banach space bundle
$\mathcal{E}$ defined over an open neighborhood $\mathcal{B}$
of $0$ in the Banach algebra
$C_{0}^{k, \alpha}(N_{C})$
of $C_{0}^{k, \alpha}$
sections of the normal bundle $N_{C}$ of $C$.
Moreover, if $\mathbf{0}\in\mathcal{B}$ denotes the zero
section of $N_{C}$, there is a natural isomorphism
\[
\bm{\alpha}:C_{0}^{k-1, \alpha}(\Hom^{0, 1}(T(\Sigma\setminus\Gamma), N_{C}))
\to\mathcal{E}_{\mathbf{0}}
\]
so that the linearization
$H'(\mathbf{0})$
of the section $H$ at the zero section $\mathbf{0}\in\mathcal{B}$ satisfies
\[
H'(\mathbf{0})V=\bm{\alpha}(\ndbar{C}V)
\]
for any $V\in C_{0}^{k, \alpha}(N_{C})$.
Thus, in the case that $\ndbar{C}$ is surjective, the implicit function
theorem can be applied to conclude that set of curves
near $C\in\M(\lambda, J)$ is a smooth manifold with dimension equal to the index
\eqref{e:index-formula}.
This leads to the following theorem, summarized from facts proved in  \cite{hwz:prop3}.

\begin{theorem}\label{t:fredholm-main}
Let $C\in\M(\lambda, J)$ be an embedded pseudoholomorphic curve
with parametrization $(\Sigma, j, \Gamma, \tl u=(a, u))$
and assume that
\[
\ndbar{C}:C_{0}^{k, \alpha}(N_{C})\to C_{0}^{k-1, \alpha}(\Hom^{0, 1}(T(\Sigma\setminus\Gamma), N_{C}))
\]
is surjective.
Then there exists an open neighborhood $B\subset\ker\ndbar{C}$ of the zero section of $N_{C}$
and a smooth embedding
$E:B\to C_{0}^{k, \alpha}(N_{C})$
mapping $0$ to the zero section satisfying:
\begin{enumerate}
\item For every $\tau\in B$, $E_{\tau}:\Sigma\setminus\Gamma\to N_{C}$ is smooth section
of the normal bundle to $C$.
\item The derivative $DE_{0}:\ker\ndbar{C}\to C_{0}^{k, \alpha}(N_{C})$
of $E:B\to C_{0}^{k, \alpha}(N_{C})$ at $0\in B$ is
the inclusion $\ker\ndbar{C}\hookrightarrow C_{0}^{k, \alpha}(N_{C})$,
i.e.\ $DE_{0}(v)=v$ for any $v\in\ker\ndbar{C}\subset C_{0}^{k, \alpha}(N_{C})$.

\item For each $\tau\in B$, there exists a distinct pseudoholomorphic curve
$C_{\tau}\in\M(\lambda, J)$
with parametrization
$(\Sigma_{\tau}, j_{\tau}, \Gamma_{\tau}, \tl v_{\tau}=(b_{\tau}, v_{\tau}))$
and a diffeomorphism\footnote{
We caution the reader that, in general, the continuous
extension of this diffeomorphism over the punctures
is not smooth.  We refer the reader to \cite{hwz:prop3} for more details.
}
$\psi_{\tau}:\Sigma\setminus\Gamma\to\Sigma_{\tau}\setminus\Gamma_{\tau}$
so that\footnote{
We note that in \cite{hwz:prop3},
rather than using the exponential map of the metric, a map
from the normal bundle of $C$ to $\R\times M$ is constructed by using a special trivialization
in a special coordinate system.
However, the essential point
for the results of \cite{hwz:prop3} to hold is that
one has a map from
a neighborhood of $0$ in $N_{C}$ to $\R\times M$ satisfying certain asymptotic conditions.
That the exponential map $\wt\exp$ of the metric $\tl g_{J}$ has the right properties
is easily seen from the asymptotic analysis in \cite{siefring2005}.
}
\[
\tl v_{\tau}\circ\psi_{\tau}=\wt\exp (E_{\tau}).
\]
\item The map
$F:B\times(\Sigma\setminus\Gamma)\to\R\times M$ defined by
\[
F(\tau, z)=\wt\exp_{\tl u(z)} E_{\tau}(z)
\]
is smooth.
\end{enumerate}
\end{theorem}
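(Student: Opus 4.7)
The plan is to apply the implicit function theorem in Banach spaces to the smooth nonlinear section $H:\mathcal{B}\to\mathcal{E}$ described in the paragraph preceding the theorem, using the Fredholm property and the assumed surjectivity of the linearized normal operator $\ndbar{C}$. The construction of $H$ is already set up so that its zero set, over a neighborhood of $\mathbf{0}\in\mathcal{B}\subset C_0^{k,\alpha}(N_C)$, corresponds (via the map $(z, V)\mapsto \wt\exp_{\tl u(z)}V(z)$) to the moduli space of $\tl J$-holomorphic curves near $C$. Moreover the linearization at the zero section is identified, via the bundle isomorphism $\bm{\alpha}$, with $\ndbar{C}$. With these ingredients, essentially all that remains is to run the standard implicit function theorem carefully and translate its output into geometric language.

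First I would fix a closed complement $W\subset C_0^{k,\alpha}(N_C)$ to the finite-dimensional subspace $\ker\ndbar{C}$, which exists because $\ndbar{C}$ is Fredholm. Then I would restrict $H$ to a smooth map $\wt H: B_0\times (W\cap\mathcal{B})\to\mathcal{E}$ of an open neighborhood $B_0\subset\ker\ndbar{C}$ of $0$ times a neighborhood of $0$ in $W$, and compose with a local trivialization of $\mathcal{E}$ near the fiber over $\mathbf{0}$ to obtain a smooth map into a fixed Banach space. By the identification $H'(\mathbf{0})\cdot V=\bm{\alpha}(\ndbar{C}V)$ and the surjectivity hypothesis, the partial derivative of $\wt H$ in the $W$-direction at $(0, 0)$ is a Banach space isomorphism onto the fiber at $\mathbf{0}$. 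The implicit function theorem then yields, after shrinking $B_0$ to some $B\subset\ker\ndbar{C}$, a unique smooth map $\tau\mapsto w(\tau)\in W$ with $w(0)=0$ such that $H(\tau+w(\tau))=0$ and $Dw(0)=0$. I would then define
\[
E:B\to C_0^{k,\alpha}(N_C), \qquad E_\tau:=\tau+w(\tau),
\]
so that $E$ is a smooth embedding with $E_0=\mathbf{0}$, and $DE_0$ coincides with the inclusion $\ker\ndbar{C}\hookrightarrow C_0^{k,\alpha}(N_C)$ because $Dw(0)=0$. Item (1) --- that each $E_\tau$ is actually smooth, not merely $C_0^{k,\alpha}$ --- follows from elliptic regularity applied to the nonlinear Cauchy--Riemann equation that the zero locus of $H$ satisfies; this is part of the standard package in \cite{hwz:prop3}. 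Item (4), smoothness of $F(\tau,z)=\wt\exp_{\tl u(z)}E_\tau(z)$, follows from smoothness of $\tau\mapsto E_\tau$, smoothness of $\wt\exp$, and the pointwise smoothness of each $E_\tau$.

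For item (3), given $\tau\in B$, the section $V:=E_\tau$ is, by the defining property of $H$, such that the map $z\mapsto \wt\exp_{\tl u(z)}V(z)$ is $\tl J$-holomorphic on $\Sigma\setminus\Gamma$, has finite energy (as a consequence of $V$ lying in $C_0^{k,\alpha}$ and the asymptotic control built into the norm), and is an embedding for $\tau$ near $0$ by continuity of the embedding condition. Denoting by $C_\tau\in\M(\lambda,J)$ the corresponding equivalence class, with any convenient parametrization $(\Sigma_\tau, j_\tau,\Gamma_\tau,\tl v_\tau)$, I would then obtain the required diffeomorphism $\psi_\tau:\Sigma\setminus\Gamma\to\Sigma_\tau\setminus\Gamma_\tau$ from the universal property of holomorphic reparametrization: the map $z\mapsto\wt\exp_{\tl u(z)}E_\tau(z)$ and $\tl v_\tau$ are two parametrizations of the same curve, and hence differ by a biholomorphism of their punctured domains. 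Distinctness of the $C_\tau$ for distinct $\tau\in B$ follows from uniqueness in the implicit function theorem, since two different $\tau$'s produce two different solutions $E_\tau$ of $H=0$ in the given slice, which (by properness of the action of reparametrizations and the transversal choice of $W$) correspond to distinct equivalence classes after shrinking $B$.

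The main obstacle, and also the only genuinely nontrivial step, is verifying that the nonlinear operator $H$ really is smooth between the relevant Banach manifolds, that its linearization at $\mathbf{0}$ agrees with $\ndbar{C}$ under $\bm{\alpha}$, and that its zero set locally captures the moduli space rather than some proper subset of it. These facts are essentially the content of \cite{hwz:prop3} (using regularity results from \cite{schwarz1995}) and are taken as given by the theorem statement; the implicit function argument above is then routine. Verifying that the exponential map $\wt\exp$ of the metric $\tl g_J$ has the correct asymptotic behavior to serve as the reparametrization map, as noted in the footnote, is handled by the asymptotic analysis of \cite{siefring2005}, and so no additional work is required.
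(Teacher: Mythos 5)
Your proposal follows essentially the same route as the paper: the theorem is presented as a summary of facts from \cite{hwz:prop3}, and the paper's surrounding discussion does exactly what you do --- set up the nonlinear section $H$ whose zero locus is the moduli space, identify $H'(\mathbf{0})$ with $\ndbar{C}$ via $\bm{\alpha}$, and apply the implicit function theorem along a complement $W$ of $\ker\ndbar{C}$; the paper's remark after the theorem also invokes elliptic regularity for the smoothness claims, as you do.

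One small point worth tightening: for item (4), joint smoothness of $(\tau,z)\mapsto \wt\exp_{\tl u(z)}E_\tau(z)$ does not follow from smoothness of $\tau\mapsto E_\tau$ into a single fixed $C_0^{k,\alpha}(N_C)$ together with pointwise smoothness of each $E_\tau$; you need the bootstrapping observation, made explicitly in the paper's remark, that $\tau\mapsto E_\tau$ is smooth as a map into $C_0^{k,\alpha}(N_C)$ for every positive integer $k$. Also, your appeal to ``properness of the reparametrization action'' for distinctness of the $C_\tau$ is not really the mechanism here --- distinct sections of $N_C$ near zero have distinct images under $\wt\exp$ (since $\wt\exp$ is injective on a neighborhood of the zero section by the asymptotic estimates), which directly gives distinct curves without invoking the reparametrization group.
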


We remark that the last claim above is only proved in \cite{hwz:prop3} as part of a theorem
(Theorem 5.7) where
it's assumed that the original curve $C$ is a pseudoholomorphic plane
satisfying some additional properties.  The proof of that portion of theorem
however applies to any immersed curve $C$.  The key idea is that
the sections $E_{\tau}$ are smooth by elliptic regularity and that
the map
$\tau\mapsto E_{\tau}$ determines a smooth map
from $B\to C_{0}^{k, \alpha}(N_{C})$ for every positive integer $k$.

It's proven in \cite{hwz:prop3} that for a generic choice of $J\in\J(M, \lambda)$ the
linearized normal $\dbar$-operator $\ndbar{C}$ at any immersed
curve $C\in\M(\lambda, J)$
is surjective.
We will not state the precise result since it is not needed in our proof.
What is of interest here is the fact that under certain circumstances,
the surjectivity of the linearized normal Cauchy--Riemann operator $\ndbar{C}$
can be guaranteed provided that certain conditions on the topological invariants of the curve
$C$
are met.  Such so-called automatic transversality conditions were first described
Gromov in \cite{gromov1985}, with proofs in \cite{hofer-lizan-sikorav}
for compact curves (either without boundary or with totally real boundary conditions),
and very general results proven in \cite{wendl2010-automatic}
from which the following theorem can be deduced.

\begin{theorem}\label{t:automatic-transversality}
Let $C=[\Sigma, j, \Gamma, a, u]\in\M(\lambda, J)$ be immersed.   Then
the linearized normal $\dbar$-operator $\ndbar{C}$ at $C$ is surjective if
\begin{equation}\label{e:automatic-transversality}
\ind(C)\ge- \chi(\Sigma)+\#\Gamma_{even}+2=2g(\Sigma)+\#\Gamma_{even}
\end{equation}
where $\chi(\Sigma)$ is the Euler characteristic of the surface $\Sigma$, and
$\#\Gamma_{even}$ is the number of punctures of the curve which
limit to periodic orbits with even Conley--Zehnder index.
\end{theorem}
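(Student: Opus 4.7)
The plan is to prove surjectivity of $\ndbar{C}$ by showing that its cokernel is trivial. Realizing the cokernel as the kernel of a formal $L^{2}$-adjoint $D^{*}$ of $\ndbar{C}$, the goal becomes to show $\ker D^{*}=\{0\}$. The essential structural feature is that since $\R\times M$ is four-dimensional and $C$ is two-dimensional, the normal bundle $N_{C}$ is a complex line bundle, so $\ndbar{C}$ and $D^{*}$ are both real-linear Cauchy--Riemann-type operators on a complex line bundle, a setting in which strong positivity-of-zeros statements are available.

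The first step is to apply a Carleman-type similarity principle (standard for real-linear Cauchy--Riemann operators on line bundles) to conclude that every nontrivial $\eta\in\ker D^{*}$ has only isolated zeros of strictly positive local multiplicity, so that the algebraic zero count $Z(\eta)$ is non-negative. A linearized version of the asymptotic analysis of Theorem~\ref{t:relative-asymptotics}, combined with Lemma~\ref{l:operator-spectrum}, further shows that $\eta$ is asymptotic at each puncture $z$ to a nonvanishing eigenvector of the asymptotic operator for $D^{*}$, namely (a sign-reversed version of) $\A_{\gamma_{z},J}$. In particular, $\eta$ has a well-defined asymptotic winding $w^{\Phi}(\eta;z)\in\Z$, and the sign reversal of the spectrum together with Theorem~\ref{t:spectral-conley-zehnder} produces the reverse of the inequality in Theorem~\ref{t:wind-infinity-bound}:
\[
\pm_{z}\, w^{\Phi}(\eta;z)\;\ge\;\lceil\pm_{z}\mu^{\Phi}(\gamma_{z})/2\rceil\qquad\text{for every }z\in\Gamma.
\]

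The second step is a Poincar\'e--Hopf-type count of zeros for $\eta$. Using the splitting $u^{*}T(\R\times M)\cong T\Sigma\oplus N_{C}$ gives the identity
\[
c_{1}^{\Phi}(N_{C})=c_{1}^{\Phi}(u^{*}\xi)-\chi(\Sigma)+\#\Gamma,
\]
and substituting this together with the winding lower bounds into the standard formula expressing $Z(\eta)$ in terms of $c_{1}^{\Phi}(N_{C})$ and the asymptotic windings yields an upper bound on $Z(\eta)$. Expanding via $\lceil m/2\rceil=\lfloor m/2\rfloor+p(m)$ (with $p(m)\in\{0,1\}$ the parity) and collecting terms using the index formula \eqref{e:index-formula} rearranges into
\[
0\;\le\;Z(\eta)\;\le\;-\tfrac{1}{2}\bigl[\ind(C)-2g(\Sigma)-\#\Gamma_{even}\bigr].
\]

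The hypothesis \eqref{e:automatic-transversality} forces the right-hand side to be non-positive. When strict inequality holds, $Z(\eta)<0$ is immediate, contradicting non-negativity and forcing $\eta=0$. The main obstacle is the borderline case $\ind(C)=2g(\Sigma)+\#\Gamma_{even}$, where the chain of inequalities above only gives $Z(\eta)=0$, i.e., $\eta$ is nowhere zero with extremal asymptotic windings at every puncture. This case is excluded by a refined analysis at the even punctures, where Lemma~\ref{l:same-or-opposite} shows that the extremal eigenspaces of $\A_{\gamma,J}$ are one-dimensional, combined with the fact that the formal adjoint pairing between $\ker\ndbar{C}$ and $\ker D^{*}$ cannot admit simultaneously extremal nowhere-vanishing representatives on both sides without producing a contradiction with the zero count. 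In all cases one concludes $\ker D^{*}=\{0\}$ and hence that $\ndbar{C}$ is surjective.
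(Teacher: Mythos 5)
Your overall strategy---showing the cokernel of $\ndbar{C}$ vanishes by realizing it as $\ker D^{*}$ and deriving a zero-count contradiction---is the right one, and is essentially the argument in \cite{wendl2010-automatic}, which the paper cites for this theorem rather than proving it directly. (The paper's in-text argument, Theorem~\ref{t:automatic-transversality-special}, handles only the special case $\ind(C)=2$ with all punctures odd by bounding $\dim\ker\ndbar{C}$; that device does not give surjectivity when $\ind(C)$ is odd, so a cokernel argument like yours is genuinely needed for the general statement.) However, your central inequality
\[
0\le Z(\eta)\le -\tfrac{1}{2}\bigl[\ind(C)-2g(\Sigma)-\#\Gamma_{even}\bigr]
\]
is off by $1$: the correct bound is $Z(\eta)\le -\tfrac{1}{2}\bigl[\ind(C)+\chi(\Sigma)-\#\Gamma_{even}\bigr]=-\tfrac{1}{2}\bigl[\ind(C)-2g(\Sigma)-\#\Gamma_{even}\bigr]-1$. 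The discrepancy comes from the bundle in which $\eta$ lives. A cokernel element is not a section of $N_{C}$ but of $\Hom^{0,1}(T(\Sigma\setminus\Gamma),N_{C})\cong \Lambda^{0,1}(\Sigma\setminus\Gamma)\otimes N_{C}$ (or its complex conjugate), and the Poincar\'e--Hopf count must include the twist $c_{1}^{\Phi}(\Lambda^{0,1})=\chi(\Sigma)-\#\Gamma$ along with $c_{1}^{\Phi}(N_{C})$ and the flipped winding bounds $\pm_{z}w^{\Phi}(\eta;z)\ge\lceil\pm_{z}\mu^{\Phi}(\gamma_{z})/2\rceil$. Since your second step explicitly works ``in terms of $c_{1}^{\Phi}(N_{C})$'' alone, this twist is dropped.

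This is not cosmetic. With the correct constant, the hypothesis $\ind(C)\ge 2g(\Sigma)+\#\Gamma_{even}$ forces $Z(\eta)\le-1<0$ outright, so there is no borderline case and the proof terminates immediately; integrality of $Z$ even yields Wendl's sharper criterion $\ind(C)\ge 2g(\Sigma)+\#\Gamma_{even}-1$ for free. With your bound, the borderline $\ind(C)=2g(\Sigma)+\#\Gamma_{even}$ gives only $Z(\eta)\le 0$, and the resolution you sketch is not a proof: $\ker\ndbar{C}$ and $\ker D^{*}$ are $L^{2}$-orthogonal by construction, so the ``formal adjoint pairing'' between them is identically zero and no contradiction is actually extracted from assuming both sides have nowhere-vanishing extremal representatives; Lemma~\ref{l:same-or-opposite} controls the one-dimensionality of extremal eigenspaces but says nothing about the relation between the two kernels. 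Since this borderline is exactly the case the paper needs for the index-$1$ planes $P^{\pm}$ (each with a single even puncture, $g=0$, $\ind=1$), the gap is a real one. Redo the zero count for $\eta$ in the correct bundle and the borderline evaporates.
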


In the event that $\ind(C)$
is even and positive
there is a short proof of a special case of this result
(the essential case for our proof is $\ind(C)=2$, but we include the above result since
it's also of interest to know that index-$1$ curves in a stable foliation are regular).
We will recall the proof of this special case below since the proof is easy and uses a fact about
the zeros of elements of the kernel of the linearized normal $\dbar$-operator
that we will need later.
We state this fact in the following lemma.

\begin{lemma}\label{l:normal-zero-count}
Let $C=[\Sigma, j, \Gamma, a, u]\in\M(\lambda, J)$ be an embedded pseudoholomorphic curve
and let $V\in\ker\ndbar{C}$ be a nontrivial element of the kernel of the linearized normal Cauchy--Riemann
operator at $C$.  Then all zeroes of $V$ are isolated and have positive local index.
Moreover, if $i(V)$ denotes the total algebraic count of zeroes of $V$, then
\[
0\le i(V)\le \frac{1}{2}\bp{\ind(C)-\chi(C)+\#\Gamma_{even}}
\] 
with $\chi(C)$ the Euler characteristic of the curve, and $\#\Gamma_{even}$ the number of
asymptotic limits of the curve with even Conley--Zehnder index.
\end{lemma}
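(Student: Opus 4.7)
The plan is to combine local information about zeros of $V$ (from the similarity principle) with asymptotic winding estimates at the punctures (from spectral theory of the asymptotic operator), and then convert these into a global count via a relative Chern class computation on the normal bundle $N_C$.

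First, I would establish the local structure of zeros. The linearized normal operator $\ndbar{C}$ is a linear Cauchy--Riemann type operator on the complex line bundle $N_C \to \Sigma\setminus\Gamma$; in any local holomorphic trivialization it has the form $\dbar V + A V = 0$ for a smooth complex-valued function $A$. The Carleman similarity principle then lets us write $V(z) = \phi(z) f(z)$ locally with $\phi$ continuous and nowhere vanishing, and $f$ holomorphic, so the zeros of $V$ are precisely those of $f$: they are isolated with positive local index, giving $i(V) \ge 0$.

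Next, I would analyze the asymptotic behavior of $V$ at each puncture $z \in \Gamma$, with limiting orbit $\gamma_z$. In cylindrical coordinates $(s,t)$ near $z$, trivializing $N_C$ suitably near $\gamma_z^*\xi$, the equation $\ndbar{C} V = 0$ becomes a first-order linear PDE whose principal part approaches $-J_0\dth - S(t)$, i.e.\ the asymptotic operator $\A_{\gamma_z, J}$ (up to sign conventions). Using the standard spectral/exponential decay arguments from \cite{hwz:prop1, mora} (now applied to the linearized problem), any nontrivial $V$ in the kernel either vanishes identically or satisfies an asymptotic formula of the form
\[
V(s,t) = e^{\sigma s}\bigl[e(t) + r(s,t)\bigr],
\]
where $\sigma$ is an eigenvalue of $\A_{\gamma_z, J}$, $e \in \ker(\A_{\gamma_z, J} - \sigma)\setminus\{0\}$, and $r$ decays exponentially. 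Since $V \in W^{k,p}$, $\sigma$ must be negative at positive punctures and positive at negative punctures. This defines a well-defined asymptotic winding $\winfty^\Phi(V; z) = \wind(\Phi^{-1} e)$ which, by the monotonicity in Lemma \ref{l:operator-spectrum} and the spectral description of the Conley--Zehnder index in Theorem \ref{t:spectral-conley-zehnder}, satisfies the winding bound
\[
\pm_z \winfty^\Phi(V;z) \le \fl{\pm_z \mu^\Phi(\gamma_z)/2},
\]
exactly as in Theorem \ref{t:wind-infinity-bound}.

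Finally, I would count zeros globally. Since $V$ is a section of the complex line bundle $N_C$ with prescribed asymptotic windings, the standard relative-Chern-class computation gives
\[
i(V) = c_1^\Phi(N_C) + \sum_{z\in\Gamma^+}\winfty^\Phi(V;z) - \sum_{z\in\Gamma^-}\winfty^\Phi(V;z).
\]
From the splitting $\tl u^*T(\R\times M) = T\Sigma \oplus N_C$ and the isomorphism $\tl u^*T(\R\times M) \cong \C \oplus u^*\xi$ of complex line bundles, together with $c_1^\Phi(T\Sigma) = \chi(\Sigma) - \#\Gamma$ in the cylindrical trivialization, one gets $c_1^\Phi(N_C) = c_1^\Phi(u^*\xi) - \chi(\Sigma) + \#\Gamma$. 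Plugging in the winding bounds and using the parity identity $\fl{\mu/2} + \fl{-\mu/2} = -p(\mu)$ (which is $0$ at even orbits and $-1$ at odd ones) to convert $\sum_{z\in\Gamma^\pm} \fl{\pm \mu^\Phi(\gamma_z)/2}$ into $\tfrac12(\sum \pm\mu^\Phi(\gamma_z)) - \tfrac12 \#\Gamma_{odd}$, the right-hand side collapses via the index formula \eqref{e:index-formula} to exactly $\tfrac12(\ind(C) - \chi(\Sigma) + \#\Gamma_{even})$, using $\#\Gamma_{odd} = \#\Gamma - \#\Gamma_{even}$.

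The main obstacle is the asymptotic step: establishing that kernel elements of $\ndbar{C}$ genuinely admit exponential decay with leading term an eigenvector of the asymptotic operator. This is not an immediate corollary of the asymptotic results already quoted (which concern pseudoholomorphic half-cylinders or their differences), but it is the standard linear analogue, following from the same spectral decomposition of the end after choosing a suitable unitary trivialization of $N_C$ near each puncture; the rest of the argument is then purely topological bookkeeping.
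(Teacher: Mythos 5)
Your proof follows the same outline as the paper's proof almost step for step: the similarity principle for isolated zeros of positive local index, the asymptotic formula for kernel elements giving a well-defined asymptotic winding bounded by $\fl{\pm_z\mu^\Phi(\gamma_z)/2}$ exactly as in Theorem \ref{t:wind-infinity-bound}, the zero count $i(V)=c_1^\Phi(N_C)+\sum_{z\in\Gamma}\pm_z\winfty^\Phi(V;z)$, the relative Chern number identity $c_1^\Phi(N_C)-c_1^\Phi(u^*\xi)=-\chi(\Sigma)+\#\Gamma$ from \cite[Proposition 3.1]{hutchings2002}, and then the algebraic simplification via \eqref{e:total-cz-index} and \eqref{e:index-formula}. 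The only difference is that you spell out the final floor-function/parity bookkeeping explicitly where the paper compresses it into one line, and you correctly flag that the asymptotic decay step for kernel elements of $\ndbar{C}$ is not literally a quoted theorem but the linear analogue of \cite{hwz:prop1,mora}; the paper handles this by deferring to \cite[Theorem 2.11]{hwz:prop3}.
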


\begin{proof}
The proof is a straightforward generalization of arguments in
\cite[Proposition 5.6, Theorem 5.8]{hwz:prop2}, 
\cite[Theorem 2.11]{hwz:prop3}, and
\cite[Theorem 2.7]{hwz:foliations}.  We will highlight the main points.
As observed in \cite[Theorem 2.11]{hwz:prop3},
the fact that the zeroes of a nontrivial element of $\ker\ndbar{C}$ are isolated and have positive
local index follows from the similarity principle (see e.g.\ Appendix A.6 in \cite{hoferzehnder}).
As in \cite[Theorem 2.11]{hwz:prop3}, it can be argued that a nontrivial element
$V$ of $\ker\ndbar{C}$ satisfies an asymptotic formula of the same form as that given in
\cite{hwz:prop1, mora} or
Theorem \ref{t:relative-asymptotics} above.
Thus choosing a trivialization $\Phi$ of the contact structure
along the asymptotic limit $\gamma_{z}$
of a given puncture $z\in\Gamma$ and extending to a trivialization of the normal bundle
$N_{C}$ near the puncture, the section $V$ has a well-defined asymptotic winding number and
the argument of Theorem \ref{t:wind-infinity-bound} applies to show that
\[
\pm_{z}\wind^{\Phi}(V; z)\le \fl{\pm_{z}\mu^{\Phi}(\gamma_{z})/2}=\tfrac{1}{2}\bp{\pm_{z}\mu^{\Phi}(\gamma_{z})-p(\gamma_{z})}
\]
with $\pm_{z}$ the sign of the puncture $z$ and $p(z)$ is the parity of the orbit $\gamma_{z}$.
A straightforward zero-counting argument gives that
\[
i(V)=c_{1}^{\Phi}(N_{C})+\sum_{z\in\Gamma}\pm_{z}\wind^{\Phi}(V; z).
\]
Meanwhile, properties of the relative first Chern number imply that
\[
c_{1}^{\Phi}(N_{C})-c_{1}^{\Phi}(\xi|_{C})=-\chi(\Sigma\setminus\Gamma)=-\chi(\Sigma)+\#\Gamma
\]
(see \cite[Proposition 3.1]{hutchings2002}).
Combining the above with
formulas \eqref{e:total-cz-index} and \eqref{e:index-formula} leads to
\[
i(V)\le \frac{1}{2}\bp{\ind(C)-\chi(C)+\#\Gamma_{even}}
\]
as claimed.
\end{proof}

We now recall the proof of
the special case of 
Theorem \ref{t:automatic-transversality}.
The idea is that if the kernel of the linearized operator is too big, then
one can construct a section of the kernel with too many zeroes.
This same argument is applied in the proofs of 
\cite[Theorem 2.11]{hwz:prop3} and
\cite[Theorem 2.7]{abb-cie-hof}.

\begin{theorem}\label{t:automatic-transversality-special}
Let $C=[S^{2}, i, \Gamma, a, u]\in\M(\lambda, J)$ be a
an embedded, pseudoholomorphic (punctured)
sphere and assume that all punctures of are odd and that
$\ind(C)=2$.  Then the linearized normal $\dbar$-operator
$\ndbar{C}$ is surjective.
\end{theorem}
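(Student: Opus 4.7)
The plan is to deduce surjectivity from Lemma \ref{l:normal-zero-count} by a dimension count on the kernel: if the kernel were larger than forced by the index, one could produce a nontrivial section of $\ker\ndbar{C}$ vanishing at a prescribed point, contradicting the upper bound on $i(V)$.

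First I would compute the zero-count bound for this curve. Since $\Sigma = S^2$ we have $\chi(\Sigma) = 2$, and since every puncture is odd we have $\#\Gamma_{even} = 0$. Combining with $\ind(C) = 2$, Lemma \ref{l:normal-zero-count} gives
\[
0 \le i(V) \le \tfrac{1}{2}\bp{\ind(C) - \chi(\Sigma) + \#\Gamma_{even}} = \tfrac{1}{2}(2 - 2 + 0) = 0
\]
for every nontrivial $V \in \ker\ndbar{C}$. Hence any nontrivial element of $\ker\ndbar{C}$ is nowhere vanishing on $\Sigma \setminus \Gamma$.

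Next I would suppose for contradiction that $\ndbar{C}$ is not surjective, i.e.\ $\dim \coker\ndbar{C} \ge 1$. Because $\ndbar{C}$ is Fredholm of index $\ind(C) = 2$, this forces
\[
\dim \ker\ndbar{C} = \ind(C) + \dim\coker\ndbar{C} \ge 3.
\]
Fix any point $p \in \Sigma \setminus \Gamma$ and consider the real linear evaluation map
\[
\mathrm{ev}_p : \ker\ndbar{C} \to (N_C)_p, \qquad \mathrm{ev}_p(V) = V(p).
\]
Since $(N_C)_p$ has real dimension $2$, the kernel of $\mathrm{ev}_p$ has dimension at least $\dim \ker\ndbar{C} - 2 \ge 1$. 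Thus there exists a nontrivial $V \in \ker\ndbar{C}$ with $V(p) = 0$. By the first part of Lemma \ref{l:normal-zero-count}, the zero at $p$ contributes a strictly positive local index, so $i(V) \ge 1$, contradicting $i(V) \le 0$.

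The only mildly delicate point is the invocation of the similarity principle inside Lemma \ref{l:normal-zero-count}, which guarantees that zeros of nontrivial elements of $\ker\ndbar{C}$ are isolated with positive local index; once this is accepted, the rest is a two-line linear algebra argument. I therefore do not foresee a serious obstacle: the main content has already been packaged into Lemma \ref{l:normal-zero-count}, and the theorem follows purely from the numerical coincidence $\chi(S^2) - \#\Gamma_{even} = \ind(C)$ together with the evaluation-at-a-point trick.
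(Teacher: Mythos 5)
Your proof is correct and is essentially the paper's proof: both arguments assume for contradiction that the kernel has dimension at least $3$, use a pointwise evaluation argument (your $\mathrm{ev}_p$ is exactly the paper's choice of constants $c_1,c_2,c_3$ killing $\sum c_iV_i(z_0)$) to produce a nontrivial kernel element vanishing at a chosen point, and then invoke Lemma \ref{l:normal-zero-count} to obtain the contradiction $1\le i(V)\le 0$. The only cosmetic difference is that you make explicit the equivalence between non-surjectivity and $\dim\ker\ndbar{C}>\ind(C)$ via the Fredholm index, which the paper leaves implicit.
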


\begin{proof}
To show that
$\ndbar{C}$ is surjective, it suffices to show that
\[
\dim\ker\ndbar{C}=\ind(C)=2.
\]
Suppose to the contrary that $\dim\ker\ndbar{C}>2$.  Then we can find
three linearly independent vectors
$V_{1}$, $V_{2}$, $V_{3}\in\ker\ndbar{C}\subset C^{\infty}(N_{C})$.
Choosing a point $z_{0}\in S^{2}\setminus\Gamma$ and using that the normal
bundle $N_{C}$ has (real) dimension $2$,
we can find constants $c_{1}$, $c_{2}$, and $c_{3}$ so that
$\sum_{i=1}^{3}c_{i}V_{i}(z_{0})=0$.
Thus, the section
$V_{\bf c}$ of $N_{C}$ defined by
$V_{\bf c}=\sum_{i=1}^{3}c_{i}V_{i}$
is a nonzero element of $\ker\ndbar{C}$ which vanishes at $z_{0}$ and therefore,
according to Lemma \ref{l:normal-zero-count} satisfies
$i(V_{\bf c})\ge 1$ since all zeroes have positive local index.
But Lemma \ref{l:normal-zero-count} also tells us that
\[
i(V_{\bf c})\le \frac{1}{2}\bp{\ind(C)-\chi(S^{2})+\#\Gamma_{even}}=\frac{1}{2}\bp{2-2+0}=0.
\]
We thus have the contradiction $1\le i(V_{\bf c})\le 0$ which completes the proof.
\end{proof}

\section{Stable finite energy foliations and moduli spaces of foliating curves}\label{s:foliating-curves}

In this section we will develop some general theory for finite energy foliations and
collect facts about 
the moduli spaces of curves which make up finite energy foliations.
We start with a definition.
\begin{definition}\label{d:fef}
Let $(M, \lambda, J)$ be a three manifold equipped with a nondegenerate contact form
and compatible $J\in\J(M, \lambda)$.  A \emph{stable finite energy foliation }
$\F$ of total energy $E_{0}$ for the data
$(M, \lambda, J)$ is a collection of
simple curves $C\in\M(\lambda, J)$ satisfying:
\begin{itemize}
\item For every point $p\in\R\times M$ there is a unique curve $C\in\F$ passing through $p$.
\item Every $C\in\F$ is either a trivial cylinder or satisfies $\ind(C)\in\br{1, 2}$
\item For any $C_{1}$, $C_{2}\in\F$ with $\ind(C_{i})\in\br{1, 2}$,
$C_{1}*C_{2}=0$.
\item $E_{0}=\sup_{C\in\F} E(C)$.
\end{itemize}
\end{definition}
We note that this definition is a slightly weaker one than that given in the introduction in that
we don't explicitly require here that the curves of $\F$ form a smooth foliation of $\R\times M$.
We will see below however, that this condition follows from the above assumptions and, thus, the
two definitions are in fact equivalent.
We observe that the last condition
in our definition of stable finite energy foliation above
applies when $C_{1}=C_{2}$.  
The following theorem collects some facts about the moduli spaces of curves satisfying
$C*C=0$ and $\ind(C)\in\br{1, 2}$
that follow from results reviewed in the preceding sections.

\begin{theorem}\label{t:foliating-curves}
Let $C=[\Sigma, j, \Gamma, a, u]\in\M(\lambda, J)$ be a simple
pseudoholomorphic curve, and assume that $C*C=0$ and
$\ind(C)\in\br{1, 2}$.  Then:
\begin{enumerate}
\item $C$ is embedded.
\item $C$ is nicely embedded;
that is, the projection of $C$ to $M$ is an embedding transverse to the Reeb flow
and doesn't intersect any of its asymptotic limits.
\item For each $z\in\Gamma$, the bound from \eqref{e:wind-infinity-inequality} is achieved, i.e.\
\[
\pm_{z}\winfty^{\Phi}(\tl u; z)= \fl{\pm_{z}\mu^{\Phi}(\tl u; z)/2}
\]
where $\pm_{z}$ denotes the sign of the puncture $z$.
\item The genus $g(\Sigma)$ of the domain is zero, i.e.\ $(\Sigma, j)$ is biholomorphic to the
Riemann sphere $(S^{2}, i)$. 
\item The number $\Gamma_{even}$ of punctures of $C$ asymptotic to even orbits is given
by
\[
\#\Gamma_{even}=2-\ind(C).
\]
\item The linearized normal Cauchy--Riemann operator $\ndbar{C}$ is surjective.
\item With $n=\ind(C)$, there exists an $\ep>0$ and an injective immersion
\[
\tl F_{C}:B_{\ep}^{n}(0)\times \Sigma\setminus\Gamma\to\R\times M
\]
so that the map $z\mapsto \tl F_{C}(0, z)$ is a parametrization of the curve $C$,
and so that for every $\tau\in B_{\ep}^{n}(0)$,
there is a pseudoholomorphic curve
\[
C_{\tau}=[\Sigma_{\tau}, j_{\tau}, \Gamma_{\tau}, \tl u_{\tau}=(a_{\tau}, u_{\tau})]\in\M(\lambda, J)
\]
and a diffeomorphism\footnote{
As with Theorem \ref{t:fredholm-main}, we again
caution the reader here
that the continuous extension of this diffeomorphism over the punctures
is not, in general, smooth.
}
\[
\psi_{\tau}:\Sigma\setminus\Gamma\to\Sigma_{\tau}\setminus\Gamma_{\tau}
\]
so that
\[
\tl F_{C}(\tau, \cdot)=\tl u_{\tau}\circ \psi_{\tau}.
\]
\end{enumerate}
\end{theorem}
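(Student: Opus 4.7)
The plan is to extract claims (1)--(6) as direct corollaries of the intersection-theoretic and Fredholm results of Sections \ref{s:intersections}--\ref{s:fredholm}, and then to assemble claim (7) by combining Theorem \ref{t:fredholm-main} with the zero-counting estimate in Lemma \ref{l:normal-zero-count}.

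Since $C$ is simple and $\ind(C)\ge 1$ excludes trivial cylinders (which have index $0$ by a direct calculation using $\mu(C)=0$, $\chi(\Sigma)=2$, $\#\Gamma=2$), Theorem \ref{t:self-gin-zero} applies directly under $C*C=0$. Its first three conclusions give claims (1), (2), and (3) immediately; its fourth reads $\ind(C)-\chi(\Sigma)+\#\Gamma_{even}=0$. Substituting $\chi(\Sigma)=2-2g(\Sigma)$ rearranges this to $2g(\Sigma)+\#\Gamma_{even}=2-\ind(C)\in\{0,1\}$. Non-negativity of both summands then forces $g(\Sigma)=0$ and $\#\Gamma_{even}=2-\ind(C)$, proving (4) and (5). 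For (6), the automatic transversality criterion of Theorem \ref{t:automatic-transversality} requires $\ind(C)\ge 2g(\Sigma)+\#\Gamma_{even}=2-\ind(C)$, equivalently $2\ind(C)\ge 2$, which holds by hypothesis. Thus $\ndbar{C}$ is surjective and $\dim\ker\ndbar{C}=\ind(C)=n$.

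Applying Theorem \ref{t:fredholm-main} supplies a neighborhood $B\subset\ker\ndbar{C}$ of $0$, an embedding $E:B\to C_0^{k,\alpha}(N_C)$ with $DE_0=\text{incl}$, and a smooth map $F(\tau,z)=\wt\exp_{\tl u(z)}E_\tau(z)$ whose images parametrize nearby curves. Identifying a small neighborhood of $0\in\ker\ndbar{C}\cong\R^n$ with $B_\epsilon^n(0)$, we set $\tl F_C:=F$. At $(0,z)$, the differential $D\tl F_C$ decomposes into the injective inclusion $d\tl u(z):T_z\Sigma\hookrightarrow T_{\tl u(z)}(\R\times M)$ along the tangential direction and the evaluation $\ker\ndbar{C}\to(N_C)_z$, $V\mapsto V(z)$, along the $\tau$-direction. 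Injectivity of the latter is the key point and follows from Lemma \ref{l:normal-zero-count}: inserting $\chi(C)=\chi(\Sigma)=2$ and $\#\Gamma_{even}=2-\ind(C)$ into its bound yields
\[
i(V)\le\tfrac12\bigl(\ind(C)-2+(2-\ind(C))\bigr)=0
\]
for every nonzero $V\in\ker\ndbar{C}$, and since all zeros of such $V$ carry strictly positive local index, $V$ is nowhere vanishing.

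To upgrade this pointwise immersivity at $\{0\}\times(\Sigma\setminus\Gamma)$ to an immersion on all of $B_\epsilon^n(0)\times(\Sigma\setminus\Gamma)$, I would shrink $\epsilon$, with uniformity near the punctures controlled by the asymptotic estimates of Theorem \ref{t:relative-asymptotics} applied to the family $\tl u_\tau\circ\psi_\tau$. Injectivity follows from positivity of intersections: if $\tau_1\ne\tau_2$, then $C_{\tau_1}$ and $C_{\tau_2}$ are distinct elements of $\M(\lambda,J)$ by Theorem \ref{t:fredholm-main}(3), with $C_{\tau_1}*C_{\tau_2}=C*C=0$ by homotopy invariance (Theorem \ref{t:hin-prop}), so Theorem \ref{t:intpositivity} forces their images to be disjoint in $\R\times M$; if $\tau_1=\tau_2$, then injectivity in the $z$-variable follows from $\tl u_\tau\circ\psi_\tau$ being an embedding, which is claim (1) applied to $C_\tau$ (homotopy invariance also gives $C_\tau*C_\tau=0$ and $\ind(C_\tau)=\ind(C)$). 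The main obstacle is the uniform control near the noncompact cylindrical ends needed to promote pointwise injectivity and immersivity to the full neighborhood $B_\epsilon^n(0)\times(\Sigma\setminus\Gamma)$; this requires carefully combining the open condition for immersions with the exponential-convergence estimates of Theorem \ref{t:relative-asymptotics} applied to the family, rather than a naive compactness argument.
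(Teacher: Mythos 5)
Claims (1)--(6) are argued exactly as in the paper: Theorem \ref{t:self-gin-zero} gives (1)--(3) directly, the identity $\ind(C)-\chi(\Sigma)+\#\Gamma_{even}=0$ together with nonnegativity forces (4) and (5), and the automatic transversality criterion gives (6). Your injectivity argument for (7), via positivity of intersections and homotopy invariance of the $*$-product, also matches the paper.

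For the immersion property in (7), however, your route diverges and leaves the gap you yourself flag. You establish immersivity along $\{0\}\times(\Sigma\setminus\Gamma)$ by applying Lemma \ref{l:normal-zero-count} to elements of $\ker\ndbar{C}$, and then propose to propagate this to a full neighborhood $B_\epsilon^n(0)\times(\Sigma\setminus\Gamma)$ by shrinking $\epsilon$, invoking Theorem \ref{t:relative-asymptotics} for uniform control near the ends. That is genuinely the hard point: immersivity is an open condition at each point, but the domain is noncompact, and there is no a priori uniform lower bound on $|D_\tau E_\tau(z)v_i|$ near the punctures, so a ``shrink $\epsilon$'' argument does not close. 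The paper avoids this entirely. Rather than perturbing off $\tau=0$, it observes that for each fixed $\tau$ the sections $z\mapsto D_\tau E_\tau(z)v_i$ lie in the kernel of a linear Fredholm operator $L_\tau$ of the same structural type as $\ndbar{C}$ (Cauchy--Riemann type on $N_C$ with the same asymptotic operators), so the zero-counting argument of Lemma \ref{l:normal-zero-count} applies verbatim: since $\ind(C)-\chi(\Sigma)+\#\Gamma_{even}=0$, any nontrivial such section is nowhere vanishing. This gives pointwise linear independence of $\{D_\tau E_\tau(z)v_i\}_i$ for \emph{every} $\tau\in B$ directly, with no perturbation or compactness argument at all. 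To repair your proof you would need to supply this reinterpretation of $D_\tau E_\tau$ as landing in $\ker L_\tau$; without it, the ``upgrade'' step is not a proof.
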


\begin{proof}
The first three claims follow immediately from \cite[Corollary 5.17]{siefring2011}
(relevant portions are reviewed above in Theorem \ref{t:self-gin-zero}).
The fourth and fifth claims also follow from
\cite[Corollary 5.17]{siefring2011}/Theorem \ref{t:self-gin-zero}.
Indeed, we get from that result that
$C*C=0$ implies that
\[
\ind(C)-\chi(\Sigma)+\#\Gamma_{even}=0,
\]
which, if $\ind(C)\ge 1$, implies that
\[
\chi(\Sigma)\ge 1+\#\Gamma_{even}
\]
and thus we must have $\chi(\Sigma)=2-2g(\Sigma)=2$ or, equivalently, $g(\Sigma)=0$ establishing
the third claim.
Substituting $\chi(\Sigma)=2$ in the above then immediately yields the fifth claim.

Next, given $g(\Sigma)=0$ and $\#\Gamma_{even}=2-\ind(C)$, we have that
\begin{align*}
\ind(C)-2g(\Sigma)-\#\Gamma_{even}
&=\ind(C)-\#\Gamma_{even} \\
&=2(\ind(C)-1)
\end{align*}
which is greater than equal to zero provided $\ind(C)\ge 1$.
Thus
\[
\ind(C)\ge 2g(\Sigma)+\#\Gamma_{even}
\]
provided $\ind(C)\ge 1$ and Theorem \ref{t:automatic-transversality}
then allows us to conclude that
the linearized normal $\dbar$-operator is surjective.
(We note that in the $\ind(C)=2$ case
we have $\#\Gamma_{even}=2-\ind(C)=0$ so the special case,
Theorem \ref{t:automatic-transversality-special}, of the automatic transversality result holds.)

The final claim is a generalization of Theorem 5.7 in \cite{hwz:prop3},
and follows from Theorem \ref{t:fredholm-main}, (a generalization of)
Lemma \ref{l:normal-zero-count},
and the fact that $C*C=0$.  Indeed, since $\ndbar{C}$ is surjective,
Theorem \ref{t:fredholm-main} holds,
and we obtain a neighborhood $B$ of $0\in\ker\ndbar{C}$
and a smooth map
$F:B\times\Sigma\setminus\Gamma\to\R\times M$
so that each of the maps $z\mapsto F(\tau, z)$ parametrizes a
distinct pseudoholomorphic curve $C_{\tau}$ homotopic to $C$.
The assumption $C*C=0$ with homotopy invariance of the holomorphic intersection number
implies that $C_{\tau_{1}}*C_{\tau_{2}}=0$ for any $\tau_{1}$, $\tau_{2}\in B$.
Hence item (1) above along with Theorem \ref{t:intpositivity} imply that the $C_{\tau}$ form
a family of pairwise disjoint, embedded/nicely-embedded pseudoholomorphic curves.
This in turn implies that the map $F$ is injective since double points of $F$
can be seen as either intersections between two distinct $C_{\tau}$'s or
a self-intersection of some given $C_{\tau}$.
We next claim that $F$ is an immersion.
This argument proceeds essentially the same as in \cite[Theorem 5.7]{hwz:prop3}
which proves a similar result in the special case that the curve $C$ is a plane.
We explain the main points here.
Since $F$ is given by
\[
F(\tau, z)=\wt\exp_{\tl u(z)} E_{\tau}(z)
\]
and we have previously remarked that $\wt\exp$ is an immersion on some uniform
neighborhood of the zero section of $N_{C}$, it suffices to show that the map
$(\tau, z)\mapsto E_{\tau}(z)$ is an immersion.
Since $E_{\tau}$ is a smooth section of a vector bundle, it suffices in turn to show that
the fiber derivative
$D_{\tau}E_{\tau}(z)$ at any point $z$ has full rank.
Letting $\br{v_{i}}_{i=1}^{\ind(C)}$ be a basis for $\ker\ndbar{C}$, it suffices to show that
the sections $D_{\tau}E_{\tau}(z)v_{i}$ are pointwise linearly independent.
If not, then we could construct a nontrivial section $v$ of $N_{C}$ in
the image of $D_{\tau}E_{\tau}$
having a zero at some point.
However, it can be shown that sections in the image of $D_{\tau}E_{\tau}$
are in the kernel of a linear Fredholm operator $L_{\tau}$
of the same type
as $\ndbar{C}$.  In particular, the proof of Lemma \ref{l:normal-zero-count}
applies to elements of the kernel of $L_{\tau}$ and shows that
any nontrivial section $v$ of $N_{C}$
in the kernel of $L_{\tau}$
is nonvanishing since
$\ind(C)-\chi(C)+\#\Gamma_{even}=0$.  This contradiction completes the proof that,
for some sufficiently small neighborhood $B$ of $0\in\ker\ndbar{C}$,
$F$ is an injective immersion on $B\times \Sigma\setminus\Gamma$.
With $n=\ind(C)$, we choose a basis $\br{v_{i}}_{i=1}^{n}$ for $\ker\ndbar{C}$
and 
get a map $\tl F:B^{n}_{\ep}(0)\times\Sigma\setminus\Gamma\to\R\times M$
by defining $\tl F(c_{i}, z)=F(\sum_{i}c_{i}v_{i}, z)$,
which will be an injective immersion provided $\ep$ is small enough.
\end{proof}

We next prove a general lemma which says that
up to $\R$-translation all but finitely many curves in a stable finite energy foliation
have index $2$.

\begin{lemma}\label{l:finite-index-0-and-1}
Let $\F$ be a stable finite energy foliation for the data $(M, \lambda, J)$
(according to Definition \ref{d:fef}).  Then:
\begin{itemize}
\item $\F$ contains a finite number of trivial cylinders.

\item Up to $\R$-translation, $\F$ contains a finite number of curves
$C$ with $\ind(C)=1$.
\end{itemize}

\end{lemma}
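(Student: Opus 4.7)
The plan is to handle each bullet in turn. For the first, I would observe that a trivial cylinder $\R\times\gamma\in\F$ has energy equal to the period $T(\gamma)$, so $T(\gamma)\le E_0<\infty$. Nondegeneracy of $\lambda$ together with compactness of $M$ forces the set of periodic orbits with period at most $E_0$ to be a finite collection of pairwise-disjoint circles: isolation is immediate from nondegeneracy, and any accumulation of such orbits would violate isolation via compactness. Hence only finitely many trivial cylinders occur in $\F$.

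For the second bullet, the first step is to show that near any index-$1$ leaf $C\in\F$, every nearby index-$1$ leaf of $\F$ is an $\R$-translate of $C$. By Theorem \ref{t:foliating-curves}(7), the local moduli space of pseudoholomorphic curves near $C$ is a smooth $1$-parameter family $\br{C_\tau}_{\tau\in B_\ep^1(0)}$ produced by an injective immersion $\tl F_C:B_\ep^1(0)\times(\Sigma\setminus\Gamma)\to\R\times M$ with $C_0=C$. The $\R$-orbit $\br{c\cdot C}_{c\in\R}$ is a $1$-dimensional smooth submanifold of $\M(\lambda,J)$ passing through $C$, so by dimension reasons it must locally coincide with $\br{C_\tau}$. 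Consequently every pseudoholomorphic deformation of $C$---and in particular every index-$1$ leaf of $\F$ near $C$---is an $\R$-translate of $C$, and the set $\F_1/\R$ of $\R$-equivalence classes of index-$1$ leaves is discrete.

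Finiteness then follows from compactness. Suppose for contradiction that $\F_1/\R$ is infinite and choose representatives $C_k\in\F_1$ of distinct classes. Since $E(C_k)\le E_0$ and the asymptotic limits of the $C_k$ are drawn from the finite set of orbits of period at most $E_0$, after passing to a subsequence I may assume all $C_k$ share the same topological type and asymptotic data. Normalizing by $\R$-translation (e.g.\ by fixing the value of $a_k$ at a prescribed point in the domain), Theorem \ref{t:spec-cpct} yields a further subsequence converging in the sense of \cite{behwz} to a stable, nicely-embedded building $B_\infty$. If $B_\infty$ consists of a single smooth level, then the limit $C_\infty$ has $\ind(C_\infty)=1$ and $C_\infty*C_\infty=0$, and the local structure above forces $C_k$ to be an $\R$-translate of $C_\infty$ for all large $k$, contradicting distinctness of the classes $[C_k]$.

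The principal technical obstacle is to rule out the case where $B_\infty$ genuinely breaks into two or more nontrivial levels. My plan is first to argue that each nontrivial component of $B_\infty$ is itself a leaf of $\F$: by Proposition \ref{p:compactness-local-C-infinity} every such component is a $C^\infty_{loc}$ limit of $\R$-translates of leaves of $\F$, and a local argument in a foliation chart identifies the smooth embedded limit with the unique leaf of $\F$ passing through each of its points. Each nontrivial component thus has $\ind\ge 1$ by the definition of stable finite energy foliation, while homotopy invariance and item (4) of Theorem \ref{t:hin-prop} combined with $*$-positivity (Theorem \ref{t:intpositivity}) force the pairwise $*$-products of the components to vanish. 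The desired contradiction with $\ind(C_k)=1$ should then be extracted by combining the index-additivity formula for SFT limits---with the corrections required for bad breaking orbits, as classified via Lemma \ref{l:bidirectional-orbits}---with the lower bound $\sum_i\ind(C^{(i)})\ge 2$; carrying out this case analysis, in particular for the doubly-covered odd hyperbolic (``bad'') breaking orbits, is the principal technical content of the argument.
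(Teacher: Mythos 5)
Your proof of the first bullet is correct and essentially identical to the paper's: bound periods by the energy, then invoke nondegeneracy and compactness of $M$.

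For the second bullet, your overall strategy---pass to a compactness limit via Theorem~\ref{t:spec-cpct}, identify nontrivial components of the limit building as leaves of $\F$, and use the local structure of the moduli space (the completeness property from \cite{hwz:prop3}) to derive a contradiction when the limit is a single smooth curve---is exactly the paper's. However, the step you defer as ``the principal technical content of the argument'' rests on a misconception. You propose a case analysis of ``corrections required for bad breaking orbits'' in index additivity, citing Lemma~\ref{l:bidirectional-orbits}. There are no such corrections: for a connected, non-nodal, genus-zero pseudoholomorphic building the Fredholm index of the glued curve is \emph{exactly} the sum of the Fredholm indices of the components, regardless of the parity or covering multiplicity of the connecting orbits. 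Starting from the formula $\ind(C)=\mu(C)-\chi(\Sigma)+\#\Gamma$, the Conley--Zehnder contribution of each connecting orbit cancels between the negative end above and the positive end below, and the $-\chi(\Sigma)+\#\Gamma$ terms are additive under the tree-like gluing. ``Bad'' (doubly-covered odd hyperbolic) orbits create sign and orientation problems in contact homology, but they do not affect Fredholm indices. Once you accept this, the contradiction you seek is immediate: every nontrivial component of $B_\infty$ is a leaf of $\F$, hence has index at least one; trivial cylinders have index zero; and the indices sum to $1$, so there is precisely one nontrivial component. Since the building is connected, non-nodal, and stable, any level other than the one carrying that component would consist entirely of trivial cylinders, contradicting stability; hence $B_\infty$ is a single smooth index-$1$ curve and your completeness argument applies. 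Finally, a smaller caution: your appeal to ``a local argument in a foliation chart'' to identify limit components with leaves of $\F$ is potentially circular, since the smooth-foliation structure (Corollary~\ref{c:smooth-R-inv-foliation}) is proved \emph{using} the present lemma. The clean route is to invoke positivity of intersections together with homotopy invariance of the $*$-product directly, as in Lemma~\ref{l:building-isect}.
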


\begin{proof}
First, consider a curve $C=[\Sigma, j, \Gamma_{+}\cup\Gamma_{-}, a, u]\in\M(\lambda, J)$
and assume that
at $z^{+}_{i}\in\Gamma_{+}$,  $u$ is asymptotic to an orbit with period $T_{i}^{+}$ and, similarly,
that at $z^{-}_{j}\in\Gamma_{-}$, $u$ is asymptotic to an orbit with period $T_{j}^{-}$.
Then the asymptotic behavior, the compatibility of $J$ with $d\lambda$,
and Stokes' Theorem can be used to show that :
\begin{itemize}
\item the energy $E(C)$ of the curve (defined by \eqref{e:energy-def} above) is
given by
\[
E(C)=\sum_{z^{+}_{i}\in\Gamma_{+}}T_{i}^{+}
\]
and
\item the $d\lambda$-energy $E_{d\lambda}(C)$, defined by
\begin{equation}\label{e:dlambda-energy-def}
E_{d\lambda}(C):=\int_{\Sigma\setminus\Gamma}u^{*}d\lambda,
\end{equation}
is nonnegative and
\[
E_{d\lambda}(C)=\sum_{z^{+}_{i}\in\Gamma_{+}}T_{i}^{+}-\sum_{z^{-}_{j}\in\Gamma_{-}}T_{j}^{-}.
\]
\end{itemize}
Thus, in a finite energy foliation, the period of any orbit appearing as an asymptotic limit of a curve
of the foliation is bounded above by the energy of the foliation.
Since we assume that $\lambda$ is nondegenerate,
one can then use Arzel\`{a}--Ascoli and the fact that nondegenerate orbits are isolated
to argue that there are only a finite number of unparametrized
periodic orbits of $X_{\lambda}$
having period less than any given positive number.
Thus a stable finite energy foliation can contain only a finite number of trivial cylinders.

To prove the second claim
we will argue by contradiction.  Suppose  that there are an infinite number of
index-$1$ curves in $\F$, each distinct up to $\R$-translation.  Then
we can find a sequence of curves $C_{k}\in\F$ with $\ind(C_{k})=1$
and so that no two of the $C_{k}$ differ by the $\R$-action.
Then, applying the main theorem of \cite{wendl2010-compactness}
(reviewed as Theorem \ref{t:spec-cpct} above),
we can pass to a subsequence, still denoted $C_{k}$, which converges
to a connected, nicely-embedded, non-nodal pseudoholomorphic building $C_{\infty}$
whose components have indices summing to $1$.  We will argue below that the limit building
$C_{\infty}$ is simply an embedded curve with $\ind(C_{\infty})=1$.  Once we know this,
the completeness property 
\cite[Theorem 7.1]{hwz:prop3}
implies that for sufficiently large $k$, the $C_{k}$ belong to the same
connected component of the moduli space as $C_{\infty}$ and thus differ by
an $\R$-shift.  This contradiction will complete the proof.

To argue that the building $C_{\infty}$
consists of just a single embedded curve, we first note that
Lemma \ref{l:building-isect} allows us to conclude that
all components of the building $C_{\infty}$ have image identical
to curves in $\F$ and further, since $C_{\infty}$ is a nicely-embedded building, that
all nontrivial components of $C_{\infty}$ are curves in $\F$ (as opposed to possibly being
multiple covers of such curves).
Since
$C_{\infty}$ consists of only trivial cylinders (which have index $0$)
and nontrivial curve of $\F$ (which have index at least $1$),
and since the indices of the components of $C_{\infty}$ must sum to $1$, 
we can conclude there is precisely one nontrivial component.
Moreover, since the building $C_{\infty}$ is connected, stable, and has 
no nodes, we can conclude that $C_{\infty}$
contains no trivial cylinders, and thus consists of just a single, embedded curve belonging to
$\F$.  This completes the proof.
\end{proof}

We now have the following corollary which shows that stable
finite energy foliations are indeed smooth foliations of $\R\times M$ which 
are invariant under the $\R$-action and project to $M$ to give smooth foliations
of the complement of a finite collection of periodic orbits in $M$.
Moreover, the projected leaves of the foliation are transverse to the Reeb flow.

\begin{corollary}\label{c:smooth-R-inv-foliation}
Let $\F$ be a stable finite energy foliation for the data $(M, \lambda, J)$
(according to Definition \ref{d:fef}).  Then:
\begin{enumerate}
\item If $C_{0}\in\F$ and $C_{1}\in\M(\lambda, J)$ is a simple\footnote{
The assumption that $C_{1}$ is also simple can be eliminated.  Indeed,
it can be shown that if $C_{0}$ is a simple curve with $C_{0}*C_{0}=0$ and
$\ind(C_{0})\in\br{1, 2}$ and $C_{1}$ is homotopic to $C_{0}$ then $C_{1}$ must
also be simple, but we will not need this here.
}
curve which is relatively homotopic
to $C_{0}$, then $C_{1}\in\F$.

\item The family of curves $\F$ is invariant under translation in the $\R$-coordinate,
i.e.\ if $C=[\Sigma, j, \Gamma, a, u]\in\F$
then $c\cdot C:=[\Sigma, j, \Gamma, a+c, u]\in\F$.

\item The curves in $\F$ form a smooth foliation of $\R\times M$.

\item There exists a finite collection $B$ of periodic orbits, so that the curves in $\F$
not fixed by the $\R$-action project to $M$ to form a smooth foliation of
$M\setminus B$ transverse to the flow.
\end{enumerate}
\end{corollary}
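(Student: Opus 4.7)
My plan is to address the four claims in order, with (1) and (2) following quickly from the $*$-product formalism and (3)--(4) requiring more care. For claim (1), I would combine homotopy invariance of the $*$-product (Theorem \ref{t:hin-prop}) with positivity of intersections (Theorem \ref{t:intpositivity}): since $C_1$ is relatively homotopic to $C_0 \in \F$, every $C \in \F$ satisfies $C*C_1 = C*C_0 = 0$. Picking any point $p$ in the image of $C_1$ together with the unique $C \in \F$ through $p$, the two curves share a point and have vanishing $*$-product, so by positivity of intersections they share a common component; since both are simple and connected, they coincide, and $C_1 \in \F$. Claim (2) is then immediate, since $c\cdot C$ is simple and relatively homotopic to $C$ via the family $t \mapsto tc \cdot C$, which fixes asymptotic limits.

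Claim (3) is the main technical step. At a point $p$ lying on an index-$2$ curve $C \in \F$, Theorem \ref{t:foliating-curves}(7) supplies an injective immersion $\tl F_{C}: B_{\ep}^{2}(0)\times(\Sigma\setminus\Gamma)\to\R\times M$ whose domain has dimension $2+2 = 4 = \dim(\R\times M)$, so it is a local diffeomorphism onto a neighborhood of $p$; by (1) each slice $\{\tau\}\times(\Sigma\setminus\Gamma)$ maps into a distinct leaf of $\F$, producing a smooth foliation chart at $p$. At points lying on index-$1$ or trivial-cylinder leaves, the map $\tl F_{C}$ only parametrizes a $3$- or $2$-dimensional slice, and one must verify that nearby index-$2$ leaves fill out the remaining transverse directions smoothly. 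For this I would take a small transverse disk to the exceptional leaf and analyze the family of index-$2$ leaves meeting it, invoking Theorem \ref{t:relative-asymptotics} near the punctures to control asymptotic behavior and Theorem \ref{t:foliating-curves}(7) elsewhere. This verification of smoothness across the non-generic leaves is the step I expect to be the main obstacle.

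For claim (4), I would define $B\subset M$ as the union of the (projections of the) trivial cylinders in $\F$ together with the asymptotic limits of the non-trivial curves in $\F$; finiteness of $B$ follows from Lemma \ref{l:finite-index-0-and-1} together with the observation that all asymptotic limits have period bounded by $E(\F)$ and that nondegenerate periodic orbits of bounded period are finite in number. By (2), the $\R$-action is free on non-trivial curves of $\F$: if $c\cdot C = C$ with $c\ne 0$, then injectivity of the projection (Theorem \ref{t:foliating-curves}(2)) forces the reparametrization realizing the identification to be the identity on the domain, whence $c = 0$, unless $C$ is fully $\R$-invariant and thus a trivial cylinder. For $x \in M\setminus B$ the unique leaves through points of $\R\times\{x\}$ are non-trivial and have pairwise identical projections (by the $C*C' = 0$ condition combined with the remark following Corollary 5.9 in \cite{siefring2011}), so these projections descend to a well-defined smooth foliation of $M\setminus B$ transverse to the Reeb flow, inheriting smoothness from (3).
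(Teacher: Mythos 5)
Your handling of claims (1), (2), and (4) matches the paper's arguments in substance, and your local-diffeomorphism observation for index-$2$ leaves in (3) is correct. For (1) you argue forward (a shared point plus vanishing $*$-product forces a common component, hence equality of simple connected curves) while the paper argues by contradiction; these are equivalent.

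The acknowledged gap in (3) is genuine, and the transverse-disk strategy you sketch is not what the paper does. Invoking Theorem~\ref{t:relative-asymptotics} at the punctures is the wrong tool here: you need a statement that entire families of nearby index-$2$ leaves converge to the exceptional leaf, not fine asymptotic control of a single curve. The paper gets this from SFT compactness plus positivity of intersections. Given a point $p$ on an index-$1$ leaf (or trivial cylinder) $C$ and a sequence $p_k\to p$ off the $\R$-translates of $C$, Lemma~\ref{l:finite-index-0-and-1} puts $p_k$ on index-$2$ leaves $C_k\in\F$ for large $k$; Theorem~\ref{t:spec-cpct} gives a subsequence of local parametrizations converging in $C^\infty_{loc}$ to a nicely embedded curve $C_\infty$ through $p$; and if $C_\infty\neq C$, an isolated intersection with $C$ would, by Theorem~\ref{t:intpositivity} and homotopy invariance of the $*$-product, force $C_{k_j}*C>0$ for large $j$, contradicting $C_{k_j}*C=0$. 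Hence $C_\infty=C$, and combined with the charts from Theorem~\ref{t:foliating-curves}(7) on the index-$2$ side one obtains the local foliation structure near $p$ (the paper defers the remaining smoothness verification to the corresponding discussion in \cite{hwz:foliations}). A smaller point on (4): rather than asserting that the quotient foliation inherits smoothness from (3), you should cite Theorem~\ref{t:foliating-curves}(2) for transversality of the projected leaves to the Reeb flow, which is equivalent to $\partial_a$ being nowhere tangent to the leaves and is exactly what makes the $\R$-quotient a smooth foliation.
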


\begin{proof}
To prove the first statement, we will argue by contradiction.
Assume, to the contrary, that $C_{0}\in\F$, and that $C_{1}$ is a simple curve
relatively homotopic to $C_{0}$ with $C_{1}\notin\F$.
Then for any given point $p$ in the image of $C_{1}$ there is a simple curve
$C_{p}\in\F$ passing through $p$ and thus intersecting $C_{1}$.
Theorem \ref{t:intpositivity} then implies that $C_{1}*C_{p}\ge 1$.
However, since $C_{0}$ and $C_{p}$ are both curves in the family $\F$,
we have that $C_{0}*C_{p}=0$ by definition of stable finite energy foliation.
Thus the homotopy invariance of the intersection product
from Theorem \ref{t:hin-prop} gives us the contradiction
\[
1\le C_{1}*C_{p}=C_{0}*C_{p}=0.
\]
This completes the proof that if $C_{0}\in\F$, all simple curves relatively homotopic to $C_{0}$
are also in $\F$.
The second statement is then an immediate corollary of the first since any
curve in $\M(\lambda, J)$ is relatively homotopic to its $\R$-translates.

We next address the third claim above.
By Theorem \ref{t:foliating-curves} above, all curves in $\F$ are embeddings.
The fact that the curves of $\F$ form a smooth foliation of $\R\times M$
then follows from an argument similar to that in the
paragraphs following Lemma 6.10 in section 6.3 of \cite{hwz:foliations}.
We first observe that Lemma \ref{l:finite-index-0-and-1} tells us that the set of points
of $\R\times M$ with index-$2$ curves passing through them is open and dense.
For a point $p\in\R\times M$ with an index-$2$ curve $C\in\F$ passing through it,
it follows from the last item in Theorem \ref{t:foliating-curves}
that $C$ belongs to a smoothly varying $2$-dimensional family of 
pseudoholomorphic curves $C_{\tau}$ which foliate a neighborhood of $p$.
Moreover, it follows from the preceding
paragraph that each of the curves $C_{\tau}$ is in $\F$, and thus that curves
of $\F$ foliate some neighborhood of $p$.

Next, considering a point $p$ lying on an index-$1$ curve $C\in\F$, we've already observed
that all $\R$-translates of $C$ belong to $\F$.
If $p_{k}$ is a sequence of points converging to $p$ and not lying on an $\R$-translate of
$C$, we can conclude from Lemma \ref{l:finite-index-0-and-1} that for sufficiently large
$k$, $p_{k}$ lies on an index-$2$ curve $C_{k}\in\F$.
Moreover, by Theorem \ref{t:spec-cpct},
we can find a sequence of local parametrizations of some subsequence
$C_{k_{j}}$ which converge in $C^{\infty}_{loc}$ to a
parametrization of a curve $C_{\infty}$ passing through
$p$.
We claim that $C_{\infty}=C$.
Indeed, if $C_{\infty}$ doesn't have identical image with $C$, it must have an isolated intersection
with $C$.  This would then allow us to conclude that the $C_{k_{j}}$
intersect $C$ for sufficiently large $j$ and thus, by Theorem \ref{t:intpositivity}, that
$C_{k_{j}}*C\ge 1$.  This contradicts the fact that $C_{k_{j}}*C=0$ by the assumption that
$C$ and all $C_{k}$ are in the family $\F$.  We conclude that $C_{\infty}$ has the same
image as $C$ and further,
since  Theorem \ref{t:spec-cpct} tells us that $C_{\infty}$ must be either a trivial cylinder or
nicely embedded, that $C_{\infty}=C$.
This allows us to conclude that the curves of $\F$ smoothly foliate some neighborhood of $p$.
The argument for points lying one of the finitely-many
(according to Lemma \ref{l:finite-index-0-and-1}) trivial cylinders of $\F$ now proceeds along
similar lines with the use compactness and positivity of intersections.

We finally address the last claim.  We first define $B$ to be the collection
of periodic orbits which appear as asymptotic limits of curves in $\F$.
Then $B$ must be a finite set by Lemma \ref{l:finite-index-0-and-1} above.
By Theorem \ref{t:foliating-curves} every curve $C\in\F$ that is not a trivial cylinder
projects to an embedding transverse to the flow and disjoint from $B$.
Moreover, it follows from the assumption that $C_{1}*C_{2}=0$ for any two nontrivial 
curves $C_{1}$, $C_{2}\in\F$ that the projections of $C_{1}$ and $C_{2}$
to $M$ have either disjoint or identical images
(see e.g.\ the discussion following Corollary 5.9 in \cite{siefring2011}).
Therefore, we have a unique embedded curve through every point of
$M\setminus B$.  Moreover, since the curves of $\F$ form a smooth foliation of $\R\times M$
invariant under $\R$-shifting, the projections of these curves to $\R\times M$ will form a smooth
foliation of $M\setminus B$ provided the pullback of the coordinate field $\partial_{a}$ on $\R$
to $\R\times M$ is not tangent to any of the curves.  Since, as a result of the
definition of $\tl J$, such a tangency can be identified with
tangency of the projected curve to the Reeb vector field, there can be no such tangencies.
This completes the proof.
\end{proof}

We can also prove a converse to last part of the above result; specifically, 
the next result shows
that as an alternate definition of stable finite energy foliation, one can consider
the projections of curves to $M$ which foliate the complement of a finite collection of periodic
orbits.

\begin{corollary}\label{c:fol-alt-def}
Let $B\subset M$ be a finite collection of simple periodic orbits, and let
$\F\subset\M(\lambda, J)/\R$ be a collection of simple curves $C\in\M(\lambda, J)/\R$
satisfying:
\begin{itemize}
\item Each $C\in\F$ is disjoint from $B$.
\item For each $p\in M\setminus B$ there is a
(not necessarily unique) curve $C\in\F$ passing through $p$.
\item $\ind(C)\in\br{1, 2}$ for all $C\in\F$.
\item $C_{1}*C_{2}=0$ for all $C_{1}$, $C_{2}\in\F$.
\item The energies of the curves in $\F$ are uniformly bounded; that is,
$E(\F):=\sup_{C\in\F}E(C)$ is finite.
\end{itemize}
Then the collection of curves $\tl\F$ in $\M(\lambda, J)$ consisting of all possible lifts 
of curves $C\in\F$ to curves in $\R\times M$ together with cylinders over the periodic orbits
in $B$ form a finite energy foliation.
\end{corollary}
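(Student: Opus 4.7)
The plan is to verify directly the four defining properties of a stable finite energy foliation (Definition \ref{d:fef}) for $\tl\F$. Three of these---simplicity, the index bound on non-trivial-cylinder elements, and the uniform energy bound---transfer nearly tautologically from $\F$: lifts of simple curves are simple, trivial cylinders over the simple orbits of $B$ are simple, lifts inherit the index and energy of the projected curve $C\in\F$, and trivial cylinders have index $0$ and energy equal to the period of the underlying orbit. To see that the periods of orbits in $B$ are themselves bounded by $E(\F)$, I would apply Theorem \ref{t:spec-cpct} to a sequence of $\F$-curves through points approaching a given $\gamma\in B$: the resulting SFT-limit is a building that must contain $\gamma$ as an asymptotic limit, so the period of $\gamma$ is bounded by $E(\F)$. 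By Theorem \ref{t:foliating-curves}, each $C\in\F$ is nicely embedded, its $M$-projection is an embedding transverse to the Reeb flow, and the extremal winding bound \eqref{e:wind-infinity-inequality} is achieved at every puncture---properties that are inherited by every lift.

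The core work is to show that the curves of $\tl\F$ foliate $\R\times M$ with vanishing pairwise $*$-products. For a point $p=(a,q)\in\R\times M$ with $q$ on some $\gamma\in B$, the trivial cylinder $\R\times\gamma$ passes through $p$; since distinct Reeb orbits are disjoint integral curves, this is the only trivial cylinder through $p$, and lifts of $\F$-curves avoid $p$ entirely since their $M$-projections are disjoint from $B$. For $q\in M\setminus B$, some $C\in\F$ passes through $q$ by hypothesis; the embedding property of $u:\Sigma\setminus\Gamma\to M$ gives a unique $z\in u^{-1}(q)$, and hence a unique $\R$-shift of $C$ passes through $p$. Uniqueness of the underlying projected curve $C$ is established by contradiction: if distinct $C,C'\in\F$ both passed through $q$, their lifts through $p$ would have an honest intersection, giving $\tl C*\tl C'\ge 1$ by Theorem \ref{t:intpositivity}, which contradicts the hypothesis $C*C'=0$ together with homotopy invariance of the $*$-product (Theorem \ref{t:hin-prop}).

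The remaining task is to verify $C_1*C_2=0$ for every pair in $\tl\F$, which I would split into three cases. For two lifts of $\F$-curves this is the hypothesis together with homotopy invariance of the $*$-product; for two distinct trivial cylinders $\R\times\gamma_1$ and $\R\times\gamma_2$ with $\gamma_1\ne\gamma_2$ in $B$, the cylinders are geometrically disjoint with no common asymptotic limits, so every term of \eqref{e:gin-def} vanishes. The main obstacle---and the case I expect to require the most care---is the mixed case $\tl C*Z$ where $\tl C$ is a lift and $Z=\R\times\gamma$ with $\gamma\in B$: although the projections of $\tl C$ and $Z$ are disjoint, if $\tl C$ has a puncture asymptotic to a cover $\gamma^m$ then the asymptotic sums in \eqref{e:gin-def} are nonzero and must be exactly canceled by the relative intersection number $i^\Phi(\tl C,Z)$. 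I would verify this via Corollary \ref{c:no-isect-gin-zero}, applied to $\tl C$ and $Z$: $\tl C$ is connected, not a trivial cylinder, has disjoint projection from $Z$, and achieves the extremal winding bound at every puncture by Theorem \ref{t:foliating-curves}, while the trivial cylinder $Z$ is treated as automatically achieving the extremal winding at both of its own ends by the standard conventions of \cite{siefring2011}. The corollary then yields $\tl C*Z=0$, completing the verification that $\tl\F$ is a stable finite energy foliation.
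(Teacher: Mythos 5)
Your proof is correct in its core strategy and matches the paper's: lift each $C\in\F$ to all its $\R$-shifts, use positivity of intersections (Theorem \ref{t:intpositivity}) together with homotopy invariance of the $*$-product to show the resulting leaf through any point of $\R\times(M\setminus B)$ is unique, and fill in $\R\times B$ with trivial cylinders.

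Two pieces of your argument, however, are unnecessary and one of them also has a technical wrinkle. First, Definition \ref{d:fef} only requires $C_1*C_2=0$ for curves with $\ind(C_i)\in\{1,2\}$; it imposes \emph{no} condition on intersection numbers involving trivial cylinders. So your cases (2) and (3) -- two trivial cylinders, and a lift against a trivial cylinder -- need not be verified at all. Moreover, the mixed-case appeal to Corollary \ref{c:no-isect-gin-zero} would not go through even if it were needed: a trivial cylinder has no asymptotic eigenvector and hence no well-defined asymptotic winding $\winfty$, so ``achieves extremal winding at both ends'' is not a meaningful hypothesis to feed that corollary (which in any case rests on Theorems \ref{t:gin-zero} and \ref{t:no-isect}, both of which explicitly exclude trivial cylinders). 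Second, the excursion through SFT compactness to bound the periods of orbits in $B$ is moot: $B$ is assumed to be a \emph{finite} set, so the periods of its orbits (and hence the energies of the corresponding trivial cylinders) are bounded for free. With these two detours trimmed, what remains is exactly the paper's short argument.
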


\begin{proof}
Given a point $p\in M\setminus B$ there is, by assumption, a curve $C\in\F$
passing through it.
Considering the set of all possible lifts gives a curve through each point of
$\R\times (M\setminus B)$.  Moreover, by the assumption that the
holomorphic intersection
numbers between all such curves is zero, we indeed get a unique curve through each point
of $\R\times (M\setminus B)$
by Theorem \ref{t:intpositivity}.
Moreover, by the assumption that the curves of $\F$
are disjoint from $B$, we obtain a unique curve through
each point of $\R\times M$ by including the trivial cylinders over the orbits in $B$ in the collection
we consider.  The remaining properties of a finite energy foliation from
Definition \ref{d:fef} are then easily verified from our remaining assumptions.
\end{proof}

In the rest of this section we will focus on
the structure of moduli spaces of simple curves satisfying
$C*C=0$ and $\ind(C)=2$.  Because of the important role that such curves play in what
follows, it will be convenient to have a term for such curves.
\begin{definition}
A curve $C\in\M(\lambda, J)$ is said to be a
\emph{foliating curve} if
$C*C=0$ and $\ind(C)=2$.
\end{definition}
Given a curve $C\in\M(\lambda, J)$ we will use the notation
$\M(C)$ to indicate the moduli space of simple curves
in the same relative homotopy class as $C$ and
$\M_{1}(C)$ to indicate the moduli space of simple curves with one marked point in the same
relative homotopy class as $C$.
We note the results of \cite{hwz:prop3}, reviewed in
Section \ref{s:fredholm} above, give a local manifold structure on these spaces
in the event that linearized normal $\dbar$-operator is surjective.
However, the fact these local manifold structures glue together to give a 
global manifold structure on the moduli space is only addressed
in \cite{hwz:prop3} as a special case of the fact that the local models for the universal
moduli space glue together to give a global Banach manifold structure on the universal
moduli space.
In the event that the curves in question are foliating curves, a simpler argument
is possible using
Theorem \ref{t:foliating-curves} above.  We state this result as a corollary.

\begin{corollary}\label{c:manifold-structure}
Let $C\in\M(\lambda, J)$ be a foliating curve, that is, assume
that $C$ is simple, $C*C=0$ and $\ind(C)=2$.
Then $\M(C)$ has the structure of a smooth, $2$-dimensional manifold, and
$\M_{1}(C)$ has the structure of a smooth $4$-dimensional manifold.  Moreover,
the
evaluation map $ev:\M_{1}(C)\to\R\times M$
is a smooth embedding,
the  forgetful map $\M_{1}(C)\to \M(C)$  is a smooth submersion,
and the
action of $\R$-shifting a curve defines smooth, free, proper $\R$-actions
on $\M_{1}(C)$ and $\M(C)$.
\end{corollary}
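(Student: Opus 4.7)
The plan is to use Theorem \ref{t:foliating-curves}(7) as the source of local charts and transport the smooth structure to the moduli spaces via the evaluation map. For each foliating curve $C' \in \M(C)$, setting $n = \ind(C') = 2$, the theorem produces an injective immersion $\tl F_{C'}: B_{\ep}^{2}(0) \times (\Sigma_{C'} \setminus \Gamma_{C'}) \to \R \times M$, and every nearby curve in $\M(C)$ is realized as $C_{\tau}$ for a unique $\tau \in B_{\ep}^{2}(0)$. The first observation is that $\tl F_{C'}$ is in fact an open embedding onto an open set $U_{C'} \subset \R \times M$: an injective immersion between manifolds of equal dimension is a local diffeomorphism by the inverse function theorem, and invariance of domain together with global injectivity promotes this to a diffeomorphism onto its image.

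Next I would define $ev: \M_{1}(C) \to \R \times M$ by sending a marked curve to the image of its marked point and establish its global injectivity. If $ev(C_{1}, z_{1}) = ev(C_{2}, z_{2}) = p$ with $(C_{1}, z_{1}) \ne (C_{2}, z_{2})$, then either $C_{1}$ and $C_{2}$ have distinct images, in which case $\inum(C_{1}, C_{2}) \ge 1$ and Theorem \ref{t:intpositivity} combined with homotopy invariance of the $*$-product (Theorem \ref{t:hin-prop}) yields the contradiction $1 \le C_{1} * C_{2} = C * C = 0$; or $C_{1} = C_{2}$ as curves, in which case the embeddedness furnished by Theorem \ref{t:foliating-curves}(1) forces $z_{1} = z_{2}$. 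Since the charts $\tl F_{C'}$ cover the image of $ev$ and the transitions $\tl F_{C_{2}}^{-1} \circ \tl F_{C_{1}}$ are smooth diffeomorphisms wherever defined (each $\tl F_{C_{i}}$ being a diffeomorphism onto its image), they form a smooth atlas, making $\M_{1}(C)$ a smooth $4$-manifold on which $ev$ is a smooth embedding.

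For $\M(C)$, the forgetful map $\pi: \M_{1}(C) \to \M(C)$ reads in the chart $\tl F_{C'}$ as the projection $(\tau, z) \mapsto \tau$ onto $B_{\ep}^{2}(0)$. These projections serve as coordinate charts on $\M(C)$: the transition $\tau_{1} \mapsto \tau_{2}$ is computed by fixing any $z_{0}$ and taking the $B_{\ep}^{2}(0)$-component of $\tl F_{C_{2}}^{-1} \circ \tl F_{C_{1}}(\cdot, z_{0})$, which is smooth, and the result is independent of $z_{0}$ because the leaves $\tl F_{C_{i}}(\{\tau\} \times (\Sigma_{C_{i}} \setminus \Gamma_{C_{i}}))$ are embedded $2$-submanifolds of $\R \times M$. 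This gives $\M(C)$ the structure of a smooth $2$-manifold and makes $\pi$ a smooth submersion.

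Finally, the $\R$-action on $\M_{1}(C)$ corresponds under $ev$ to the translation $\sigma_{c}: (a, m) \mapsto (a+c, m)$ of $\R \times M$, which is smooth, so the action on $\M_{1}(C)$ is smooth and descends smoothly to $\M(C)$ via $\pi$. Freeness is clear because the only $\R$-invariant finite-energy pseudoholomorphic curves are trivial cylinders, which have index $0$ and so are excluded from $\M(C)$. For properness, if $x_{n} \to x$ in $\M_{1}(C)$ and $c_{n} \cdot x_{n} \to y$, then writing $ev(x_{n}) = (a_{n}, m_{n})$ and $ev(c_{n} \cdot x_{n}) = (a_{n} + c_{n}, m_{n})$, convergence of both sequences in $\R \times M$ forces $c_{n}$ to converge. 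The main conceptual input driving the whole argument is the observation that each $\tl F_{C'}$ is not merely an injective immersion but an open embedding; once this is in hand, the manifold structures, the embedding property of $ev$, and the submersion property of $\pi$ all fall out of routine manifold-theoretic considerations, and the remaining care lies in verifying chart compatibility across different base curves.
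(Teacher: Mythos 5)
Your proposal follows essentially the same route as the paper: local charts from Theorem \ref{t:foliating-curves}(7), smoothness of transitions from the local-diffeomorphism property of the maps $\tl F_{C'}$, injectivity of $ev$ from $C*C=0$ and positivity of intersections, and the $\R$-action pulled back through the $\R$-equivariant evaluation map. The explicit observation that $\tl F_{C'}$ is not merely an injective immersion but an open embedding (equal dimensions, invariance of domain) is a nice clarification that the paper leaves implicit.

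Two points in your argument are slightly under-justified. First, the claim that the transition for $\M(C)$ is independent of $z_{0}$ ``because the leaves are embedded $2$-submanifolds'' does not by itself establish what is needed; embeddedness of individual fibers would be equally true for two \emph{different} local foliations, and the projection of a transition map along one foliation need not be constant on fibers of the other. What you actually need is that the two local parametrizations $\tl F_{C_{1}}$ and $\tl F_{C_{2}}$ foliate their overlap by the \emph{same} curves, i.e.\ that $\tl F_{C_{2}}^{-1}\circ\tl F_{C_{1}}$ carries fibers to fibers --- and this is exactly what your $ev$-injectivity argument already proves (two curves in $\M(C)$ are disjoint or identical since $C*C=0$), so you should cite that rather than embeddedness. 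Second, you verify properness of the $\R$-action only on $\M_{1}(C)$; the corollary asserts it for $\M(C)$ as well. This follows, as in the paper, from $\R$-equivariance of the forgetful submersion $\pi$: any convergent sequence in $\M(C)$ lifts via a smooth local section of $\pi$ to a convergent sequence in $\M_{1}(C)$, at which point your properness argument upstairs applies.
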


\begin{proof}
Given $C_{i}=[\Sigma_{i}, j_{i}, \Gamma_{i}, a_{i}, u_{i}]\in\M(C)$ for $i\in\br{1, 2}$,
Theorem \ref{t:foliating-curves} gives a local identification of the
moduli space of curves with one marked point with
$B_{\ep_{i}}^{2}(0)\times\Sigma_{i}\setminus\Gamma_{i}$ together with
an embedding
$\tl F_{C_{i}}:B_{\ep_{i}}^{2}(0)\times\Sigma_{i}\setminus\Gamma_{i}\to\R\times M$.
Because the maps $\tl F_{C_{i}}$ are local diffeomorphisms,
maps of the form
${\tl F_{C_{2}}}^{-1}\circ \tl F_{C_{1}}$
are smooth when defined, and thus the local identifications of
$\M_{1}(C)$ with sets of the form $B_{\ep_{i}}^{2}(0)\times\Sigma_{i}\setminus\Gamma_{i}$ 
piece together to give a global manifold structure on $\M_{1}(C)$ in which the evaluation map,
being locally given by the $\tl F_{C}$-maps,
are smooth immersions.
Moreover, since double points of $ev$ can be seen as intersections/self-intersections between
curves in $\M(C)$, the fact that $C*C=0$ implies that $ev$ is an injective map.

Next, we observe that
a local manifold structure on $\M(C)$ near the curve $C_{i}$ is given
by projecting
\[
\pi_{i}:B_{\ep_{i}}^{2}(0)\times\Sigma_{i}\setminus\Gamma_{i}\to B_{\ep_{i}}^{2}(0).
\]
Since the maps ${\tl F_{C_{2}}}^{-1}\circ \tl F_{C_{1}}$,
where defined, are smooth local diffeomorphisms which
restrict to diffeomorphisms on the fibers of the projections $\pi_{i}$,
a smooth local section $s_{1}$ for the projection $\pi_{1}$ is mapped
via ${\tl F_{C_{2}}}^{-1}\circ \tl F_{C_{1}}$ to a smooth local section for $\pi_{2}$,
and the composition
$\pi_{2}\circ {\tl F_{C_{2}}}^{-1}\circ \tl F_{C_{1}}\circ s_{1}$ is independent 
of the choice of smooth section $s_{1}$.
Such maps can then be used to construct smooth change-of-coordinate maps
giving a global manifold structure on $\M(C)$.
Moreover, since the forgetful map
$\M_{1}(C)\to\M(C)$ is given locally by one of the projections $\pi_{i}$ defined above,
the forgetful map is a smooth submersion in the manifold structure we've constructed.

To see that the $\R$-action
is a smooth, free, proper action on $\M_{1}(C)$ we first observe that the evaluation map
$ev:\M_{1}(C)\to\R\times M$ is $\R$-equivariant.  Since the $\R$-action on $\R\times M$ is
smooth, free, and proper and $ev$ is an embedding, it follows immediately that
the $\R$-action on $\M_{1}(C)$ is smooth, free, and proper.
Moreover, since the forgetful map
$\M_{1}(C)\to\M(C)$ is a smooth $\R$-equivariant submersion,
we can conclude that the $\R$ acts smoothly
on $\M(C)$ as well by considering smooth local sections
$\M(C)\to\M_{1}(C)$.
Freeness of the $\R$-action on $\M(C)$
follows from the well-known fact that
only trivial cylinders can be fixed points of the $\R$-action (or, in this case, from the
fact that $C*C=0$ implies that $C$ is disjoint from all of its nontrivial $\R$-translates).
Finally, properness follows from $\R$-equivariance of the forgetful map and
properness of the action on $\M_{1}(C)$.
\end{proof}

\begin{corollary}\label{c:mod-spaces-mod-r}
Let $C$ be a foliating curve.
Then the moduli space $\M_{1}(C)/\R$ is a smooth $3$-manifold and
the moduli space $\M(C)/\R$ is a smooth $1$-manifold.
Moreover,
$ev:\M_{1}(C)/\R\to M$ is an embedding,
and the forgetful map
$\M_{1}(C)/\R\to\M(C)/\R$ is a smooth submersion.
\end{corollary}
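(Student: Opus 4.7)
The plan is to deduce Corollary \ref{c:mod-spaces-mod-r} directly from Corollary \ref{c:manifold-structure} by taking quotients by the $\R$-action. Since Corollary \ref{c:manifold-structure} has already established that $\R$ acts smoothly, freely, and properly on both $\M_1(C)$ and $\M(C)$, the standard quotient theorem for Lie group actions on manifolds immediately gives that $\M_1(C)/\R$ and $\M(C)/\R$ are smooth manifolds of dimensions $4-1=3$ and $2-1=1$ respectively, with the quotient projections being smooth submersions.

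Next I would address the evaluation map. The key observation is that the evaluation map $ev:\M_1(C)\to\R\times M$ is $\R$-equivariant with respect to the $\R$-action by translation on the target. Quotienting the target by the $\R$-action gives $M$, and thus $ev$ descends to a well-defined smooth map $\bar{ev}:\M_1(C)/\R\to M$. Since $ev$ was an embedding (by Corollary \ref{c:manifold-structure}) and is $\R$-equivariant with respect to free and proper actions on domain and target, the induced map $\bar{ev}$ on quotients is again an embedding: injectivity follows from the equivariance and injectivity of $ev$ together with the fact that two orbits in $\M_1(C)$ map to the same $M$-orbit only if they are the same orbit; the immersion property descends since $ev$ restricted to a local slice for the $\R$-action provides a local section, whose composition with $ev$ is a smooth immersion into $\R\times M$ that composes with the projection $\R\times M\to M$ to give $\bar{ev}$ in local coordinates on $\M_1(C)/\R$; and properness of $\bar{ev}$ follows from properness of the quotient projections.

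For the forgetful map $\M_1(C)\to\M(C)$, observe that this map is $\R$-equivariant (since $\R$-shifting a curve with a marked point induces an $\R$-shift of the underlying curve). Hence it descends to a map $\M_1(C)/\R\to\M(C)/\R$ on quotients. That this induced map is a smooth submersion follows from the fact that it is covered by a smooth submersion between the corresponding total spaces, together with the fact that the $\R$-quotient projections are themselves smooth submersions; explicitly, given a local slice to the $\R$-action on $\M(C)$, one can pull it back via the forgetful map and the quotient projection to obtain a local slice on $\M_1(C)$ in terms of which the induced forgetful map is locally given by the restriction of the original forgetful submersion.

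The only conceivable obstacle is checking that the quotient constructions behave well with respect to the additional structures (evaluation and forgetful maps), but since all maps in question are $\R$-equivariant with respect to smooth, free, and proper actions, everything is handled by standard differential-topological facts about principal $\R$-bundles; no genuinely new analytic input beyond Corollary \ref{c:manifold-structure} is required.
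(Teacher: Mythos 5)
Your overall strategy agrees with the paper's: everything reduces to Corollary \ref{c:manifold-structure} plus standard facts about quotients by smooth, free, proper $\R$-actions, and the smoothness of the quotients, the submersion property of the induced forgetful map, and the injectivity of the induced evaluation map $\bar{ev}:\M_1(C)/\R\to M$ all come out correctly; your injectivity argument, using only $\R$-equivariance and injectivity of $ev$, is even a bit cleaner than the paper's, which re-invokes $C*C=0$ at this point.

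There are, however, two gaps in your embedding argument for $\bar{ev}$. First, the immersion step does not follow as stated: if $S\subset\M_1(C)$ is a local slice, composing the immersion $ev|_S$ with the projection $\pi_M:\R\times M\to M$ need not produce an immersion, since a projection can annihilate tangent vectors in the image of an immersion. What saves the argument is equivariance: the differential of $ev$ carries the infinitesimal generator of the $\R$-action to $\partial_a$, so if $v\in T_xS$ has $d\pi_M(d(ev)(v))=0$ then $d(ev)(v)=c\,\partial_a$ for some $c\in\R$, and injectivity of $d(ev)$ then forces $v$ to be tangent to the $\R$-orbit, hence zero since $S$ is a slice. This is the content of the paper's statement that an $\R$-equivariant immersion descends to an immersion on quotients. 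Second, your claim that ``properness of $\bar{ev}$ follows from properness of the quotient projections'' is unjustified and in general false: $ev:\M_1(C)\to\R\times M$ is an embedding but typically not a proper map (its image is generically a proper open subset of $\R\times M$), and properness of the quotient projections alone does not yield properness of $\bar{ev}$. Fortunately, properness is not needed: since $\dim(\M_1(C)/\R)=\dim M=3$, the injective immersion $\bar{ev}$ is automatically an open embedding by the inverse function theorem --- this dimension count is what the paper uses and is what should replace your properness claim.
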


\begin{proof}
The facts that $\M_{1}(C)/\R$ is a smooth $3$-manifold and that
$\M(C)/\R$ is a smooth $1$-manifold follow directly from
Corollary \ref{c:manifold-structure}
since the $\R$-action on $\M_{1}(C)$ and $\M(C)$ is free and proper,
while the fact that the forgetful map
$\M_{1}(C)/\R\to\M(C)/\R$ is a smooth submersion
follows from the fact that the forgetful map
$\M_{1}(C)\to\M(C)$ is an $\R$-equivariant smooth submersion.
Finally, to see that $ev:\M_{1}(C)/\R\to M$ is an embedding,
we first observe that it follows from the fact that the evaluation map
$ev:\M_{1}(C)\to\R\times M$
is an $\R$-equivariant immersion
that $ev:\M_{1}(C)/\R\to M$ is also immersion.
Since $\M_{1}(C)/\R$ and $M$ are the same dimension, it remains to 
show that $ev:\M_{1}(C)/\R\to M$ is injective.
But since $C*C=0$, it follows
that the projections of distinct curves
in $\M(C)$ to $M$ 
are embedded and have disjoint
image unless they differ by the $\R$-action.
Since double points of $ev:\M_{1}(C)/\R\to M$ can be seen as
intersections/self-intersections of curves in $\M(C)/\R$,
we conclude that $ev:\M_{1}(C)/\R\to M$ in injective, and thus an embedding.
\end{proof}

For the following let $\psi_{t}$ denote the flow of the Reeb vector field.

\begin{corollary}\label{c:flow-loc-diff}
Let $C=[\Sigma, j, \Gamma, da, u]\in\M(\lambda, J)/\R$
be a foliating curve.
Then given any $p\in u(\Sigma\setminus\Gamma)$, there exists an
$\ep>0$ so that:
\begin{enumerate}
\item For every $t\in (-\ep, \ep)$ there exists a unique point of $\M(C)/\R$
passing through $\psi_{t}(p)$.

\item The map taking a point $t\in(-\ep, \ep)$ to the unique curve in $\M(C)/\R$
passing through $\psi_{t}(p)$
is a local diffeomorphism.
\end{enumerate}
\end{corollary}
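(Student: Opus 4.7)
The plan is to upgrade the facts already established in Corollary \ref{c:mod-spaces-mod-r} --- that the evaluation map $ev:\M_{1}(C)/\R\to M$ is a smooth embedding of a $3$-manifold into the $3$-manifold $M$, and that the forgetful map $\mathrm{forg}:\M_{1}(C)/\R\to\M(C)/\R$ is a smooth submersion between manifolds of dimensions $3$ and $1$ --- together with the transversality of the projected curves to the Reeb flow from Theorem \ref{t:foliating-curves}(2). First I would observe that, by invariance of domain, the image of $ev$ is open in $M$. Since $C\in \M(C)$ and $p=u(z)$ for some $z\in\Sigma\setminus\Gamma$, the pair $(C,z)$ represents a point $\hat p\in\M_{1}(C)/\R$ with $ev(\hat p)=p$. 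Thus there exists $\ep>0$ so that $\psi_{t}(p)$ lies in the open set $ev(\M_{1}(C)/\R)$ for all $t\in(-\ep,\ep)$, and I can define
\[
\phi(t)=\mathrm{forg}\bigl(ev^{-1}(\psi_{t}(p))\bigr),
\]
which is smooth as a composition of smooth maps.

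For uniqueness of the curve through $\psi_{t}(p)$: if any $C'\in\M(C)/\R$ has $\psi_{t}(p)$ in its projected image, then $C'$ together with a point mapping to $\psi_{t}(p)$ defines an element of $\M_{1}(C)/\R$ evaluating under $ev$ to $\psi_{t}(p)$; since $ev$ is injective this element must equal $ev^{-1}(\psi_{t}(p))$, and hence $C'=\phi(t)$. This proves the first claim.

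For the second claim, since both $(-\ep,\ep)$ and $\M(C)/\R$ are $1$-dimensional smooth manifolds, it suffices to show that $\phi'(0)\ne 0$. Using the local parametrization $\tl F_{C}:B^{2}_{\ep}(0)\times\Sigma\setminus\Gamma\to\R\times M$ from Theorem \ref{t:foliating-curves}(7) and its $\R$-quotient, $\mathrm{forg}$ is locally the projection to $B^{2}_{\ep}(0)$ while $ev$ corresponds to $\tl F_{C}$ modulo the $\R$-action. In particular, the kernel of $d(\mathrm{forg})_{\hat p}$ is mapped under $dev_{\hat p}$ isomorphically onto the tangent space $du(T_{z}\Sigma)\subset T_{p}M$ of the projected curve at $p$. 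The chain rule gives
\[
\phi'(0)=d(\mathrm{forg})_{\hat p}\bigl((dev_{\hat p})^{-1}X_{\lambda}(p)\bigr),
\]
which vanishes if and only if $X_{\lambda}(p)\in du(T_{z}\Sigma)$. But Theorem \ref{t:foliating-curves}(2) says the projected curve is everywhere transverse to the Reeb flow, so this never happens; hence $\phi'(0)\ne 0$ and the inverse function theorem finishes the proof.

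The only real subtlety, and the main obstacle, is identifying the kernel of $d(\mathrm{forg})_{\hat p}$ with the tangent space of the projected curve via $dev$; this is purely a matter of unwinding the local model provided by Theorem \ref{t:foliating-curves}(7), and once done, the result is immediate from the Reeb-transversality built into the definition of a foliating curve.
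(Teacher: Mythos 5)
Your proof is correct and takes essentially the same route as the paper: both arguments rest on the embedding property of $ev:\M_{1}(C)/\R\to M$ from Corollary \ref{c:mod-spaces-mod-r} for existence and uniqueness, and both reduce the second claim to the observation that tangency of $t\mapsto ev^{-1}(\psi_{t}(p))$ to a fiber of the forgetful map corresponds, via $ev$, to tangency of the Reeb vector field to a projected leaf, which is ruled out by Theorem \ref{t:foliating-curves}. Your phrasing in terms of the chain rule and $\phi'(0)\ne 0$ is a mild repackaging of the paper's transversality-to-fibers language; the one cosmetic point is that the statement asserts a local diffeomorphism on all of $(-\ep,\ep)$, so you should note that the same computation gives $\phi'(t)\ne 0$ for every $t$, not only at $t=0$.
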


\begin{proof}
The first claim follows from
Corollary \ref{c:mod-spaces-mod-r}.
Indeed, since the evaluation map $ev:\M_{1}(C)/\R\to M$ is an embedding, the image
of an open set around $(C, z)$
contains an open neighborhood $U$ of the point $p:=u(z)$. 
Thus, there exists some $\ep>0$ so that 
$\psi_{t}(p)\in U$ for all $t\in (-\ep, \ep)$,
which tells there us there is a point of $\M_{1}(C)/\R$ mapping via $ev$ to $p$, which is equivalent
to there being a curve in $\M(C)/\R$ passing through $p$.
Moreover, the fact that 
the evaluation map is injective implies that there is at most one curve in $\M(C)$
passing through any given point in $M$.

Next we show that the map
taking $t\in(-\ep, \ep)$ to the unique curve in $\M(C)/\R$ passing through $p$
is  a local diffeomorphism.
By construction, the map taking an interval $(-\ep, \ep)$ to $\M(C)/\R$ is given by the composition
\[
\begin{CD}
(-\ep, \ep) @>{\psi_{\cdot}(p)}>> M @>{ev^{-1}}>> \M_{1}(C)/\R @>>> \M(C)/\R
\end{CD}
\]
with the last map the forgetful map.  Since the composition
of the first two maps gives an embedding of $(-\ep, \ep)$ in $\M_{1}(C)/\R$ it suffices to show
that this embedding is transverse to the fibers of the forgetful map.
However, since the embedding $ev:\M_{1}(C)/\R\to M$ maps the fibers of the forgetful map
to nicely-embedded pseudoholomorphic curves,
a tangency of the map $t\mapsto ev^{-1}(\psi_{t}(p))$ to a fiber of the forgetful map
corresponds
via the embedding $ev$
with a tangency of the map $t\mapsto\psi_{t}(p)$ to
a curve $C'\in\M(C)/\R$, that is, a tangency of the Reeb vector field to a curve $C'\in\M(C)/\R$.
Since we know from
Theorem \ref{t:foliating-curves} that the Reeb vector field is everywhere transverse to
every curve in $\M(C)/\R$, no such tangency can exist.
We've thus shown the map taking
a point $t\in (-\ep, \ep)$ to the unique curve passing through $p$ is a local diffeomorphism.
\end{proof}

\section{The connected sum construction}\label{s:connect-sum}

This section is devoted to the proof of Theorem \ref{t:contact-connect-sum} below,
which shows
that we can perform a connected sum on a manifold
$M$ with contact form $\lambda$ and obtain a
contact form on the surgered manifold which has certain additional properties
which will allow us to prove Theorem \ref{t:0-surg-main}.
Previous descriptions/constructions of connected sums in contact manifolds
can be found in
\cite{meckert1982, weinstein1991}.
For our main theorem, we will need the Reeb vector field of the new contact form
to have some specific properties not addressed in these previous constructions.

For the statement of the theorem, we will need the following definition.
We will say that an open set $U$ in a contact manifold $(M, \lambda)$
is a \emph{flow-tube neighborhood} of a point $p\in M$ if the
closure $\bar U$ of $U$ is contained in a coordinate neighborhood in which
$\bar U$ takes the form
\[
\bar U=\overline{B_{\ep}(p)}\times [-\ep, \ep]\subset\R^{2}\times \R=\br{(x, y)}\times\br{z}
\]
for some $\ep>0$ and
the Reeb vector field takes the form $\X=\pm\dz$.

\begin{theorem}\label{t:contact-connect-sum}
Let $M$ a $3$-manifold
equipped with a nondegenerate contact form $\lambda$
and let $p$ and $q$ be distinct points in $M$, and let $\mathcal{O}$ be an open neighborhood
of $\br{p, q}$.
Then there exist disjoint flow-tube neighborhoods $U\subset\mathcal{O}$
and $V\subset\mathcal{O}$ of $p$ and $q$ respectively,
a manifold $M'$ equipped with a contact form $\lambda'$,
and an embedding $i:M\setminus\br{p, q}\to M'$ so that:
\begin{enumerate}
\item The contact form $\lambda'$ on $M'$ is nondegenerate.

\item The pullback $i^{*}\lambda'$ agrees with $\lambda$ on $M\setminus\br{U\cup V}$, that is,
if $\iota$ denotes the composition
\[
M\setminus\br{U\cup V}\hookrightarrow M\setminus\br{p, q}\stackrel{i}{\longrightarrow} M'
\]
with $M\setminus\br{U\cup V}\hookrightarrow M\setminus\br{p, q}$ the obvious inclusion,
then
\[
\iota^{*}\lambda'=\lambda.
\]

\item The set
\[
M'\setminus i(M\setminus\br{p, q})
\]
is diffeomorphic to an embedded $2$-sphere in $M'$, and
the set
\[
N:=M'\setminus \overline{i(M\setminus\br{U\cup V})},
\]
called the neck,
is diffeomorphic to $\R\times S^{2}$.
\item Letting $X_{\lambda'}$ denote the Reeb vector field of the contact form $\lambda'$, there
exists a simple, even periodic orbit $\gamma_{0}\subset N$ of $X_{\lambda'}$
contained entirely within $N$.  All other simple periodic orbits of $X_{\lambda'}$ pass
through points of $M'\setminus N$.

\item Given any compatible
$J\in\J(M', \lambda')$ we can find a $J'\in\J(M', \lambda')$ agreeing with
$J$ outside of the neck $N$ for which there
exists a pair of (nicely) embedded, disjoint
pseudoholomorphic planes $P^{\pm}=[S^{2}, i, \br{\infty}, da^{\pm}, u^{\pm}]\in\M(\lambda', J')/\R$
asymptotic to $\gamma_{0}$ in opposite directions with extremal winding.
Moreover,
$P^{\pm}*P^{\pm}=0=P^{+}*P^{-}$ and
the union
$P^{+}\cup\gamma_{0}\cup P^{-}$
of the planes and the periodic orbit form a ($C^{1}$-)smooth\footnote{
Our proof will actually provide a $C^{\infty}$-smooth sphere, but
for our main result we need only assume that the two planes approach $\gamma_{0}$
in opposite directions, in which case Theorem \ref{t:relative-asymptotics}
can be used to show that the resulting sphere is $C^{1}$.
This is addressed in \cite{fs2}.
}
sphere in $N\approx \R\times S^{2}$ which
generates $\pi_{2}(N)$.

\item\label{i:flow-lines} Let $\psi_{t}$ denote the flow of $\X$ and $\tl\psi_{t}$ denote the flow of
$X_{\lambda'}$.  Then:
\begin{enumerate}
\item If $p_{+}$ and $p_{-}$ are points in $\partial U$ and
$\gamma_{p}:[a, b]\subset\R\to \bar U\subset M$ is a smooth integral curve-segment of $\X$
connecting $p_{-}$ to $p$ to $p_{+}$ within $\bar U$, then there exist smooth
integral curve-segments
$\tl\gamma_{p, \pm}$
of $X_{\lambda'}$ lying in $\bar N$ so that
\begin{itemize}
\item $\tl\gamma_{p, -}$ connects $i(p_{-})$ to the plane $P^{-}$ and the interior of
$\tl\gamma_{p, -}$ lies in $N\setminus\br{P^{+}\cup\gamma_{0}\cup P^{-}}$.
\item $\tl\gamma_{p, +}$ connects the plane $P^{+}$ to $i(p_{+})$ and the interior of
$\tl\gamma_{p, +}$ lies in $N\setminus\br{P^{+}\cup\gamma_{0}\cup P^{-}}$.
\end{itemize}

\item Similarly, if $q_{\pm}$ are points in $\partial V$ and
$\gamma_{q}:[a', b']\subset\R\to \bar V\subset M$
is a smooth integral curve-segment of $\X$
connecting $q_{-}$ to $q$ to $q_{+}$ within $\bar V$, then there exist smooth
integral curve-segments
$\tl\gamma_{q, \pm}$
of $X_{\lambda'}$ lying in $\bar N$ so that
\begin{itemize}
\item $\tl\gamma_{q, -}$ connects $i(q_{-})$ to the plane $P^{+}$ and the interior of
$\tl\gamma_{q, -}$ lies in $N\setminus\br{P^{+}\cup\gamma_{0}\cup P^{-}}$.
\item $\tl\gamma_{p, +}$ connects the plane $P^{-}$ to $i(q_{+})$ and the interior of
$\tl\gamma_{q, +}$ lies in $N\setminus\br{P^{+}\cup\gamma_{0}\cup P^{-}}$.
\end{itemize}
\end{enumerate}
\end{enumerate}
\end{theorem}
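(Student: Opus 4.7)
The plan is to first put $\lambda$ into flow-box normal form near $p$ and $q$, then glue in an explicit model neck carrying the prescribed Reeb dynamics and pseudoholomorphic planes, and finally verify the listed properties.

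\textbf{Step 1 (Normal form near $p$ and $q$).} Because $X_\lambda$ is nowhere vanishing, the straightening theorem provides coordinates $(x,y,z)$ on small neighborhoods of $p$ and $q$ inside $\mathcal{O}$ in which $\X = \pm\partial_{z}$. The Reeb conditions $i_{\X}\lambda=1$ and $i_{\X}d\lambda=0$ then force $\lambda = \pm dz + \alpha(x,y)$ with $\alpha$ a $1$-form in $x,y$ only, and the contact condition reduces to $d\alpha \ne 0$. Shrinking the coordinate patches, I obtain disjoint flow-tube neighborhoods $U \ni p$ and $V \ni q$ in $\mathcal{O}$ in the sense of the statement.

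\textbf{Step 2 (Explicit model neck).} I will construct an auxiliary contact manifold $(N_{0},\lambda_{0})$ diffeomorphic to $\R\times S^{2}$, equipped with a compatible $J_{0}$, satisfying three properties: (a) a collar neighborhood of each end of $N_{0}$ is strictly contactomorphic to the flow-tube normal form from Step 1 (one end with Reeb direction $+\dz$, the other with $-\dz$, to accommodate orientation); (b) $X_{\lambda_{0}}$ admits exactly one simple periodic orbit $\gamma_{0}$, which is nondegenerate and even, and every other integral curve leaves $N_{0}$ through the two collars in both forward and backward time; (c) there exist two disjoint, nicely-embedded $\tl J_{0}$-holomorphic planes $P_{0}^{\pm}$ asymptotic to $\gamma_{0}$ with extremal winding and approaching $\gamma_{0}$ from opposite directions, whose union with $\gamma_{0}$ is a smoothly embedded $S^{2}$ generating $\pi_{2}(N_{0})$. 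The template follows Hofer--Wysocki--Zehnder's description of the symplectization of $S^{3}$: one takes $N_{0}$ to be a suitable open subset of the symplectization of a carefully chosen contact $S^{3}$, with $\gamma_{0}$ a short orbit of Hopf type (compressed so as to be even) and $P_{0}^{\pm}$ the two hemispheres it bounds. All verifications are explicit coordinate computations and, as promised in the outline, can be delegated to the appendix.

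\textbf{Step 3 (Surgery and transfer of structure).} Form $M'$ by removing $U$ and $V$ from $M$ and gluing in $N_{0}$ along the collar identifications from Steps 1 and 2; let $i$ be the resulting inclusion. Because the contact forms match on the collars, $\lambda$ and $\lambda_{0}$ glue to a smooth nondegenerate contact form $\lambda'$ satisfying conditions (1)--(3), and $N$ is the image of $N_{0}$. Condition (4) follows from property (b): any closed orbit of $X_{\lambda'}$ is either $\gamma_{0}$ or enters the complement of $N$. For condition (5), given $J\in\J(M',\lambda')$ I define $J'$ to equal $J$ outside $N$ and $J_{0}$ on a small neighborhood of $\gamma_{0}\cup P_{0}^{+}\cup P_{0}^{-}$, interpolating on an intermediate annular region using the contractibility of $\J(M',\lambda')$; the planes $P_{0}^{\pm}$ then realize $P^{\pm}$. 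Extremal winding plus Corollary \ref{c:embedding-gin-zero} gives $P^{\pm}*P^{\pm}=0$, and disjointness plus Corollary \ref{c:no-isect-gin-zero} gives $P^{+}*P^{-}=0$; the resulting sphere is $C^{1}$ by Theorem \ref{t:relative-asymptotics} applied to the opposite-direction approach. Condition (6) is read off the model: a short integral curve-segment of $\X$ through $p$ (resp.\ $q$) corresponds under Step 1 to an integral curve of $X_{\lambda_{0}}$ that enters $N_{0}$ through one collar and exits through the other, and by property (b) this trajectory meets each of $P^{-}$ and $P^{+}$ (resp.\ $P^{+}$ and $P^{-}$) transversally exactly once in the stipulated order.

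The main obstacle is Step 2. Evenness of $\gamma_{0}$ is essential, since by Lemma \ref{l:same-or-opposite} a pair of extremal-winding ends at the same asymptotic limit can only approach in opposite directions when the orbit is even; and one must simultaneously arrange that no other simple periodic orbits are created inside $N_{0}$, which requires controlling the global Reeb dynamics (not merely their germs at the collars) under a deformation that interpolates between the flow-tube behavior at each end and the flow around the central orbit $\gamma_{0}$. These explicit constructions form the bulk of the auxiliary computations deferred to the appendix.
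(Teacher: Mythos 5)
Your high-level plan — Darboux normalization near $p$ and $q$, glue in a model neck diffeomorphic to $\R\times S^2$ carrying the prescribed Reeb dynamics and holomorphic planes, then verify the listed properties — is exactly the paper's strategy. However, the proposal has several concrete gaps, and they are in precisely the places where the work of this theorem actually lives.

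First, Step 1 is too weak. Straightening the flow gives $\lambda = \pm dz + \alpha(x,y)$, but here $\alpha$ is an arbitrary $z$-independent $1$-form depending on $\lambda$ near $p$ (resp.\ $q$). To glue in a \emph{fixed} model neck $N_{0}$ whose collar structure is prescribed in advance, you need to further normalize $\alpha$; this is what the full contact Darboux theorem does, reducing to $\lambda_{\pm} = \pm dz + \tfrac{1}{2}(x\,dy - y\,dx)$. Without this, your Step 2(a) --- that a collar of $N_{0}$ is strictly contactomorphic to ``the flow-tube normal form from Step 1'' --- is not well-posed, since the form at the collar depends on the unknown germ $\alpha$.

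Second, your Step 2 description of the model neck does not hold up. The symplectization of a contact $S^{3}$ is $4$-dimensional, while $N_{0}$ must be a contact $3$-manifold diffeomorphic to $\R\times S^{2}$; these do not match. Further, the Hopf orbit on the round $S^{3}$ is elliptic, hence \emph{odd}, and the phrase ``compressed so as to be even'' hides a substantive modification, since passing from an odd elliptic orbit to an even hyperbolic one is not a small perturbation. The paper instead builds the neck explicitly as $(\R\times S^{2}, f\lambda_{1})$ with $\lambda_{1} = 3\cos\theta\,d\rho - \rho\sin\theta\,d\theta + \tfrac{1}{2}\sin^{2}\theta\,d\phi$ and $f$ interpolating between $\rho^{2}$ and a quadratic well at $\rho=0$; the diffeomorphisms $\Phi_{\pm}: \R^{\pm}\times S^{2}\to\R^{3}\setminus\{0\}$ with $\Phi_{\pm}^{*}\lambda_{\pm} = \rho^{2}\lambda_{1}$ are what make the collar gluing literally a strict contactomorphism. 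Establishing that $\gamma_{0}$ is the unique simple orbit in the neck, that it is even, and that all other Reeb trajectories exit monotonically, requires the explicit formula for $X_{f}$, the monotone function $Z = \rho\cos\theta$ along trajectories, and a direct Conley--Zehnder computation showing $\mu^{\Phi}(\gamma_{0}) = 0$. These are not ``auxiliary computations deferred to the appendix'' in the sense of being routine; they are the theorem.

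Third, Step 3 asserts that $\lambda'$ is nondegenerate because $\lambda$ is. This is false a priori: the surgery can create new periodic orbits that cross the gluing region, and there is no reason they are born nondegenerate. The paper handles this by noting that all such orbits must pass through a small compact annular region near $\{\pm\ep'/2\}\times S^{2}$, and then applying a genericity result (Robinson) to perturb $\lambda'$ by a $C^{\infty}$-small factor supported near that region, after which all orbits are nondegenerate and every other conclusion of the theorem is preserved because it is stable under such small, localized perturbations. You need an argument of this type. Relatedly, condition (6) is deduced in the paper from the explicit fact that $X_{f}$ is a nonzero multiple of $\partial_{\rho}$ along the $\theta\in\{0,\pi\}$ locus, which is again part of the explicit model; your ``property (b)'' asserts that trajectories exit the collars, but does not by itself force a trajectory to hit each plane exactly once in the stipulated order.
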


Before proving this theorem, we will describe a contact connected sum on two copies
of $\R^{3}$ equipped with specific contact forms.
Since the connected sum operation we describe can be localized into arbitrarily small regions,
Darboux's theorem for contact manifolds will then allow us to
transfer the construction to any contact $3$-manifold.
We describe this construction in a series of lemmas.
In order to focus on the main points of the construction
we delay some details involving longer but more straightforward computations to
Appendix \ref{a:details}.

We consider $\R^{3}=\br{(x, y, z)}$
equipped with the contact forms $\lambda_{+}$ and $\lambda_{-}$ defined by
\[
\lambda_{\pm}=\pm dz+\frac{1}{2}(x\,dy-y\,dx)
\]
We equip $S^{2}$ with polar coordinate
$\phi\in\R/2\pi\Z$ and azimuthal coordinate $\theta\in[0, \pi]$ and consider the
$1$-form $\lambda_{1}$ on $\R\times S^{2}$ defined by
\[
\lambda_{1}=3\cos\theta\,d\rho-\rho\sin\theta\,d\theta+\frac{1}{2}\sin^{2}\theta\,d\phi
\]
where $\rho$ is the $\R$-coordinate.
It follows from Lemma \ref{l:smoothstuff} that $\lambda_{1}$
does in fact extend over the $\theta\in\br{0, \pi}$ locus to 
gives
a smooth $1$-form on $\R\times S^{2}$, and further that $\lambda_{1}$ is a contact
form on $\R\times S^{2}$.

\begin{lemma}\label{l:connect-sum-pullback}
Consider the maps $\Phi_{\pm}:\R^{\pm}\times S^{2}\to \R^{3}\setminus\br{0}$
defined by
\begin{equation}\label{e:Phi-pm-definition}
\Phi_{\pm}(\rho, \phi, \theta)
=\pm(\rho\sin\theta\cos\phi, \rho\sin\theta\sin\phi, \rho^{3}\cos\theta).
\end{equation}
Then $\Phi_{+}$ and $\Phi_{-}$ are smooth diffeomorphisms satisfying
\begin{equation}
\Phi_{\pm}^{*}\lambda_{\pm}=\rho^{2}\lambda_{1}
\end{equation}
with $\lambda_{+}$, $\lambda_{-}$, and $\lambda_{1}$ as defined above.
\end{lemma}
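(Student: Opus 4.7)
The plan is to verify the claim in two stages: first, that $\Phi_{\pm}$ are diffeomorphisms onto $\R^{3}\setminus\br{0}$; second, that the pullback identity $\Phi_{\pm}^{*}\lambda_{\pm}=\rho^{2}\lambda_{1}$ holds by direct computation. Both steps are in principle routine, but a couple of mild subtleties need to be addressed: the definition of $\Phi_{\pm}$ uses spherical coordinates $(\phi,\theta)$ which are singular at the poles $\theta\in\{0,\pi\}$, and the two signs in $\lambda_{\pm}$ and in $\Phi_{\pm}$ need to interact correctly.

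For the diffeomorphism claim, I would first observe that the formula for $\Phi_{\pm}$ is really an intrinsic expression in the variable $\rho\in\R^{\pm}$ and the point $\nu=(\sin\theta\cos\phi,\sin\theta\sin\phi,\cos\theta)\in S^{2}$ regarded via its embedding in $\R^{3}$; in particular the expression $\Phi_{\pm}(\rho,\nu)=(\pm\rho\,\nu_{1},\pm\rho\,\nu_{2},\pm\rho^{3}\nu_{3})$ makes sense globally on $\R^{\pm}\times S^{2}$, is smooth there, and does not depend on the coordinate patch of $S^{2}$ used. To see it is a bijection onto $\R^{3}\setminus\br{0}$, I would solve for $(\rho,\theta,\phi)$ given $(x,y,z)\ne 0$: setting $r^{2}=x^{2}+y^{2}$ one finds from $r=|\rho|\sin\theta$ and $z=\pm\rho^{3}\cos\theta$ the identity $r^{2}\rho^{4}+z^{2}=\rho^{6}$, i.e.\ $u^{3}-r^{2}u^{2}-z^{2}=0$ with $u=\rho^{2}$. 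By Descartes' rule of signs this cubic in $u$ has exactly one positive root, so $|\rho|$ is uniquely determined; the appropriate sign of $\rho$ is fixed by the domain, and then $\theta$ and $\phi$ are uniquely recovered. Smoothness of the inverse follows by the inverse function theorem once one checks nondegeneracy of $d\Phi_{\pm}$, which is clear away from the poles and can be verified at the poles using that the defining formula is smooth in the embedded coordinates on $S^{2}$.

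For the pullback calculation I would simply plug in. Writing $x=\pm\rho\sin\theta\cos\phi$, $y=\pm\rho\sin\theta\sin\phi$, $z=\pm\rho^{3}\cos\theta$ and differentiating, the terms in $x\,dy-y\,dx$ that involve $d\rho$ and $d\theta$ cancel after using $\cos^{2}\phi+\sin^{2}\phi=1$, leaving
\[
x\,dy-y\,dx=\rho^{2}\sin^{2}\theta\,d\phi,
\]
where the ambient sign $\pm$ appears squared and hence drops out. Similarly,
\[
\pm dz=3\rho^{2}\cos\theta\,d\rho-\rho^{3}\sin\theta\,d\theta,
\]
since the outer $\pm$ and the $\pm$ inside $z$ again multiply to $+$. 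Adding and pulling out a factor of $\rho^{2}$ gives exactly $\rho^{2}\lambda_{1}$.

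The main potential obstacle is really bookkeeping: keeping track of the two $\pm$ signs consistently and confirming smoothness at the poles $\theta\in\{0,\pi\}$. Since both cancellations of signs are automatic (one $\pm$ squared, the other $\pm$ twice), no genuine difficulty arises, and the result falls out as a direct computation, in the spirit of the standard verification that the radial map $\R_{>0}\times S^{2}\to\R^{3}\setminus\br{0}$ conformally relates the flat and cylindrical metrics. The content of the lemma is thus primarily the identification of the specific weight $\rho^{2}$ and the specific form $\lambda_{1}$ that make the identity hold; these have been chosen so that $\lambda_{1}$ extends smoothly over the locus $\theta\in\{0,\pi\}$ (as recorded in Lemma~\ref{l:smoothstuff}), which is what makes $\lambda_{1}$ a natural candidate for building the neck region in the subsequent connected-sum construction.
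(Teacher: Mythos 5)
Your pullback computation coincides with the paper's, but your route to bijectivity is genuinely different and arguably cleaner. You set $u=\rho^2$ and reduce the inversion problem to the cubic $u^3-r^2u^2-z^2=0$, then invoke Descartes' rule of signs to extract a unique positive root. The paper instead eliminates $\rho$ to obtain $\cot\theta\csc^2\theta = z_0/(x_0^2+y_0^2)^{3/2}$ and appeals to strict monotonicity of $\cot\theta\csc^2\theta$ on $(0,\pi)$, after first disposing of the $z$-axis separately. Your polynomial reduction handles the axis and the equator uniformly in one stroke and replaces a limiting argument on a transcendental function with an algebraic fact, which is a small but real gain.

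The one place where your proposal stops short is the nondegeneracy of $d\Phi_\pm$ at the poles $\theta\in\{0,\pi\}$. You correctly point out that the map extends smoothly in the embedded coordinates $X=\sin\theta\cos\phi$, $Y=\sin\theta\sin\phi$, but then assert that nondegeneracy ``can be verified'' there without doing it; smoothness of the formula does not by itself give invertibility of its differential. The paper carries out this computation, finding
\[
\det D\Phi_{+}(\rho,X,Y)=\frac{\rho^{4}\left(3-2X^{2}-2Y^{2}\right)}{\sqrt{1-X^{2}-Y^{2}}},
\]
which is nonzero at $X=Y=0$ since $\rho\ne 0$. This short calculation should be included to close the argument; away from the poles the paper likewise records the explicit determinant $-\rho^{4}\sin\theta\,(1+2\cos^{2}\theta)$ rather than declaring it ``clear.''
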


The proof of this lemma involves straightforward computation
and we give the details in Lemma \ref{l:connect-sum-pullback-app} in Appendix \ref{a:details}.
This lemma shows that we can take a connected sum between these two copies of
$\R^{3}$ in a way which preserves the Reeb flow outside of an arbitrarily small
neighborhood of the surgered region.  Indeed,
according to this lemma,
any smooth positive function
$f:\R\times S^{2}\to\R^{+}$
gives us a contact form $f\lambda_{1}$ on $\R\times S^{2}$
which is contactomorphic on $\R^{\pm}\times S^{2}$ to $(\R^{3}\setminus\br{0}, \lambda_{\pm})$
via the maps $\Phi_{\pm}$.
Furthermore, the Reeb flow of $f\lambda_{1}$ is conjugate via $\Phi_{\pm}$ 
to that of the Reeb vector field(s) for $(\R^{3}, \lambda_{\pm})$
on any region where $f(\rho, p)=\rho^{2}$.
Since we can easily construct smooth positive functions $f:\R\times S^{2}\to\R^{+}$
satisfying $f(\rho, p)=\rho^{2}$ on an arbitrarily small neighborhood of $\rho=0$,
this shows the Reeb vector fields of $f\lambda_{1}$ and those of $\lambda_{\pm}$
are identified via $\Phi_{\pm}$ outside of an arbitrarily neighborhood of the surgered region.

To establish that the connected sum operation can be carried out in such a way
as to ensure the
other properties we will need,
further properties on the function $f$ will be required.
Before discussing these properties
we first establish some properties of the contact form
\[
\lambda_{f}:=f\lambda_{1}
\]
and its associated contact structure
\[
\xi_{1}:=\ker\lambda_{f}=\ker\lambda_{1}.
\]
It will be convenient to define the function
\begin{equation}\label{e:g-def}
g(\theta):=2\cos^{2}\theta+1=3\cos^{2}\theta+\sin^{2}\theta
\end{equation}
and we note that $g$ defines a smooth function on $S^{2}$ as a result of
Lemma \ref{l:smoothstuff}.

\begin{lemma}\label{l:lambda-f-properties}
For $\theta\notin\br{0, \pi}$:
\begin{itemize}
\item The set
\begin{equation}\label{e:symplectic-basis}
\begin{aligned}
\B_{(\rho, \theta, \phi)}
&=\br{(fg)^{-1}(-3\cot\theta\,\dph+\frac{1}{2}\sin\theta\,\drh), 2\rho\csc\theta\,\dph+\dth} \\
&=:\br{v_{1}(\rho, \theta, \phi), v_{2}(\rho, \theta, \phi)}
\end{aligned}
\end{equation}
is a symplectic basis for
$(\xi_{1}, d\lambda_{f})$.

\item The Reeb vector field $\Xf$ of the contact form $\lambda_{f}$ is given by
\begin{equation}\label{e:xf}
\begin{aligned}
\Xf=
[gf^{2}]^{-1}\Big[&
(-\rho f_{\rho}-3f_{\theta}\cot\theta+2f)\,\dph  \\
&
+(3\cot\theta f_{\phi}-\frac{1}{2}\sin\theta f_{\rho})\,\dth  \\
&
+(\rho f_{\phi}+\frac{1}{2}\sin\theta f_{\theta}+f\cos\theta)\,\drh
\Big].
\end{aligned}
\end{equation}
\end{itemize}
\end{lemma}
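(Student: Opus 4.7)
The plan is to verify both claims by direct calculation in the coordinates $(\rho, \theta, \phi)$, starting from the explicit formula for $\lambda_1$ and the relation $\lambda_f = f\lambda_1$. Because $f > 0$, the contact structure $\xi_1 = \ker \lambda_f$ coincides with $\ker\lambda_1$, so to show $v_1, v_2 \in \xi_1$ it suffices to check $\lambda_1(v_i) = 0$. This is a one-line computation in each case: for $v_2$ the pairing $(2\rho\csc\theta)(\tfrac{1}{2}\sin^2\theta)$ against $d\phi$ cancels the pairing $-\rho\sin\theta$ against $d\theta$, and for $v_1$ the $d\rho$- and $d\phi$-contributions cancel via the identity $\cot\theta \cdot \sin^2\theta = \sin\theta\cos\theta$.

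To establish $d\lambda_f(v_1,v_2) = 1$, I would first compute
\[
d\lambda_1 = 2\sin\theta\,d\rho\wedge d\theta + \sin\theta\cos\theta\,d\theta\wedge d\phi,
\]
expand $d\lambda_f = df\wedge \lambda_1 + f\,d\lambda_1$, and read off its three coefficients in the basis $\{d\rho\wedge d\theta,\ d\rho\wedge d\phi,\ d\theta\wedge d\phi\}$. Pairing against $(v_1, v_2)$ produces roughly a dozen terms; however, every contribution containing a partial derivative of $f$ appears twice with opposite signs and drops out (this is essentially forced by $\lambda_1(v_i) = 0$). The surviving ``pure-$f$'' contributions collapse to $f\sin^2\theta + 3f\cos^2\theta = f\cdot g(\theta)$, and the prefactor $(fg)^{-1}$ built into $v_1$ then produces exactly $1$.

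For the Reeb vector field formula, my plan is to write $X_f = A\drh + B\dth + C\dph$ with unknown coefficient-functions $A, B, C$ and use the defining conditions $\lambda_f(X_f) = 1$ and $i_{X_f}d\lambda_f = 0$. These yield an inhomogeneous linear system in $(A, B, C)$; only two of the three equations from $i_{X_f}d\lambda_f = 0$ are independent since $d\lambda_f$ has rank two on a three-manifold. Rather than invert this $3 \times 3$ system by hand, I would substitute the displayed formula for $A, B, C$ and verify all four equations directly. The verification of $\lambda_f(X_f) = 1$ again reduces, after pair-wise cancellation of $f$-derivative terms, to the identity $3\cos^2\theta + \sin^2\theta = g(\theta)$, matching the prefactor $[gf^{2}]^{-1}$; the two independent components of $i_{X_f}d\lambda_f = 0$ vanish by an analogous pairing.

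The main obstacle is purely bookkeeping: each of the three coefficient-functions of $d\lambda_f$ contains three summands, so both $d\lambda_f(v_1,v_2)$ and $i_{X_f}d\lambda_f$ expand into many terms. The key structural observation that makes the verification tractable is that the $f$-derivative contributions are \emph{forced} to appear in cancelling pairs because $v_1, v_2$ and $X_f$ interact with $\lambda_1$ in controlled ways, leaving only the clean trigonometric combinations absorbed by $g(\theta)$. Since no idea beyond direct calculation is required, the full details are naturally suited to be deferred to Appendix \ref{a:details}.
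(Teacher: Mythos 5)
Your verification of the symplectic-basis claim follows essentially the same route as the paper's (Lemma \ref{l:lambda-f-properties-app} in the appendix): compute $\lambda_1(v_i)=0$, then pair $d\lambda_1$ against $(v_1,v_2)$ and observe that the $df\wedge\lambda_1$ contribution drops out because $\lambda_1(v_1)=\lambda_1(v_2)=0$, leaving $f\,d\lambda_1(v_1,v_2)=f\cdot(1/f)=1$. Your intermediate expression $d\lambda_1 = 2\sin\theta\,d\rho\wedge d\theta + \sin\theta\cos\theta\,d\theta\wedge d\phi$ matches the paper's.

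For the Reeb vector field, however, you take a genuinely different route. The paper does not verify the displayed formula against the defining equations; it \emph{derives} it. The key tool is a general rescaling lemma (Lemma \ref{l:reeb-lemma-app}): if $X_\lambda$ is the Reeb field of $\lambda$ and $\widetilde{X}_f\in\xi$ is the unique solution of $i_{\widetilde X_f}d\lambda = df - df(X_\lambda)\lambda$, then $X_{f\lambda} = \tfrac{1}{f}X_\lambda + \tfrac{1}{f^2}\widetilde X_f$, and moreover $\widetilde X_f$ has an explicit expression $[d\lambda(v_1,v_2)]^{-1}(df(v_2)v_1 - df(v_1)v_2)$ in terms of any basis $\{v_1,v_2\}$ of $\xi$. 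The paper then computes the Reeb field of $\lambda_1$ itself, $X_1 = g(\theta)^{-1}(\cos\theta\,\drh + 2\,\dph)$, and plugs into the rescaling formula using the basis from the first part. Your approach --- write $X_f = A\drh + B\dth + C\dph$ and check $\lambda_f(X_f)=1$, $i_{X_f}d\lambda_f = 0$ by substitution --- is sound and will close (the $f$-derivative terms do cancel in pairs and one is left with $3\cos^2\theta+\sin^2\theta=g$ both times, as you predict), but it presupposes the answer. The paper's route has the advantage of isolating the general structural fact about how Reeb fields transform under conformal rescaling $\lambda\mapsto f\lambda$, which also explains \emph{why} all the $f$-derivative cancellations you observe are forced; yours is a shorter ad hoc verification. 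Either is acceptable, but if you want the proof to serve a reader who wonders where the formula came from, the rescaling-lemma route is worth adopting.
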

The proof is straightforward computation.  Further details are given in 
Lemmas \ref{l:reeb-lemma-app}-\ref{l:lambda-f-properties-app} in the
Appendix \ref{a:details}.

We now have the following lemma
which identifies a condition which guarantees a periodic orbit
of
$\Xf$
on the sphere
$\rho=0$.

\begin{lemma}\label{l:existence-of-orbit}
The Reeb vector field $\Xf$ of $\lambda_{f}$ is a constant multiple of $\dph$ along the equator
$\theta=\pi/2$ of the sphere $\rho=0$ precisely when
$df=0$ there.
\end{lemma}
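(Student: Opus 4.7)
The plan is to reduce the claim to a direct computation using the explicit formula \eqref{e:xf} for the Reeb vector field from Lemma \ref{l:lambda-f-properties}. Along the equator $\br{\rho=0,\,\theta=\pi/2}$ we have $\cos\theta=\cot\theta=0$, $\sin\theta=1$, and by \eqref{e:g-def} also $g(\pi/2)=1$, so every term carrying a factor of $\rho$, $\cos\theta$, or $\cot\theta$ drops out. Substituting into \eqref{e:xf} collapses the formula to
\[
\Xf\bigr|_{\rho=0,\,\theta=\pi/2} = \frac{1}{f^{2}}\left(2f\,\dph - \tfrac{1}{2}f_{\rho}\,\dth + \tfrac{1}{2}f_{\theta}\,\drh\right).
\]

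From this expression the equivalence can be read off almost mechanically. The $\dth$-component vanishes identically on the equator if and only if $f_{\rho}\equiv 0$ there; similarly the $\drh$-component vanishes if and only if $f_{\theta}\equiv 0$ there. Under these two conditions $\Xf$ reduces to $(2/f)\,\dph$ along the equator, and this is a \emph{constant} scalar multiple of $\dph$ if and only if the positive function $f$ is constant in $\phi$ along the circle $\br{\rho=0,\,\theta=\pi/2}$, i.e.\ $f_{\phi}\equiv 0$ on the equator.

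To finish, I would note that since the equator lies in the open region $\theta\in(0,\pi)$ where $(\rho,\theta,\phi)$ are smooth local coordinates on $\R\times S^{2}$, the simultaneous vanishing of $f_{\rho}$, $f_{\theta}$, and $f_{\phi}$ at each point of the equator is precisely the condition $df\equiv 0$ there. Combining the three equivalences above gives both directions of the claim. The argument uses no tools beyond Lemma \ref{l:lambda-f-properties} and a substitution, so there is no serious obstacle; the only care needed is to verify that $f_\phi=0$ on the equator is indeed equivalent to constancy of the scalar $2/f$ along that closed curve, which is immediate because the equator is connected and parametrized by $\phi$.
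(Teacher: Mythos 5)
Your proof is correct and follows essentially the same route as the paper: substitute $(\rho,\theta,\phi)=(0,\pi/2,\phi)$ into the explicit formula \eqref{e:xf}, observe the $\dth$- and $\drh$-components force $f_{\rho}=f_{\theta}=0$, and then note that the resulting coefficient $2/f$ on $\dph$ is constant along the equator iff $f_{\phi}=0$ there. The intermediate expression and the three-way equivalence match the paper's argument exactly.
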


\begin{proof}
From \eqref{e:xf}, we have for $(\rho, \theta, \phi)=(0, \pi/2, \phi)$ that
\[
\Xf
=f^{-2}
\left[
2f\,\dph-\frac{1}{2}f_{\rho}\,\dth+\frac{1}{2}f_{\theta}\,\drh
\right].
\]
Thus, $\Xf(0, \pi/2, \phi)$ is a positive multiple of $\dph$ precisely when
$f_{\rho}(0, \pi/2, \phi)=f_{\theta}(0, \pi/2, \phi)=0$, in which case the formula for $\Xf$
along $(\rho, \theta, \phi)=(0, \pi/2, \phi)$ reduces to
$\Xf=(2/f)\,\dph$.
Thus $\Xf(0, \pi/2, \phi)$ is a constant multiple of $\dph$ precisely when
$f(0, \pi/2, \phi)$ is constant, which is equivalent to requiring
$f_{\phi}(0, \pi/2, \phi)=0$.
\end{proof}

By further restricting the function $f$ we can say that the periodic
orbit identified in the above lemma is the only (simple) periodic orbit of $\Xf$,
and we can arrange that the flow of $\Xf$ is tangent to
$\R\times\br{\theta=0, \pi}$.

\begin{lemma}\label{l:orbit-and-flow}
Let $f:\R\to\R^{+}$ be a smooth, positive function satisfying
\[
\rho f'(\rho)>0
\]
for all $\rho\ne 0$.
Then
\begin{itemize}
\item $\Xf$ has a unique (simple) periodic orbit occurring at
$\rho=0$, $\theta=\pi/2$.
\item Along $\theta=0$ (resp.\ $\theta=\pi$) locus,
$\Xf$ is a positive (resp.\ negative) multiple of $\drh$.
\end{itemize}
\end{lemma}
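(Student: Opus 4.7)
The plan is to exploit the $\phi$-rotational symmetry of $\lambda_f$ --- which holds because $f$ depends only on $\rho$ --- to reduce the problem to a two-dimensional analysis. Substituting $f_\theta = f_\phi = 0$ into formula \eqref{e:xf} yields
\begin{equation*}
\Xf = [gf^{2}]^{-1}\Big[(2f - \rho f'(\rho))\,\dph - \tfrac{1}{2}\sin\theta\, f'(\rho)\,\dth + f\cos\theta\,\drh\Big],
\end{equation*}
and the function $H := 2\lambda_f(\dph) = f(\rho)\sin^{2}\theta$ is invariant under the Reeb flow --- either by noting that $\dph$ is a contact symmetry, or by direct verification that the $\drh$- and $\dth$-contributions to $L_{\Xf}H$ cancel.

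For the first claim I would project the Reeb flow to the $(\rho, \theta)$-half-plane $\R \times (0, \pi)$. There the components $\Xf(\rho) = \cos\theta/(gf)$ and $\Xf(\theta) = -\sin\theta\, f'(\rho)/(2gf^{2})$ vanish simultaneously only when $\theta = \pi/2$ and $f'(\rho) = 0$; the hypothesis $\rho f'(\rho) > 0$ for $\rho \ne 0$ then forces $\rho = 0$, so the unique reduced fixed point is $(0, \pi/2)$. Lifting back, $\Xf(\phi) = 2/f(0) > 0$ at this point, so the circle $\{0\} \times \{\theta = \pi/2\}$ is a genuine simple periodic orbit. Any other periodic orbit of $\Xf$ in $\R \times (0,\pi) \times S^{1}$ would project to a closed, non-constant curve in the half-plane and therefore lie on a compact component of some level set $\{H = c\}$ with $c > 0$. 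I would rule this out by observing that for every $c > 0$ the constraint $\sin^{2}\theta = c/f(\rho) \le 1$ is satisfied for arbitrarily large $|\rho|$ (since $f$ is strictly monotone on each side of $\rho = 0$, so $\{f \ge c\}$ is unbounded in $\rho$), so every component of $\{H = c\}$ in $\R \times (0,\pi)$ is unbounded in $\rho$ and hence non-compact. Combined with the second claim --- which shows $\Xf$ is a nowhere-vanishing translation on each pole locus --- this rules out any further periodic orbits.

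For the second claim I would use that the pole loci $\R \times \{\text{poles}\}$ together form the level set $\{H = 0\}$ (since $f > 0$ rules out $f = 0$), so each pole locus is invariant under $\Xf$. Since each is one-dimensional, $\Xf$ restricted to it is a scalar multiple of $\drh$. Evaluating $\lambda_f = f\lambda_{1}$ along the poles using the formula for $\lambda_{1}$ gives $\lambda_f|_{\theta = 0} = 3f\, d\rho$ and $\lambda_f|_{\theta = \pi} = -3f\, d\rho$, and the normalization $\lambda_f(\Xf) = 1$ then forces $\Xf = (3f\cos\theta)^{-1}\drh$, which is a positive multiple of $\drh$ along $\theta = 0$ and a negative multiple along $\theta = \pi$. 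The only mild subtlety in the whole argument is that $(\theta, \phi)$ degenerate at the poles, so formula \eqref{e:xf} cannot be applied there directly; the invariance of $\{H = 0\}$ together with the simple form of $\lambda_f$ at the poles circumvents this, and is what makes the direction-of-flow computation clean.
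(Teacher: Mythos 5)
Your proof is correct, but it proceeds by a genuinely different route from the paper's. For the first claim the paper constructs a Lyapunov function: it takes $Z=\rho\cos\theta$ and observes from formula \eqref{e:xf} (with $f$ depending only on $\rho$) that $dZ(\Xf)=(gf^{2})^{-1}\bigl(f\cos^{2}\theta+\tfrac12 f'\rho\sin^{2}\theta\bigr)\ge 0$, with equality exactly at $\rho=0$, $\theta=\pi/2$; monotonicity of $Z$ along flow lines then kills all other periodic orbits in one stroke. You instead exploit the $\phi$-symmetry to produce a \emph{conserved} quantity $H=2\lambda_{f}(\dph)=f\sin^{2}\theta$, reduce to the $(\rho,\theta)$-half-plane, locate the unique reduced rest point, and rule out reduced periodic orbits because the positive level sets of $H$ have no compact components (the rays $\{f\ge c\}$ being unbounded when nonempty, by strict monotonicity of $f$ on each half-line). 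Both arguments are sound; the paper's is shorter and needs no level-set topology, while yours makes the integrable structure visible and, for the second claim, gives a slightly cleaner way to handle the coordinate degeneracy at the poles by noting they form the invariant set $\{H=0\}$ rather than plugging in $\theta\in\{0,\pi\}$ directly (which the paper justifies via Lemma~\ref{l:smoothstuff}). One small point worth making explicit in your write-up: for the critical value $c=f(0)$ the level set is singular at $(0,\pi/2)$, but a non-constant closed orbit avoids that fixed point and hence lies in the regular part, whose four components are unbounded rays, so the conclusion still holds.
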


\begin{proof}
If $f$ depends only on the $\R$-coordinate $\rho$,
then the formula \eqref{e:xf} of the Reeb vector field
of $\lambda_{f}$ reduces to
\begin{equation}\label{e:reeb-rho-dependent}
\Xf=
[g(\theta)f(\rho)^{2}]^{-1}\left[
(-\rho f'+2f)\,\dph 
-\frac{1}{2}\sin\theta f'\,\dth 
+f\cos\theta\,\drh
\right]
\end{equation}
Define the function $Z:\R\times S^{2}\to\R$ by
\[
Z(\rho, \theta, \phi)=\rho\cos\theta.
\]
It follows from Lemma \ref{l:smoothstuff} that $Z$ defined
as such extends to a smooth function on all of $\R\times S^{2}$.
Then
\[
dZ=\cos\theta\,d\rho-\rho\sin\theta\,d\theta
\]
and so
\[
dZ(\Xf)
=(g(\theta)f(\rho)^{2})^{-1}
\bp{f\cos^{2}\theta+\frac{1}{2}f'\rho\sin^{2}\theta}
\]
which is nonnegative everywhere.
Therefore $Z$ is monotonic along any flow line of $\Xf$, and 
any periodic orbit of $\Xf$ must be contained in the zero locus of
$dZ(\Xf)$.
But $dZ(\Xf)=0$
precisely when both
$f\cos^{2}\theta$ and $f'\rho\sin^{2}\theta$ vanish,
which, in turn, happens precisely when
$\rho=0$ and $\theta=\pi/2$.

To see the second claim is true, we observe from Lemma \ref{l:smoothstuff} that 
$\dph$ and $\sin\theta\,\dth$ define smooth vector fields on $S^{2}$ which vanish at the
north and south poles $\theta\in\br{0, \pi}$.
Thus, the formula above for the Reeb vector field tell us that
\[
\Xf(\rho, \phi, 0)=[g(0)f(\rho)]^{-1}\cos(0)\,\drh=\frac{1}{3f(\rho)}\drh
\]
and
\[
\Xf(\rho, \phi, \pi)=[g(\pi)f(\rho)]^{-1}\cos(\pi)\,\drh=\frac{-1}{3f(\rho)}\drh
\]
which establishes the second claim of the lemma.
\end{proof}

We next compute the Conley--Zehnder index of the periodic orbit guaranteed by the
above lemma provided an additional assumption on the function $f$.

\begin{lemma}\label{l:cz-computation}
Assume that $f:\R\to\R^{+}$ is a smooth positive function satisfying
\[
\rho f'(\rho)>0
\]
for $\rho\ne 0$
and
\[
f''(0)>0.
\]
Then relative to the symplectic trivialization
\[
\B_{(0, \pi/2, \phi)}=\br{v_{1}(0, \pi/2, \phi), v_{2}(0, \pi/2, \phi)}=\br{\tfrac{1}{2f(0)}\drh, \dth}
\]
of $(\xi_{1}, d\lambda_{f})$
from \eqref{e:symplectic-basis},
the Conley--Zehnder index of the unique simple periodic orbit 
$\gamma_{0}$ of $\Xf$ is $0$.
\end{lemma}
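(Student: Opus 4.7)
The plan is to linearize the Reeb flow along $\gamma_{0}$ in the trivialization $\B$, and then compute the Conley--Zehnder index of the resulting path in $Sp(1)$ directly from the axioms of Theorem \ref{t:conley-zehnder-axioms}.

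First, since $\rho f'(\rho)>0$ for $\rho\neq 0$ and $f$ is smooth, one necessarily has $f'(0)=0$. Formula \eqref{e:reeb-rho-dependent} then gives $X_{f}|_{\gamma_{0}}=(2/f(0))\dph$, so parametrizing $\gamma_{0}$ as $t\mapsto(0,\pi/2,2\pi t)$ satisfies $\dot\gamma=T\,\Xf$ with $T=\pi f(0)$. Taylor expanding $X_{f}$ to first order in $\rho$ and $v:=\theta-\pi/2$ using $f(\rho)=f(0)+\tfrac{1}{2}f''(0)\rho^{2}+O(\rho^{3})$, $g(\pi/2+v)=1+O(v^{2})$, and $\cos(\pi/2+v)=-v+O(v^{3})$ (and noting $\partial_{\rho}A$ and $\partial_{\theta}A$ both vanish on $\gamma_{0}$, so the linearized flow preserves the transverse directions, as it must since it preserves $\xi$), the physical-time linearization along $\gamma_{0}$ reduces to $\dot\rho\approx -v/f(0)$ and $\dot\theta\approx-(f''(0)/(2f(0)^{2}))\rho$. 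Rewritten in the coordinates $(x,y)=(2f(0)\rho,v)$ dual to $\B=\{\tfrac{1}{2f(0)}\drh,\dth\}$, this becomes $(\dot x,\dot y)^{T}=M(x,y)^{T}$ with
\[
M=\begin{pmatrix} 0 & -2 \\ -f''(0)/(4f(0)^{3}) & 0 \end{pmatrix}.
\]

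The path representing $d\psi_{tT}|_{\xi_{\gamma(0)}}$ in $\B$ is therefore $\Psi(t)=\exp(tTM)$. Since $M^{2}=\mu^{2}I$ with $\mu:=\sqrt{f''(0)/(2f(0)^{3})}>0$, the endpoint $\Psi(1)=\cosh(T\mu)I+\mu^{-1}\sinh(T\mu)M$ has positive eigenvalues $e^{\pm T\mu}\neq 1$. Hence $\gamma_{0}$ is hyperbolic with positive eigenvalues (an \emph{even} orbit), $\Psi\in\Sigma(1)$, and $p(\gamma_{0})=0$, so by Theorem \ref{t:spectral-conley-zehnder} the Conley--Zehnder index is even.

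To pin down $\pathcz(\Psi)=0$, I would chain two homotopies within $\Sigma(1)$ with the inverse axiom. First, $M$ and $M':=\operatorname{diag}(\mu,-\mu)$ are $SL(2,\R)$-conjugate, so a conjugation homotopy $M_{s}:=P_{s}M'P_{s}^{-1}$ gives $\exp(TM_{s})$ with eigenvalues $e^{\pm T\mu}\neq 1$ throughout; by homotopy invariance, $\pathcz(\Psi)=\pathcz(\Psi')$ where $\Psi'(t):=\operatorname{diag}(e^{tT\mu},e^{-tT\mu})$. Next, conjugation by the rotation $g:=\begin{pmatrix}0&-1\\1&0\end{pmatrix}\in Sp(1)$ swaps diagonal entries, so $g\Psi'(t)g^{-1}=\Psi'(t)^{-1}$; connecting $g$ to $I$ along a path in $Sp(1)$ yields a homotopy (again in $\Sigma(1)$, since conjugation preserves endpoint eigenvalues) from $\Psi'$ to $\Psi'^{-1}$. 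Homotopy invariance together with the inverse axiom then forces $\pathcz(\Psi')=\pathcz(\Psi'^{-1})=-\pathcz(\Psi')$, so $\pathcz(\Psi')=0$ and hence $\mu^{\B}(\gamma_{0})=0$. The only genuinely nontrivial step is this concluding symmetry argument; the linearization and eigenvalue computations follow directly from \eqref{e:reeb-rho-dependent} and the definition of $\B$.
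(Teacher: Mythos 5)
Your proof is correct and follows essentially the same strategy as the paper's: linearize the Reeb flow along $\gamma_{0}$ in the trivialization $\B$, solve the resulting constant-coefficient ODE to obtain a positive-hyperbolic path in $Sp(1)$, and then conclude $\pathcz=0$ by homotoping to the diagonal path $\operatorname{diag}(e^{T\mu t},e^{-T\mu t})$ and exploiting the rotation symmetry $g\Psi'g^{-1}=\Psi'^{-1}$ together with the homotopy-invariance and inverse axioms. The only differences are cosmetic: you obtain the ODE coefficient matrix by directly Taylor-expanding $\Xf$ from \eqref{e:reeb-rho-dependent} rather than via the Lie-derivative formalism of the paper, and you perform the conjugation homotopy on the generator $M$ while the paper conjugates the solution path and isolates the symmetry argument in a separate lemma (Lemma \ref{l:conley-zehnder-app}); both yield the same $M$ and the same conclusion.
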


\begin{proof}
We first observe that the proof of Lemma \ref{l:orbit-and-flow} above shows that
along the equator
\[
\Xf(0, \pi/2, \phi)=(2/f(0))\,\dph
\]
and thus the map $\gamma_{0}:\R/\Z\to\R\times S^{2}$ given by
$\gamma_{0}(t)=(0, \pi/2, 2\pi t)$ satisfies
\[
\dot\gamma_{0}(t)=2\pi\,\dph=(f(0)\pi)\, \Xf(\gamma_{0}(t))
\]
so $\gamma_{0}$ is a periodic orbit of period $\tau_{f}:=f(0)\pi$.

Let $\psi_{t}$ denote the flow of $\Xf$, that is $\psi_{t}$ satisfies
\[
\dot\psi_{t}(x)=\Xf(\psi_{t}(x))
\]
To compute the Conley--Zehnder
$\mu^{\Phi}(\gamma_{0})$
index of $\gamma_{0}$ in the trivialization
$\Phi$ arising from $\B_{(0, \pi/2, \phi)}$ we need to analyze the behavior of the linearized
flow $d\psi_{t}$ on $\xi_{1}$ in the trivialization $\Phi$.
Letting
\begin{align*}
\Psi(t)
&=\Phi(\psi_{t\tau_{f}}(0, \pi/2, 0))^{-1}d\psi_{t\tau_{f}}(0, \pi/2, 0)\Phi(0, \pi/2, 0) \\
&=\Phi(0, \pi/2, 2\pi t)^{-1}d\psi_{t\tau_{f}}(0, \pi/2, 0)\Phi(0, \pi/2, 0) \\
\end{align*}
we can write
\[
\Psi(t)=
\begin{bmatrix}
c_{11}(t) & c_{12}(t) \\
c_{21}(t) & c_{22}(t)
\end{bmatrix}
\]
where the $c_{ij}$ are defined by
\[
d\psi_{t\tau_{f}}(0, \pi/2, 0)v_{j}(0, \pi/2, 0)=\sum_{i}c_{ij}(t)v_{i}(0, \pi/2, 2\pi t)
\]
and satisfy $c_{ij}(0)=\delta_{ij}$.
Since the $d\psi_{t\tau_{f}}(0, \pi/2, 0)v_{j}(0, \pi/2, 0)$
defines a section of $\xi_{1}$ along $\gamma_{0}$
defined by pushing forward by the linearized flow
of $\tau_{f}\Xf$, the Lie derivative
$L_{\tau_{f}\Xf}=\tau_{f}L_{\Xf}$
(in the sense of \eqref{e:lie-derivative})
is well defined and vanishes.
Taking the Lie derivative $L_{\tau_{f}\Xf}$ 
then of the above equation gives us
\[
\sum_{i}c_{ij}'(t)v_{i}(0, \pi/2, 2\pi t)+c_{ij}(t) (L_{\tau_{f}\Xf}v_{i})(0, \pi/2, 2\pi t)=0.
\]
Letting $M(t)=[m_{ij}(t)]$ be the matrix
defined by
\[
(L_{\tau_{f}\Xf}v_{j})(0, \pi/2, 2\pi t)=-\sum_{i}m_{ij}(t)v_{i}(0, \pi/2, 2\pi t),
\]
we substitute in the above equation and use that the $v_{i}$ are a linearly independent to
conclude that
\[
c_{ij}'-\sum_{k}m_{ik}c_{kj}=0
\]
or, equivalently,
that $\Psi$ satisfies the linear ODE
\begin{equation}\label{e:linearized-flow-ode}
\begin{aligned}
\Psi'(t)&=M(t)\Psi(t)  \\
\Psi(0)&=I.
\end{aligned}
\end{equation}

To find $M(t)$ we extend
$v_{1}(0, \pi/2, 2\pi t)$ and $v_{2}(0, \pi/2, 2\pi t)$ to vector fields
\begin{align*}
\tl v_{1}(\rho, \theta, \phi)&=\frac{1}{2 f(0)}\,\drh  \\
\tl v_{2}(\rho, \theta, \phi)&=\dth
\end{align*}
which are locally constant in $(\rho, \theta, \phi)$ coordinates, and
use \eqref{e:reeb-rho-dependent} to compute
\begin{align*}
-(L_{\tau_{f}\Xf}v_{1})(0, \pi/2, 2\pi t)
&=\tau_{f}(v_{1}\Xf-\Xf\tl v_{1})(0, \pi/2, 2\pi t) \\
&=\tau_{f} (v_{1}\Xf)(0, \pi/2, 2\pi t) \\
&=(f(0)\pi)\frac{1}{2 f(0)}\drh \Xf(0, \pi/2, 2\pi t) \\
&=-\frac{\pi f''(0)}{4f(0)^{2}}\,\dth \\
&=-\frac{\pi f''(0)}{4f(0)^{2}}  v_{2}(0, \pi/2, 2\pi t)
\intertext{and}
-(L_{\tau_{f}\Xf}v_{2})(0, \pi/2, 2\pi t)
&=\tau_{f}(v_{2}\Xf-\Xf\tl v_{2})(0, \pi/2, 2\pi t) \\
&=\tau_{f} (v_{2}\Xf)(0, \pi/2, 2\pi t) \\
&=(f(0)\pi)\dth\Xf(0, \pi/2, 2\pi t) \\
&=-\pi\,\drh \\
&=-2\pi f(0) v_{1}(0, \pi/2, 2\pi t).
\end{align*}
We conclude
\[
M(t)=
\begin{bmatrix}
0 &-2\pi f(0) \\  -\frac{\pi f''(0)}{4f(0)^{2}}   & 0
\end{bmatrix}
=
\begin{bmatrix}
0 &-A^{2} \\ -B^{2} & 0
\end{bmatrix}.
\]
with $A=\sqrt{2\pi f(0)}$ and $B=\frac{\sqrt{\pi f''(0)}}{2f(0)}$.
Direct computation then shows that the solution
to \eqref{e:linearized-flow-ode} is given by
\[
\Psi(t)
=
\begin{bmatrix}
\cosh(AB t) & -(A/B)\sinh(ABt) \\
-(B/A)\sinh(ABt) & \cosh(ABt)
\end{bmatrix}
=
C
\begin{bmatrix}
e^{AB t} & 0 \\ 0 &  e^{-AB t}
\end{bmatrix}
C^{-1}
\]
where
$C$ is the symplectic matrix
\[
C=
\frac{1}{\sqrt{2}}
\begin{bmatrix}
A/B & 1 \\ -1 & B/A
\end{bmatrix}.
\]
A path of symplectic matrices of this form is well-known to have Conley--Zehnder index equal to
$0$ (see Lemma \ref{l:conley-zehnder-app} below) and thus
\[
\mu^{\Phi}(\gamma_{0})=\pathcz(\Psi)=0
\]
as claimed.
\end{proof}

We next show that we can choose a compatible $J$
on a neighborhood of $\rho=0$ so that
the northern/southern hemispheres of the
the sphere $\rho=0$ are projections of pseudoholomorphic planes
to $\R\times S^{2}$ asymptotic to the periodic orbit at the equator.

\begin{lemma}\label{l:planes}
Let $f:\R\to\R^{+}$ satisfy the hypotheses of Lemma \ref{l:cz-computation}
and let $J\in\J(\R\times S^{2}, \lambda_{f})$ be a compatible almost complex structure.
Then, for any open neighborhood $U$ of $\br{0}\times S^{2}$
there exists a compatible $J'\in \J(\R\times S^{2}, \lambda_{f})$
agreeing with $J$ outside of $U$
so that the planes
\begin{gather*}
P^{+}=\br{\rho=0, \theta\in[0, \pi/2)} \\
P^{-}=\br{\rho=0, \theta\in(\pi/2, \pi]}\\
\end{gather*}
given from the upper and lower hemispheres of the $\rho=0$ sphere
are projected pseudoholomorphic curves,
i.e.\ elements of $\M(\lambda_{f}, J')/\R$,
approaching their mutual asymptotic limit $\gamma_{0}$
in opposite directions with extremal winding.
Moreover,
\[
P^{+}*P^{+}=P^{-}*P^{-}=P^{+}*P^{-}=0.
\]
\end{lemma}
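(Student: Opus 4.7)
The plan is to parametrize each hemisphere of $\br{\rho = 0}$ as an explicit projected pseudoholomorphic plane, simultaneously choosing $J'$ on a neighborhood of the sphere so that the resulting maps satisfy \eqref{e:J-hol-eqn-projected} with $j$ the standard complex structure on $\C$.

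Since $\rho f'(\rho) > 0$ for $\rho \ne 0$ forces $f'(0) = 0$, along $\br{\rho = 0}$ the formula \eqref{e:xf} reduces to $\Xf = [g(\theta)f(0)]^{-1}(2\,\dph + \cos\theta\,\drh)$, which is transverse to the sphere away from $\gamma_0$. In the symplectic basis \eqref{e:symplectic-basis}, a direct computation yields $\pi_{\xi_1}(\dth)|_{\rho = 0} = v_2$ and $\pi_{\xi_1}(\dph)|_{\rho = 0} = -f(0)\sin\theta\cos\theta\, v_1$, so for $\theta \in (0, \pi) \setminus \br{\pi/2}$ the projection $\pi_{\xi_1}(T_p P^\pm)$ fills $\xi_{1,p}$. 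Writing $z = r e^{i\varphi} \in \C$, I look for $u^\pm(z) = (\rho = 0,\, \theta = \theta_\pm(r),\, \phi = \varphi)$ together with $J'$ on $\xi_1$ so that \eqref{e:J-hol-eqn-M-comp} holds with standard $j$. The condition forces the pointwise relation $J' v_2 = -v_1$ along $\br{\rho = 0}$ together with the separable ODE $\theta_\pm'(r) = f(0)\sin\theta_\pm(r)\cos\theta_\pm(r)/r$, whose two families of solutions $\tan\theta_\pm(r) = \pm A r^{f(0)}$ with $A > 0$ produce $\theta_+ : [0,\infty) \to [0, \pi/2)$ increasing from $0$ and $\theta_- : [0,\infty) \to (\pi/2, \pi]$ decreasing from $\pi$, both tending to $\pi/2$ as $r \to \infty$. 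The choice $J' v_2 = -v_1$ is compatible with $d\lambda_f$ since $\br{v_1, v_2}$ is a symplectic basis of $(\xi_1, d\lambda_f)$, and it extends to a smooth compatible $J' \in \J(\R \times S^2, \lambda_f)$ over a neighborhood of $\br{\rho = 0}$ that agrees with $J$ outside $U$, via contractibility of the space of compatible complex structures. The $\R$-coordinate $a^\pm$ is recovered up to constant by integrating \eqref{e:J-hol-eqn-R-comp}: a short calculation gives $(u^\pm)^* \lambda_f \circ j = \frac{f(0)\sin^2\theta_\pm(r)}{2r}\, dr$, so $a^\pm(r) \sim \frac{f(0)}{2}\ln r$ as $r \to \infty$ (giving positive punctures), consistent with the asymptotic cylindrical formula since $\gamma_0$ has period $\pi f(0)$.

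For the asymptotic and intersection properties: as $r \to \infty$, $\pi/2 - \theta_+(r) \sim A^{-1} r^{-f(0)}$ and $\theta_-(r) - \pi/2 \sim A^{-1} r^{-f(0)}$, so the deviations of $u^\pm$ from $\gamma_0$ point along $\mp v_2 = \mp\,\dth$ respectively and decay at the same rate. Passing to cylindrical coordinates on the end and invoking Theorem \ref{t:relative-asymptotics}, one identifies the eigenvectors of $\A_{\gamma_0, J'}$ controlling the two approaches as nonzero constant multiples of $\mp v_2$ --- negative scalar multiples of one another --- so $u^+$ and $u^-$ approach $\gamma_0$ in opposite directions. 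These eigenvectors have winding $0$ in the trivialization $\Phi$ from Lemma \ref{l:cz-computation}, which together with $\mu^\Phi(\gamma_0) = 0$ realizes extremality in \eqref{e:wind-infinity-inequality}. Since $u^\pm$ is an embedding onto its hemisphere and the winding bound is extremal at the unique puncture, Corollary \ref{c:embedding-gin-zero} gives $P^\pm * P^\pm = 0$; since $P^+$ and $P^-$ have distinct and disjoint projections and both have extremal winding, Corollary \ref{c:no-isect-gin-zero} gives $P^+ * P^- = 0$.

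The main technical obstacle lies in the smoothness of $u^\pm$ across the poles corresponding to $z = 0$, where $\theta_\pm(r) \sim \pm A r^{f(0)}$: smoothness on $\C$ at the origin requires $\theta_\pm(r) = O(r)$, which will be arranged by normalizing the connected-sum function $f$ so that $f(0) = 1$. Extension of $J'$ across $\gamma_0$, where the basis $\br{v_1, v_2}$ degenerates, requires a standard argument using a unitary trivialization of $(\xi_1, d\lambda_f)$ along $\gamma_0$ to transfer the prescribed pointwise rotation $J' v_2 = -v_1$ smoothly across the equator.
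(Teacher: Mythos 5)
Your overall strategy --- parametrizing each hemisphere explicitly by $z=re^{i\varphi}\in\C$, prescribing $J'$ on $\xi_1$ along $\br{\rho=0}$, and deriving a first-order ODE for $\theta_\pm(r)$ from \eqref{e:J-hol-eqn-M-comp} --- is a valid and arguably more concrete alternative to the paper's argument, which instead induces the complex structure $j$ on $TS^2$ by radial projection from $\C$ and then transports it to $\xi_1$ via $\pi_\xi|_{TS^2}$. Your asymptotic argument for "opposite direction" (direct identification of the controlling eigenvectors via Theorem \ref{t:relative-asymptotics}) also differs from the paper's indirect route via Lemma \ref{l:same-or-opposite} and Theorem \ref{t:dir-of-approach}, and both are fine. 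Your intersection-number computations and the use of Corollaries \ref{c:no-isect-gin-zero} and \ref{c:embedding-gin-zero} match the paper.

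There is, however, a genuine gap in the choice of $J'$, and it is not repaired by the proposed normalization. The holomorphicity condition forces $J'$ to be off-diagonal in the basis $\br{v_1,v_2}$, i.e. $J'v_1=bv_2$, $J'v_2=-b^{-1}v_1$ for some $b>0$, and the resulting ODE is $\theta_\pm'=\frac{f(0)b}{r}\sin\theta_\pm\cos\theta_\pm$, with solutions $\tan\theta_\pm=\pm A r^{f(0)b}$. Your choice $b=1$ gives $\theta_\pm\sim\pm A r^{f(0)}$ near $r=0$, which fails to extend smoothly (or even as an immersion) over the puncture at $z=0$ unless $f(0)=1$. But $f(0)=1$ cannot be assumed: the hypotheses of Lemma \ref{l:cz-computation} leave $f(0)$ unconstrained, and in the application inside the proof of Theorem \ref{t:contact-connect-sum} one requires $f(\rho)=\rho^2$ for $\abs{\rho}\ge\ep'/2$ together with $\rho f'(\rho)>0$, which forces $0<f(0)<(\ep'/2)^2\ll 1$. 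Rescaling $\lambda_f$ to $\lambda_f/f(0)$ would change the basis $\br{v_1,v_2}$ (the factor $(fg)^{-1}$ in $v_1$ rescales) and, more importantly, would destroy the matching $\Phi^*_\pm\lambda_\pm=\rho^2\lambda_1$ at the boundary of the neck, so this is not a free normalization. The correct choice is $b=1/f(0)$, i.e. $J'v_1=f(0)^{-1}v_2$ and $J'v_2=-f(0)v_1$; then the ODE is $\theta_\pm'=\sin\theta_\pm\cos\theta_\pm/r$ with solutions $\tan\theta_\pm=\pm Ar$, smooth at $z=0$ for every $f(0)>0$. This is exactly the $J'$ the paper obtains by transporting the radial-projection $j$. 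One additional small error: you state that the basis $\br{v_1,v_2}$ degenerates at the equator $\gamma_0$. It does not: at $(\rho,\theta)=(0,\pi/2)$ one has $v_1=\tfrac{1}{2f(0)}\drh$ and $v_2=\dth$, a perfectly good smooth symplectic frame. The basis is singular only at the poles $\theta\in\br{0,\pi}$, which is precisely the locus where the smoothness of $u^\pm$ at $z=0$ must be checked and where the $b=1/f(0)$ choice is what makes things work.
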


\begin{proof}
Considering $S^{2}$ as the unit sphere in $\R^{3}$ we can
define
diffeomorphisms between
$\C=\br{x+iy}$ and $P^{\pm}$ via radial projection from the origin to the planes
$(x, y, \pm 1)$.
In standard polar coordinates
$x=R\cos\Theta$, $y=R\sin\Theta$ on $\C$,
this radial projection map
from $P^{\pm}\to\C$ is given by
\begin{equation}\label{e:radial-proj}
\begin{aligned}
R(\theta, \phi)&=\tan\theta \\
\Theta(\theta, \phi)&=\phi.
\end{aligned} 
\end{equation}
Since $i$ in these coordinates is given by
\[
i(R\,\dR)=\dTh \qquad i(\dTh)=-R\,\dR
\]
and a straightforward computation shows that
\[
R\,\dR=\sin\theta\cos\theta\,\dth \qquad \dTh=\dph
\]
under the coordinate change \eqref{e:radial-proj},
we find that
the radial projection map induces a smooth complex structure on $T(S^{2}\setminus\br{\theta=\pi/2})$ given by
\begin{equation}\label{e:small-j-def}
\begin{aligned}
j\dth&=\sec\theta\csc\theta \,\dph \\
j\dph&=-\cos\theta\sin\theta\,\dth
\end{aligned} 
\end{equation}
in which the upper and lower hemispheres $P^{\pm}$ of $S^{2}$ are conformally
equivalent to $\C$.

We next claim that if $i_{0}:S^{2}\to\R\times S^{2}$ is the inclusion
\[
p\in S^{2}\mapsto (0, p)\in\R\times S^{2}
\]
then
$i_{0}^{*}\lambda_{f}\circ j$ is exact.
We have that $\lambda_{f}$ along the sphere $\rho=0$ is given by
\[
\lambda_{f}=f(0)\left[3\cos\theta\,d\rho+\frac{1}{2}\sin^{2}\theta\,d\phi\right]
\]
so the pullback of $\lambda_{f}$ to the sphere is given by
\[
i_{0}^{*}\lambda_{f}=\frac{1}{2}f(0)\sin^{2}\theta\,d\phi.
\]
Then
\[
d\phi\circ j=\sec\theta\csc\theta\,d\theta
\]
and hence
\begin{align*}
i_{0}^{*}\lambda_{f}\circ j
&=\frac{1}{2}f(0)\sin^{2}\theta\,d\phi\circ j \\
&=\frac{1}{2}f(0)\tan\theta\,d\theta \\
&=d\bp{-\frac{1}{2}f(0)\log\abs{\cos\theta}}.
\end{align*}
We note that the function
\[
(\theta, \phi) \mapsto -\frac{1}{2}f(0)\log\abs{\cos\theta}
\]
is smooth on the upper and lower hemispheres as a result of
Lemma \ref{l:smoothstuff}.

Next, let
\[
\pi_{\xi}:T(\R\times S^{2})=\Xf\oplus\xi_{1}\to\xi_{1}
\]
be the projection onto $\xi_{1}$ along $\Xf$,
given by the formula
\[
\pi_{\xi}(v)=v-\lambda_{f}(v)\Xf.
\]
We claim that along
$\rho=0$, $\pi_{\xi}|_{TS^{2}}:TS^{2}\to\xi_{1}$ is an isomorphism away from $\theta=\pi/2$.
Along $\rho=0$, the Reeb vector field is given by
\begin{align*}
\Xf
&=[g(\theta)f(0)^{2}]^{-1}\left[
2f(0)\,\dph 
+f(0)\cos\theta\,\drh
\right] \\
&=[(2\cos^{2}\theta+1)f(0)]^{-1}\left[
2\,\dph 
+\cos\theta\,\drh
\right]
\end{align*}
and $\lambda_{f}$ is given by
\[
\lambda_{f}=f(0)\left[3\cos\theta\,d\rho+\frac{1}{2}\sin^{2}\theta\,d\phi\right].
\]
Thus, with $v_{1}$ and $v_{2}$ as defined by \eqref{e:symplectic-basis},
straightforward computation shows that
\begin{align*}
\pi_{\xi}(\dth)
&=\dth-\lambda_{f}(\dth)\Xf \\
&=\dth \\
&=v_{2}(0, \theta, \phi)
\end{align*}
and
\begin{align*}
\pi_{\xi}(\dph)
&=\dph-\lambda_{f}(\dph)\Xf \\
&=-g(\theta)^{-1}\cos\theta\sin\theta
\left[
-3\cot\theta\,\dph +\frac{1}{2}\sin\theta\,\drh
\right] \\
&=-f(0)\cos\theta\sin\theta\,v_{1}(0, \theta, \phi).
\end{align*}
This computation
shows that $\pi_{\xi}|_{TS^{2}}$ is an isomorphism when $\theta\notin\br{0, \pi/2, \pi}$,
and since $TS^{2}=\xi_{1}$ at the north and south poles $\theta\in\br{0, \pi}$, it follows
that $\pi_{\xi}|_{TS^{2}}$ is an isomorphism away from $\theta=\pi/2$.

Given the results of the previous two paragraphs, we define
a compatible almost complex structure $J'$ on $\xi_{1}|_{\rho=0, \theta\ne\pi/2}$
by
\begin{equation}\label{e:J-definition}
J'=\pi_{\xi}|_{TS^{2}}\circ j\circ (\pi_{\xi}|_{TS^{2}})^{-1},
\end{equation}
and, as long as $J'$ extends smoothly over the equator $\theta=\pi/2$,
we have found a compatible $J'$ along $\rho=0$ for which $P^{\pm}$
are projected $\tl J'$-holomorphic curves.
Since the space of compatible complex multiplications
on a given symplectic vector space is nonempty and contractible,
there are no obstructions to extending a compatible $J'$ defined on $\rho=0$
smoothly to a $J'\in\J(\R\times S^{2}, \lambda_{f})$
which agrees outside of any given open neighborhood
of $\br{0}\times S^{2}$
with any previously chosen $J\in\J(\R\times S^{2}, \lambda_{f})$.
The computation of the previous paragraph together with the definition \eqref{e:small-j-def}
of $j$ shows, however, that
a $J'$ defined by \eqref{e:J-definition} will satisfy
\begin{align*}
J'(0, \theta, \phi)v_{1}(0, \theta, \phi)&=\tfrac{1}{f(0)}\,v_{2}(0, \theta, \phi) \\
J'(0, \theta, \phi)v_{2}(0, \theta, \phi)&=-f(0)\,v_{1}(0, \theta, \phi)
\end{align*}
away from the north and south poles $\theta\in\br{0, \pi}$.
Since $v_{1}$ and $v_{2}$ are a smooth basis for $\xi_{1}$ on $\theta\notin\br{0, \pi}$,
this implies that
the $J'$ defined by \eqref{e:J-definition} extends smoothly over the equator $\theta=\pi/2$.

It remains to check that
$P^{\pm}$ approach $\gamma_{0}$ in opposite directions with extremal winding
and that
\begin{equation}\label{e:planes-isect-zero}
P^{\pm}*P^{\pm}=P^{+}*P^{-}=0.
\end{equation}
We first claim that the planes approach their asymptotic limit with extremal winding, i.e.\ that
\begin{equation}\label{e:connect-sum-winding-bound}
\winfty^{\Phi}(P^{\pm})=\fl{\mu^{\Phi}(\gamma_{0})/2}.
\end{equation}
To see this, we observe that large $R=\text{constant}$ loops in $\C$
get mapped via the identification \eqref{e:radial-proj}
to $\theta=c$ loops with $c$ some constant close to but not equal to $\pi/2$.
It's straightforward to see that such loops lift via the exponential map
to sections of $\xi_{1}|_{\gamma_{0}}$ which have zero winding
relative to the trivialization $\Phi$ arising from the framing
$\B_{(0, \pi/2, \phi)}=\br{\tfrac{1}{2f(0)}\drh, \dth}$ from \eqref{e:symplectic-basis}.
Thus $\winfty^{\Phi}(P^{\pm})=0$.
Since we have already computed in Lemma \ref{l:cz-computation} that
$\mu^{\Phi}(\gamma_{0})=0$, we have confirmed \eqref{e:connect-sum-winding-bound}.
Since $P^{+}$ and $P^{-}$ are disjoint and $\gamma_{0}$ is even,
it then follows immediately from Lemma \ref{l:same-or-opposite}
and Theorem \ref{t:dir-of-approach} that $P^{+}$ and $P^{-}$ approach $\gamma_{0}$
in opposite directions.
Finally we prove that all intersection numbers are zero, i.e.\ that
\eqref{e:planes-isect-zero} holds.
We first observe that $P^{+}$ and $P^{-}$ are, by construction, disjoint embeddings.
Since we've already confirmed that $P^{+}$ and $P^{-}$ both converge to their unique
asymptotic limit with extremal winding,
\eqref{e:planes-isect-zero} is an immediately consequence of Corollaries
\ref{c:no-isect-gin-zero} and \ref{c:embedding-gin-zero}.
\end{proof}

We are now prepared to prove Theorem \ref{t:contact-connect-sum}.

\begin{proof}[Proof of Theorem \ref{t:contact-connect-sum}]
Recall $(M, \xi=\ker\lambda)$ denotes a contact $3$-manifold
equipped with nondegenerate contact form.
Given $p\ne q\in M$ and any open neighborhood $\mathcal{O}$ of $\br{p, q}$,
we can apply Darboux's theorem for contact manifolds (see e.g.\ \cite[Theorem 2.24]{geiges})
to find disjoint open neighborhoods $O_{p}$, $O_{q}\subset\mathcal{O}$
of $p$ and $q$ respectively,
and embeddings
$\phi_{p}:O_{p}\to\R^{3}$, and $\phi_{q}:O_{q}\to\R^{3}$ with
$\phi_{p}(p)=0$, $\phi_{q}(q)=0$ and
\begin{equation}\label{e:coord-pullback}
\begin{aligned}
\phi_{p}^{*}\lambda_{+}&=\lambda \\
\phi_{q}^{*}\lambda_{-}&=\lambda.
\end{aligned}
\end{equation}
Choosing an $\ep>0$ so that
$\overline{B_{\ep}(0)}\times [-\ep, \ep]\subset \phi_{p}(O_{p})\cap\phi_{q}(O_{q})$
gives flow tube neighborhoods
\begin{gather*}
U=\phi_{p}^{-1}\bp{B_{\ep}(0)\times (-\ep, \ep)} \\
V=\phi_{q}^{-1}\bp{B_{\ep}(0)\times (-\ep, \ep)} \\
\end{gather*}
of $p$ and $q$, respectively, identified via
$\phi_{p}$ and $\phi_{q}$ with neighborhoods of $0$ in $(\R^{3}, \lambda_{+})$
and $(\R^{3}, \lambda_{-})$, respectively.

Next, with the maps
$\Phi_{\pm}:\R^{\pm}\times S^{2}\to\R^{3}\setminus\br{0}$
as defined above in \eqref{e:Phi-pm-definition},
choose an $\ep'>0$ so that
\begin{align*}
\Phi_{+}\bp{(0,\ep')\times S^{2}}&\subset B_{\ep}(0)\times (-\ep, \ep) \\
\Phi_{-}\bp{(-\ep', 0)\times S^{2}}&\subset B_{\ep}(0)\times (-\ep, \ep).
\end{align*}
Given such an $\ep'>0$ we can find a smooth positive function
$f:\R\to\R^{+}$ satisfying\footnote{Such functions are easy to construct.  See Lemma \ref{l:function-construction} for an example.}
\begin{itemize}
\item $f(\rho)=\rho^{2}$ for $\abs{\rho}\ge \ep'/2$,
\item $\rho f'(\rho)$ for $\rho\ne 0$, and 
\item $f''(0)>0$.
\end{itemize}
As explained following Lemma \ref{l:connect-sum-pullback} above,
this $f$ gives us a 
contact form
$\lambda_{f}=f\lambda_{1}$
on $(-\ep', \ep')\times S^{2}$
so that the maps
\begin{gather*}
\Phi_{+}:\bp{(0,\ep')\times S^{2}, \lambda_{f}}\to (B_{\ep}(0)\times (-\ep, \ep)\setminus\br{0}, \lambda_{+}) \\
\Phi_{-}:\bp{(-\ep', 0)\times S^{2}, \lambda_{f}}\to (B_{\ep}(0)\times (-\ep, \ep)\setminus\br{0}, \lambda_{-})
\end{gather*}
are contact diffeomorphisms onto their images which,
on $(\ep'/2, \ep')\times S^{2}$ and $(-\ep', -\ep'/2)\times S^{2}$,
satisfy
\[
\Phi_{\pm}^{*}\lambda_{\pm}=\rho^{2}\lambda_{1}=f\lambda_{1}
\]
and hence by \eqref{e:coord-pullback}
\begin{equation}\label{e:connect-sum-form-pullback}
\begin{aligned}
(\phi_{p}^{-1}\circ\Phi_{+})^{*}\lambda&=f\lambda_{1}=\lambda_{f} \\
(\phi_{q}^{-1}\circ\Phi_{-})^{*}\lambda&=f\lambda_{1}=\lambda_{f}.
\end{aligned}
\end{equation}
We then define
\[
M'=\bp{M\setminus\br{p, q}\amalg (-\ep', \ep')\times S^{2}}/\sim
\]
where $\sim$ is the equivalence relation identifying points in
$(-\ep', 0)\times S^{2}$ and $(0, \ep')\times S^{2}$ with their
respective images in $M\setminus\br{p, q}$
under $\phi_{q}^{-1}\circ\Phi_{-}$ and $\phi_{p}^{-1}\circ\Phi_{+}$,
and we define
\[
i:M\setminus\br{p, q}\to M'
\]
to be the naturally induced inclusion.
With
\begin{align*}
B_{p}&:=(\phi_{p}^{-1}\circ\Phi_{+})\bp{(0, \ep'/2)\times S^{2}} \\
B_{q}&:=(\phi_{q}^{-1}\circ\Phi_{-})\bp{(-\ep'/2, 0)\times S^{2}},
\end{align*}
we define $\lambda'$ on $M'$ by
\[
\lambda'=
\begin{cases}
\lambda & \text{on $M\setminus\overline{B_{p}\cup B_{q}}$} \\
\lambda_{f} & \text{on $(-\ep', \ep)\times S^{2}$}.
\end{cases}
\]
It follows from \eqref{e:connect-sum-form-pullback} and the definition of $M'$ that
$\lambda'$ defines a smooth contact form on $M'$.
Moreover, it is easily verified that items (2) and (3)
in theorem are satisfied.

Next, since the neck
\[
N:=M'\setminus \overline{i(M\setminus\br{U\cup V})}
\]
equipped with the contact form $\lambda'$
can be identified via the maps
$\Phi_{+}^{-1}\circ \phi_{p}$ and $\Phi_{-}^{-1}\circ \phi_{q}$
with a subset of $\R\times S^{2}$ equipped with the contact form $\lambda_{f}$,
it follows immediately from Lemma \ref{l:orbit-and-flow}
that there is precisely one (simple) periodic orbit $\gamma_{0}$
of the Reeb vector field $X_{\lambda'}$ of $\lambda'$ contained in the neck.
Thus any other (simple) periodic orbit $X_{\lambda'}$ must pass through points of $M'\setminus N$.
Moreover, it follows from Lemma \ref{l:cz-computation} that $\gamma_{0}$ is
an even orbit.  Thus item (4) of the theorem is verified.
Item (5) meanwhile follows immediately from Lemma \ref{l:planes}.

To see that Condition (6) holds, we note that since the
Reeb vector field of $\lambda_{+}$ is $\dz$,
the points $p_{\pm}:=\phi_{p}^{-1}(0, 0, \pm\ep)$
are points in $\partial U$ which are connected by a flow line
$\gamma_{p}(t)=\phi_{p}^{-1}(0, 0, t)$
which is contained in $\bar U$ and passes through $p=\phi_{p}(0,0,0)$.
Moreover, since we can easily verify 
from \eqref{e:Phi-pm-definition}
that
\begin{align*}
\Phi_{+}^{-1}\bp{\br{(0, 0, z)\,|\, z>0}}&=\R^{+}\times\br{\theta=0} \\
\Phi_{+}^{-1}\bp{\br{(0, 0, z)\,|\, z<0}}&=\R^{+}\times\br{\theta=\pi}
\end{align*}
it follows that
\begin{align*}
\Phi_{+}^{-1}\circ\phi_{p}\circ\gamma_{p}\bp{(0,\ep)}&\subset \R^{+}\times\br{\theta=\pi} \\
\Phi_{+}^{-1}\circ\phi_{p}\circ\gamma_{p}\bp{(-\ep, 0)}&\subset \R^{+}\times\br{\theta=0}.
\end{align*}
We can then apply the second claim of Lemma \ref{l:orbit-and-flow}
to conclude that item (6a) of the theorem holds.
Item (6b) follows similarly.

At this point, all claims of Theorem \ref{t:contact-connect-sum} hold
with the possible exception of item (1): nondegeneracy of the contact form $\lambda'$.
By construction we have that
$i^{*}\lambda'=\lambda$ outside of the region identified with
$(-\ep'/2, \ep'/2)\times S^{2}$ in the construction.
Thus any periodic orbit created by this construction (i.e.\ not identified via $i$ with a
periodic orbit of $\X$) must pass through the region
$(-\ep'/2, \ep'/2)\times S^{2}$.  Moreover,
since $\gamma_{0}$ is the only periodic orbit contained within this region, any other
new (and thus potentially nondegenerate)
orbit created in the connected sum operation must pass through
the boundary $\br{\pm\ep'/2}\times S^{2}$ of the region.
Applying results from
\cite{robinson}, we can find a $C^{\infty}$-small function $h:M'\to\R$
supported in an arbitrarily small neighborhood of these
spheres, so that the contact form
$e^{h}\lambda'$
has only nondegenerate periodic orbits.
Moreover, since the support and ($C^{\infty}$-) size of the function can both be chosen arbitrarily
small,
and since all claims of the theorem 
remain true under sufficiently $C^{\infty}$-small
perturbations with sufficiently small support in a neighborhood of $\br{\pm\ep'/2}\times S^{2}$,
we can carry out this perturbation of the contact form while maintaining
all the claims of the theorem.
\end{proof}

\section{Proof of Theorem \ref{t:0-surg-main}}\label{s:main-proof}

In this section we prove Theorem \ref{t:0-surg-main}.
Here we will use the alternate definition of finite energy foliation
furnished by Corollary \ref{c:fol-alt-def}, and will thus work almost exclusively with
projections of pseudoholomorphic curves to the $3$-manifold.
All curves should thus be thought of as equivalence classes of maps
to the $3$-manifold unless otherwise stated.
Since we deal nearly exclusively with simple curves (i.e.\ those which do not
factor though a branched cover of degree $2$ or greater)
such an equivalence class of maps is entirely determined by the image in $M$ of a
representative map from the class.  We will thus generally make no distinction between a curve
and its image in $M$.

Our standing assumptions throughout the section will be:
\begin{itemize}
\item $(M, \lambda)$ is a $3$-manifold with a nondegenerate contact form $\lambda$,
\item $J\in\J(M,\lambda)$ is a compatible complex structure on $\xi=\ker\lambda$, and
\item $\F$ is a stable finite energy foliation for the data $(M, \lambda, J)$
with energy $E(\F)=E_{0}$.
\end{itemize}
Given the foliation $\F$, 
we consider a subset $\mathcal{U}$ of $M\times M\setminus\Delta(M)$
defined to be the set of pairs of distinct points $(p, q)\in M\times M\setminus\Delta(M)$
in $M$ with $p$ and $q$ lying on distinct index-$2$ leaves of the foliation.
It is straightforward to use Lemma \ref{l:finite-index-0-and-1}
and Corollary \ref{c:mod-spaces-mod-r} to argue that $\mathcal{U}$
is an open, dense subset of $M\times M\setminus\Delta(M)$.
We will show that the manifold
$M'$ formed by taking the connected sum at any given
pair of points
$(p, q)\in\mathcal{U}$
admits a contact form $\lambda'$ and compatible $J'\in\J(M', \lambda')$ so that
the data $(M', \lambda', J')$ admits a
stable finite energy foliation $\F'$  
with $E(\F')=E(\F)$.
Moreover, our construction will show that the change in the contact form
and almost complex structure can be localized to an arbitrarily small neighborhood
of the points $p$ and $q$; that is, if $i:M\setminus\br{p, q}\to M'$ is the natural
inclusion, we can arrange that $i^{*}\lambda'=\lambda$ and $i^{*}J'=J$ on
the complement of any given neighborhood of $\br{p, q}$.

To start the construction, we choose a pair of points
$(p, q)\in\mathcal{U}$; that is, we choose distinct points
$p$ and $q$ in $M$ so that
\begin{itemize}
\item $p$ lies on a curve $C_{p}\in\F$ with $\ind(C_{p})=2$,
\item $q$ lies on a curve $C_{q}\in\F$ with $\ind(C_{q})=2$, and
\item $C_{q}\ne C_{p}$ (in $\M(\lambda, J)/\R$).
\end{itemize}
We recall from Corollary \ref{c:smooth-R-inv-foliation} that all curves in the moduli spaces
$\M(C_{p})/\R$ and $\M(C_{q})/\R$ are also in the foliation $\F$
(where $\M(C)$ is the notation introduced in Section \ref{s:foliating-curves}
to indicate all simple curves in $\M(\lambda, J)$ which are relatively homotopic to $C$).
Letting $\psi_{t}$ denote the flow generated by the Reeb vector field $\X$ associated to
$\lambda$, we can apply Corollary \ref{c:flow-loc-diff}
to find an $\ep_{0}>0$
so that
\begin{itemize}
\item for each $t\in [-\ep_{0}, \ep_{0}]$ there is a unique curve of $\M(C_{p})/\R$ passing through the point $\psi_{t}(p)$,
and so that the map taking $t\in [-\ep_{0}, \ep_{0}]$ to the unique curve in $\M(C_{p})/\R$ passing through $\psi_{t}(p)$ is an embedding;
\item for each $t\in [-\ep_{0}, \ep_{0}]$ there is a unique curve of $\M(C_{q})/\R$ passing through the point $\psi_{t}(q)$,
and so that the map taking $t\in [-\ep_{0}, \ep_{0}]$ to the unique curve in $\M(C_{q})/\R$ passing through $\psi_{t}(q)$ is an embedding;
\item the collection of curves passing through the points $\psi_{t}(p)$ for $t\in [-\ep_{0}, \ep_{0}]$ and the collection of curves passing through the points $\psi_{t}(q)$ for $t\in [-\ep_{0}, \ep_{0}]$ are disjoint.
\end{itemize}
We define points $p_{\pm}=\psi_{\pm\ep_{0}}(p)$ and similarly $q_{\pm}=\psi_{\pm\ep_{0}}(q)$, and
let $C_{p,\pm}$ denote the unique curve in $\M(C_{p})/\R$ through $p_{\pm}$ and similarly
let $C_{q, \pm}$ denote the unique curve in $\M(C_{q})/\R$ through $q_{\pm}$.

\begin{figure}
\scalebox{.6}{\includegraphics{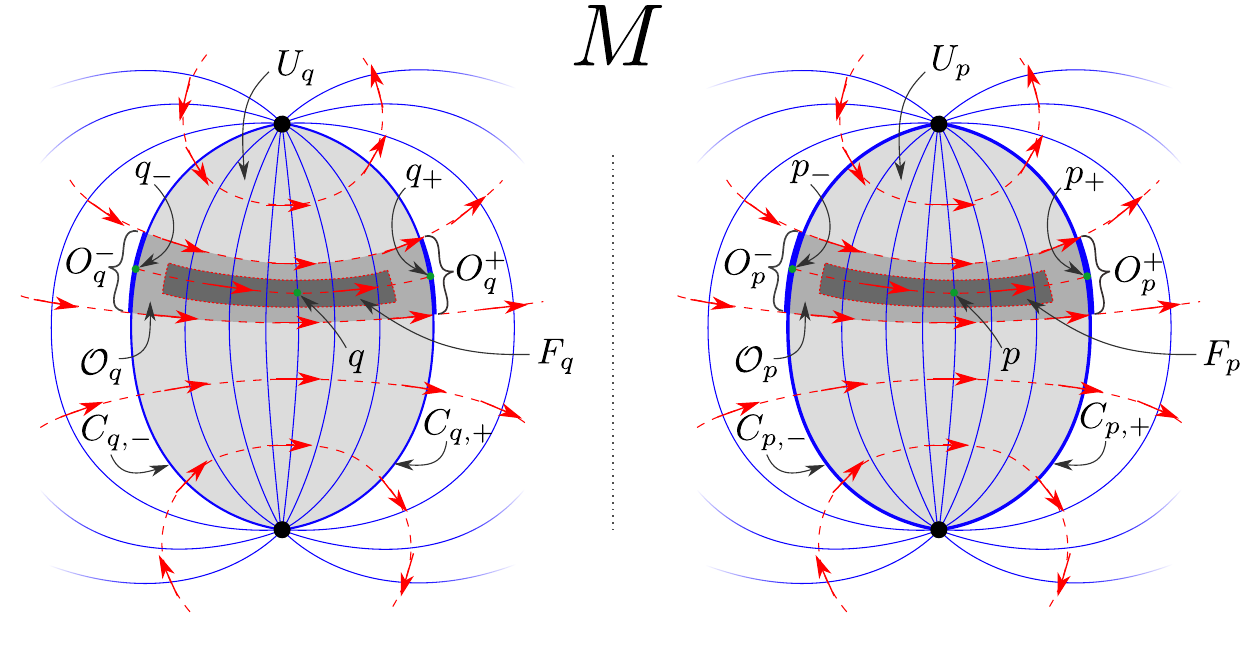}}
\end{figure}
Given the above, we define an
open set $U_{p}$ to be the union of the images of the curves in 
$\M(C_{p})/\R$ passing through the points
$\psi_{t}(p)$ for $t\in (-\ep_{0}, \ep_{0})$, and similarly $U_{q}$ is the image of the curves
in $\M(C_{q})/\R$ passing through the points $\psi_{t}(q)$ for $t\in (-\ep_{0}, \ep_{0})$.
There exists an open neighborhood $O^{-}_{p}$ of $p_{-}$ in $C_{p, -}$ and a positive function
$f_{p}:O_{p}^{-}\to\R^{+}$ so that
\[
\br{\psi_{f(z)}(z)\,|\,z\in O_{p}^{-}}
\]
is an open neighborhood of $p_{+}$ in $C_{p, +}$ and
$\psi_{t}(z)\in U_{p}$ for all $t\in(0, f(z))$.
Define an open set $\mathcal{O}_{p}$ by
\[
\mathcal{O}_{p}=\bigcup_{z\in O_{p}^{-}}\bigcup_{t\in(0, f(z))}\psi_{t}(z)
\]
and define an open set $\mathcal{O}_{q}\subset U_{q}$ analogously.
Choosing
an open flow tube neighborhood $F_{p}$ of $p$ with $\bar F_{p}\subset \mathcal{O}_{p}$ and
an open flow tube neighborhood $F_{q}$ of $q$ with $\bar F_{q}\subset \mathcal{O}_{q}$,
we can apply Theorem \ref{t:contact-connect-sum} to find
a nondegenerate contact manifold $(M', \xi'=\ker\lambda')$ with compatible $J'\in\J(M', \xi')$,
and an embedding $i:M\setminus\br{p, q}\to M'$
so that:
\begin{figure}
\scalebox{.55}{\includegraphics{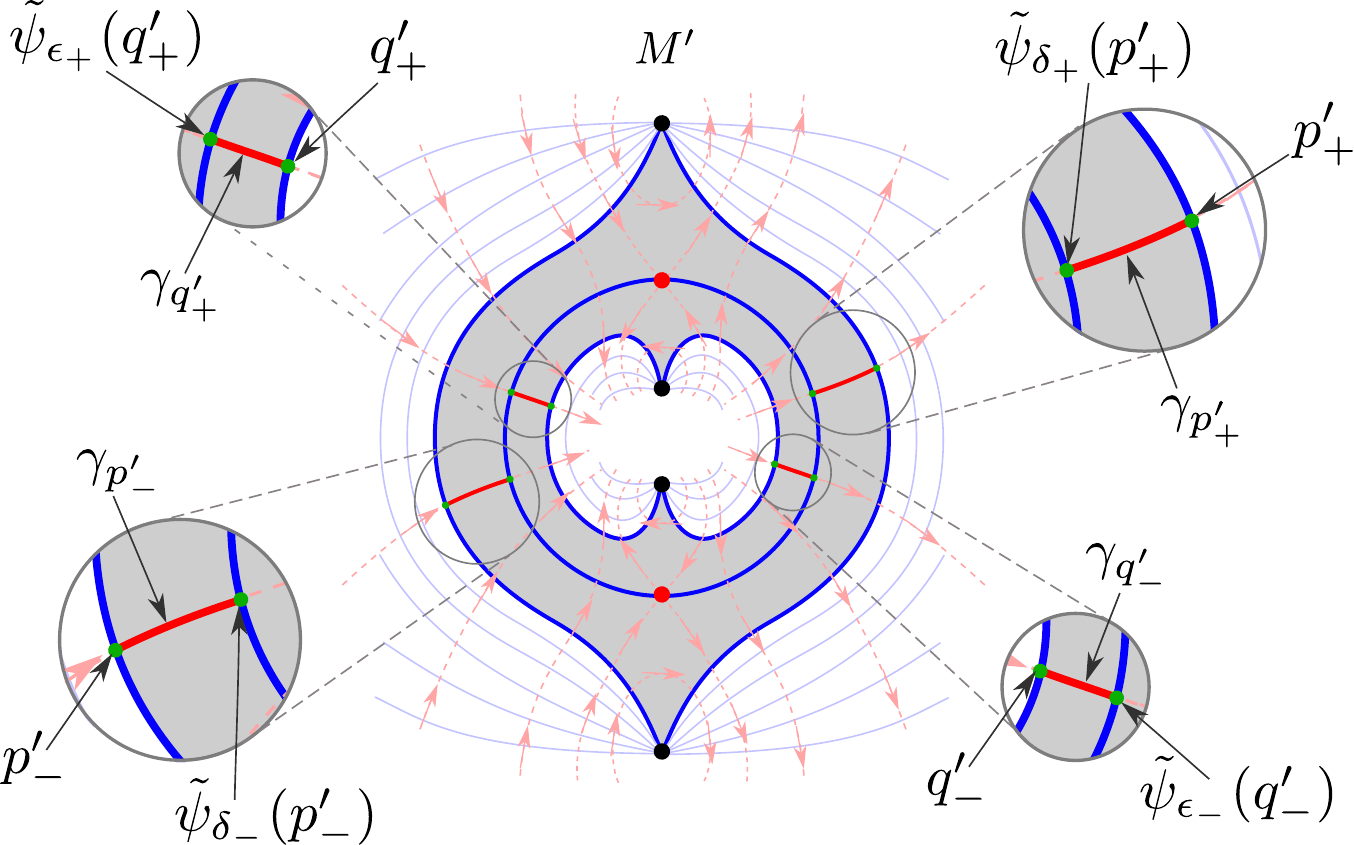}}
\end{figure}
\begin{enumerate}
\item $M'$ is diffeomorphic to the connected sum of $M$ taken at $p$ and $q$,
and the set $M'\setminus i(M\setminus(F_{p}\cup F_{q}))$ (called the neck) is diffeomorphic
to $\R\times S^{2}$.

\item On $M\setminus(F_{p}\cup F_{q})$, $i^{*}\lambda'=\lambda$ and $i^{*}J'=J$.

\item There is precisely one simple (unparametrized)
periodic orbit $\gamma_{0}$ contained in the neck.
Moreover $\gamma_{0}$ has even Conley--Zehnder index.

\item There exist two distinct, nonintersecting, nicely-embedded planes
$P^{\pm}\in\M(\lambda', J')/\R$ contained in the neck
which are asymptotic to $\gamma_{0}$
in opposite directions (see discussion preceding Lemma \ref{l:same-or-opposite})
with extremal winding
(see Theorem \ref{t:wind-infinity-bound}).
Moreover,
\[
P^{+}*P^{+}=P^{-}*P^{-}=P^{+}*P^{-}=0,
\]
and the union
of the images $P^{+}$, $P^{-}$ and $\gamma_{0}$
form a $C^{1}$-smooth sphere which divides the neck into two pieces,
each homeomorphic to $\R\times S^{2}$.

\item If $\tl\psi_{t}$ is the flow of the Reeb vector field $X_{\lambda'}$ then there exist
real numbers $\delta_{+}<0<\delta_{-}$ and $\ep_{+}<0<\ep_{-}$ so that,
\begin{itemize}
\item $\tl\psi_{\delta_{+}}(i(p_{+}))\in P^{+}$ and $\tl\psi_{t}(i(p_{+}))\notin P^{\pm}$ for all $t\in (\delta_{+}, 0]$.
\item $\tl\psi_{\delta_{-}}(i(p_{-}))\in P^{-}$ and $\tl\psi_{t}(i(p_{-}))\notin P^{\pm}$ for all $t\in [0, \delta_{-})$.
\item $\tl\psi_{\ep_{+}}(i(q_{+}))\in P^{-}$ and $\tl\psi_{t}(i(q_{+}))\notin P^{\pm}$ for all $t\in (\ep_{+}, 0]$.
\item $\tl\psi_{\ep_{-}}(i(q_{-}))\in P^{+}$ and $\tl\psi_{t}(i(q_{-}))\notin P^{\pm}$ for all $t\in [0, \ep_{-})$.
\end{itemize}
Letting $p'_{\pm}:=i(p_{\pm})$ and $q'_{\pm}:=i(q_{\pm})$,
we define embedded flow-line segments $\gamma_{p'_{\pm}}$, $\gamma_{q'_{\pm}}\subset M'$ by:
\begin{itemize}
\item $\gamma_{p'_{+}}=\br{\tl\psi_{t}(p'_{+})\,|\, t\in [\delta_{+}, 0]}$
\item $\gamma_{p'_{-}}=\br{\tl\psi_{t}(p'_{-})\,|\, t\in [0, \delta_{-}]}$
\item $\gamma_{q'_{+}}=\br{\tl\psi_{t}(q'_{+})\,|\, t\in [\ep_{+}, 0]}$
\item $\gamma_{q'_{-}}=\br{\tl\psi_{t}(q'_{-})\,|\, t\in [0, \ep_{-}]}$
\end{itemize}
\end{enumerate}

Since $i^{*}\lambda'=\lambda$ and $i^{*}J'=J$ outside of $F_{p}\cup F_{q}$,
and since the curves
$C_{p, \pm}$ do not meet $F_{p}\cup F_{q}$, it follows that
$C'_{p, \pm}:=i(C_{p, \pm})$ are nicely-embedded finite-energy curves in $\M(\lambda', J')/\R$.
We note that since $p\in F_{p}\subset U_{p}$ and $q\in F_{q}\subset U_{q}$
the connected sum operation yields an open set $U\subset M'$ arising
as the connected sum of the sets $U_{p}$ and $U_{q}$ taken at $p$ and $q$.
Moreover, the set $U$ is divided into two open subsets by sphere formed by
$P^{+}\cup \gamma_{0}\cup P^{-}$ in the neck.  We call
the subset coming from the $p$-side of the connected sum $U_{p}'$ and the subset coming from the $q$-side of
the connected sum $U_{q}'$.
We note that, by construction, the boundary of $U_{p}'$ consists of the curves $C'_{p, \pm}$ and
their asymptotic limits, along with $P^{\pm}$ and $\gamma_{0}$.

\begin{lemma}\label{l:no-orbits}
There are no periodic orbits of $X_{\lambda'}$
contained within $U_{p}'$.
\end{lemma}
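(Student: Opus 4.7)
The plan is to argue by contradiction by producing a continuous function $\Phi: U_p' \to \R$ that is strictly increasing along every integral curve of $X_{\lambda'}$; a periodic orbit $\gamma' \subset U_p'$ would then force $\Phi$ to return to its initial value after one period, which is the desired contradiction.

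The function $\Phi$ will be built by splicing two natural strict Lyapunov functions defined on overlapping pieces of $U_p'$. On $i(U_p \setminus F_p) \subset U_p'$, where $X_{\lambda'}$ agrees with $i_* X_\lambda$ by item~(2) of Theorem~\ref{t:contact-connect-sum}, I will use the leaf parameter $\tau:U_p\to(-\ep_0,\ep_0)$ of the foliation $\{C_{p,t}\}$ defined by $x\in C_{p,\tau(x)}$; transversality of the foliation to the Reeb flow (from Theorem~\ref{t:foliating-curves}) yields $X_\lambda \cdot \tau > 0$ on $U_p$. On the modification piece $i(B_p) \subset U_p'$, which in the $p$-side neck coordinates is $(0, \ep'/2) \times S^2$ with Reeb field $X_f$ given by \eqref{e:reeb-rho-dependent}, I will use the pullback of the Darboux $z$-coordinate from the chart $\phi_p$; in neck coordinates this is $z = \rho^3 \cos\theta$, and a direct computation using \eqref{e:reeb-rho-dependent} yields
\[
dz(X_f) = \bigl(g(\theta)\,f(\rho)^2\bigr)^{-1}\rho^2\left[\,3f\cos^2\theta + \tfrac{1}{2}\rho f' \sin^2\theta\,\right],
\]
which is strictly positive on $(0, \ep'/2) \times S^2$ because $f > 0$ and $\rho f'(\rho) > 0$ for $\rho > 0$.

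To combine these I will choose a smooth cutoff $\beta$ on the Darboux chart $\phi_p(O_p) \subset \R^3$ with $\beta \equiv 1$ on a neighborhood of $\phi_p(\bar B_p)$ and $\beta \equiv 0$ near $\partial\phi_p(F_p)$, and set $\Phi = (1-\beta)\tau + \beta z$ on $i(F_p \setminus \{p\})$ and $\Phi = \tau$ on $i(U_p \setminus F_p)$, with $\beta$ and $z$ pulled back to $M'$ via $\phi_p \circ i^{-1}$. Continuity across $i(\partial F_p)$ is immediate from $\beta \equiv 0$ there. The essential verification will be that $X_{\lambda'}\cdot\Phi > 0$ everywhere in $U_p'$: the two extreme regions reduce to the inequalities above, while in the transition region $i(F_p \setminus \bar B_p)$, where $X_{\lambda'} = X_\lambda = \partial_z$ in Darboux coordinates and hence $X_\lambda \cdot z \equiv 1$, the derivative expands as
\[
X_\lambda \cdot \Phi = (1-\beta)\,X_\lambda \tau + \beta + (z-\tau)\,\partial_z \beta.
\]
I expect this last expression to be the principal technical point: the first two summands are bounded below by a positive constant on $\bar F_p$, and $|z - \tau|$ is uniformly bounded there, so positivity reduces to making $|\partial_z \beta|$ sufficiently small. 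This can be arranged by factoring $\beta = \alpha(x,y)\gamma(z)$ as a product of transverse and longitudinal Darboux-coordinate bumps and spreading the $z$-transition of $\gamma$ over the full longitudinal extent $2\ep$ of the flow tube, which dominates the $z$-height $2(\ep'/2)^3$ of $\bar B_p$. Once $\Phi$ is constructed, strict monotonicity along $X_{\lambda'}$-flow lines immediately precludes any periodic orbit in $U_p'$.
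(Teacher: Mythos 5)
Your Lyapunov-function strategy is a genuinely different route from the paper's proof, which uses no analysis of the flow at all: the paper first rules out periodic orbits of $X_\lambda$ in $U_p$ by observing that such an orbit $\gamma$ would force $(\R\times\gamma)*C_{p,\pm}>0$ and hence an intersection with $C_{p,\pm}$, and then disposes of orbits of $X_{\lambda'}$ in $U_p'$ by the flow-tube structure built into item (6) of Theorem~\ref{t:contact-connect-sum} (any flow line entering or exiting the neck inside $U_p'$ must pass through $C'_{p,-}$ or $C'_{p,+}$). Your approach repackages the same geometric content into a single strict Lyapunov function, which is an appealing idea, and the two unspliced pieces ($\tau$ and the Darboux $z$, with your computation of $dz(X_f)>0$) are both correct.

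However, the splicing step --- which you correctly flag as the principal technical point --- has a genuine gap as written. You claim positivity of $X_\lambda\Phi = (1-\beta)X_\lambda\tau + \beta + (z-\tau)\partial_z\beta$ reduces to making $|\partial_z\beta|$ sufficiently small, and that this is achieved by spreading the $z$-transition of $\gamma$ over the full longitudinal extent $2\ep$ of the flow tube. But spreading only gives $|\partial_z\beta|\approx 1/\ep$, a fixed positive constant determined by the flow tube; it cannot be made arbitrarily small. Meanwhile $|z-\tau|$ is bounded on $\bar F_p$ only by a constant of size roughly $\ep+\ep_0$, which does not tend to zero relative to $\ep$. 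Your proposed dominance of $2\ep$ over $2(\ep'/2)^3$ is comparing $|\partial_z\beta|$ to the $z$-height of $\bar B_p$, which is not the quantity that matters; the product $|(z-\tau)\partial_z\beta|$ stays $O(1)$ and need not be less than $(1-\beta)X_\lambda\tau+\beta$.

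The gap is repairable, but by a different mechanism than the one you cite. Observe that since $\phi_p(\psi_t(p))=(0,0,t)$ and $\tau(\psi_t(p))=t$, the functions $\tau$ and $z$ automatically coincide on the Reeb orbit through $p$, so $|z-\tau|\le L\,|(x,y)|$ on $\bar F_p$ for a fixed Lipschitz constant $L$. Now use the factored cutoff $\beta=\alpha(x,y)\gamma(z)$ as you suggest, but make the \emph{transverse} bump $\alpha$ supported in a thin cylinder $\{|(x,y)|<r_1\}$ with $\ep'/2<r_0<r_1$. Since $\partial_z\beta=\alpha\gamma'$ is supported in $\{|(x,y)|<r_1\}$, the bad term satisfies $|(z-\tau)\partial_z\beta|\le L\,r_1\cdot|\gamma'|\lesssim L\,r_1/\ep$, which can be made smaller than the lower bound $m=\min\{(1-\beta)X_\lambda\tau+\beta\}$ (which itself approaches $1$ as $r_1\to 0$) by choosing $r_1$ small, and then taking $\ep'$ small enough that $\bar B_p\subset\{|(x,y)|<r_0\}$. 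With this transverse localization supplementing the longitudinal spreading, your construction closes.
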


\begin{proof}
We first argue that there are no periodic orbits $\gamma$ of
the Reeb vector field $\X$ on the unsurgered manifold
contained in the set
$U_{p}$.  Indeed, since $U_{p}$ is foliated by curves in $\M(C_{p})/\R$, then $\gamma$ would
intersect some curve $C'\in\M(C_{p})/\R$, which implies that
$(\R\times \gamma)*C'>0$.
By homotopy invariance of the $*$-product, we conclude
$(\R\times\gamma)*C>0$ for any $C\in\M(C_{p})/\R$
and,
in particular,
$(\R\times\gamma)*C_{p, \pm}>0$.
However, since $\gamma$ is contained in $U_{p}$ and thus not
an asymptotic limit of $C_{p, \pm}$,
this implies that $\gamma$ intersects $C_{p, \pm}$.  This,
however, contradicts the assumption that $\gamma\subset U_{p}$, and we conclude
that there are no periodic orbits $\gamma$ of $\X$ contained in $U_{p}$.

Now, assume $\gamma$ is a simple
periodic orbit of $X_{\lambda'}$ with $\gamma\subset U_{p}'$.
Then, we claim
the previous paragraph shows that $\gamma$ must enter the neck.
Indeed if not, then $\gamma$
is identified via
the map
$i:M\setminus\br{p, q}\to M'$
with a periodic orbit of $\X$ contained in $U_{p}$, of which, we have just
argued, there are none.  Moreover, since $\gamma_{0}$ is the only
periodic orbit contained entirely within the neck, $\gamma$ must
pass through points of $U_{p}'$ both inside and outside the neck.
But by construction ---
specifically
that the connected sum is carried out in flow tubes neighborhoods contained in
open sets consisting of flow lines connecting the curves $C_{p, +}$ and $C_{p, -}$
(or $C_{q, +}$ and $C_{q, -}$) ---
any flow line entering the neck
in $U_{p}'$ must hit $C'_{p, -}$ in backward time, while
any flow line exiting the neck in $U_{p}'$ must hit
$C'_{p, +}$ in forward time.
Thus $\gamma$ intersects either $C'_{p, +}$ or $C'_{p, -}$ which contradicts the assumption that
$\gamma\subset U_{p}'$.
\end{proof}

\begin{lemma}\label{l:curves-subset}
Let $C\in\M(C'_{p, \pm})/\R$.  Then either $C\subset U_{p}'$ or $C\cap U_{p}'=\emptyset$.
Moreover, the set of curves $C\in\M(C'_{p, \pm})/\R$ with $C\subset U_{p}'$ is nonempty and open.
\end{lemma}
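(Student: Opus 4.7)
The plan is to first prove the dichotomy by showing that any $C \in \M(C'_{p, \pm})/\R$ distinct from the boundary curves $C'_{p, \pm}$ is disjoint from $\partial U_p'$, and then to establish nonemptyness and openness of the subset of curves contained in $U_p'$ using the local structure of the moduli space. The topological boundary $\partial U_p'$ is contained in $C'_{p, +} \cup C'_{p, -} \cup \{\text{asymptotic orbits of } C'_{p, \pm}\} \cup P^+ \cup P^- \cup \gamma_0$, since $U_p'$ is bounded on one side by $C'_{p, \pm}$ together with their asymptotic orbits and on the other side by the dividing sphere $P^+ \cup \gamma_0 \cup P^-$. For each component I would invoke homotopy invariance of the $*$-product (Theorem \ref{t:hin-prop}) together with positivity of intersections (Theorem \ref{t:intpositivity}). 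From $C*C'_{p, \pm} = C'_{p, \pm}*C'_{p, \pm} = 0$, the discussion following \cite[Corollary 5.9]{siefring2011} forces the projections of $C$ and $C'_{p, \pm}$ to be either identical or disjoint, hence disjoint when $C \ne C'_{p, \pm}$. Since $C'_{p, \pm}$ lies outside the neck while $P^\pm$ lies inside the neck with sole asymptotic limit $\gamma_0$ (which is not an asymptotic orbit of $C'_{p, \pm}$), Theorem \ref{t:intpositivity} gives $C'_{p, \pm}*P^\pm = 0$ and likewise $C'_{p, \pm}*(\R \times \gamma_0) = 0$; homotopy invariance propagates these to $C$, and Theorem \ref{t:intpositivity} then yields $C \cap P^\pm = \emptyset$ and $C \cap \gamma_0 = \emptyset$. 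Finally, $C$ avoids its own asymptotic orbits by Theorem \ref{t:foliating-curves}(2). Connectedness of the image of $C$ in $M' \setminus \partial U_p'$ now forces $C$ to lie entirely in $U_p'$ or entirely in $M' \setminus \overline{U_p'}$.

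For nonemptyness, Theorem \ref{t:contact-connect-sum}(\ref{i:flow-lines}a) produces a Reeb flow segment $\tl\gamma_{p, +}$ joining $P^+$ to $i(p_+) \in C'_{p, +}$ whose interior lies in the $p$-side of the neck, hence in $U_p'$. Consequently $\tl\psi_t(i(p_+)) \in U_p'$ for all sufficiently small $t < 0$. Corollary \ref{c:flow-loc-diff} then produces, for each sufficiently small $t$, a unique curve $C_t \in \M(C'_{p, +})/\R$ through $\tl\psi_t(i(p_+))$ varying diffeomorphically in $t$, with $C_0 = C'_{p, +}$ and $C_t \ne C'_{p, +}$ for $t \ne 0$; for $t < 0$ small, $C_t$ meets the open set $U_p'$, and the dichotomy forces $C_t \subset U_p'$. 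For openness, given $C_0 \subset U_p'$, I would pick $z_0 \in C_0$ and use that by Corollary \ref{c:mod-spaces-mod-r} the evaluation map $ev: \M_1(C'_{p, +})/\R \to M'$ is a smooth embedding, so curves $C$ near $C_0$ in the moduli space contain points near $z_0$. Since $U_p'$ is open, those points lie in $U_p'$ for $C$ sufficiently close to $C_0$, and the dichotomy again forces $C \subset U_p'$.

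The main obstacle is the dichotomy, and within it the verification that $C$ misses the pieces $P^\pm$ and $\gamma_0$ of $\partial U_p'$ inside the neck. This is precisely where the careful construction in Theorem \ref{t:contact-connect-sum} is essential: the dividing sphere consists of curves whose only asymptotic orbit is $\gamma_0$, which is disjoint from the asymptotic orbits of the foliating curves $C'_{p, \pm}$, so that the $*$-products $C'_{p, \pm}*P^\pm$ and $C'_{p, \pm}*(\R \times \gamma_0)$ reduce via Theorem \ref{t:intpositivity} to purely algebraic intersection counts in $M'$, and these vanish because $C'_{p, \pm}$ sits entirely outside the neck. Once this is in hand, homotopy invariance propagates the vanishing to every $C$ in the moduli space, and the remaining steps, namely the connectedness argument, the flow-line argument for nonemptyness, and the evaluation-map argument for openness, are standard consequences of the local manifold structure of $\M(C'_{p, +})/\R$ developed in Section \ref{s:foliating-curves}.
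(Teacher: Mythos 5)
Your dichotomy argument is essentially the paper's: identify $\partial U_p'$ with $C'_{p,\pm} \cup P^\pm$ together with their asymptotic limits, kill the relevant $*$-products by homotopy invariance, and apply positivity of intersections. One small subtlety you elide is that Theorem~\ref{t:intpositivity} concerns intersections of curves in $\R\times M'$, not intersections of the projections to $M'$; to conclude that the projection of $C$ misses $\gamma_0$ and the projections of $P^\pm$, you must also use $\R$-invariance of the $*$-product (so that every $\R$-translate of $C$ also has vanishing $*$-product with $\R\times\gamma_0$ and with $P^\pm$), which is precisely the point made in the discussion following Corollary~5.9 of \cite{siefring2011}. The paper sidesteps this by invoking Theorems~\ref{t:gin-zero}/\ref{t:no-isect} directly, whose conclusions are stated at the level of projections.

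Where you genuinely diverge is in the nonemptiness and openness steps, which you have swapped relative to the paper. The paper proves \emph{nonemptiness} via Corollary~\ref{c:mod-spaces-mod-r}: the image of the evaluation map is open, so an open neighborhood of a boundary point of $U_p'$ lying in that image must meet $U_p'$. It proves \emph{openness} via Corollary~\ref{c:flow-loc-diff}: a short flow segment through a point of $C \subset U_p'$ gives a local chart on $\M(C'_{p,\pm})/\R$ all of whose curves meet $U_p'$. You instead obtain nonemptiness from the explicit flow segments $\tl\gamma_{p,\pm}$ furnished by Theorem~\ref{t:contact-connect-sum}(\ref{i:flow-lines}) combined with Corollary~\ref{c:flow-loc-diff}, and openness from the evaluation map. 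Both routes work; yours is a bit more tied to the specific flow-tube construction for nonemptiness, while the paper's is more topological, and conversely for openness the paper's flow argument is a touch more hands-on than your continuity-of-the-evaluation-map argument. For the latter you should be a little careful: ``curves $C$ near $C_0$ contain points near $z_0$'' requires a local section of the submersion $\M_1(C'_{p,\pm})/\R\to\M(C'_{p,\pm})/\R$ through $(C_0,z_0)$; that such a section exists is part of what Corollary~\ref{c:mod-spaces-mod-r} provides, but it is worth saying explicitly.
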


\begin{proof}
We first observe that, by construction,
the boundary of $U'_{p}$
consists of the pseudoholomorphic curves
$C'_{p, \pm}$, $P^{\pm}$ and their asymptotic limits.
Moreover, we know that $C'_{p, \pm}*C'_{p, \pm}=C'_{p, +}*C'_{p, -}=0$ and,
since the curves
$P^{\pm}$ are disjoint from and share no common orbits with the curves
$C'_{p, \pm}$,
it follows immediately from the definition
of the $*$-product that $C'_{p, +}*P^{\pm}=C'_{p, -}*P^{\pm}=0$.
Given a curve $C\in\M(C'_{p, \pm})/\R$, homotopy invariance of the intersection number
then allows us to conclude that
the intersection number of $C$ with each 
of $C'_{p, \pm}$ and $P^{\pm}$ is zero.
Theorem \ref{t:gin-zero}/\ref{t:no-isect} then lets us further conclude that $C$
doesn't intersect any of the asymptotic limits of the curves in the boundary of $U_{p}'$.
Thus the curve $C$ can't intersect any of the curves in the boundary of $U_{p}'$
unless it coincides with that curve.  We conclude that either $C\subset U_{p}'$
or $C$ is disjoint from $U_{p}'$.

We next show that the set of curves $C\in\M(C'_{p, \pm})/\R$ with $C\subset U_{p}'$
is nonempty.  Given that we've shown in the previous paragraph that
a curve $C\in\M(C'_{p, \pm})/\R$
meeting $U_{p}'$ must be contained in $U_{p}'$,
it suffices to show there are curves $C\in\M(C'_{p, \pm})/\R$ meeting $U_{p}'$.
This follows from Corollary \ref{c:mod-spaces-mod-r}.
Indeed, since the evaluation map
$ev:\M_{1}(C'_{p, \pm})/\R\to M$ is an embedding, the image of the evaluation map
is open.  Therefore, given any point $x\in C'_{p, \pm}\subset \partial U_{p}'$, there is
an open set around $x$ in the image of evaluation map.  Since an open set around
a boundary point of $U_{p}'$ must meet $U_{p}'$, there are points in $U_{p}'$ in the image of
the evaluation map, which is equivalent to there being points in $U_{p}'$ with curves
in $\M(C'_{p, \pm})/\R$ passing through them.

Finally we show that the set of curves $C\in\M(C'_{p, \pm})/\R$ with $C\subset U_{p}'$
is open.  This follows from Corollary \ref{c:flow-loc-diff}.
Indeed, given a curve $C\in\M(C'_{p, \pm})/\R$ passing through a point
$x\in U_{p}'$ there is an $\ep>0$ so that
for every $t\in (-\ep, \ep)$
there is a unique
curve of $\M(C'_{p, \pm})/\R$ passing through $\tl\psi_{t}(x)$,
and so that the map taking a point
$t\in (-\ep, \ep)$ to the unique curve
in $\M(C'_{p, \pm})/\R$
passing through $\tl\psi_{t}(x)$ is an embedding.
Since $U_{p}'$ is open, we can, by shrinking $\ep$ if necessary, assume that
$\tl\psi_{t}(x)\in U_{p}'$ for all $t\in (-\ep, \ep)$.
We have thus found a subset $I$ of $\M(C'_{p, \pm})/\R$
diffeomorphic to an open interval
and containing $C$ so that each curve $C'\in I$ meets $U_{p}'$
and thus, according to the results of the first paragraph, 
is contained in $U_{p}'$.
\end{proof}

Since, as we've noted above, the images of $C'_{p, \pm}$ are contained in the boundary of
the set $U_{p}'$, it follows that
the curves $C'_{p, \pm}$ are in the boundary
of the set $\br{C\in\M(C'_{p, \pm})/\R\,|\, C\subset U_{p}'}$.
We define a submanifold with boundary $\M_{p, \pm}\subset\M(C'_{p, \pm})/\R$
to be
the connected
component of
\[
\br{C'_{p, \pm}}\cup \br{C\in\M(C'_{p, \pm})/\R\,|\, C\subset U_{p}'}
\]
containing $C'_{p, \pm}$.
As a result of the above discussion, it is clear that $\M_{p, \pm}$ is
diffeomorphic to
a half-open interval, and
in the following we seek to characterize
$\overline{\M_{p,\pm}}\setminus\M_{p,\pm}$,
where $\overline{\M_{p,\pm}}$ denotes the compactification
of $\M_{p, \pm}$ in the SFT topology
(relevant information is reviewed in Section \ref{s:compactness} above).
According to our construction ---
specifically that $\M(C'_{p, \pm})/\R$ is a smooth $1$-manifold
and  $\M_{p, \pm}$ 
can be identified with an embedded half-open subinterval ---
the set
$\overline{\M_{p,\pm}}\setminus\M_{p,\pm}$
is either an element of $\M(C'_{p, \pm})/\R$ or
is contained in the boundary of
$\M(C'_{p, \pm})/\R$ and thus,
according to the main theorem of \cite{wendl2010-compactness}
(reviewed above as Theorem \ref{t:spec-cpct}),
consists of stable,
nicely-embedded, non-nodal pseudoholomorphic buildings.
The next lemma shows that the latter alternative in fact always holds.

\begin{lemma}\label{l:non-compactness}
Every element of $\overline{\M_{p,\pm}}\setminus\M_{p,\pm}$
is a 
stable,
nicely-embedded, non-nodal pseudoholomorphic building
with at least two nontrivial components and at least two levels.
\end{lemma}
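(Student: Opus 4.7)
The plan is to first invoke Theorem \ref{t:spec-cpct} for the general shape of any SFT limit, then rule out the possibility of a height-$1$ (single smooth curve) limit using Lemma \ref{l:curves-subset} together with the topological structure of $\partial U'_{p}$, and finally deduce the component count from stability.

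First I would observe that, by Theorem \ref{t:spec-cpct} together with the fact that SFT limits of connected curves are connected, any $C_{\infty}\in\overline{\M_{p,\pm}}$ is a connected, stable, nicely-embedded, non-nodal pseudoholomorphic building. So it suffices to show that a $C_{\infty}\in\overline{\M_{p,\pm}}\setminus\M_{p,\pm}$ cannot be a single smooth curve: once the building has height at least $2$, stability (no level consists entirely of trivial cylinders) forces each of the $\ge 2$ levels to contribute at least one nontrivial component, giving $\ge 2$ nontrivial components overall.

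To rule out a smooth-curve limit, I argue by contradiction. If $C_{\infty}$ were a single smooth curve, it would inherit the relative homotopy class of the $C_{k}$'s and hence lie in the smooth $1$-manifold $\M(C'_{p,\pm})/\R$ (Corollary \ref{c:mod-spaces-mod-r}). Lemma \ref{l:curves-subset} then splits into two cases. If $C_{\infty}\subset U'_{p}$, then $C_{\infty}$ lies in the ambient set of which $\M_{p,\pm}$ is a connected component, so smoothness of the convergence $C_{k}\to C_{\infty}$ in $\M(C'_{p,\pm})/\R$ together with maximality of the connected component forces $C_{\infty}\in\M_{p,\pm}$, a contradiction. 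If $C_{\infty}\cap U'_{p}=\emptyset$, then the $C^{\infty}_{loc}$-convergence from Proposition \ref{p:compactness-local-C-infinity} together with $C_{k}\subset U'_{p}$ forces $C_{\infty}\subset\overline{U'_{p}}$, and combined with $C_{\infty}\cap U'_{p}=\emptyset$ we obtain $C_{\infty}\subset\partial U'_{p}=C'_{p,+}\cup C'_{p,-}\cup P^{+}\cup P^{-}\cup\gamma_{0}\cup(\text{asymptotic limits})$; since $C_{\infty}$ is a connected nicely-embedded curve in the homotopy class of $C'_{p,\pm}$ (distinct from the classes of the planes $P^{\pm}$, the orbit $\gamma_{0}$, or asymptotic-limit circles), we conclude $C_{\infty}$ equals $C'_{p,+}$ or $C'_{p,-}$. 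The case $C_{\infty}=C'_{p,\pm}$ (the defining endpoint of $\M_{p,\pm}$) again yields $C_{\infty}\in\M_{p,\pm}$. The remaining case $C_{\infty}=C'_{p,\mp}$ would require $\M_{p,\pm}$ to extend smoothly across $U'_{p}$ all the way to $C'_{p,\mp}$; this is excluded by combining Corollary \ref{c:flow-loc-diff} applied at a point on $C'_{p,\mp}$ with Condition (\ref{i:flow-lines}) of Theorem \ref{t:contact-connect-sum}, because the local flow-shift parametrization of $\M(C'_{p,\pm})/\R$ near $C'_{p,\mp}$ on the side that enters $U'_{p}$ would have to produce curves approaching the separating sphere $P^{+}\cup\gamma_{0}\cup P^{-}\subset\partial U'_{p}$ arbitrarily closely while remaining disjoint from it (by $C*P^{\pm}=0$ via Corollary \ref{c:no-isect-gin-zero}), a configuration that by SFT compactness must break into a genuine building and hence contradicts our smoothness assumption.

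The main obstacle is precisely this last subcase, i.e., ruling out that $\M_{p,\pm}$ connects $C'_{p,+}$ to $C'_{p,-}$ by a smooth family inside $U'_{p}$; it is the one point where I expect to need essential use of the surgery construction from Theorem \ref{t:contact-connect-sum} (the separating sphere and the flow-line description) rather than purely formal moduli-space arguments.
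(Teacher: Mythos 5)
Your case analysis is organized sensibly and the first two cases are fine, but there is a genuine gap in the third subcase ($C_\infty = C'_{p,\mp}$), which you correctly flag as the hard one. Your argument there does not work as stated: if the family $\M_{p,+}$ converged smoothly to $C'_{p,-}$, the curves near the open end of the family would be $C^{\infty}_{loc}$-close to $C'_{p,-}$, \emph{not} close to the separating sphere $P^{+}\cup\gamma_0\cup P^{-}$, so there is no forced approach to the sphere from which SFT breaking could be extracted. Worse, invoking "by SFT compactness must break into a genuine building" is essentially circular, since the conclusion that the limit is a genuine building is precisely what the lemma asserts. Corollary \ref{c:flow-loc-diff} and Condition (\ref{i:flow-lines}) of Theorem \ref{t:contact-connect-sum} do not, on their own, supply the missing contradiction.

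The paper closes this gap in a cleaner, more topological way that you have not identified. It argues by contradiction: if $\overline{\M_{p,+}}\setminus\M_{p,+}$ were a single curve in $\M(C'_{p,+})/\R$ (making $\overline{\M_{p,+}}$ a closed interval, so every sequence in $\M_{p,+}$ subconverges inside $\M(C'_{p,+})/\R$), then the set of points of $U'_p$ covered by curves of $\M_{p,+}\setminus\{C'_{p,+}\}$ is nonempty, open (Corollary \ref{c:mod-spaces-mod-r}), and relatively closed (using the subsequence-convergence hypothesis and Lemma \ref{l:curves-subset}), hence all of $U'_p$. Since these are pairwise disjoint embedded genus-zero punctured surfaces, $U'_p$ would be homeomorphic to $\R\times(\Sigma\setminus\Gamma)$. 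But by construction $U'_p$ is homeomorphic to $(\R\times(\Sigma\setminus\Gamma))$ with a point deleted, which has different topology (e.g.\ $H_2\ne 0$). This one contradiction rules out \emph{all} your cases at once and is the key idea your proposal is missing. Your initial observation that the SFT limit must be connected, plus the observation that a connected height-one nodeless building with more than one component is disconnected, does correctly reduce the problem to ruling out a smooth-curve limit, so the rest of your skeleton can be kept; you just need to replace the case-by-case analysis with the foliation/topology argument.
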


\begin{proof}
We prove this for $\M_{p, +}$.  The proof for $\M_{p, -}$ is identical.

We first argue that $\overline{\M_{p,\pm}}\setminus\M_{p,\pm}$ is either
a single element of $\M(C'_{p, +})/\R$ or is a subset of the boundary of $\M(C'_{p, +})/\R$.
As we discussed above, $\M(C'_{p, +})/\R$ has the structure of a smooth $1$-manifold
and, by construction, $\M_{p, +}$ is an embedded submanifold with boundary
which is diffeomorphic to a half-open interval.
Choosing an identification of $\M_{p, +}$ with an embedding
$i:[0, 1)\hookrightarrow \M(C'_{p, +})/\R$, we can identify
the set $\overline{\M_{p,\pm}}\setminus\M_{p,\pm}$
with limits of SFT-convergent sequences of the form
$i(a_{k})$ with $a_{k}\in [0, 1)$ an increasing sequence converging to $1$.
Assume that there exists some such sequence $a_{k}$ so that $i(a_{k})$ converges
to a curve $C_{\infty}\in\M(C'_{p, +})/\R$.
Then every open neighborhood of $C_{\infty}$ in $\M(C'_{p, +})/\R$ meets a set of the 
form $i\bp{(1-\ep, 1)}$ for some $\ep>0$.
Since $i$ is an embedding of a $1$-manifold in a $1$-manifold,
this allows us to conclude that for every sequence
$a_{k}\in [0, 1)$ with $a_{k} \to 1$, $i(a_{k})$ converges to $C_{\infty}$.  We conclude that
if $\overline{\M_{p,\pm}}\setminus\M_{p,\pm}$ contains an interior point $C_{\infty}$ of
$\M(C'_{p, +})$ then $\overline{\M_{p,\pm}}\setminus\M_{p,\pm}=\br{C_{\infty}}$.
Thus $\overline{\M_{p,\pm}}\setminus\M_{p,\pm}$ is either
a single element of $\M(C'_{p, +})/\R$ or is a subset of the boundary of $\M(C'_{p, +})/\R$
as claimed.

To show that $\overline{\M_{p,+}}\setminus\M_{p,+}$
can't be a curve in $\M(C'_{p, +})/\R$
we will argue by contradiction and
suppose to the contrary that
$\overline{\M_{p,+}}\setminus\M_{p,+}$ is an element of
$\M(C'_{p, +})/\R$.
In this case $\overline{\M_{p, +}}$ is diffeomorphic to a closed interval
embedded in $\M(C'_{p, +})/\R$ and thus
every sequence in $\M_{p,+}$
has a subsequence
converging to an element of $\M(C'_{p, +})/\R$.
We then claim that every point of $U_{p}'$
would have a curve from $\M_{p, +}$ passing through it and, since
$\M_{p, +}$ consists of foliating curves, that $U_{p}'$ is homeomorphic to $\R\times (\Sigma\setminus\Gamma)$.
Since, by construction, $U_{p}'$ is homeomorphic to $(\R\times (\Sigma\setminus\Gamma))$
with a point removed,
this contradiction would finish the proof.

Supposing then that every
sequence in $\M_{p,+}$ has a subsequence converging to an element of $\M(C'_{p, +})/\R$,
we claim that the set of points in $U_{p}'$ with a curve in
$\M_{p, +}\setminus\br{C'_{p, +}}$ passing through them
is nonempty and
both open and (relatively) closed in $U_{p}'$.
Both nonemptiness and openness follow immediately from Corollary \ref{c:mod-spaces-mod-r}; 
indeed, since the evaluation map $ev:\M_{1}(C'_{p, +})/\R\to M$ is an embedding
it has open image, and therefore the
set of points in $U_{p}'$ with curves from the open subset $\M_{p, +}$ of $\M(C'_{p, +})/\R$
passing through them is nonempty and open.
Closedness, in turn, follows from our assumption
that all sequences in $\M_{p, +}$ have subsequences converging to a point in
$\M(C'_{p, +})/\R$.
Indeed, let $p_{k}$ be a sequence in $U'_{p}$ converging to a point $p_{\infty}\in U_{p}'$,
and assume that for each point $p_{k}$ there is a curve $C_{k}\in\M_{p, +}$ passing through
$p_{k}$.  Then by assumption, some subsequence of the curves $C_{k}$ converges to a curve
$C_{\infty}\in\M(C'_{p, +})/\R$ passing through $p_{\infty}$
and, by the previous lemma, we have that $C_{\infty}\subset U_{p}'$
since it passes through the point $p_{\infty}\in U_{p}'$.
Since Corollary \ref{c:mod-spaces-mod-r} implies there is an open
set of points around $p_{\infty}$ having curves of $\M(C'_{p, +})/\R$ passing through
them, it follows that $C_{\infty}$ is in the same connected component of
the subset
\[
\br{C\in\M(C'_{p, +})/\R\,|\,C\subset U_{p}'}
\]
of $\M(C_{p,+})$
as the $C_{k}$
i.e.\ that $C_{\infty}\in\M_{p, +}$.
This completes the proof that
$\overline{\M_{p,+}}\setminus\M_{p,+}$
can't be an interior point of $\M(C'_{p, +})/\R$, and therefore must
be contained in the (SFT) boundary of $\M(C'_{p, +})/\R$.
As noted above, it follows from the main theorem of \cite{wendl2010-compactness}
that $\overline{\M_{p,+}}\setminus\M_{p,+}$
consists of stable, non-nodal, nicely-embedded pseudoholomorphic buildings.

Finally we show that any element of 
$\overline{\M_{p,+}}\setminus\M_{p,+}$
has at least two levels and at least two nontrivial components.
We first note that an element of $\overline{\M_{p,+}}\setminus\M_{p,+}$
is connected since it is the limit of connected curves.
Supposing such an element has only one level, then that level must have at least
two components (or else we would just have a curve in $\M(C'_{p, +})/\R$).
But, since a height-$1$ pseudoholomorphic building with at least two components
and no nodes must be disconnected, this is a contradiction.
Given that an element of $\overline{\M_{p,+}}\setminus\M_{p,+}$ has at least
two levels, stability implies that each level must have at least one nontrivial component.
\end{proof}

We next seek to show that the
sets
$\overline\M_{p, +}\setminus\M_{p, +}$ and $\overline\M_{p, -}\setminus\M_{p, -}$
each contain a single
non-nodal, nicely-embedded pseudoholomorphic building,
and that these buildings consist of
exactly one of the planes
$P^{\pm}$ and precisely one other nicely-embedded pseudoholomorphic curve $Z_{p}$
with $\gamma_{0}$
as a negative asymptotic limit.

\begin{proposition}\label{p:m-p-boundary-building}
Let $\Gamma_{p}^{-}$ denote the set of negative punctures of $C_{p, \pm}$ and
assume that at $z\in\Gamma_{p}^{-}$, $C_{p, +}$ is asymptotic to the orbit
$\gamma_{z}$.
Then, there exists a pseudoholomorphic curve $Z_{p}\in\M(M', \lambda')/\R$
so that 
\[
\overline{\M_{p, +}}\setminus\M_{p, +}
=Z_{p}\odot(P^{+}\amalg_{z\in\Gamma_{p}^{-}}\R\times \gamma_{z})
\]
and
\[
\overline{\M_{p, -}}\setminus\M_{p, -}
=Z_{p}\odot(P^{-}\amalg_{z\in\Gamma_{p}^{-}}\R\times \gamma_{z}).
\]
\end{proposition}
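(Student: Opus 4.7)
The plan is to fix any limit $B\in\overline{\M_{p,+}}\setminus\M_{p,+}$, pin down its combinatorial structure, establish uniqueness within each family, and then identify the top level common to both $\M_{p,+}$ and $\M_{p,-}$. I first take a sequence $C_k\in\M_{p,+}$ SFT-converging (by Theorem~\ref{t:spec-cpct}) to $B$. By Lemma~\ref{l:non-compactness}, $B$ is a stable, nicely-embedded, non-nodal building with at least two nontrivial components and at least two levels. Combining $C_k*C_k=0$ with homotopy invariance and the additivity bound for the $*$-product from Theorem~\ref{t:hin-prop}, together with positivity of intersections from Theorem~\ref{t:intpositivity}, I expect each nontrivial component $C'$ of $B$ to satisfy $C'*C'=0$, so by Theorem~\ref{t:self-gin-zero} each such $C'$ is a nicely-embedded sphere with extremal winding at every puncture.

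Next I constrain the internal breaking orbits of $B$. The projection of $B$ lies in $\bar{U}_p'$, so Lemma~\ref{l:no-orbits} leaves only $\gamma_0$ and the external orbits $\gamma_z$ as candidates. Applying Theorem~\ref{t:foliating-curves} to $C_{p,+}$ (with $\ind=2$ and $C_{p,+}*C_{p,+}=0$) yields $\#\Gamma_{even}(C_{p,+})=0$, so each $\gamma_z$ is odd, and Lemma~\ref{l:bidirectional-orbits} then rules out any $\gamma_z$ as an internal breaking orbit (its appearance would require a double cover incompatible with the simple external asymptotic structure of $B$). Using the index sum $\sum\ind(C')=\ind(C_{p,+})=2$, the identity $\ind(C')=2-\#\Gamma_{even}(C')$ from Theorem~\ref{t:foliating-curves}, stability, and connectedness of the building, one concludes that $B$ has exactly two levels: the bottom level consists of a single plane $P$ at $\gamma_0$ together with trivial cylinders over each $\gamma_z$, and the top level is a single index-$1$ sphere $Z_p^+$ whose positive asymptotics match those of $C_{p,+}$ and whose negative asymptotics are $\gamma_0$ and the $\gamma_z$'s.

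To identify $P$ with $P^+$, I combine Lemma~\ref{l:same-or-opposite} with Theorem~\ref{t:dir-of-approach} and Lemma~\ref{l:building-isect} to force $P\in\{P^+,P^-\}$. Property~(6a) of Theorem~\ref{t:contact-connect-sum} provides a flow segment $\gamma_{p'_+}\subset\bar{U}_p'$ from $p'_+\in C'_{p,+}$ to $P^+$; a curve in $\M_{p,+}$ near $C'_{p,+}$ meets this segment near $p'_+$, and as the curve degenerates, continuity forces the intersection point along $\gamma_{p'_+}$ to slide into $P^+$, ruling out $P=P^-$. Since $\M_{p,+}$ is a half-open $1$-manifold (from the discussion before the proposition), its SFT compactification adds exactly one building, uniquely identifying $\overline{\M_{p,+}}\setminus\M_{p,+}=Z_p^+\odot(P^+\amalg_{z\in\Gamma_p^-}\R\times\gamma_z)$. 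An entirely parallel analysis using the flow segment $\gamma_{p'_-}$ terminating on $P^-$ gives $\overline{\M_{p,-}}\setminus\M_{p,-}=Z_p^-\odot(P^-\amalg_{z\in\Gamma_p^-}\R\times\gamma_z)$.

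The main obstacle is showing $Z_p^+=Z_p^-$ in $\M(\lambda',J')/\R$. Since $\ind(Z_p^\pm)=1=\#\Gamma_{even}(Z_p^\pm)$, Theorem~\ref{t:automatic-transversality} makes $\M(Z_p^\pm)/\R$ a discrete $0$-manifold. The plan is to realize the degenerate end of $\M_{p,+}$ as a $1$-parameter SFT gluing family of $Z_p^+$ with $P^+$ across the even orbit $\gamma_0$ inside the common moduli space $\M(C'_{p,+})/\R$, and analogously the degenerate end of $\M_{p,-}$ as the gluing family of $Z_p^-$ with $P^-$. Exploiting that $\M_{p,+}$ and $\M_{p,-}$ are two connected components of the common subset $\{C\in\M(C'_{p,+})/\R\mid C\subset U_p'\}$, and that SFT gluing at an even orbit symmetrically realizes both $P^+$- and $P^-$-gluings from a common top curve, one reverses the gluings to identify $Z_p^\pm$ as the $C^\infty_{loc}$-limit of the portion of $C_k$ away from the pinching neck around $\gamma_0$, and shows this limit is the same single curve $Z_p$ in both cases.
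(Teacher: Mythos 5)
Your structural analysis of the limit building is sound and tracks the paper's own argument closely: you identify that each nontrivial component must have vanishing self-intersection number, that only $\gamma_0$ can be a breaking orbit (Lemmas~\ref{l:no-orbits}, \ref{l:bidirectional-orbits}), that exactly one nontrivial non-plane component exists, and you use the flow-segment intersection number $\gamma_{p'_{\pm}}\cdot C$ to distinguish $P^+$ from $P^-$, just as in Lemmas~\ref{l:flow-segment-intersection} and \ref{l:compactness-step3}.

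The gap is in your final step, identifying $Z_p^+=Z_p^-$. You propose an SFT gluing argument: realize the degenerate ends of $\M_{p,\pm}$ as gluing families and invoke a ``symmetric realization'' of gluings at an even orbit to conclude the top curves agree. But no gluing theorem is developed in the paper, and the symmetry claim as stated is not self-evident --- gluing a fixed negative end of a top curve at an even orbit with $P^+$ rather than $P^-$ imposes a direction-of-approach compatibility that is not automatically symmetric, and it is exactly this compatibility that must be controlled to conclude the top curves coincide. Without making this precise you have essentially restated the problem. The paper's argument for $Z_p^+=Z_p^-$ is more elementary and avoids gluing altogether: it shows, using the fact that the negative ends of $Z_p^\pm$ at $\gamma_0$ have extremal winding and are governed by eigenvectors with smallest positive eigenvalue (pointwise linearly independent from the eigenvector of largest negative eigenvalue that governs $P^\pm$), that the direction of approach of each $Z_p^\pm$ into $U_p'$ at $\gamma_0$ is forced, hence $Z_p^+$ and $Z_p^-$ approach $\gamma_0$ in the same direction. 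Then homotopy invariance of the $*$-product plus Lemma~\ref{l:building-isect} shows $Z_p^+$ and $Z_p^-$ are disjoint or identical, while Theorem~\ref{t:dir-of-approach} forces an intersection if they were distinct --- a contradiction. Note the paper first applies this same direction-of-approach argument within $\overline{\M_{p,+}}\setminus\M_{p,+}$ to deduce the boundary is a \emph{single} building (Lemma~\ref{l:m-p-plus-building}), a point you dispatch by appeal to the half-open $1$-manifold structure; that step is also not entirely automatic, since convergence of different sequences along the same end could in principle produce different limit buildings, and the paper rules this out by the same intersection-theoretic argument rather than purely by point-set topology of the moduli space.
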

We note that since, as previously observed,
the main theorem of
\cite{wendl2010-compactness} (see Theorem \ref{t:spec-cpct} above)
implies that any curve in $\overline{\M_{p, \pm}}\setminus\M_{p, \pm}$
must be a nicely-embedded building, it follows immediately from this proposition that
$Z_{p}$ is embedded in $M$
and disjoint from $P^{\pm}$.

We prove the proposition in a series of lemmas.
In the following we let
\[
C_{\infty, +}\in\overline{\M_{p, +}}\setminus\M_{p, +}
\]
denote one of the nicely-embedded, non-nodal pseudoholomorphic buildings
given by Lemma \ref{l:non-compactness} above.

\begin{lemma}
Let $\gamma$ be a simple periodic orbit and assume that
the $m$-fold cover
$\gamma^{m}$ is a positive (resp.\ negative) asymptotic limit of $C'_{p, +}$.
Then there exists a nontrivial component
of $C_{\infty, +}$ with $\gamma^{m}$ as a positive (resp.\ negative) asymptotic limit.
\end{lemma}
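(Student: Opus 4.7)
The plan is to follow a positive puncture of $C'_{p,+}$ at $\gamma^m$ into the limit building $C_{\infty,+}$ via SFT convergence, then to descend through a chain of trivial cylinders, using connectedness of the building to show that this chain must terminate at a nontrivial component.

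First, since each curve $C_k\in\M_{p,+}$ lies in the same relative homotopy class as $C'_{p,+}$, every $C_k$ has $\gamma^m$ as a positive asymptotic limit. The definition of SFT convergence in \cite{behwz} (cf.\ Proposition \ref{p:compactness-local-C-infinity}) pairs positive punctures of $C_k$ with top-level positive punctures of $C_{\infty,+}$ in an asymptotic-limit-preserving bijection. Consequently some component $K_k$ of the top level of $C_{\infty,+}$ carries a positive puncture asymptotic to $\gamma^m$. If $K_k$ is nontrivial, we are done; otherwise $K_k$ is a trivial cylinder $\R\times\gamma^m$, and its negative puncture at $\gamma^m$ must, by the matching structure of a pseudoholomorphic building, be identified with a positive puncture at $\gamma^m$ of some component $K_{k-1}$ one level below. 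Iterating produces a strand $K_k,K_{k-1},\dots$ of trivial cylinders over $\gamma^m$ joined in series by puncture matchings, which continues only until we first encounter a nontrivial component; the first nontrivial component reached then carries $\gamma^m$ as a positive asymptotic limit.

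The main obstacle to overcome is the possibility that this descent proceeds through every level of the building without ever meeting a nontrivial component. To rule this out, suppose all of $K_k,K_{k-1},\dots,K_1$ are trivial cylinders over $\gamma^m$. Then the positive puncture of $K_k$ and the negative puncture of $K_1$ correspond to (external) asymptotic limits of the whole building and hence are unmatched internally, while the remaining punctures of the strand are paired in series between consecutive $K_i$ and $K_{i-1}$. The strand would therefore constitute an entire connected component of the combinatorial graph modelling $C_{\infty,+}$, disjoint from every other component. This contradicts Lemma \ref{l:non-compactness}, which asserts that $C_{\infty,+}$ is connected (being the SFT limit of connected curves) and contains at least two nontrivial components: such components would lie outside the strand, violating connectedness of the building graph. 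The negative-puncture assertion of the lemma is proved by the completely symmetric argument, ascending through trivial cylinders over $\gamma^m$ starting from a bottom-level negative puncture of $C_{\infty,+}$ produced by SFT matching of the negative punctures of the $C_k$.
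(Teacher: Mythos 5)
Your proof is correct, and up to the final contradiction it follows the same skeleton as the paper's: push the top-level positive puncture at $\gamma^m$ down through a chain of trivial cylinders until a nontrivial component is reached. Where you differ is in how you rule out the degenerate case that the chain consists entirely of trivial cylinders all the way to the bottom level. You observe that such a chain, having every one of its punctures either matched consecutively within itself or external to the building (a trivial cylinder has exactly two punctures), would be an entire connected component of the graph modelling $C_{\infty,+}$; since the building is connected and, by Lemma \ref{l:non-compactness}, has nontrivial components, which would necessarily lie outside an all-trivial chain, this is impossible. The paper instead notes that an all-trivial chain running from the top level to the bottom level forces $\gamma^m$ to be both a positive and a negative asymptotic limit of $C'_{p,+}$, so that $(\gamma,m,m)$ is a bidirectional limit of the nicely-embedded curve $C'_{p,+}$; Lemma \ref{l:bidirectional-orbits} would then make $\gamma^m$ even, contradicting that $C'_{p,+}$, being a foliating curve, has only odd asymptotic limits (Theorem \ref{t:foliating-curves}). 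Your route is purely combinatorial and arguably more elementary, buying you independence from the parity machinery; the paper's route derives the contradiction from invariants of $C'_{p,+}$ alone and does not need the connectedness claim, and in addition requires only that a single trivial cylinder over $\gamma^m$ sits at the bottom level, rather than tracking the whole strand. One small attribution point: connectedness of $C_{\infty,+}$ is established in the body of the proof of Lemma \ref{l:non-compactness} rather than in its statement, so it is cleaner to justify it directly as the SFT limit of connected curves; this is not a gap, only a citation nuance. Likewise, for the contradiction you only need a single nontrivial component outside the strand, not two, though citing the stronger conclusion of Lemma \ref{l:non-compactness} is of course harmless.
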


\begin{proof}
For simplicity we assume that $\gamma^{m}$ is a positive asymptotic limit of $C'_{p, +}$.
The argument in the case that it is a negative asymptotic limit is identical.
Since $C_{\infty, +}$ can be written as the SFT-limit of a sequence of curves
$C_{k}\in\M_{p, +}$, each homotopic to
$C'_{p, +}$, it follows from the definition of SFT-convergence
(see Proposition \ref{p:compactness-local-C-infinity})
that the
positive asymptotic limits of the top-most level of $C_{\infty, +}$ agree with the
positive asymptotic limits of $C'_{p, +}$ and, similarly, the negative asymptotic limits
of the bottom-most level of $C_{\infty, +}$ agree with the
negative asymptotic limits of $C'_{p, +}$.
Thus there is some component of the top level
of $C_{\infty, +}$ with $\gamma^{m}$ as a positive asymptotic limit.
If this component is nontrivial there is no more to prove.  If not, then  the component with $\gamma^{m}$ as a positive asymptotic limit
must be a trivial cylinder and, thus, there is some component on the next level
down with
$\gamma^{m}$ as a positive asymptotic limit.
Repeating this argument we find either at least one component of $C_{\infty, +}$
with $\gamma^{m}$ as a positive puncture, or we can conclude that there is a trivial cylinder
over $\gamma^{m}$ on the lowest level of $C_{\infty, +}$ and thus that $\gamma^{m}$
is also negative asymptotic limits of $C'_{p, +}$ so that
$(\gamma, m, m)$ is a bidirectional orbit of $C'_{p, +}$.
Recalling that $C'_{p, +}$ satisfies $C'_{p, +}*C'_{p, +}=0$ and $\ind(C'_{p, +})=2$ it
follows Theorem \ref{t:foliating-curves} that all asymptotic limits of $C'_{p, +}$, and specifically
$\gamma^{m}$, have
odd Conley--Zehnder index.
But, since $C'_{p, +}$ is nicely embedded, it follows from Lemma \ref{l:bidirectional-orbits}
that $\gamma^{m}$ must have even Conley--Zehnder index if it were a bidirectional orbit of
$C'_{p, +}$.  This contradiction completes the proof.
\end{proof}

\begin{lemma}\label{l:compactness-step1}
The periodic orbit $(\gamma_{0}, 1, 1)$ is the only possible
bidirectional asymptotic limit (see Definition \ref{d:bidirectional-limit} above)
of $C_{\infty, +}$.  Moreover, if $\gamma_{0}$ appears as a limit
of a component of $C_{\infty, +}$, $\gamma_{0}$ is necessarily a bidirectional orbit.
\end{lemma}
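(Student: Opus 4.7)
The plan is to localize any bidirectional asymptotic limit $(\gamma, m_+, m_-)$ of $C_{\infty,+}$ to the boundary of $U_p'$, and then combine Lemma \ref{l:bidirectional-orbits} with a parity and intersection-theoretic analysis to identify $\gamma$ with $\gamma_0$ and force $m_\pm = 1$. First I will use that each $C_k \in \M_{p,+}$ is contained in $U_p'$ and that $C_k \to C_{\infty,+}$ in the SFT sense: by Proposition \ref{p:compactness-local-C-infinity}, every nontrivial component of $C_{\infty,+}$ projects into $\overline{U_p'}$, so any periodic orbit arising as an asymptotic limit of such a component lies in $\overline{U_p'}$. Lemma \ref{l:no-orbits} then forces $\gamma \subset \partial U_p'$, and by construction the only periodic orbits on $\partial U_p'$ are $\gamma_0$ together with the simple orbits $\gamma_z$ underlying the asymptotic limits $\gamma_z^{m_z}$ of $C'_{p,+}$.

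Next, Lemma \ref{l:bidirectional-orbits} restricts bidirectional limits of a nicely-embedded building to either (i) $\gamma$ even with $m_\pm = 1$, or (ii) $\gamma$ odd hyperbolic with $m_\pm = 2$. Since $C'_{p,+}$ is a foliating curve, Theorem \ref{t:foliating-curves} gives $\#\Gamma_{even}(C'_{p,+}) = 2 - \ind(C'_{p,+}) = 0$, so every $\gamma_z^{m_z}$ is odd. In case (i) with $\gamma = \gamma_z$, the orbit $\gamma_z$ would be even hyperbolic, and Lemma \ref{l:cz-iteration} would then force $\gamma_z^{m_z}$ to be even, a contradiction. In case (ii) with $\gamma = \gamma_z$, the even orbit $\gamma_z^2$ would appear as a breaking orbit strictly in the middle of $C_{\infty,+}$, since by Proposition \ref{p:compactness-local-C-infinity} the top-level positive and bottom-level negative asymptotic limits of $C_{\infty,+}$ agree with the odd limits $\gamma_z^{m_z}$ of $C'_{p,+}$ rather than with $\gamma_z^2$. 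To rule out this case I will combine Lemma \ref{l:same-or-opposite} and Theorem \ref{t:dir-of-approach}, which constrain the directions in which distinct components of the building may approach the even orbit $\gamma_z^2$, with the coprimality and extremal-winding conditions of Theorem \ref{t:embedded-projection} applied to $C'_{p,+}$ near $\gamma_z$, together with Lemma \ref{l:building-isect} applied to the constant sequence $D_k = C'_{p,+}$, to show that any such bidirectional configuration forces an isolated intersection between a component of $C_{\infty,+}$ and $C'_{p,+}$ incompatible with $C_k * C'_{p,+} = 0$.

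Once $\gamma = \gamma_z$ is ruled out, only $\gamma = \gamma_0$ remains, and since $\gamma_0$ is even by Theorem \ref{t:contact-connect-sum}, Lemma \ref{l:bidirectional-orbits} forces $m_\pm = 1$. For the second assertion I note that $\gamma_0$ lies entirely in the neck $N$, while $C'_{p,+} = i(C_{p,+})$ is disjoint from $N$ by construction, so $\gamma_0$ is not an asymptotic limit of $C'_{p,+}$. By Proposition \ref{p:compactness-local-C-infinity} the top-level positive and bottom-level negative asymptotic limits of $C_{\infty,+}$ coincide with those of $C'_{p,+}$; hence if $\gamma_0$ appears as an asymptotic limit of some component it must be matched between adjacent levels of the building, making it a bidirectional orbit. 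The hardest step will be case (ii) above: the odd-hyperbolic scenario survives the naive parity test and the arithmetic constraint $\gcd(m, \winfty^{\Phi}) = 1$, so ruling it out requires the combined use of direction-of-approach, extremal winding, and building-level intersection theory localized near the simple orbit $\gamma_z$.
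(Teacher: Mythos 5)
Your localization of any bidirectional limit $(\gamma, m_+, m_-)$ to $\partial U_p'$ and the reduction via Lemma \ref{l:bidirectional-orbits} to the two cases (even, $m_\pm=1$) or (odd hyperbolic, $m_\pm=2$) both match the paper. The genuine gap is your treatment of case (ii), which you flag as ``the hardest step'' and propose to handle via Lemma \ref{l:same-or-opposite}, Theorem \ref{t:dir-of-approach}, the coprimality conditions of Theorem \ref{t:embedded-projection}, and Lemma \ref{l:building-isect}. None of that machinery is needed, and it is not clear it would close the argument: Theorem \ref{t:dir-of-approach} compares ends approaching the \emph{same} cover of an orbit, whereas your configuration pairs an end at $\gamma_z^2$ against $C'_{p,+}$'s end at $\gamma_z^{m_z}$ with $m_z$ odd, covers of different multiplicity.

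The paper dispatches both cases by the \emph{same} parity argument, and the ingredient you are missing is the unnamed lemma immediately preceding this statement (not Proposition \ref{p:compactness-local-C-infinity}): for each asymptotic limit $\gamma_z^{m_z}$ of $C'_{p,+}$, there is a \emph{nontrivial} component of $C_{\infty,+}$ having $\gamma_z^{m_z}$ as an asymptotic limit of the same sign. Suppose now $(\gamma_z, m_+, m_-)$ is a bidirectional limit. WLOG $\gamma_z^{m_z}$ is a positive limit of $C'_{p,+}$ (it cannot appear at both signs since $C'_{p,+}*C'_{p,+}=0$ and all limits are odd, cf.\ condition 2(c)(iii) of Theorem \ref{t:gin-zero}). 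Then $\gamma_z^{m_z}$ is a positive limit of a nontrivial component, and by hypothesis $\gamma_z^{m_-}$ is a negative limit of a nontrivial component, so $(\gamma_z, m_z, m_-)$ is a bidirectional limit. Lemma \ref{l:bidirectional-orbits} applied to this triple then forces $m_z = m_- \in \{1,2\}$ and, in either alternative, $\gamma_z^{m_z}$ is \emph{even}, contradicting the fact (from Theorem \ref{t:foliating-curves} with $\ind(C'_{p,+})=2$) that every limit of $C'_{p,+}$ is odd. This kills case (ii) exactly as cheaply as case (i). Your handling of the second assertion is in the right spirit but should be phrased via the tree model as in the paper: if $\gamma_0$ appears only unidirectionally then every component carrying it on one side is a trivial cylinder, and chasing the chain of trivial cylinders pushes $\gamma_0$ to the top or bottom level, contradicting that those limits agree with the (odd) limits of $C'_{p,+}$.
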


\begin{proof}
Since $C_{\infty, +}$ is the limit of a sequence of curves contained entirely within the open
set $U_{p}'$, it follows from the definition of SFT-convergence
(see Proposition \ref{p:compactness-local-C-infinity})
that any periodic orbit appearing
as an asymptotic limit of a component of $C_{\infty, +}$ must be contained in
the closure $\bar U_{p}'$ of $U_{p}'$.
Moreover, Lemma \ref{l:no-orbits} tells us that there are no orbits contained within
$U_{p}'$ so any periodic orbit within the closure $\bar U_{p}'$ must touch the boundary.
However, by construction, the boundary of $U_{p}'$ consists of
the pseudoholomorphic curves
$C'_{p, \pm}$ and $P^{\pm}$
and the periodic orbits which are asymptotic limits of these curves,
and any flow line touching the curves $C'_{p, \pm}$ of $P^{\pm}$ necessarily passes through
points outside of $U_{p}'$.
We thus conclude that any orbit
contained in $\bar U_{p}'$ is contained within the boundary.
Since the periodic orbits in the boundary of $U_{p}'$ are the
asymptotic limits of $C'_{p, +}$ along with $\gamma_{0}$, and since
every asymptotic limit of $C'_{p, +}$ is odd and,
according to the previous lemma,
occurs as an asymptotic limit of 
some nontrivial component of
$C_{\infty, +}$ with the same covering numbers, it follows
from
Lemma \ref{l:bidirectional-orbits}
that the only possible
bidirectional limit is $\gamma_{0}$,
which can only occur simply covered since $\gamma_{0}$
is even.

Next, we argue that
if $\gamma_{0}$ (or in fact any orbit other than the limits of $C'_{p, +}$)
appears as an asymptotic limit of a component
of $C_{\infty, +}$, then it must be a bidirectional limit.
Recall that since $C_{\infty, +}$ is a limit of pseudoholomorphic spheres,
its structure can be modeled by a tree with one vertex for each component,
and an edge for each periodic orbit connecting adjacent levels (or, in general, 
for each node, but we know there are none here).
Moreover, the positive asymptotic limits of the top level of $C_{\infty, +}$
and the negative asymptotic limits of the bottom level of $C_{\infty, +}$
agree with those of $C'_{p, +}$.
Assuming then that $\gamma_{0}$
appears as an asymptotic limit of a component of $C_{\infty, +}$
but is not a bidirectional limit, 
we can conclude that either 
the only curves in $C_{\infty, +}$ having $\gamma_{0}$ as
a positive limit are trivial cylinders, or the only curves in $C_{\infty, +}$
having $\gamma_{0}$ as a negative limit are trivial cylinders.
In either case, we could, by following a path of
vertices corresponding to trivial cylinders,
conclude that $\gamma_{0}$ is 
either a positive asymptotic limit of the top level
or a negative asymptotic limit of the bottom level.
This contradicts the fact that the positive limits of the top level and the negative limits
of the bottom level agree with those of $C'_{p, +}$.
\end{proof}

\begin{lemma}\label{l:positive-puncture-hyperbolic}
All components of $C_{\infty, +}$ with $\gamma_{0}$ as a positive asymptotic limit must be equal to
either $P^{+}$ or $P^{-}$.
\end{lemma}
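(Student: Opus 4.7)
The plan is to show that any component $C'$ of $C_{\infty,+}$ with $\gamma_0$ as a positive asymptotic limit must equal either $P^+$ or $P^-$. I focus on the case that $C'$ is nontrivial; the possibility that $C'$ is a trivial cylinder over $\gamma_0$ can be excluded by following the chain of $\gamma_0$-connections downward through the building, which must terminate at a nontrivial component having $\gamma_0$ as a positive puncture (lest $\gamma_0$ would appear as an external negative puncture of $C'_{p,+}$, contrary to setup).

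First, I verify that $C'$ approaches $\gamma_0$ with extremal winding $\fl{\mu^{\Phi}(\gamma_0)/2}$. This follows from Lemma \ref{l:compactness-step1}, which forces $\gamma_0$ to be a bidirectional limit of $C_{\infty,+}$ as soon as it appears as a limit of any component, combined with equation \eqref{e:wind-cz-pos-neg} from the proof of Lemma \ref{l:bidirectional-orbits}, which pins the asymptotic winding at both ends of such a bidirectional even orbit to the extremal value. By Lemma \ref{l:planes}, both $P^+$ and $P^-$ are positive pseudoholomorphic planes at $\gamma_0$ with extremal winding, approaching $\gamma_0$ in opposite directions.

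Next, since $\gamma_0$ is even and $C'$, $P^+$, $P^-$ are all positive half-cylinders at $\gamma_0$ with extremal winding, Lemma \ref{l:same-or-opposite} implies that $C'$ approaches $\gamma_0$ in either the same or the opposite direction as $P^+$; combined with the fact that $P^\pm$ approach $\gamma_0$ in opposite directions, $C'$ must approach $\gamma_0$ in the same direction as exactly one of $P^\pm$. Assume without loss of generality that this is the same direction as $P^+$. Then Theorem \ref{t:dir-of-approach} forces the projections of $C'$ and $P^+$ to $M'$ to intersect.

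Finally, I apply Lemma \ref{l:building-isect} to the sequence $C_k\in\M_{p,+}$ (converging in the SFT sense to $C_{\infty,+}$) together with the constant sequence $D_k:=P^+$. The hypotheses hold since $C_k\ne P^+$ (they lie in distinct relative homotopy classes) and $C_k*P^+ = C'_{p,+}*P^+ = 0$ by homotopy invariance of the $*$-product and the earlier observation that $C'_{p,+}$ shares no asymptotic orbits with $P^\pm$ and has disjoint image. The conclusion is that the projection of every component of $C_{\infty,+}$ to $M'$ is either disjoint from, or identical to, that of $P^+$. Combining this with the intersection forced in the previous paragraph, the projection of $C'$ must coincide with that of $P^+$, yielding $C'=P^+$; the opposite-direction case yields $C'=P^-$ analogously. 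The most delicate ingredient is the hypothesis of Theorem \ref{t:dir-of-approach} that $P^\pm$ be positive ends at $\gamma_0$, which is implicit in their construction as the upper and lower hemispheres of the sphere $\rho=0$ in the neck.
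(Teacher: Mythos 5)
Your proof is correct and takes essentially the same route as the paper: establish extremal asymptotic winding at $\gamma_0$ (you via \eqref{e:wind-cz-pos-neg}, the paper via condition (3c) of Theorem~\ref{t:no-isect} -- equivalent statements), deduce via Lemma~\ref{l:same-or-opposite} and Theorem~\ref{t:dir-of-approach} an intersection with one of $P^{\pm}$, and then invoke Lemma~\ref{l:building-isect} (the paper applies it first, you last) to force identity rather than intersection. One small caveat: your parenthetical dismissal of the trivial-cylinder case only shows that \emph{some} nontrivial component has $\gamma_0$ as a positive limit -- it does not by itself preclude a trivial cylinder over $\gamma_0$ sitting elsewhere in the building; the paper's proof has the same implicit restriction to nontrivial components, and this suffices for how the lemma is actually used downstream (in Lemma~\ref{l:unique-component} it is only ever applied to nontrivial $C_1$, $C_n$).
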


\begin{proof}
This follows from Theorem \ref{t:dir-of-approach},
\cite[Theorem 2.4]{siefring2011}/Theorem \ref{t:no-isect},
and Lemma \ref{l:building-isect}.
Indeed if $C_{k}$ is a sequence in $\M_{p, +}$ converging to $C_{\infty, +}$
then $P^{\pm}*C_{k}=P^{\pm}*C'_{p, +}=0$ by homotopy invariance of the intersection number.
Thus Lemma \ref{l:building-isect} allows us to conclude that any component
of $C_{\infty, +}$ is either identical with or disjoint from $P^{\pm}$.

Now, if there were a component $Z$ of $C_{\infty, +}$ distinct from $P^{\pm}$
with $\gamma_{0}$ as a positive puncture,
then $Z$ must approach a simple cover of $\gamma_{0}$ since,
according to the previous lemma,
$\gamma_{0}$
would have to be a bidirectional
limit of $C_{\infty, +}$.
Condition (3c) of Theorem \ref{t:no-isect} then
tells us that $Z$ must approach with the same winding as $P^{\pm}$.
But since $P^{+}$ and $P^{-}$ approach $\gamma_{0}$ in opposite directions
with extremal winding,
$Z$ necessarily approaches $\gamma_{0}$ with extremal winding and thus,
according to Lemma \ref{l:same-or-opposite},
in the same direction
as either $P^{+}$ or $P^{-}$.
Theorem \ref{t:dir-of-approach} then lets us conclude that $Z$ intersects either
$P^{+}$ or $P^{-}$, which contradicts the
fact from the previous paragraph
that such a $Z$ must be disjoint from $P^{+}$ and $P^{-}$.
\end{proof}

\begin{lemma}\label{l:unique-component}
There is precisely one nontrivial component of $C_{\infty, +}$ not equal to $P^{+}$ or $P^{-}$.
\end{lemma}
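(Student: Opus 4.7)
The plan is to analyze the combinatorial structure of the non-nodal building $C_{\infty,+}$ as a tree. Denote by $Z_1, \ldots, Z_n$ the nontrivial components of $C_{\infty,+}$ not equal to $P^+$ or $P^-$, and let $a, b \in \{0, 1\}$ record whether $P^+$ and $P^-$ appear as components; I would show $n = 1$ by producing a tree on the nontrivial components whose edge count is forced to equal its vertex count minus one. Since each $C_k \in \M_{p, +}$ has genus zero and SFT convergence preserves the arithmetic genus, $C_{\infty, +}$ has arithmetic genus zero, and the formula $g = \#E - \#V + \sum g(v_i) + 1$ from Section \ref{s:compactness}, combined with the nonnegativity of each $g(v_i)$, forces every component to have genus zero and the modeling graph of $C_{\infty, +}$ to be a tree.

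The key step is to classify the possible internal matchings among the nontrivial components of the building. If an orbit $\gamma$ appears as the positive puncture of one nontrivial component of the building and the negative puncture of another (possibly through an intervening chain of trivial cylinders over $\gamma$), then $\gamma$ is a bidirectional asymptotic limit in the sense of Definition \ref{d:bidirectional-limit}, so by Lemma \ref{l:compactness-step1} we must have $\gamma = \gamma_0$ with simple covering, and by Lemma \ref{l:positive-puncture-hyperbolic} the upper nontrivial component must equal $P^+$ or $P^-$. Consequently, no internal matching can connect two $Z_i$'s, any chain of trivial cylinders over an orbit $\gamma \ne \gamma_0$ that touches a nontrivial component must extend to an external top-positive or bottom-negative puncture of the building, and no chain over $\gamma_0$ can have an external endpoint since $\gamma_0$ is not an asymptotic limit of $C'_{p, +}$.

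With this in hand, I would form the abbreviated graph $G$ on the nontrivial components of the building by contracting all degree-$2$ trivial-cylinder vertices and pruning all degree-$1$ trivial-cylinder vertices in the original tree; these operations preserve the tree property. The vertex set of $G$ is $\{Z_1, \ldots, Z_n\}$ together with whichever of $P^\pm$ are present, so $\#V(G) = n + a + b$. Each edge of $G$ is an internal $\gamma_0$-matching incident to exactly one of $P^\pm$ (since each $P^\pm$ present has a unique puncture which must be matched internally, and no edge joins two $Z_i$'s), so $\#E(G) = a + b$. The tree identity $\#E = \#V - 1$ then gives $a + b = (n + a + b) - 1$, whence $n = 1$; the case $n = 0$ is incidentally incompatible with $\#V \ge 2$ (which follows from Lemma \ref{l:non-compactness}) since it would force $\#E = \#V$. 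The step most prone to subtle error is the careful bookkeeping of trivial-cylinder chains needed to verify that $G$ is a tree and that the edge count is exactly $a + b$, which rests on the correct interpretation of the bidirectional-orbit classification from Lemma \ref{l:compactness-step1}.
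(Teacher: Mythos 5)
Your proof is correct and takes a genuinely different route from the paper's. The paper proves the lemma in two separate steps: it first shows there is \emph{at most} one nontrivial component distinct from $P^\pm$ by exhibiting, under the contrary assumption, a path in the modeling tree between two such components whose interior vertices are trivial cylinders over a single orbit $\gamma$; Lemma \ref{l:compactness-step1} then forces $\gamma = \gamma_0$, and Lemma \ref{l:positive-puncture-hyperbolic} forces one endpoint to be $P^+$ or $P^-$, a contradiction. It then shows there is \emph{at least} one such component by noting that otherwise $\gamma_0$ would be the sole asymptotic limit of the building, contradicting the fact that the external limits must match the (all odd) limits of $C'_{p,+}$. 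Your argument handles both directions at once through a global Euler count: after pruning/contracting the trivial-cylinder vertices to form the abbreviated tree $G$, you observe that every edge of $G$ is a $\gamma_0$-matching with exactly one endpoint at a copy of $P^\pm$ (again via Lemmas \ref{l:compactness-step1} and \ref{l:positive-puncture-hyperbolic}), and each such copy has degree exactly one since its unique puncture, at $\gamma_0$, must be matched internally. The identity $\#E = \#V - 1$ then delivers $n = 1$ directly; note in particular that your count does not even require knowing a priori that each of $P^\pm$ appears at most once. The trade-off is that your approach needs more bookkeeping to justify the pruning/contraction operations and to verify that $G$ is a well-defined tree with the claimed edge structure, whereas the paper's explicit path argument avoids setting up the auxiliary graph but must treat existence and uniqueness separately. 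Both rest on the same two structural lemmas.
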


\begin{proof}
We first show that there is at most one nontrivial component of $C_{\infty, +}$ distinct from
$P^{+}$ and $P^{-}$.
Recall that the building $C_{\infty, +}$, being the limit of
spheres, can be modeled by a tree with vertices
corresponding to components of the building and edges corresponding to
periodic orbits connecting adjacent levels
(or, in general, nodes, but we know there are none in this case).
Moreover, we know that all components of the building are either nicely-embedded curves
or trivial cylinders.
Assuming there are two or more nontrivial components in $C_{\infty, +}$ which 
are distinct from $P^{+}$ and $P^{-}$,
we can find
a sequence of components $(C_{1}, \dots, C_{n})$ of
$C_{\infty, +}$ corresponding to distinct, adjacent vertices in the modeling tree,
with $C_{1}$ and $C_{n}$ nontrivial and not equal to $P^{+}$ or $P^{-}$. 
Since we assume all the $C_{i}$ correspond to distinct vertices,
we can conclude
none of the $C_{i}$ with $i\notin\br{1, n}$ are planes since planes correspond to
univalent vertices in the tree modeling $C_{\infty, +}$.
Thus all elements of the sequence
$(C_{1}, \dots, C_{n})$
are either trivial cylinders, or nontrivial components
distinct from $P^{+}$ and $P^{-}$.
Moreover, by truncating the sequence
if necessary, we can assume without loss of generality
that only $C_{1}$ and $C_{n}$ are nontrivial and that all
$C_{i}$ for $1<i<n$ are trivial cylinders.
However, since a sequence of adjacent trivial cylinders
in the holomorphic building must be cylinders over the same periodic orbit $\gamma$,
this allows us to conclude that
either $C_{1}$ has a positive puncture limiting to $\gamma$ and $C_{n}$ has a negative puncture limiting to
$\gamma$,
or that $C_{1}$ has a negative puncture limiting to $\gamma$ and $C_{n}$ has a positive
puncture limiting to $\gamma$.  Thus $\gamma$ is a bidirectional orbit, so
Lemma \ref{l:compactness-step1} tells us that $\gamma=\gamma_{0}$.  However, since
Lemma \ref{l:positive-puncture-hyperbolic} tells us that the planes
$P^{+}$ and $P^{-}$ are the only possible components of the building having $\gamma_{0}$
as a positive asymptotic limit,
this contradicts our assumption that $C_{1}$ and $C_{m}$ are
distinct from $P^{\pm}$.  This completes the argument that there is at most one component
of $C_{\infty, +}$ distinct from the $P^{\pm}$.

We next argue there is at least one nontrivial component of $C_{\infty, +}$
distinct from $P^{+}$ and $P^{-}$.
If there are no nontrivial components other than $P^{+}$ and $P^{-}$, then
every component of the building is either equal to $P^{+}$, $P^{-}$ or a trivial cylinder.
Since $C_{\infty, +}$ is, as the limit of connected curves, a connected building, this would then
let us conclude that $\gamma_{0}$ is the only asymptotic limit of components of the building
$C_{\infty, +}$
However, since
$C_{\infty, +}$ is a limit of curves in $\M(C'_{p, +})$, the properties of SFT-convergence
(see Proposition \ref{p:compactness-local-C-infinity}) allow us to conclude
that the asymptotic limits of the top and bottom levels of $C_{\infty, +}$
agree with those of $C'_{p, +}$.
Since $C'_{p, +}$ has only odd asymptotic limits and $\gamma_{0}$ is even,
this is a contradiction.
Thus there must be at least one nontrivial component in the building
not equal to $P^{+}$ or $P^{-}$.
\end{proof}

\begin{lemma}\label{l:flow-segment-intersection}
Let $C\in\M_{p, +}\setminus\br{C'_{p, +}}$.  Then the intersection numbers of the curve $C$
with the flow segments $\gamma_{p'_{\pm}}$ are well defined and given by
\[
\gamma_{p'_{+}}\cdot C=1 \qquad \gamma_{p'_{-}}\cdot C=0.
\]
\end{lemma}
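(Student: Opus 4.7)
The argument has three parts: well-definedness, reduction via positivity of intersections, and a one-parameter continuity argument within the family $\M_{p,+}$.

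First I verify that the endpoints of $\gamma_{p'_\pm}$ lie off $C$. These endpoints lie either on $C'_{p,\pm}$ or on $P^\pm$. Since $C$ is relatively homotopic to $C'_{p,+}$, homotopy invariance of the $*$-product (Theorem~\ref{t:hin-prop}) yields $C*C'_{p,\pm}=C'_{p,+}*C'_{p,\pm}=0$ and $C*P^\pm=C'_{p,+}*P^\pm=0$; the last vanishing follows from the construction, as $C'_{p,+}$ lies outside the neck and shares no asymptotic limits with $P^\pm$. Corollary~\ref{c:no-isect-gin-zero} applied to $(C,C'_{p,\pm})$, whose punctures achieve extremal winding by Theorem~\ref{t:foliating-curves}, gives $C\cap C'_{p,\pm}=\emptyset$; Theorem~\ref{t:intpositivity} gives $C\cap P^\pm=\emptyset$ since $C$ and $P^\pm$ share no asymptotic limits. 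Hence the intersection numbers are well defined. Furthermore, compatibility of $J'$ with $\lambda'$ makes $X_{\lambda'}$ the positively-oriented co-normal to every $\tl J'$-holomorphic leaf, so each transverse crossing of a flow segment with $C$ contributes $+1$, and the algebraic count equals the geometric count.

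Now parametrize $\M_{p,+}$ as an embedded half-open interval $s\in[0,1)$ with $s=0$ corresponding to $C'_{p,+}$, writing $C_s$ for the leaf at parameter $s$. For each $\gamma\in\{\gamma_{p'_+},\gamma_{p'_-}\}$ I claim the function $s\mapsto\gamma\cdot C_s$ is locally constant on $(0,1)$: the family $C_s$ varies smoothly (Corollary~\ref{c:manifold-structure}), so intersections vary continuously and can only appear or disappear at the endpoints of $\gamma$; but by the previous paragraph these endpoints remain off every $C_s$ with $s>0$, and interior intersections are all transverse to $X_{\lambda'}$ so no tangential birth/death can occur. Hence the function is constant on $(0,1)$ and it suffices to evaluate it in the limit $s\to 0^+$, using $C^\infty_{\mathrm{loc}}$-convergence $C_s\to C'_{p,+}$.

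The geometric input for this evaluation is that (as can be read off from Theorem~\ref{t:contact-connect-sum}(6) and the construction of $U_p'$) the segment $\gamma_{p'_\pm}\setminus\{p'_\pm\}$ lies in the open neck $N$, while $C'_{p,+}\subset i(M\setminus(F_p\cup F_q))$ is disjoint from $\bar N$. Consequently $C'_{p,+}\cap\gamma_{p'_-}=\emptyset$ and $C'_{p,+}\cap\gamma_{p'_+}=\{p'_+\}$. For $\gamma_{p'_-}$, the compact set $\gamma_{p'_-}$ is then at positive distance from $C'_{p,+}$, so for $s>0$ sufficiently small, $C_s$ lies in a tubular neighborhood of $C'_{p,+}$ disjoint from $\gamma_{p'_-}$, yielding $\gamma_{p'_-}\cdot C_s=0$. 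For $\gamma_{p'_+}$, Corollary~\ref{c:flow-loc-diff} at $p'_+$ produces a local diffeomorphism $t\mapsto C'_t\in\M(C'_{p,+})/\R$ with $C'_0=C'_{p,+}$; for $s>0$ small there is a unique $t_s$ with $C'_{t_s}=C_s$, contributing one transverse intersection near $p'_+$, while on the rest of $\gamma_{p'_+}$ (bounded away from $C'_{p,+}$) the $C^\infty_{\mathrm{loc}}$-closeness forbids further intersections, giving $\gamma_{p'_+}\cdot C_s=1$. The main obstacle in rigorously executing this plan is confirming the geometric separation $C'_{p,+}\cap\bar N=\emptyset$, i.e., that the flow tubes $F_p,F_q$ can be chosen small enough that the entire curves $C_{p,\pm}$ and $C_{q,\pm}$ avoid $\bar F_p\cup\bar F_q$; this requires inspecting the construction in Section~\ref{s:connect-sum} together with the initial choice of $\mathcal{O}_p,\mathcal{O}_q$ at the beginning of the present section.
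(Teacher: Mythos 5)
Your proof follows essentially the same route as the paper's: establish well-definedness, evaluate the count for curves near $C'_{p,+}$ using Corollary~\ref{c:flow-loc-diff}, and extend to all of $\M_{p,+}\setminus\{C'_{p,+}\}$ by constancy along the connected family (a step the paper leaves implicit in ``thus''). Your well-definedness argument is heavier than the paper's, which simply notes that the flow segments have boundary in $\partial U_p'$ and interior in $U_p'$ while the curves $C$ are contained in the open set $U_p'$ with asymptotic limits disjoint from the segments, but it reaches the right conclusion.

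Your flagged concern about $C'_{p,+}\cap\bar N=\emptyset$ is addressed by the construction: $\bar F_p$ was chosen inside $\mathcal{O}_p$, and $\mathcal{O}_p$ consists of open flow segments $\psi_t(z)$, $t\in(0,f(z))$, all lying in the open set $U_p$, which is disjoint from the boundary leaves $C_{p,\pm}$; combined with the disjointness of the $p$- and $q$-families this gives $C_{p,\pm}\cap(\bar F_p\cup\bar F_q)=\emptyset$, hence $C'_{p,\pm}$ avoids $\bar N$. However, this same observation shows the claim ``$\gamma_{p'_\pm}\setminus\{p'_\pm\}$ lies in $N$'' is not correct: $p'_\pm$ lies on $C'_{p,\pm}$, which is at positive distance from $\bar N$, so the initial arc of $\gamma_{p'_\pm}$ between $p'_\pm$ and $i(\partial F_p)$ lies outside $\bar N$. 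The conclusion $C'_{p,+}\cap\gamma_{p'_-}=\emptyset$ is nevertheless correct, because that initial arc is a segment of the unsurgered Reeb flow lying in $\mathcal{O}_p$ (which avoids $C_{p,\pm}$ by construction), while the remaining arc of $\gamma_{p'_-}$ lies in $\bar N$.
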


\begin{proof}
To see this, we first recall the flow segments have boundary in
the boundary $U_{p}'$ and interior in the interior of $U_{p}'$.
Since curves $C\in\M_{p, +}\setminus\br{C'_{p, +}}$ limit at the punctures
to periodic orbits disjoint from $\gamma_{p'_{\pm}}$
and, by definition of $\M_{p, +}$,
are contained entirely within $U_{p}'$, it follows that
the intersection number of such a curve with the flow segments $\gamma_{p'_{\pm}}$ is well defined.
Moreover, since the image of $C'_{p, +}$ compactifies to a map that is disjoint from
$\gamma_{p'_{-}}$, it follows that curves in $\M_{p, +}$ nearby
to $C'_{p, +}$ are also disjoint from $\gamma_{p'_{-}}$ and thus
\[
\gamma_{p'_{-}}\cdot C=0
\]
for $C\in\M_{p, +}\setminus\br{C'_{p, +}}$.
On the other hand, it follows from Corollary \ref{c:flow-loc-diff}
---
specifically that a flow line passing through $C'_{p, +}$ gives a local diffeomorphism
with a neighborhood of $C'_{p, +}$ in $\M(C'_{p, +})$
---
that curves
in $\M_{p, +}$ nearby to $C'_{p, +}$ have a single transverse intersection with
$\gamma_{p'_{+}}$.
Thus
\[
\gamma_{p'_{+}}\cdot C=1
\]
for $C\in\M_{p, +}\setminus\br{C'_{p, +}}$ as claimed.
\end{proof}

We will denote by $Z_{p}$ the nontrivial component of $C_{\infty, +}$
guaranteed by Lemma \ref{l:unique-component}.

\begin{lemma}\label{l:compactness-step3}
The pseudoholomorphic building
$C_{\infty, +}$ is a height-$2$ building with
$Z_{p}$ on the top level, and $P^{+}$ and trivial cylinders over the negative orbits of
$C'_{p, \pm}$ on the bottom level, i.e.\
\[
C_{\infty, +}
=Z_{p}\odot(P^{+}\amalg_{z\in\Gamma_{p}^{-}}\R\times \gamma_{z})
\]
where $\Gamma_{p}^{-}$ is the set of negative punctures of $C'_{p, \pm}$
and $\gamma_{z}$ is the asymptotic limit of $C'_{p, \pm}$ at $z\in\Gamma_{p}^{-}$.
\end{lemma}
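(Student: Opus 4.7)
The plan is to first pin down the combinatorial shape of the limit building $C_{\infty,+}$, and then to use the flow-segment intersection numbers computed in Lemma~\ref{l:flow-segment-intersection} to determine which of the two planes $P^{\pm}$ occupies the bottom level.

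To fix the shape, I will model $C_{\infty,+}$ by a tree whose vertices are the nontrivial components and trivial cylinders and whose edges correspond to breaking orbits shared between adjacent levels. Lemma~\ref{l:unique-component} supplies exactly one nontrivial component $Z_p$ distinct from the $P^{\pm}$, while Lemma~\ref{l:positive-puncture-hyperbolic} forces any component with $\gamma_0$ as a positive asymptotic limit to be one of $P^{\pm}$; since each such plane has $\gamma_0$ as its only (positive) puncture, it is necessarily a leaf on the bottom level. The properties of SFT convergence recorded in Proposition~\ref{p:compactness-local-C-infinity} give that the positive asymptotic limits of the top level and the negative asymptotic limits of the bottom level of $C_{\infty,+}$ coincide with those of $C'_{p,+}$, all of which are odd by Theorem~\ref{t:foliating-curves}. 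Since $\gamma_0$ is even, no trivial cylinder over $\gamma_0$ can appear on either the top or bottom level, so stability of the building together with the fact that it has only the two nontrivial components $Z_p$ and some $P^{\epsilon}$ with $\epsilon\in\{+,-\}$ forces the height to equal two, with $Z_p$ as the sole top-level component. The bottom level must then consist of $P^{\epsilon}$ together with trivial cylinders $\R\times\gamma_z$ for each $z\in\Gamma_p^-$, and the punctures of $Z_p$ must correspondingly be positive at the positive limits of $C'_{p,+}$ and negative at $\{\gamma_0\}\cup\{\gamma_z:z\in\Gamma_p^-\}$.

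To rule out $\epsilon=-$ I argue by contradiction using Lemma~\ref{l:flow-segment-intersection}, which says that $\gamma_{p'_-}\cdot C_k=0$ for every $C_k\in\M_{p,+}\setminus\{C'_{p,+}\}$. Suppose $P^-$ appears as the bottom-level plane and let $x_0=\tl\psi_{\delta_-}(p'_-)\in P^-$ be the endpoint of $\gamma_{p'_-}$ lying on $P^-$; this is an interior point of the plane, bounded away from $\gamma_0$, so in a small neighborhood $W$ of $x_0$ the plane $P^-$ is an embedded disk transverse to the Reeb flow separating $W$ into two open pieces, one of which is contained in $U_p'$. The $C^\infty_{loc}$ convergence of reparametrizations on the bottom level provided by Proposition~\ref{p:compactness-local-C-infinity} implies that the image of $C_k$ in $W$ converges (after shrinking $W$) uniformly to $P^-\cap W$; meanwhile $C_k\cap P^-=\emptyset$ (since $C_k*P^-=0$ by homotopy invariance and the two curves have distinct projections) and $C_k\subset U_p'$, so for large $k$ the portion of $C_k$ visible in $W$ is a smooth surface lying strictly on the $U_p'$-side of $P^-$ and tending to $P^-$. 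Since $\gamma_{p'_-}$ enters $W$ from the $U_p'$-side and terminates transversely at $x_0$, it must cross this nearby copy of $C_k$ at some interior point of the segment, yielding $\gamma_{p'_-}\cdot C_k\ge 1$ and contradicting Lemma~\ref{l:flow-segment-intersection}. Hence $\epsilon=+$ and the description of $C_{\infty,+}$ asserted in the lemma follows.

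The main obstacle is this last step: verifying that the projected images of $C_k$ in $M'$ genuinely approach $P^-$ locally as a smooth two-sided surface from the $U_p'$-side, which is what makes the forced crossing of $\gamma_{p'_-}$ rigorous. This requires combining the SFT convergence on the bottom level with the two global constraints $C_k*P^-=0$ and $C_k\subset U_p'$, together with transversality of the Reeb flow to $P^-$ at the interior point $x_0$ (so that the local picture is genuinely two-sided by $P^-$). By comparison, the earlier shape-identification step is a largely combinatorial consequence of what has already been established, provided the bookkeeping of the sign and parity of the asymptotic limits of $C'_{p,+}$ is handled carefully.
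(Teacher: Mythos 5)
Your plan is in the right spirit, and your step~2 (the forced crossing at $x_0$ to rule out $P^-$) is essentially the same geometric mechanism the paper uses, so that half of the argument is sound. However, there is a genuine gap in your step~1: you assert, without justification, that $C_{\infty,+}$ ``has only the two nontrivial components $Z_p$ and some $P^{\epsilon}$.'' Lemma~\ref{l:unique-component} only bounds the number of nontrivial components \emph{not equal to} $P^+$ or $P^-$ (it is exactly one); it says nothing about how many times the planes themselves appear. A priori $Z_p$ could have several negative ends at $\gamma_0$, giving a building with multiple copies of $P^+$ and/or $P^-$ on the bottom level, and nothing in your combinatorial analysis (parity, leaf vertices, stability) rules this out — the planes are leaves of the modeling tree, but a tree can have many leaves. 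The paper explicitly flags this as the issue to be overcome, both in the proof itself and in the remark that follows it.

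The fix is in fact already contained in your own step~2: the exact argument you run for $P^-$ also applies to $P^+$ and yields a multiplicity bound rather than just a yes/no answer. For each component of $C_{\infty,+}$ equal to $P^{\pm}$, SFT convergence (Proposition~\ref{p:compactness-local-C-infinity}) produces, for large $k$, a pseudoholomorphic disk in $C_k$ that $C^\infty$-converges to a disk in $P^{\pm}$ through the point where $\gamma_{p'_{\pm}}$ meets $P^{\pm}$; transversality of $\gamma_{p'_{\pm}}$ to $P^{\pm}$ then forces a transverse intersection of $\gamma_{p'_{\pm}}$ with $C_k$, and since $\R\times\gamma_{p'_{\pm}}$ is pseudoholomorphic every such intersection contributes positively. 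Hence the number of copies of $P^{\pm}$ is bounded above by $\gamma_{p'_{\pm}}\cdot C_k$, which is $1$ for $P^+$ and $0$ for $P^-$ by Lemma~\ref{l:flow-segment-intersection}. Combined with Lemmas~\ref{l:non-compactness} and~\ref{l:unique-component} (at least two nontrivial components, precisely one not a plane) this gives exactly two nontrivial components, $Z_p$ and $P^+$, and then stability produces the height-$2$ structure you describe. This is precisely the paper's route; the only difference in your writeup is that you ran the crossing argument only to \emph{choose} a plane rather than to \emph{count} planes, which left the multiplicity unaddressed.
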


\begin{proof}
We argue that the number of times that $P^{\pm}$ appears as a component of the building
$C_{\infty, +}$ bounds 
the intersection number
$\gamma_{p'_{\pm}}\cdot C$ for $C\in\M_{p, +}\setminus\br{C'_{p, +}}$ from below.
This with the previous lemma will show that $P^{+}$ can appear at most once 
as a component of $C_{\infty, +}$, while $P^{-}$ can't appear.
Assume there
are components $D_{i}$
of $C_{\infty, +}$ with $D_{i}=P^{+}$ (mod the $\R$-action),
and choose a parametrization $u:\C\to M'$ of $P^{+}$ with $u$ mapping $0\in\C$ to the
intersection of $\gamma_{p'_{+}}$ with $P^{+}$.
Then according to Proposition \ref{p:compactness-local-C-infinity}, if
$C_{k}=[\Sigma_{k}, j_{k}, \Gamma_{k}, a_{k}, u_{k}]\in\M_{p, +}$
is a sequence which converges in the sense of \cite{behwz} to $C_{\infty, +}$, then there
is a sequence of holomorphic embeddings
\[
\phi_{k}=\amalg_{i}\phi_{k, i}:\amalg_{i}\DD\to \Sigma_{k},
\]
with $\DD=\br{z\in\C\,|\, |z|\le 1}$ the unit disk in $\C$,
so that $u_{k}\circ\phi_{k, i}$ converges in $C^{\infty}$ to
$u|_{\DD}$.  Since $u_{k}\circ\phi_{k}$ has image in $U_{p}'$ and $\gamma_{p'_{+}}$
meets $P^{+}$ transversely, it follows from the $C^{\infty}$ convergence
that the pseudoholomorphic disks
$u_{k}\circ\phi_{k, i}$ intersect $\gamma_{p'_{+}}$ transversely for sufficiently large $k$.
This shows that for large $k$, $C_{k}$ has at least one transverse intersection with
$\gamma_{p'_{+}}$ for each component of $C_{\infty, +}$ equal to $P^{+}$.
However, since $\R\times \gamma_{p'_{+}}$ is pseudoholomorphic, every intersection of
$\gamma_{p'_{+}}$ with a curve $C_{k}$ contributes positively to the intersection number
$\gamma_{p'_{+}}\cdot C_{k}$, so the number
of components of $C_{\infty, +}$ equal to $P^{+}$ is bounded above by
$\gamma_{p'_{+}}\cdot C_{k}=1$.
An analogous argument shows that the number of components of $C_{\infty, +}$ equal to
$P^{-}$ is bounded above by $\gamma_{p'_{-}}\cdot C_{k}=0$.
Thus $P^{+}$ can appear at most once as a component of $C_{\infty, +}$ and
$P^{-}$ does not appear.

Finally, we observe that we have shown in
Lemmas \ref{l:non-compactness} and \ref{l:compactness-step1}
that $C_{\infty, +}$ has at least two levels,
at least two nontrivial components, and precisely one nontrivial component $Z_{p}$ distinct from
$P^{+}$ and $P^{-}$.  This combined with the results of the previous paragraph then shows that
$C_{\infty, +}$ has precisely two nontrivial components: $Z_{p}$ and $P^{+}$.
Moreover, by stability, $C_{\infty, +}$ must be a
height-$2$ building with $Z_{p}$ on the top level and $P^{+}$ along with
cylinders over the other negative orbits of $Z_{p}$ on the bottom level.
\end{proof}

\begin{remark}
We remark that Theorem \ref{t:dir-of-approach}
can also be used to bound the number of planes appearing in $C_{\infty, +}$,
but an additional argument is then needed to show that the unique plane appearing
in $C_{\infty, +}$ is $P^{+}$ and not $P^{-}$.
Indeed,
if the total number of times $P^{+}$ and $P^{-}$ appeared as components of
$C_{\infty, +}$ were greater than one,
we could argue that
the unique nontrivial component of $C_{\infty, +}$
distinct from $P^{\pm}$
guaranteed from Lemma \ref{l:unique-component}
must have multiple negative ends approaching $\gamma_{0}$.  Moreover,
these ends would have to be disjoint in $M$ since all components of $C'_{p, +}$
are nicely embedded.  We could then argue, as in Lemma \ref{l:m-p-plus-building}
below, that these ends would have to approach $\gamma_{0}$ in the same direction.
Theorem \ref{t:dir-of-approach} would then yield a contradiction,
so we could conclude that there is at most a single plane in the building $C_{\infty, +}$.
\end{remark}

\begin{remark}
The work leading up to the proof of
Lemma \ref{l:compactness-step3}
can be simplified somewhat
if one applies \cite[Theorem 1.10]{hwz:prop3} to perturb $J'$ slightly so that
all moduli spaces of embedded curves are smooth manifolds of the appropriate
dimension as predicted by the index formula \eqref{e:index-formula}.
Since automatic transversality holds for the planes $P^{+}$ and $P^{-}$ 
such a perturbation of $J'$ could be carried out while maintaining the existence of planes with the
properties we need.
Given such a $J'$, it would follow immediately from
\cite[Theorem 2]{wendl2010-compactness}
that $C_{\infty, +}$ is a height-$2$ building with precisely one nontrivial component
on each level.
It would then remain to argue, as we have above, that the nontrivial component 
of the lower level is $P^{+}$.
However,
we do not need to assume that such a generic $J'$ has been chosen since
our argument shows that 
the theorem holds for
any $J'$ for which there exist planes $P^{+}$ and $P^{-}$ with the prescribed properties.
\end{remark}

Finally, we conclude that $\overline{\M_{p, +}}\setminus\M_{p, +}$ consists
of a single building, completing the proof of the first statement in
Proposition \ref{p:m-p-boundary-building}.

\begin{lemma}\label{l:m-p-plus-building}
With $C_{\infty, +}$, $Z_{p}$, $\Gamma_{p}^{-}$, and $\gamma_{z}$ as above, we have that
\[
\overline{\M_{p, +}}\setminus\M_{p, +}=
C_{\infty, +}
=Z_{p}\odot(P^{+}\amalg_{z\in\Gamma_{p}^{-}}\R\times \gamma_{z}).
\]
\end{lemma}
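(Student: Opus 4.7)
The plan is to endow the SFT-compactification $\overline{\M_{p,+}}$ with the structure of a compact, connected topological $1$-manifold with boundary, and then apply the classification of such spaces: $\overline{\M_{p,+}}$ is homeomorphic to $[0,1]$, and hence has precisely two boundary points. One boundary point is $C'_{p,+}$, so the other is the unique building in $\overline{\M_{p,+}}\setminus\M_{p,+}$, which by Lemma \ref{l:compactness-step3} must have the stated form.

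I would first establish that every nontrivial component of a building in $\overline{\M_{p,+}}\setminus\M_{p,+}$ is regular in the sense of automatic transversality. Additivity of the Fredholm index under SFT-breaking together with additivity of the arithmetic genus for a non-nodal building whose modeling graph is a tree give
\[
\ind(Z_p)=\ind(C'_{p,+})-\ind(P^+)=2-1=1, \qquad g(Z_p)=0.
\]
The asymptotic limits of $Z_p$ are $\gamma_0$ (negative, even) and the negative orbits of $C'_{p,+}$, the latter being odd since $C'_{p,+}$ is a foliating curve, so $\#\Gamma_{\mathrm{even}}(Z_p)=1$. The criterion $\ind(Z_p)\ge 2g(Z_p)+\#\Gamma_{\mathrm{even}}(Z_p)=1$ from Theorem \ref{t:automatic-transversality} thus holds with equality, making $\ndbar{Z_p}$ surjective. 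The same criterion applies to $P^+$, so $\ndbar{P^+}$ is surjective as well.

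Next I would invoke a standard gluing theorem for stable, nicely-embedded pseudoholomorphic buildings whose nontrivial levels are regular: near any $C_{\infty,+}=Z'\odot(P^+\amalg_z\R\times\gamma_z)\in\overline{\M_{p,+}}\setminus\M_{p,+}$, the SFT-compactified moduli space is locally homeomorphic to $[0,\epsilon)$, the open end corresponding to curves obtained by gluing at the breaking orbit $\gamma_0$. The glued curves lie in $U_p'$, and hence in $\M_{p,+}$, by Lemma \ref{l:curves-subset} together with the observation that $C_{\infty,+}$ is an SFT-limit of a sequence already contained in $\M_{p,+}\subset U_p'$, so the gluing direction compatible with that sequence stays on the $U_p'$-side of the wall $P^+\cup\gamma_0\cup P^-$. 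Combining these local boundary charts with the smooth $1$-manifold structure on $\M(C'_{p,+})/\R$ at interior points (Corollary \ref{c:mod-spaces-mod-r}) and the boundary chart at $C'_{p,+}$ furnished by the flow-line parametrization (Corollary \ref{c:flow-loc-diff}) equips $\overline{\M_{p,+}}$ with the structure of a compact (by Theorem \ref{t:spec-cpct}) connected (since $\M_{p,+}$ is connected) topological $1$-manifold with boundary. The classification of compact connected $1$-manifolds with boundary then forces $\overline{\M_{p,+}}\cong[0,1]$, and so there is exactly one boundary point other than $C'_{p,+}$.

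The main obstacle is the gluing step. While gluing of regular pseudoholomorphic curves at nondegenerate orbits is well-established, the breaking orbit $\gamma_0$ here is even and both $P^+$ and the corresponding negative end of $Z_p$ meet it with extremal winding (for $Z_p$ this follows from the bidirectional-orbit analysis in Lemma \ref{l:bidirectional-orbits} together with the uniqueness of the eigenspace controlling approach identified in Lemma \ref{l:same-or-opposite}). One must verify that the pre-gluing is unobstructed in this even-orbit setting, so the glued family is genuinely a one-parameter family modulo the $\R$-action; once the local $[0,\epsilon)$ model is in hand, the remainder of the proof is purely topological.
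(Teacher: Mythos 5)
Your approach is genuinely different from the paper's. The paper proves uniqueness of the boundary building purely by intersection theory: given two limits $Z_p \odot (P^+ \amalg \cdots)$ and $Z_p' \odot (P^+ \amalg \cdots)$, one first shows $Z_p$ and $Z_p'$ approach $\gamma_0$ with extremal winding (via Theorem \ref{t:no-isect}), hence in the same or opposite direction (Lemma \ref{l:same-or-opposite}); a tangency argument (the $U_p'$-side of $\gamma_0$ is determined by the sign of the eigenvector $e_+$ of smallest positive eigenvalue, which is nowhere tangent to the wall $P^+ \cup \gamma_0 \cup P^-$) forces the same direction; then Lemma \ref{l:building-isect} says $Z_p$ and $Z_p'$ are disjoint or identical, while Theorem \ref{t:dir-of-approach} says same-direction approach forces intersection, so $Z_p = Z_p'$. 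No gluing appears anywhere in the paper.

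Your route, via classifying $\overline{\M_{p,+}}$ as a compact connected $1$-manifold with boundary, is a standard and appealing alternative, but it imports machinery the paper is explicitly organized to avoid: an SFT gluing theorem with a local $[0,\epsilon)$ chart at a two-level nicely-embedded building. As you note yourself, that is the main obstacle, and the paper nowhere proves or cites such a result. (In fact, the Remark following Lemma \ref{l:compactness-step3} points out that \emph{if} one is willing to make a generic perturbation of $J'$ then parts of the argument simplify via \cite{wendl2010-compactness}, and then explains that the paper deliberately does not do so.) Your automatic-transversality computation for $Z_p$ and $P^+$ is correct, and your reasoning that glued curves would land in $U_p'$ and hence in $\M_{p,+}$ via Lemma \ref{l:curves-subset} is reasonable, so the skeleton is sound \emph{given} a gluing theorem. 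What you'd buy with this approach is a cleaner global picture of the moduli space (and you'd get, for free, that every boundary building is actually an SFT limit of curves in $\M_{p,+}$); what the paper's argument buys is self-containedness — it needs only the intersection-theoretic results already developed in Section \ref{s:intersections}, and in particular the direction-of-approach Theorem \ref{t:dir-of-approach}, which is precisely the new tool the paper advertises in its introduction.

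One more caveat about your argument: even granting the gluing theorem, you should be careful with the assertion that $\overline{\M_{p,+}}$ is a compact $1$-manifold with boundary without first knowing the boundary is a single point (or finitely many points). Theorem \ref{t:spec-cpct} only gives subsequential compactness; a priori, as one travels to the end of the half-open interval $\M_{p,+}$, different subsequences could converge to different buildings, and the closure could fail to be a manifold at the boundary. The local $[0,\epsilon)$ gluing chart rules this out (any nearby curve belongs to the glued family), but that means the gluing step is load-bearing for the manifold structure itself, not merely a convenience.
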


\begin{proof}
Let $C_{\infty, +}'\in \overline{\M_{p, +}}\setminus\M_{p, +}$.
We seek to show that $C_{\infty, +}'=C_{\infty, +}$.
The argument above applies to show that
$C_{\infty, +}=Z_{p}'\odot(P^{+}\amalg_{z\in\Gamma_{p}^{-}}\R\times \gamma_{z})$
for some nicely-embedded curve $Z_{p}'$.
It only remains to show that $Z_{p}'=Z_{p}$.
We first argue that $Z_{p}$ and $Z_{p}'$ must approach $\gamma_{0}$
in the same direction.
To see this, we first note that
since $C_{\infty, +}$ and $C_{\infty, +}'$ are nicely-embedded buildings
and thus $Z_{p}$ and $Z_{p}'$ are disjoint from $P^{\pm}$, it follows from
condition (3c) of Theorem \ref{t:no-isect} that
$Z_{p}'$ and $Z_{p}$
approach $\gamma_{0}$ with winding equal to that of $P^{\pm}$,
and thus equal to $\fl{\mu^{\Phi}(\gamma_{0})/2}=\mu^{\Phi}(\gamma_{0})/2$.
Thus, according to Lemma \ref{l:same-or-opposite},
$Z_{p}'$ and $Z_{p}$ approach $\gamma_{0}$ in either the same or the opposite direction,
with approach governed by a nonzero multiple
of an eigenvector $e_{+}$ of $\A_{\gamma_{0}, J}$
with smallest possible positive eigenvalue.
However, the boundary of $U_{p}'$ at $\gamma_{0}$,
being given nearby by the planes $P^{+}$ and $P^{-}$,
is tangent to the largest negative eigenspace $\vspan\br{e_{-}}$
of $\A_{\gamma_{0}, J}$, since eigenvectors with largest negative
eigenvalue govern the approach of $P^{+}$ and $P^{-}$.
Since,
according to \cite[Lemma 3.5]{hwz:prop2},
eigenvectors with the same winding and different eigenvalue are pointwise
linearly independent,
$e_{+}$ 
is nowhere tangent to the boundary of $U_{p}'$.
Thus if $e_{+}$ points into $U_{p}'$, $-e_{+}$ points into $U_{q}'$ and vice versa.
Since, $Z_{p}'$ and $Z_{p}$ both approach $\gamma_{0}$ from within
$U_{p}'$ we can thus conclude that $Z_{p}'$ and $Z_{p}$ approach $\gamma_{0}$
in the same direction.

We next
claim the fact that $Z_{p}'$ and $Z_{p}$ approach $\gamma_{0}$ in the same direction
leads to a contradiction unless $Z_{p}'=Z_{p}$.
Let $C_{k}$ and $C_{k}'$ be sequences in $\M_{p, +}$ converging
respectively to $C_{\infty, +}$ and $C_{\infty, +}'$.
Then, for any $j$ and $k$, we have that
\[
C_{j}*C_{k}'=C'_{p, +}*C'_{p, +}=0
\]
by homotopy invariance of the holomorphic intersection product.
It then follows from Lemma \ref{l:building-isect} that $Z_{p}'$ and $Z_{p}$
are either identical or disjoint.  However, since $Z_{p}'$ and $Z_{p}$ approach
$\gamma_{0}$ in the same direction, it follows from Theorem \ref{t:dir-of-approach}
that $Z_{p}'$ and $Z_{p}$ must intersect.  We thus arrive at a contradiction unless
$Z_{p}'=Z_{p}$.
\end{proof}

We now complete the proof of Proposition \ref{p:m-p-boundary-building}.

\begin{lemma}
The set
$\overline{\M_{p, -}}\setminus\M_{p, -}$
consists of a single height-$2$ building with
$Z_{p}$ on the top level, and $P^{-}$ and trivial cylinders over the negative orbits of $C'_{p, \pm}$ on the bottom
layer, i.e.\
\[
\overline{\M_{p, -}}\setminus\M_{p, -}
=Z_{p}\odot(P^{-}\amalg_{z\in\Gamma_{p}^{-}}\R\times \gamma_{z})
\]
where $\Gamma_{p}^{-}$ is the set of negative punctures of $C'_{p, \pm}$
and $\gamma_{z}$ is the asymptotic limit of $C'_{p, \pm}$ at $z\in\Gamma_{p}^{-}$.
\end{lemma}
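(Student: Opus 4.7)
The plan is to mirror, step by step, the chain of arguments (Lemmas \ref{l:non-compactness}--\ref{l:m-p-plus-building}) used to describe $\overline{\M_{p,+}}\setminus\M_{p,+}$, and then compare the two limit buildings in order to identify their top-level components. First I would observe that the hypotheses used on $C'_{p,+}$ in Lemmas \ref{l:non-compactness}, \ref{l:compactness-step1}, \ref{l:positive-puncture-hyperbolic}, and \ref{l:unique-component} depend only on the following facts: $C'_{p,\pm}$ is a nicely-embedded index-$2$ foliating curve, the set $U_{p}'$ contains no periodic orbits other than $\gamma_0$ (Lemma \ref{l:no-orbits}), and every curve of $\M_{p,\pm}$ lies in $\overline{U_p'}$ (Lemma \ref{l:curves-subset}). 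All of these hold equally for $C'_{p,-}$, so the same arguments show that every element of $\overline{\M_{p,-}}\setminus\M_{p,-}$ is a stable, connected, nicely-embedded, non-nodal building $C'_{\infty,-}$ with at least two levels; that the only possible bidirectional asymptotic limit is $(\gamma_0,1,1)$ and $\gamma_0$ must be bidirectional if it appears at all; that any component with $\gamma_0$ as a positive puncture is either $P^+$ or $P^-$; and that there is exactly one nontrivial component $Z'$ of $C'_{\infty,-}$ that is neither $P^+$ nor $P^-$.

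The essential new input is the analog of Lemma \ref{l:flow-segment-intersection}. By construction, $\gamma_{p'_-}$ is the flow segment emerging from $p'_-\in C'_{p,-}$ into $U_p'$ and terminating on $P^-$, whereas $\gamma_{p'_+}$ enters $U_p'$ only via $C'_{p,+}$. Combined with the local diffeomorphism from Corollary \ref{c:flow-loc-diff} applied at a point of $C'_{p,-}$, this yields
\[
\gamma_{p'_-}\cdot C=1, \qquad \gamma_{p'_+}\cdot C=0
\]
for every $C\in\M_{p,-}\setminus\{C'_{p,-}\}$. Repeating the counting argument of Lemma \ref{l:compactness-step3} with the roles of $P^+$ and $P^-$ interchanged (using $C^\infty_{\mathrm{loc}}$-convergence from Proposition \ref{p:compactness-local-C-infinity} to produce one transverse intersection of each $P^\pm$-component of $C'_{\infty,-}$ with the corresponding flow segment) then forces $P^-$ to appear exactly once and $P^+$ not to appear. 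Stability now implies that $C'_{\infty,-}$ is a height-$2$ building with $Z'$ on top and $P^-$ together with trivial cylinders over the negative orbits $\gamma_z$, $z\in\Gamma_p^-$, on the bottom, so
\[
C'_{\infty,-}=Z'\odot\bigl(P^-\amalg_{z\in\Gamma_p^-}\R\times\gamma_z\bigr).
\]

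The final and main point is to show that $Z'=Z_p$ (and hence that $C'_{\infty,-}$ is uniquely determined). For this I would compare $Z'$ with the curve $Z_p$ produced in Lemma \ref{l:m-p-plus-building}. Pick sequences $C_k\in\M_{p,+}$ and $C'_k\in\M_{p,-}$ converging, respectively, to $C_{\infty,+}$ and $C'_{\infty,-}$. Since both $C_{p,+}$ and $C_{p,-}$ belong to the stable finite energy foliation $\F$ pulled back to $M'$, we have $C'_{p,+}*C'_{p,-}=0$, so by homotopy invariance $C_j*C'_k=0$ for all $j,k$. Lemma \ref{l:building-isect} then shows that the projections of $Z_p$ and $Z'$ to $M'$ are either identical or disjoint. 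On the other hand, both $Z_p$ and $Z'$ are nicely-embedded curves in $C_{\infty,\pm}$-type buildings that are disjoint from $P^{\pm}$; by condition $(3c)$ of Theorem \ref{t:no-isect}, each approaches $\gamma_0$ with the same winding as $P^\pm$, namely $\mu^\Phi(\gamma_0)/2$, so Lemma \ref{l:same-or-opposite} applies to $Z_p$ and $Z'$. Since $\gamma_0$ is even with two one-dimensional eigenspaces (largest negative and smallest positive) of equal winding, and since $P^+\cup\gamma_0\cup P^-$ is tangent to the largest negative eigenspace along $\gamma_0$, the smallest-positive eigenvector that governs the approach of $Z_p$ and $Z'$ points transversely across $P^+\cup\gamma_0\cup P^-$. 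Since both $Z_p$ and $Z'$ approach $\gamma_0$ from within $U_p'$, they therefore approach in the \emph{same} direction. Theorem \ref{t:dir-of-approach} then forces their projections to intersect, and the disjoint-or-identical dichotomy above leaves no option but $Z'=Z_p$. The hard step, which I expect to require the most care, is exactly this use of the eigenvector/direction-of-approach mechanism: one must make sure that the approach of $Z_p$ and $Z'$ from the $U_p'$-side of the sphere $P^+\cup\gamma_0\cup P^-$ really does translate into ``same direction'' in the sense of Lemma \ref{l:same-or-opposite}, since ``direction'' is determined by a positive-eigenvalue eigenvector whose sign is not visible from the large-$|s|$ geometry of $P^\pm$ alone.
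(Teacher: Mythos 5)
Your proposal is correct and follows essentially the same route as the paper. The paper's own proof of this lemma is deliberately brief: it cites Lemmas \ref{l:compactness-step1}--\ref{l:compactness-step3} to conclude that $C_{\infty,-}=Z_p'\odot(P^-\amalg_{z\in\Gamma_p^-}\R\times\gamma_z)$ for some nicely-embedded $Z_p'$, then argues "exactly as in Lemma \ref{l:m-p-plus-building}" that $Z_p'$ and $Z_p$ approach $\gamma_0$ in the same direction, uses homotopy invariance to get $C_j * C_k' = C'_{p,+}*C'_{p,-}=0$, and applies Lemma \ref{l:building-isect} with Theorem \ref{t:dir-of-approach} to force $Z_p'=Z_p$. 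You have filled in exactly those cross-referenced steps — the mirrored flow-segment count forcing $P^-$ (not $P^+$) onto the bottom level, the winding/eigenvector argument showing the two top curves approach from the same side of the sphere $P^+\cup\gamma_0\cup P^-$ and hence in the same direction — and your flagged concern about translating "approach from the $U_p'$-side" into "same direction in the sense of Lemma \ref{l:same-or-opposite}" is precisely the point the paper handles via \cite[Lemma 3.5]{hwz:prop2} (pointwise linear independence of eigenvectors with equal winding but different eigenvalue), which you also identify.
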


\begin{proof}
Let $C_{\infty, -}\in \overline{\M_{p, -}}\setminus\M_{p, -}$.
An analogous argument
to that in Lemmas \ref{l:compactness-step1}-\ref{l:compactness-step3}
as above gives us that
$C_{\infty, -}=Z_{p}'\odot(P^{-}\amalg_{z\in\Gamma_{p}^{-}}\R\times \gamma_{z})$
for some nicely-embedded curve $Z_{p}'$.
It remain only to show that $Z_{p}'=Z_{p}$.
This follows from an argument analogous to that in Lemma \ref{l:m-p-plus-building} above.
Indeed, we can argue exactly as in Lemma \ref{l:m-p-plus-building} that
$Z_{p}'$ and $Z_{p}$ approach $\gamma_{0}$ in the same direction.
Then, with $C_{k}$ and $C_{k}'$ sequences respectively in $\M_{p, +}$ and $\M_{p, -}$
converging respectively to $C_{\infty, +}$ and $C_{\infty, -}$,
we have that
\[
C_{j}*C_{k}'=C'_{p, +}*C'_{p, -}=0
\]
by homotopy invariance of the holomorphic intersection product.
We can thus again apply Lemma \ref{l:building-isect} and Theorem \ref{t:dir-of-approach}
to arrive at a contradiction unless $Z_{p}'=Z_{p}$.
\end{proof}

Next, we define $\M_{q,\pm}$ analogously to $\M_{p, \pm}$,
that is, we define
$\M_{q, \pm}\subset\M(C_{q, \pm})/\R$
to be
the connected
component of
\[
\br{C_{q, \pm}}\cup \br{C\in\M(C_{q, \pm})/\R\,|\, C\subset U_{q}'}
\]
containing $C_{q, \pm}$.
An analogous argument to above shows the following.

\begin{proposition}\label{p:m-q-boundary-building}
There exists a pseudoholomorphic curve $Z_{q}\in\M(\lambda', J')/\R$
which is embedded in $M$ and disjoint from $P^{\pm}$
so that
\[
\overline{\M_{q, +}}\setminus\M_{q, +}
=Z_{q}\odot(P^{-}\amalg_{z\in\Gamma_{q}^{-}}\R\times \gamma_{z})
\]
and
\[
\overline{\M_{q, -}}\setminus\M_{q, -}
=Z_{q}\odot(P^{+}\amalg_{z\in\Gamma_{q}^{-}}\R\times \gamma_{z})
\]
where $\Gamma_{q}^{-}$ is the set of negative punctures of $C'_{q, \pm}$
and $\gamma_{z}$ is the asymptotic limit of $C'_{q, \pm}$ at $z\in\Gamma_{q}^{-}$.
\end{proposition}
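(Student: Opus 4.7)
The plan is to follow the proof of Proposition \ref{p:m-p-boundary-building} essentially verbatim, with the only substantive change being to track how the flow-line geometry on the $q$-side swaps the roles of $P^+$ and $P^-$ in the limit buildings. First I would establish the direct analogues of Lemma \ref{l:no-orbits}, Lemma \ref{l:curves-subset}, and Lemma \ref{l:non-compactness} for $\M_{q,\pm}$, with $U_p'$ replaced by $U_q'$ and $C'_{p,\pm}$ replaced by $C'_{q,\pm}$. The arguments go through unchanged because the construction of $U_q'$, the ambient contact/almost complex data in the neck, and the structure of the boundary of $U_q'$ (consisting of $C'_{q,\pm}$, $P^\pm$, and their asymptotic limits) are the $q$-side mirrors of those on the $p$-side.

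Next I would repeat the compactness analysis of Lemmas \ref{l:compactness-step1}--\ref{l:unique-component} for a building $C_{\infty,\pm}\in\overline{\M_{q,\pm}}\setminus\M_{q,\pm}$: exactly as in Lemma \ref{l:compactness-step1}, the only simple orbit in $\bar U_q'$ apart from those touched by $C'_{q,\pm}$ is $\gamma_0$, so $\gamma_0$ is the only possible bidirectional asymptotic limit of $C_{\infty,\pm}$; then as in Lemma \ref{l:positive-puncture-hyperbolic}, any nontrivial component of $C_{\infty,\pm}$ with $\gamma_0$ as a positive asymptotic limit must equal $P^+$ or $P^-$ (using $C'_{q,\pm}*P^\pm=0$, Lemma \ref{l:building-isect}, Lemma \ref{l:same-or-opposite} and Theorem \ref{t:dir-of-approach}); and as in Lemma \ref{l:unique-component} there is exactly one nontrivial component $Z_q$ distinct from $P^\pm$.

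The essential new input is the flow-segment count. By item (\ref{i:flow-lines}) of the construction, the flow-segments through $q'_\pm$ satisfy $\tl\psi_{\ep_+}(q'_+)\in P^-$ and $\tl\psi_{\ep_-}(q'_-)\in P^+$ (the opposite pairing to the $p$-side). Repeating the proof of Lemma \ref{l:flow-segment-intersection} with $\gamma_{q'_\pm}$ in place of $\gamma_{p'_\pm}$ gives that for $C\in\M_{q,+}\setminus\br{C'_{q,+}}$ one has $\gamma_{q'_+}\cdot C=1$ and $\gamma_{q'_-}\cdot C=0$, and symmetrically for $\M_{q,-}$. Running the argument of Lemma \ref{l:compactness-step3}---each occurrence of $P^\pm$ as a component of $C_{\infty,\pm}$ contributes a positive count to $\gamma_{q'_\pm}\cdot C_k$ via $C^\infty_{loc}$-convergence on small disks---one concludes that the lower level of $C_{\infty,+}$ contains $P^-$ (not $P^+$) together with trivial cylinders over the remaining negative orbits, and the lower level of $C_{\infty,-}$ contains $P^+$.

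Finally, uniqueness of $Z_q$ proceeds exactly as in Lemma \ref{l:m-p-plus-building}: any two candidate components $Z_q,Z_q'$ must approach $\gamma_0$ with extremal winding by condition 3(c) of Theorem \ref{t:no-isect}, and since they both approach from within $U_q'$, the linear-independence argument for the extremal eigenspaces of $\A_{\gamma_0,J}$ together with Lemma \ref{l:same-or-opposite} forces them to approach in the same direction; homotopy invariance gives $C_j*C_k'=C'_{q,\pm}*C'_{q,\pm}=0$, so Lemma \ref{l:building-isect} makes $Z_q$ and $Z_q'$ either identical or disjoint, and Theorem \ref{t:dir-of-approach} rules out disjointness. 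I do not expect any genuine obstacle here: the entire argument is a faithful parallel of Proposition \ref{p:m-p-boundary-building}, and the only point requiring care is the bookkeeping of the flow-segment-to-plane pairing, which is fixed by the construction in Theorem \ref{t:contact-connect-sum}.
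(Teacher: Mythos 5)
Your proposal is correct and follows the same approach the paper takes, which is to repeat the proof of Proposition \ref{p:m-p-boundary-building} on the $q$-side. You have correctly identified the only non-trivial bookkeeping point: by the construction in Theorem \ref{t:contact-connect-sum}, the flow segment $\gamma_{q'_{+}}$ terminates on $P^{-}$ (rather than $P^{+}$) and $\gamma_{q'_{-}}$ terminates on $P^{+}$, so the analogue of Lemma \ref{l:flow-segment-intersection} gives $\gamma_{q'_{+}}\cdot C=1$ and $\gamma_{q'_{-}}\cdot C=0$ for $C\in\M_{q,+}\setminus\{C'_{q,+}\}$, bounding the count of $P^{-}$ components of the limit building by $1$ and that of $P^{+}$ components by $0$, which forces the plane appearing in $\overline{\M_{q,+}}\setminus\M_{q,+}$ to be $P^{-}$, and symmetrically $P^{+}$ for $\overline{\M_{q,-}}\setminus\M_{q,-}$.
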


Finally, to complete the argument, we need to show that the curves
from the old foliation together with the curves from the compactified
moduli spaces from above give a foliation of the surgered manifold
with the same energy as the original foliation.
We work with the definition of finite energy foliation given by
Corollary \ref{c:fol-alt-def}.
More precisely we
consider the collection of simple periodic orbits $B'\subset M'$ defined
by
\[
B'=i(B)\cup\br{\gamma_{0}}
\]
where $B$ is the set of periodic orbits with covers appearing as asymptotic limits of the
original foliation $\F$,
and we 
define a collection of curves $\F'\subset\M(\lambda', J')/\R$
by including:
\begin{itemize}
\item The curves in the moduli spaces $\M_{p, +}$, $\M_{p, -}$, $\M_{q, +}$, and $\M_{q, -}$.

\item The curves $\br{Z_{p},\, Z_{q},\, P^{+},\, P^{-}}$ constructed above.

\item The push forward via the inclusion $i:M\setminus\br{p, q}\to M'$
of any curve in the original foliation $\F$ which
lies in the closure of the complement of the regions
$U_{p}$ and $U_{q}$,
i.e.\ if $C=[\Sigma, j, \Gamma, a, u]\in\F$ and $C\subset M\setminus (U_{p}\cup U_{q})$
then we define $i(C)=[\Sigma, j, \Gamma, a, i\circ u]$.
Since $i^{*}J=J'$ outside of $U_{p}$ and $U_{q}$ it follows that $i(C)$
is a pseudoholomorphic curve in $M'$.
\end{itemize}

We now argue that $\F'$ so defined is a stable finite energy foliation for $M'$.
We need to show that there is a unique curve from $\F'$ through every point of
$M'\setminus B'$,
that the index of any nontrivial curve in $\F'$ is $1$ or $2$, that the intersection numbers
between any two nontrivial curves in $\F'$ vanish, and that
$E(\F')=E(\F)$.

We first address the fact that the energies of the two collections of curves are the same.

\begin{lemma}
With $\F'\subset\M(\lambda', J')/\R$ the collection of curves defined above,
we have that
\[
E(\F'):=\sup_{C\in\F'}E(C)=\sup_{C\in\F}E(C)=:E(\F).
\]
\end{lemma}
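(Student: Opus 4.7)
The plan is to use the formula $E(C) = \sum_{z \in \Gamma^+} T_z$ (established in the proof of Lemma \ref{l:finite-index-0-and-1}) to reduce the comparison of energies to a comparison of positive asymptotic periods. I would then check both $E(\F') \le E(\F)$ and $E(\F') \ge E(\F)$ by classifying the curves of $\F'$ according to their construction.

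First I would handle $E(\F') \le E(\F)$. The pushforward curves $i(C)$ for $C \in \F$ lying in $M \setminus (U_p \cup U_q)$ share their positive asymptotic limits, and hence their energy, with $C$. Curves in $\M_{p, \pm}$ are relatively homotopic to $C'_{p, \pm} = i(C_{p, \pm})$, and the latter have the same positive asymptotic limits as $C_{p, \pm}$, so every such curve has energy $E(C_{p, \pm}) \le E(\F)$; the same reasoning applies to $\M_{q, \pm}$. By Proposition \ref{p:m-p-boundary-building} (resp.\ Proposition \ref{p:m-q-boundary-building}), the positive asymptotic limits of $Z_p$ (resp.\ $Z_q$) match those of $C_{p, +}$ (resp.\ $C_{q, +}$), whence $E(Z_p) = E(C_{p, +}) \le E(\F)$ and similarly for $Z_q$. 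The only remaining case is $P^{\pm}$, whose sole positive asymptotic limit is $\gamma_0$, so $E(P^{\pm}) = T_{\gamma_0}$, the period of $\gamma_0$. To bound this I would apply nonnegativity of the $d\lambda$-energy to $Z_p$: by Proposition \ref{p:m-p-boundary-building} the negative punctures of $Z_p$ consist of the negative punctures of $C_{p, +}$ together with one end at $\gamma_0$, so
\[
0 \le E_{d\lambda}(Z_p) = E(C_{p, +}) - \sum_{z \in \Gamma_p^-} T_z - T_{\gamma_0},
\]
which yields $T_{\gamma_0} \le E(C_{p, +}) \le E(\F)$.

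For the reverse inequality $E(\F') \ge E(\F)$, any curve $C \in \F$ either lies in $M \setminus (U_p \cup U_q)$, in which case $i(C) \in \F'$ has the same energy, or it lies in $\M(C_p)/\R \cup \M(C_q)/\R$; in the latter case relative homotopy invariance of the asymptotic data gives $E(C) = E(C_{p, \pm})$ or $E(C_{q, \pm})$, which is realized in $\F'$ by $C'_{p, \pm} \in \M_{p, \pm}$ or $C'_{q, \pm} \in \M_{q, \pm}$. Taking the supremum gives $E(\F) \le E(\F')$, completing the proof. The only substantive point is the bound on $T_{\gamma_0}$, which I expect to be the main obstacle; it rests on the building decomposition in Proposition \ref{p:m-p-boundary-building}, after which everything reduces to homotopy invariance of the asymptotic data and the standard $d\lambda$-energy identity.
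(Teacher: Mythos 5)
Your proof is correct and follows essentially the same route as the paper: reduce energy to the sum of positive asymptotic periods, handle pushforward and homotopic curves by homotopy invariance, bound $E(Z_p)$, $E(Z_q)$ via Propositions \ref{p:m-p-boundary-building}--\ref{p:m-q-boundary-building}, and bound $E(P^{\pm}) = T_{\gamma_0}$ using nonnegativity of the $d\lambda$-energy of $Z_p$. Your rendering of the $T_{\gamma_0}$ estimate is just a slightly more explicit unpacking of the paper's one-line appeal to $E(P^{\pm}) \le E(Z_p) \le E(\F)$.
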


\begin{proof}
We recall from the proof of Lemma \ref{l:finite-index-0-and-1} that
the energy $E(C)$ of a curve
$C=[\Sigma, j, \Gamma, da, u]\in\M(\lambda, J)/\R$,
defined by \eqref{e:energy-def},
is given by the sum of the periods of the
orbits that are asymptotic limits of the positive punctures of $C$.
Since all curves $C\in\F$ have asymptotic limits in the region where
$i^{*}\lambda'=\lambda$ and, by construction, either
satisfy $i(C)\in\F'$ or are homotopic to a curve satisfying this,
every curve in $\F$ has energy equal to that of some curve in $\F'$.
We conclude $E(F)\le E(\F')$.

Conversely, every curve in $C'\in\F'\setminus\br{Z_{p}, Z_{q}, P^{+}, P^{-}}$
is, by construction, either of the form
$C'=i(C)$ for some $C\in\F$ or is homotopic to a curve of this form.
Thus every curve $C'\in\F'\setminus\br{Z_{p}, Z_{q}, P^{+}, P^{-}}$
has energy equal to that of some curve in $\F$ and we conclude that
$E(C')\le E(\F)$.  Moreover, by
Propositions \ref{p:m-p-boundary-building} and \ref{p:m-q-boundary-building},
$Z_{p}$ and $Z_{q}$ have positive
punctures identical respectively to
$C'_{p, \pm}=i(C_{p, \pm})$ and $C'_{q, \pm}=i(C_{q, \pm})$
and thus $E(Z_{p})\le  E(\F)$ and $E(Z_{q})\le E(\F)$ as well.
Finally, we recall from the proof of Lemma \ref{l:finite-index-0-and-1}
that the $d\lambda$-energy of a curve, defined by \eqref{e:dlambda-energy-def},
is always nonnegative and
is given by the difference between the sums of periods of the positive asymptotic limits
and those of the negative asymptotic limits.
Thus, if follows immediately from
Propositions \ref{p:m-p-boundary-building}
that $E(P^{\pm})\le E(Z_{p})\le E(\F)$.
We conclude that $E(F')\le E(\F)$ and, with the previous paragraph, this completes the proof.
\end{proof}

We next address the fact that all nontrivial curves in $\F'$ have index
$1$ or $2$.

\begin{lemma}
Let $C\in\F'$.  Then $\ind(C)\in\br{1, 2}$.
\end{lemma}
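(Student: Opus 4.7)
The plan is to verify the bound $\ind(C) \in \{1,2\}$ separately for each of the four types of curve making up $\F'$: (i) the pushforwards $i(C_0)$ with $C_0 \in \F$ lying in $M \setminus (U_p \cup U_q)$; (ii) the curves in $\M_{p,\pm} \cup \M_{q,\pm}$; (iii) the planes $P^+$ and $P^-$; and (iv) the ``extra'' curves $Z_p$ and $Z_q$ produced by Propositions \ref{p:m-p-boundary-building} and \ref{p:m-q-boundary-building}.

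Cases (i) and (ii) are immediate from \eqref{e:index-formula} together with the fact that, outside $U_p \cup U_q$, the inclusion $i$ pulls $(\lambda', J')$ back to $(\lambda, J)$. For (i) this identifies the asymptotic limits of $i(C_0)$ with those of $C_0$ while preserving trivializations of the contact bundles along them, so the Conley--Zehnder indices and the relative first Chern number coincide, and hence $\ind(i(C_0)) = \ind(C_0) \in \{1,2\}$. For (ii), every $C \in \M_{p,\pm}$ is by construction in the same relative homotopy class as $C'_{p,\pm} = i(C_{p,\pm})$; homotopy invariance of the ingredients of \eqref{e:index-formula} then combines with case (i) to give $\ind(C) = \ind(C_{p,\pm}) = 2$, and similarly for $\M_{q,\pm}$.

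For (iii) I would apply Theorem \ref{t:self-gin-zero}. The planes $P^\pm$ are simple, are not trivial cylinders, and satisfy $P^\pm * P^\pm = 0$ by construction, and their unique asymptotic limit $\gamma_0$ is even by Lemma \ref{l:cz-computation}. Item (4) of Theorem \ref{t:self-gin-zero} then yields
\[
\ind(P^\pm) = \chi(S^2) - \#\Gamma_{even} = 2 - 1 = 1.
\]

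The main obstacle is case (iv), and the plan is to exploit additivity of the Fredholm index across the SFT-limit buildings of Propositions \ref{p:m-p-boundary-building} and \ref{p:m-q-boundary-building}. Consider $C_{\infty,+} = Z_p \odot (P^+ \amalg_{z \in \Gamma_p^-} \R \times \gamma_z)$, the SFT-limit of a sequence in $\M_{p,+}$ whose members each have index $2$ by case (ii). After choosing trivializations $\Phi$ along every orbit appearing in the building and extending them trivially over each $\R \times \gamma_z$, the relative first Chern number becomes additive under the topological glueing,
\[
c_1^\Phi((C'_{p,+})^*\xi') = c_1^\Phi(Z_p^*\xi') + c_1^\Phi((P^+)^*\xi'),
\]
since each trivial cylinder contributes zero in this trivialization. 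A direct substitution into \eqref{e:index-formula}, using $\mu^+(Z_p) = \mu^+(C'_{p,+})$ and $\mu^-(Z_p) = \mu^-(C'_{p,+}) + \mu^\Phi(\gamma_0)$, then yields
\[
\ind(Z_p) + \ind(P^+) = \ind(C'_{p,+}) = 2,
\]
with the contributions of the trivial cylinders cancelling against the Euler characteristic correction at each pair of matched punctures. Combined with $\ind(P^+) = 1$ from case (iii) this gives $\ind(Z_p) = 1$, and the identical argument applied to the limit building from Proposition \ref{p:m-q-boundary-building} gives $\ind(Z_q) = 1$. The delicate part is keeping careful track of the trivialization and glueing conventions needed to establish additivity of $c_1^\Phi$ and of the Conley--Zehnder sums at the matched punctures, but once this bookkeeping is in place the identity above is a one-line consequence of \eqref{e:index-formula}.
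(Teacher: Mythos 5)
Your proposal is correct and follows essentially the same route as the paper. The paper's proof invokes additivity of the Fredholm index across the non-nodal limit buildings in a single sentence, whereas you carry out the bookkeeping with $c_1^\Phi$, the Conley--Zehnder sums, and the trivial cylinders explicitly; otherwise the case decomposition, the appeal to Theorem~\ref{t:self-gin-zero} for $\ind(P^\pm)=1$, and the final deduction $\ind(Z_p)=\ind(Z_q)=1$ from $\ind(Z_p)+\ind(P^+)=\ind(C'_{p,+})=2$ coincide with the paper's argument.
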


\begin{proof}
Except for
$C\in\br{Z_{p}, Z_{q}, P^{+}, P^{-}}$,
this is immediate
from the fact that all nontrivial curves in $\F$ have index $1$ or $2$.
To see that $\ind(P^{\pm})=1$ we use the fact
$P^{\pm}*P^{\pm}=0$.
Then according to Theorem \ref{t:self-gin-zero}
\begin{align*}
\ind(P^{\pm})
&=\chi(S^{2})-\#\Gamma_{even} \\
&=2-1=1.
\end{align*}
The fact that $\ind(Z_{p})=\ind(Z_{q})=1$ then follows from
Propositions \ref{p:m-p-boundary-building} and \ref{p:m-q-boundary-building}.
Indeed, since the pseudoholomorphic buildings
$\overline{\M_{p, \pm}}\setminus\M_{p, \pm}$
and
$\overline{\M_{q, \pm}}\setminus\M_{q, \pm}$
have no nodes, the sum of the indices of the nontrivial components must
add to the index of a curve in $\M_{p, \pm}$ or $\M_{q, \pm}$.
Since such curves have index $2$, and we have just shown that $P^{\pm}$ have index
$1$, it follows that
$\ind(Z_{p})=\ind(Z_{q})=1$ as claimed.
\end{proof}

We next address the intersection numbers.  We start by showing that any two
distinct curves in $\F'$ are disjoint.

\begin{lemma}
Let $C_{1}$, $C_{2}\in\F'$ be distinct curves.  Then
$C_{1}$ and $C_{2}$ are disjoint.
\end{lemma}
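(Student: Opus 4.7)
The plan is to carry out a case analysis based on the construction of $\F'$. Every curve in $\F'$ belongs to one of three classes: (a) a pushforward $i(D)$ of a curve $D\in\F$ with $D\subset M\setminus(U_{p}\cup U_{q})$; (b) a curve in one of the four moduli spaces $\M_{p,+}$, $\M_{p,-}$, $\M_{q,+}$, $\M_{q,-}$; (c) one of the four distinguished curves $\br{Z_{p}, Z_{q}, P^{+}, P^{-}}$. The two principal tools are homotopy invariance of the $*$-product (Theorem \ref{t:hin-prop}) combined with Corollary \ref{c:no-isect-gin-zero}, and Lemma \ref{l:building-isect} applied to the SFT-convergent sequences producing the buildings of Propositions \ref{p:m-p-boundary-building} and \ref{p:m-q-boundary-building}.

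The backbone is a geometric partition of $M'$. By construction the open sets $U_{p}'$ and $U_{q}'$ are disjoint, separated in the neck by the sphere $P^{+}\cup\gamma_{0}\cup P^{-}$, and $i(M\setminus(U_{p}\cup U_{q}))$ lies in $M'\setminus N$ and is disjoint from both $U_{p}'$ and $U_{q}'$. Every curve of class (b) other than the boundary curves $C'_{p,\pm}$, $C'_{q,\pm}$ lies in the appropriate open set $U_{p}'$ or $U_{q}'$, while the boundary curves themselves, being pushforwards, lie in $i(M\setminus(U_{p}\cup U_{q}))\subset M'\setminus N$; each plane $P^{\pm}$ lies on the common boundary of $U_{p}'$ and $U_{q}'$ inside $N$. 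This settles immediately, by ambient disjointness, the following pairs: any pair of distinct pushforwards (disjoint in $M$ by the foliation property of $\F$ via Corollary \ref{c:fol-alt-def}, hence disjoint after applying the injection $i$); any pair consisting of a pushforward together with a class (b) or class (c) curve in $\br{P^{+},P^{-}}$; any pair of class (b) curves with one attached to $p$ and the other to $q$; and any pair of a class (b) curve with one of $P^{\pm}$.

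For pairs $(C_{1},C_{2})$ where both curves lie in $\M_{p,+}\cup\M_{p,-}$ (or both in the $q$-moduli spaces), homotopy invariance gives $C_{1}*C_{2}=C'_{p,\epsilon_{1}}*C'_{p,\epsilon_{2}}$ for some signs $\epsilon_{1},\epsilon_{2}\in\br{+,-}$. Since $C'_{p,\pm}=i(C_{p,\pm})$ and the contact form and almost complex structure are preserved by $i$ outside the surgery region (which contains none of the asymptotic limits of $C_{p,\pm}$), this intersection number equals $C_{p,\epsilon_{1}}*C_{p,\epsilon_{2}}=0$ by the foliation property of $\F$. Each curve in $\M_{p,\pm}\cup\M_{q,\pm}$ is a foliating curve by Theorem \ref{t:foliating-curves}, so the winding bound \eqref{e:wind-infinity-inequality} is achieved at every puncture, and Corollary \ref{c:no-isect-gin-zero} yields disjointness. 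The disjointness $P^{+}\cap P^{-}=\emptyset$ is explicit in the construction of Theorem \ref{t:contact-connect-sum}.

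The main obstacle is the family of cases involving $Z_{p}$ or $Z_{q}$, since these curves are not members of the same moduli space as anything else in $\F'$. For such a pair $(Z_{p},C)$ with $C\in\F'\setminus\br{Z_{p}}$, choose a sequence $C_{k}\in\M_{p,+}\setminus\br{C'_{p,+}}$ converging in SFT to the building $Z_{p}\odot\bp{P^{+}\amalg_{z\in\Gamma_{p}^{-}}\R\times\gamma_{z}}$ of Proposition \ref{p:m-p-boundary-building}, and a sequence $D_{k}$ converging to $C$: the constant sequence $D_{k}=C$ when $C\ne Z_{q}$, or a sequence $D_{k}\in\M_{q,+}\setminus\br{C'_{q,+}}$ converging to the $Z_{q}$-building when $C=Z_{q}$. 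In every case $C_{k}*D_{k}=0$: homotopy invariance reduces this product to one of the vanishings $C'_{p,+}*i(D)$, $C'_{p,+}*C'_{p,\pm}$, $C'_{p,+}*C'_{q,\pm}$, or $C'_{p,+}*P^{\pm}$, each of which is either a consequence of the foliation property of $\F$ (for the first three) or holds because $C'_{p,+}$ and $P^{\pm}$ lie in ambient-disjoint regions with no common asymptotic orbits (for the last). Lemma \ref{l:building-isect} then implies that $Z_{p}$ and $C$ have either disjoint or identical projections to $M'$, and identity is ruled out case by case: $Z_{p}$ has $\gamma_{0}$ as a negative asymptotic limit and so enters $N$, whereas no pushforward or class (b) curve has $\gamma_{0}$ as an asymptotic limit; $Z_{p}$ has positive punctures at every positive limit of $C'_{p,+}$, hence more than the single puncture that each plane $P^{\pm}$ possesses; and $Z_{p}$ lies in $\overline{U_{p}'}$ while $Z_{q}$ lies in $\overline{U_{q}'}$, closures whose intersection lies inside the sphere $P^{+}\cup\gamma_{0}\cup P^{-}$ and is touched by neither $Z_{p}$ nor $Z_{q}$. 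The identical argument, with the roles of $p$ and $q$ exchanged, disposes of every remaining pair involving $Z_{q}$, completing the proof.
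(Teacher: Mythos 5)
Your proof is correct and reaches the conclusion by the same two-step mechanism the paper uses: homotopy invariance of the $*$-product shows that any two curves outside $\br{Z_{p}, Z_{q}, P^{+}, P^{-}}$ have vanishing intersection number (hence disjoint projections), and Lemma \ref{l:building-isect}, applied to the SFT-convergent sequences underlying Propositions \ref{p:m-p-boundary-building} and \ref{p:m-q-boundary-building}, handles every pair involving the four exceptional curves. The paper compresses all of this into two sentences; your proof supplies the case breakdown, the ambient-disjointness shortcuts, and the explicit arguments ruling out the ``identical'' alternative in Lemma \ref{l:building-isect}, which the paper leaves implicit.
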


\begin{proof}
Since any two nontrivial curves in the original foliation $\F$
have vanishing intersection number,
it follows that any two nontrivial curves in
\[
C_{1},\, C_{2}\in\F'\setminus\br{Z_{p}, Z_{q}, P^{+}, P^{-}}
\]
have vanishing intersection number.  Thus any two such
distinct curves are disjoint in $M$.
Moreover, since the curves $Z_{p}$, $Z_{q}$, $P^{+}$, and $P^{-}$
occur as components of limiting buildings of sequences of curves in
$\F'\setminus\br{Z_{p}, Z_{q}, P^{+}, P^{-}}$ we can immediately conclude from
Lemma \ref{l:building-isect} that any two distinct, nontrivial curves in $\F'$
are disjoint.
\end{proof}

\begin{lemma}\label{l:isect-zero}
Let $C_{1}$, $C_{2}\in\F'$.  Then
$C_{1}*C_{2}=0$.
\end{lemma}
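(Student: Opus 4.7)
The plan is to split $(C_1,C_2)\in\F'\times\F'$ into cases by the type of the curves involved and reduce the substantive content to establishing that every curve in $\F'$ achieves the extremal asymptotic winding bound \eqref{e:wind-infinity-inequality} at each of its punctures. Once this winding fact is in hand, the previous lemma (which gave disjointness of distinct projections) together with Corollaries~\ref{c:embedding-gin-zero} and~\ref{c:no-isect-gin-zero} will yield $C_1*C_2=0$ in every case, since these corollaries require only embeddedness of the projection, disjointness of projections for distinct curves, and the extremal winding bound at every puncture.

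The easy cases can be dispatched directly. When $C_1$ and $C_2$ are each either a pushforward along $i$ of a curve from $\F$ or a curve in one of the moduli spaces $\M(C'_{p,\pm})/\R$ or $\M(C'_{q,\pm})/\R$, each is relatively homotopic in $M'$ to a pushforward of a curve from the original foliation $\F$, so the homotopy invariance of the $*$-product in Theorem~\ref{t:hin-prop} identifies $C_1*C_2$ with an intersection product in $\F$, which vanishes by hypothesis. The values $P^\pm*P^\pm$ and $P^+*P^-$ were arranged to vanish in Theorem~\ref{t:contact-connect-sum}(5). Finally, an intersection of $P^\pm$ with a curve of the above type involves disjoint images and no common asymptotic limits, so the intersection product is zero directly.

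The remaining cases all involve $Z_p$ or $Z_q$, and here I plan to apply Corollary~\ref{c:embedding-gin-zero} when $C_1=C_2$ and Corollary~\ref{c:no-isect-gin-zero} when $C_1\ne C_2$. Each curve of $\F'$ projects to an embedding in $M'$: for pushforwards from $\F$ by Theorem~\ref{t:foliating-curves}, for $P^\pm$ by construction, and for $Z_p$, $Z_q$ because they are nicely embedded components of the limiting buildings in Propositions~\ref{p:m-p-boundary-building} and~\ref{p:m-q-boundary-building}. Distinct curves in $\F'$ have disjoint projections by the previous lemma. So the entire task reduces to verifying the extremal winding bound for $Z_p$ (the argument for $Z_q$ being identical). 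The positive punctures of $Z_p$ coincide, via the $C^\infty_{loc}$ convergence of Proposition~\ref{p:compactness-local-C-infinity}, with the positive punctures of $C'_{p,+}\in\F$, and so inherit extremal winding from Theorem~\ref{t:foliating-curves}. The negative puncture of $Z_p$ at $\gamma_0$ is matched with the positive puncture of the nicely embedded plane $P^+$; applying condition 2(c) of Theorem~\ref{t:no-isect} to the nicely embedded pair $Z_p,P^+$ with disjoint projections --- as in the proof of Lemma~\ref{l:bidirectional-orbits} --- forces extremal winding at that puncture. For the remaining negative punctures of $Z_p$ at the orbits $\gamma_z$, $z\in\Gamma_p^-$, the building connects $Z_p$ through a trivial cylinder $\R\times\gamma_z$ to the overall bottom negative end, which inherits its asymptotic behaviour from the extremal negative end of the sequence $C_k\in\M_{p,+}$; consequently the winding of $Z_p$ at $\gamma_z$ equals that of $C_k$ at $\gamma_z$ and is therefore extremal.

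The main obstacle will be this final step: extremal winding at the negative punctures of $Z_p$ joined via trivial cylinders to the overall bottom ends. Lemma~\ref{l:bidirectional-orbits} does not apply there because the adjacent bottom-level component is trivial rather than a nontrivial curve, so one must unwind the SFT convergence in Proposition~\ref{p:compactness-local-C-infinity} and use the nicely embedded structure of the whole building to identify the asymptotic eigenvector of $Z_p$ at $\gamma_z$ with that of $C_k$ at its corresponding negative puncture. With extremal winding then in hand at every puncture of every curve of $\F'$, Corollaries~\ref{c:embedding-gin-zero} and~\ref{c:no-isect-gin-zero} close out the remaining cases and complete the proof that $\F'$ is a stable finite energy foliation of $(M',\lambda',J')$ with $E(\F')=E(\F)$.
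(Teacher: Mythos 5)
Your proposal correctly reduces the lemma to establishing extremal asymptotic winding for $Z_p$ and $Z_q$ at every puncture, and your treatment of the negative end at $\gamma_0$ via condition 2(c) of Theorem~\ref{t:no-isect} applied to the nicely embedded, disjoint pair $(Z_p, P^+)$ is sound. But the remaining punctures are handled by a mechanism that doesn't actually work. Asymptotic winding $\winfty^\Phi$ is an invariant detected only in a deleted neighborhood of the puncture, and that is exactly the region that the $C^\infty_{loc}$ convergence of Proposition~\ref{p:compactness-local-C-infinity} leaves uncontrolled. So the claim that the positive ends of $Z_p$ "inherit" extremal winding from the curves $C_k\in\M_{p,+}$ via local convergence does not follow; the eigenvalue governing the approach (and with it the winding) can change across an SFT limit. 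The same objection applies to the negative ends of $Z_p$ that connect through trivial cylinders to the bottom of the building, a gap you flag but leave open. Nor does nicely embeddedness alone force extremal winding: compare Theorem~\ref{t:embedded-projection}, whose condition (3) asks only for equality of windings at matched ends, not extremality, so the building structure by itself is not enough.

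The paper sidesteps the SFT-convergence analysis entirely with an index count, and that count is the missing idea here. One has $\ind(Z_p)=1$ and exactly one puncture of $Z_p$ at an even orbit (namely $\gamma_0$), so
\[
\ind(Z_p)-\chi(S^2)+\#\Gamma_{even}=1-2+1=0.
\]
By the winding identity from \cite{hwz:prop2} (see equation 5.1 in \cite{siefring2011} and the discussion preceding Lemma 2.6 in \cite{abb-cie-hof}), $\tfrac{1}{2}\bigl(\ind(C)-\chi(\Sigma)+\#\Gamma_{even}\bigr)$ equals the nonnegative winding $\windpi$ of $\pi_\xi\circ du$ plus the sum of the nonnegative winding defects $\fl{\pm_z\mu^\Phi/2}-\pm_z\winfty^\Phi(u;z)$ over all punctures; vanishing of the left side therefore forces every defect to vanish, giving extremal winding at every puncture of $Z_p$ (and likewise $Z_q$). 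With that in hand, your remaining steps — disjointness of distinct curves from the previous lemma, then Corollaries~\ref{c:no-isect-gin-zero} and~\ref{c:embedding-gin-zero} — close out the argument exactly as the paper does.
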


\begin{proof}
As noted in the proof of the above lemma, this is immediate for any two curves
\[
C_{1},\, C_{2}\in\F'\setminus\br{Z_{p}, Z_{q}, P^{+}, P^{-}}.
\]
It remains to show that $C_{1}*C_{2}=0$ when one
or both of $C_{1}$, $C_{2}$ is equal to one of
$Z_{p}$, $Z_{q}$, $P^{+}$, or $P^{-}$.
We first observe that
$\ind(Z_{p})=\ind(Z_{q})=1$ and $Z_{p}$ and $Z_{q}$ each have
precisely one puncture asymptotic to an even orbit.
We thus have for $C\in\br{Z_{p}, Z_{q}}$ that
\[
\ind(C)-\chi(S^{2})+\#\Gamma_{even}=1-2+1=0,
\]
so it then follows from facts in \cite{hwz:prop2} 
that the bound in 
inequality \eqref{e:wind-infinity-inequality}
is achieved at each puncture of $Z_{p}$ and $Z_{q}$
(see also discussion preceding Lemma 2.6 in \cite{abb-cie-hof}
and equation 5.1 in \cite{siefring2011}).
Meanwhile, we know that the 
bound in inequality 
\eqref{e:wind-infinity-inequality}
is achieved at each puncture of
 every other curve in $\F'$ from Theorem \ref{t:self-gin-zero}.
Thus, by Corollary \ref{c:no-isect-gin-zero} and the previous lemma, we can conclude that
$C_{1}*C_{2}=0$ for any distinct $C_{1}$, $C_{2}\in\F'$.
Since we already know that $P^{\pm}*P^{\pm}=0$ by Theorem \ref{t:contact-connect-sum},
it remains to show $Z_{p}*Z_{p}=Z_{q}*Z_{q}=0$.
However, since $Z_{p}$ and $Z_{q}$ are embedded in $M'$ and, as observed above,
have extremal winding at each puncture, Corollary \ref{c:embedding-gin-zero}
implies that $Z_{p}*Z_{p}=Z_{q}*Z_{q}=0$.
This completes the proof that $C_{1}*C_{2}=0$ for any two curves
$C_{1}$, $C_{2}\in\F'$.
\end{proof}

It remains to show that there is a curve of $\F'$ through every point of $M'\setminus B'$.

\begin{lemma}
For every $x\in M'\setminus B'$
there is a curve $C\in\F'$ passing through $x$.
\end{lemma}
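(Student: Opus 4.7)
The plan is to partition $M'\setminus B'$ into four regions and exhibit a curve of $\F'$ through each point:
(A) the exterior $i(M\setminus(U_p\cup U_q))\setminus i(B)$,
(B) the planes $(P^+\cup P^-)\setminus\gamma_0$,
(C) the surgered region $U'_p$, and
(D) the symmetric region $U'_q$.
For (A), given $x=i(y)$ with $y\notin U_p\cup U_q\cup B$, take the unique curve $C\in\F$ through $y$. By construction $U_p$ is foliated by those curves of $\M(C_p)/\R\subset\F$ passing through $\psi_t(p)$ for $t\in(-\ep_0,\ep_0)$, each of which is contained in $U_p$; if $C$ met $U_p$ at a point $z$, uniqueness of the foliation through $z$ forces $C$ to coincide with the defining curve, hence $C\subset U_p$, contradicting $y\in C\setminus U_p$. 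The symmetric argument rules out $C\cap U_q$, so $i(C)\in\F'$ passes through $x$. Case (B) is immediate since $P^\pm\in\F'$.

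For (C) (and symmetrically (D)), set $\F'_p:=\M_{p,+}\cup\M_{p,-}\cup\{Z_p\}$ and
\[
S_p:=\{y\in U'_p : y\in C \text{ for some } C\in\F'_p\}.
\]
The goal is $S_p=U'_p$, which follows once $S_p$ is shown to be nonempty, closed, and open in the connected open set $U'_p$. Nonemptiness is clear, since any interior point of a curve in $\M_{p,+}\setminus\{C'_{p,+}\}$ lies in $U'_p$.

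For closedness, let $y_n\to y_\infty\in U'_p$ with $y_n\in C_n\in\F'_p$ and pass to subsequences. If $C_n\equiv Z_p$, then $y_\infty\in\overline{Z_p}\cap U'_p=Z_p$ since the extra points of $\overline{Z_p}$ are asymptotic orbits lying in $B'$. If $C_n\in\M_{p,\pm}$ converges inside $\M(C'_{p,\pm})/\R$ to some $C_\infty$, then $y_\infty\in C_\infty$; Lemma \ref{l:curves-subset} together with connectedness of $\M_{p,\pm}$ gives $C_\infty\in\M_{p,\pm}$. Otherwise $C_n$ SFT-converges to the boundary building $Z_p\odot(P^\pm\amalg_{z\in\Gamma_p^-}\R\times\gamma_z)$ of Proposition \ref{p:m-p-boundary-building}; Proposition \ref{p:compactness-local-C-infinity}(2) then places $y_\infty$ in the continuous image of the building, which projects to $Z_p\cup P^\pm\cup(\text{orbits})$. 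Since $P^\pm$ lies in $\partial U'_p$ (the sphere $P^+\cup\gamma_0\cup P^-$ separates $U$ into $U'_p$ and $U'_q$) and the orbits lie in $B'$, we conclude $y_\infty\in Z_p\subset S_p$.

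For openness at a point $y$ on some $C\in\M_{p,\pm}$, Corollary \ref{c:mod-spaces-mod-r} gives that $ev:\M_1(C'_{p,\pm})/\R\to M'$ is an embedding with open image, and Lemma \ref{l:curves-subset} plus connectedness keep the nearby curves in $\M_{p,\pm}$, so a whole neighborhood of $y$ is in $S_p$. Openness at $y\in Z_p$ is the crux: since $Z_p$ is embedded and transverse to the Reeb flow (Theorem \ref{t:foliating-curves}), $Z_p$ locally separates a neighborhood of $y$ in $M'$ into two half-spaces. Curves in $\M_{p,+}$ that are SFT-close to $Z_p\odot(P^+\amalg\cdots)$ are $C^\infty_{loc}$-close to $Z_p$ near $y$, hence graphs over $Z_p$, and mutual disjointness forces them to foliate one side of $Z_p$; analogously for $\M_{p,-}$. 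Because $P^+$ and $P^-$ approach $\gamma_0$ from opposite directions by Theorem \ref{t:contact-connect-sum}(5), the two families $\M_{p,\pm}$ must sit on opposite sides of $Z_p$ near the end approaching $\gamma_0$ (else Theorem \ref{t:dir-of-approach} would force intersections), and this opposite-sidedness propagates along $Z_p$ by connectedness. Consequently $\tl\psi_t(y)$ for small $t>0$ lies on a curve in $\M_{p,+}$ and for small $t<0$ on a curve in $\M_{p,-}$ (or with signs swapped); applying Corollary \ref{c:flow-loc-diff} to each of these index-$2$ curves produces an open neighborhood of $y$ swept by $\M_{p,+}\cup\M_{p,-}$ and contained in $S_p$.

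The hardest step is this last one: translating the local opposite-direction asymptotic data of $P^\pm$ at $\gamma_0$ into the global geometric statement that the two one-parameter families $\M_{p,+}$ and $\M_{p,-}$ approach the embedded surface $Z_p$ from opposite sides. Once this opposite-sidedness holds at a single point of $Z_p$, the global statement follows from connectedness, and the openness of $S_p$ at every $Z_p$-point is established.
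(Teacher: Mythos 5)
Your overall strategy -- show that the set swept by $\M_{p,+}\cup\M_{p,-}\cup\br{Z_p}$ is nonempty, open, and closed in the connected set $U_p'$ -- parallels the paper's, which runs the analogous clopen argument but on $U_p'\setminus Z_p$ and with the two families $\M_{p,+}$, $\M_{p,-}$ kept separate. By keeping $Z_p$ in your set you have forced yourself to prove openness at points of $Z_p$, and that is exactly where the proposal has a genuine gap.

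The claim that $\M_{p,+}$ and $\M_{p,-}$ lie on opposite sides of $Z_p$ near $\gamma_0$, ``else Theorem~\ref{t:dir-of-approach} would force intersections,'' does not hold up as written. Theorem~\ref{t:dir-of-approach} concerns pairs of half-cylinders \emph{asymptotic to} the same even orbit with extremal winding; curves in $\M_{p,\pm}$ are not asymptotic to $\gamma_0$ at all -- $\gamma_0$ appears only as a breaking orbit in the limit buildings. The opposite directions of approach of $P^+$ and $P^-$ to $\gamma_0$ are statements about the bottom levels of the two limit buildings, not directly about which side of the top level $Z_p$ the nearby curves in $\M_{p,\pm}$ occupy; bridging that requires a careful analysis of the neck region near $\gamma_0$ which you acknowledge but do not carry out. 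The subsequent assertion that ``opposite-sidedness propagates along $Z_p$ by connectedness'' is logically fine as a propagation step, but it presupposes the pointwise statement established at some single point, which you have not done.

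The paper sidesteps this entirely. Working on $U_p'\setminus Z_p$, which has at most two connected components since $Z_p$ is an embedded hypersurface, it suffices to show $\M_{p,+}$ and $\M_{p,-}$ cannot cover the \emph{same} component, i.e.\ that no curve lies in both families. This is immediate from Lemma~\ref{l:flow-segment-intersection} (and its mirror for $\M_{p,-}$): a curve in $\M_{p,+}$ has intersection number $1$ with $\gamma_{p'_+}$ and $0$ with $\gamma_{p'_-}$, while a curve in $\M_{p,-}$ has these swapped, so no curve can be in both. These relative intersection counts with auxiliary flow segments are robust topological invariants that cleanly separate the two families and eliminate any need for a direction-of-approach analysis near $\gamma_0$. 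I would either switch to that argument, or, if you prefer your geometric picture, formulate and prove the pointwise opposite-sidedness claim for $\M_{p,\pm}$ near the end of $Z_p$ at $\gamma_0$ before invoking it -- as it stands, the cited theorem does not apply to the objects to which you are applying it.
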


\begin{proof}
By construction,
since $i^{*}J'=J$ outside of $U_{p}\cup U_{q}$ and since the boundaries
of $U_{p}$ and $U_{q}$ are made up of curves in the foliation $\F$,
it suffices to show there
is a curve through every point of $U_{p}'\setminus Z_{p}$ and
$U_{q}'\setminus Z_{q}$.  We will prove this for $U_{p}'\setminus Z_{p}$.  The argument for
$U_{q}'\setminus Z_{q}$ is identical.

We first define a subset
$U_{p, +}'$ (resp.\ $U_{p, -}'$)  of $U_{p}'\setminus Z_{p}$ to be
the sets of points in $U_{p}'\setminus Z_{p}$
having a curve of $\M_{p, +}$ (resp.\ $\M_{p, -}$) passing through them.
Then $U_{p, +}'$ and $U_{p, -}'$ are each nonempty (by construction of $\M_{p, \pm}$),
open (by Corollary \ref{c:mod-spaces-mod-r}),
and (relatively) closed (by compactness and Proposition \ref{p:m-p-boundary-building}).
Thus, $U_{p, +}'$ and $U_{p, -}'$ each form a connected component of $U_{p}'\setminus Z_{p}$.
Since $U_{p}'$ is connected and $Z_{p}$ is an embedded submanifold, 
$U_{p}'\setminus Z_{p}$ has at most two connected components.
Thus, the proof is completed unless $U_{p, +}'=U_{p, -}'$.
However, if $U_{p, +}'=U_{p, -}'$ there is a point
$x'\in U_{p}'\setminus Z_{p}$ with curves $C_{+}\in\M_{p, +}$ and $C_{-}\in\M_{p, -}$
passing through $x'$.  Since
$C_{+}*C_{-}=C'_{p, +}*C'_{p, -}=0$ by homotopy invariance of the intersection number
and Theorem \ref{t:intpositivity},
we must have that $C_{+}=C_{-}$, so we have found a curve
belonging to both $\M_{p, +}$ and $\M_{p, -}$.
But we've shown in Lemma \ref{l:flow-segment-intersection} that
the
intersection numbers $\gamma_{p'_{\pm}}\cdot C$ 
of curves $C\in\M_{p, +}$ with the flow segments $\gamma_{p'_{\pm}}$ are well defined
and satisfy
\[
\gamma_{p'_{+}}\cdot C=1 \qquad \gamma_{p'_{-}}\cdot C=0,
\]
while an analogous argument shows that for curves $C\in\M_{p, -}$ the intersection numbers
$\gamma_{p'_{\pm}}\cdot C$ are well defined
and satisfy
\[
\gamma_{p'_{+}}\cdot C=0 \qquad \gamma_{p'_{-}}\cdot C=1.
\]
Thus, a curve belonging to both $\M_{p, +}$ and $\M_{p, -}$
would lead to a contradiction, and this completes the proof that there is at least
one curve of $\F'$ through every point of $M'\setminus B'$.
\end{proof}

\appendix
\section{Additional details for Section \ref{s:connect-sum}}\label{a:details}

In this section we collect some of the more straightforward but tedious computations
supporting claims made in Section \ref{s:connect-sum}.

\begin{lemma}\label{l:smoothstuff}
Consider $S^{2}$ equipped with polar coordinate
$\phi\in\R/2\pi\Z$ and azimuthal coordinate $\theta\in[0, \pi]$.
The following define smooth tensor fields on $S^{2}$:
\begin{enumerate}
\item $\sin^{2}\theta$
\item $\cos\theta$
\item $\sin\theta\,d\theta$
\item $\dph$ (and $\dph$ vanishes for $\theta\in\br{0, \pi}$)
\item $\sin^{2}\theta\,d\phi$
\item $\sin\theta\,\dth$ (and $\sin\theta\,\dth$ vanishes for  $\theta\in\br{0, \pi}$)
\end{enumerate}
\end{lemma}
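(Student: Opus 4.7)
My plan is to verify smoothness by passing to the standard smooth embedding
\[
\iota:S^{2}\hookrightarrow\R^{3},\qquad \iota(\theta,\phi)=(\sin\theta\cos\phi,\ \sin\theta\sin\phi,\ \cos\theta),
\]
and exhibiting each tensor in terms of the ambient Cartesian coordinates $(x,y,z)$. Since $\iota$ is a smooth embedding, a tensor field on $S^{2}$ is smooth precisely when it arises as the restriction (respectively pullback, or tangential projection in the case of vector fields) of a smooth tensor field on $\R^{3}$ defined in a neighborhood of $\iota(S^{2})$. Away from $\theta\in\{0,\pi\}$ the $(\theta,\phi)$ coordinates already constitute a smooth chart, so the only content of the lemma is that each listed expression extends smoothly across the poles.

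For the scalar functions, (1) and (2) are immediate from the identities $\sin^{2}\theta=x^{2}+y^{2}$ and $\cos\theta=z$, both of which are polynomials in the ambient coordinates. For the $1$-forms I will differentiate these identities: (3) follows from $\sin\theta\,d\theta=-d(\cos\theta)=-dz$, while (5) follows by first noting that, away from the poles, $d\phi=(x^{2}+y^{2})^{-1}(x\,dy-y\,dx)$, whence $\sin^{2}\theta\,d\phi=x\,dy-y\,dx$ on $\iota(S^{2})$, and the right-hand side is globally smooth on $\R^{3}$.

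For the vector fields (4) and (6), I will differentiate $\iota$ directly. A routine computation gives
\[
\partial_{\phi}=-y\,\partial_{x}+x\,\partial_{y},
\qquad
\sin\theta\,\partial_{\theta}=xz\,\partial_{x}+yz\,\partial_{y}-(x^{2}+y^{2})\,\partial_{z},
\]
both of which are smooth vector fields on $\R^{3}$ tangent to $S^{2}$, and both vanish on the $z$-axis, i.e.\ at the poles $\theta\in\{0,\pi\}$ where $x=y=0$. This gives simultaneously the smoothness claim and the vanishing statements in (4) and (6).

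There is essentially no obstacle here — the only thing to watch is that one must not interpret, e.g., $d\phi$ or $\partial_{\theta}$ individually as smooth across the poles (they are not); smoothness only holds for the specific combinations listed, which is exactly why the identities above, where the offending factors of $\sin\theta$ or $\sin^{2}\theta$ are packaged in, produce globally smooth ambient expressions. Once the six identifications are written down the verification is mechanical.
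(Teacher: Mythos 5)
Your proof is correct, and for items (5) and (6) it takes a genuinely different (though equally valid) route from the paper. The paper first establishes smoothness of $\cos\theta$ by working in the chart obtained by projecting the hemisphere near a pole onto the $xy$-plane (writing $\cos\theta=\pm\sqrt{1-x^{2}-y^{2}}$), and then reduces the $1$-form $\sin^{2}\theta\,d\phi$ and the vector field $\sin\theta\,\dth$ to items (4) and (3) via duality under the round metric $g=d\theta\otimes d\theta+\sin^{2}\theta\,d\phi\otimes d\phi$, using $g(\dph,\cdot)=\sin^{2}\theta\,d\phi$ and $g(\sin\theta\,\dth,\cdot)=\sin\theta\,d\theta$; the vanishing of $\sin\theta\,\dth$ at the poles is read off from $g(\sin\theta\,\dth,\sin\theta\,\dth)=x^{2}+y^{2}$. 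You instead work directly with the ambient embedding $\iota:S^{2}\hookrightarrow\R^{3}$, exhibiting every quantity as the restriction (or pullback, or tangential projection) of an explicitly smooth ambient tensor: $\cos\theta=z$ directly, $\sin^{2}\theta\,d\phi=\iota^{*}(x\,dy-y\,dx)$, and $\sin\theta\,\dth=xz\,\partial_{x}+yz\,\partial_{y}-(x^{2}+y^{2})\,\partial_{z}$. Your route is more uniform (all six items handled the same way and the vanishing at the poles is immediate from the coefficient expressions), at the cost of carrying out the explicit $d\iota$ and pullback computations; the paper's metric-duality trick avoids these two computations by leveraging (3) and (4) once they are known, at the cost of invoking the smoothness of $g$ as a black box. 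Both are standard and correct.
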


\begin{proof}
Considering $S^{2}$ embedded as the unit sphere in $\R^{3}$, the smooth change of coordinates
on the upper and lower hemispheres obtained by projecting onto the
$xy$-plane is given by
\begin{align*}
x&=\sin\theta\cos\phi \\
y&=\sin\theta\sin\phi.
\end{align*}
In these coordinates we have
\[
\sin^{2}\theta=x^{2}+y^{2}
\]
which is clearly smooth.
Meanwhile
\[
\cos\theta=\pm\sqrt{1-\sin^{2}\theta}=\pm\sqrt{1-x^{2}-y^{2}}
\]
which is also smooth near $(x, y)=0$,
and hence the $1$-form
\[
-d(\cos\theta)=\sin\theta\,d\theta
\]
is also smooth.

Next we have that
\begin{align*}
\dph
&=x_{\phi}\,\dx+y_{\phi}\,\dy \\
&=-y\,\dx+x\,\dy
\end{align*}
which is smooth and vanishes when $x=y=0$
(i.e.\ when $\theta\in\br{0, \pi}$).
Meanwhile, using that the standard round metric $g$
on $S^{2}$ with total area $4\pi$ is given by
\[
g=d\theta\otimes d\theta+\sin^{2}\theta\,d\phi\otimes d\phi
\]
we have that
\[
g(\dph, \cdot)=\sin^{2}\theta\,d\phi
\]
so $\sin^{2}\theta\,d\phi$ is smooth since it is dual to a smooth vector field.
Similarly
\[
g(\sin\theta\,\dth, \cdot)=\sin\theta\,d\theta
\]
so $\sin\theta\,\dth$ is smooth since it is dual to a smooth $1$-form.
Moreover,
\[
g(\sin\theta\,\dth, \sin\theta\,\dth)=\sin^{2}\theta=x^{2}+y^{2}
\]
shows that $\sin\theta\,\dth$ vanishes when $x=y=0$
i.e.\ when $\theta\in\br{0, \pi}$.
\end{proof}

\begin{lemma}[Lemma \ref{l:connect-sum-pullback}]\label{l:connect-sum-pullback-app}
Consider the maps $\Phi_{\pm}:\R^{\pm}\times S^{2}\to \R^{3}\setminus\br{0}$
defined by
\[
\Phi_{\pm}(\rho, \phi, \theta)
=\pm(\rho\sin\theta\cos\phi, \rho\sin\theta\sin\phi, \rho^{3}\cos\theta).
\]
Then $\Phi_{+}$ and $\Phi_{-}$ are smooth diffeomorphisms satisfying
\[
\Phi_{\pm}^{*}\lambda_{\pm}=\rho^{2}\lambda_{1}
\]
with $\lambda_{+}$, $\lambda_{-}$, and $\lambda_{1}$ as defined
in Section \ref{s:connect-sum}.
\end{lemma}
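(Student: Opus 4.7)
The plan is to proceed in three steps: verify smoothness of $\Phi_{\pm}$, establish bijectivity onto $\R^{3}\setminus\br{0}$ with smooth inverse, and then carry out the pullback computation directly.

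First I would observe that the defining formula uses only the Cartesian coordinate functions $(X, Y, Z)=(\sin\theta\cos\phi, \sin\theta\sin\phi, \cos\theta)$ of the standard embedding $S^{2}\hookrightarrow\R^{3}$, so that $\Phi_{\pm}(\rho, p)=\pm(\rho X(p), \rho Y(p), \rho^{3}Z(p))$ is manifestly smooth with no case analysis required at the coordinate singularities $\theta\in\br{0,\pi}$. Nonvanishing of the image is immediate: if $\sin\theta=0$ then $\cos\theta=\pm 1$ and the third component is nonzero since $\rho\ne 0$, and otherwise the first two components cannot both vanish.

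Next, for bijectivity I would first reduce to the case of $\Phi_{+}$. Composing with the antipodal map $\sigma:\R^{3}\to\R^{3}$, $(x, y, z)\mapsto -(x, y, z)$, gives $\Phi_{-}=\sigma\circ\Phi_{+}'$, where $\Phi_{+}'$ denotes the same formula on $\R^{-}\times S^{2}$; since $\sigma^{*}\lambda_{-}=\lambda_{+}$ is immediate from the defining formulas for $\lambda_{\pm}$, both the bijectivity and the pullback claim for $\Phi_{-}$ reduce to those for $\Phi_{+}'$. For $\Phi_{+}$ itself, given $(x, y, z)\ne 0$ with $r=\sqrt{x^{2}+y^{2}}$, any preimage $(\rho, \theta, \phi)$ must satisfy $\rho\sin\theta=r$, $\rho^{3}\cos\theta=z$, and (when $r>0$) $\phi$ is the polar angle of $(x, y)$. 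Imposing $\sin^{2}\theta+\cos^{2}\theta=1$ yields
\[
s^{3}-r^{2}s^{2}-z^{2}=0, \qquad s:=\rho^{2},
\]
and I would show this cubic has a unique positive real root by checking that its value at $s=0$ is $-z^{2}\le 0$, that its only nonzero critical point $s=2r^{2}/3$ is a local minimum with value $-4r^{6}/27-z^{2}<0$, and that the leading coefficient is positive. This pins down $\rho>0$, and hence $(\theta, \phi)\in S^{2}$, uniquely; smoothness of $\Phi_{+}^{-1}$ then follows from the inverse function theorem after a routine verification that $d\Phi_{+}$ has full rank, which is cleanest to check in local $(\theta, \phi)$-charts on $S^{2}$.

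The pullback is then a direct computation. Writing $a=\rho\sin\theta\cos\phi$, $b=\rho\sin\theta\sin\phi$, $c=\rho^{3}\cos\theta$, the key identity is $a\,db-b\,da=\rho^{2}\sin^{2}\theta\,d\phi$, in which the $d\rho$ and $d\theta$ contributions cancel in pairs, together with $dc=3\rho^{2}\cos\theta\,d\rho-\rho^{3}\sin\theta\,d\theta$. Substituting into $\lambda_{+}=dz+\tfrac{1}{2}(x\,dy-y\,dx)$ yields $\Phi_{+}^{*}\lambda_{+}=\rho^{2}\lambda_{1}$ on the nose, and the $-$ case then follows from the antipodal reduction of the previous paragraph. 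I do not anticipate any genuine obstacle here: the only steps warranting a moment of thought are the cubic analysis for injectivity and the sign bookkeeping between the $+$ and $-$ cases, the latter being handled cleanly by the antipodal map $\sigma$.
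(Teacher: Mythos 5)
Your proof is correct and follows the same overall structure as the paper's: verify bijectivity, establish that $\Phi_{\pm}$ are immersions (hence local diffeomorphisms), and compute the pullback directly via $x\,dy - y\,dx = \rho^{2}\sin^{2}\theta\,d\phi$. The one place where you genuinely deviate is the injectivity argument. The paper separates off the $z$-axis locus, then for off-axis points combines $x_{0}^{2}+y_{0}^{2}=\rho^{2}\sin^{2}\theta$ and $z_{0}=\rho^{3}\cos\theta$ into the transcendental equation $\cot\theta\csc^{2}\theta=z_{0}/(x_{0}^{2}+y_{0}^{2})^{3/2}$ and invokes monotonicity of $\theta\mapsto\cot\theta\csc^{2}\theta$ on $(0,\pi)$. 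You instead eliminate $\theta$ via $\sin^{2}\theta+\cos^{2}\theta=1$ to obtain the cubic $s^{3}-r^{2}s^{2}-z^{2}=0$ in $s=\rho^{2}$, and pin down a unique positive root by evaluating at $s=0$ and at the critical point $s=2r^{2}/3$. Your route is marginally cleaner in that it handles the $z$-axis ($r=0$) uniformly rather than as a separate case, and it replaces a monotonicity argument for a transcendental function with an elementary cubic discriminant-style check; the paper's route, on the other hand, stays closer to the spherical coordinates and immediately produces $\theta_{0}$, $\rho_{0}$, $\phi_{0}$ in sequence. Your antipodal reduction $\Phi_{-}=\sigma\circ\Phi_{+}'$ together with the easy check $\sigma^{*}\lambda_{-}=\lambda_{+}$ is a tidy way to package the sign bookkeeping; the paper handles this implicitly via the reparametrization $\rho\mapsto-\rho$, which amounts to the same thing. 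Note also that the paper carries out the rank verification for $d\Phi_{+}$ explicitly (computing determinants both in $(\rho,\phi,\theta)$ away from the poles and in a chart $(X,Y)=(\sin\theta\cos\phi,\sin\theta\sin\phi)$ near them), whereas you leave it as a routine check; for a complete write-up you would want to include that computation.
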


\begin{proof}
We first claim that $\Phi_{\pm}$ is bijective.
Since the two maps differ by negation on $\R^{3}\setminus\br{0}$
it suffices to show that $\Phi_{+}$ is bijective.
We first observe that $\Phi_{+}$ maps the set
$\R^{+}\times\br{\theta=0, \pi}$
bijectively to complement of the origin on the $z$-axis.
We thus consider a point $p_{0}=(x_{0}, y_{0}, z_{0})\in\R^{3}\setminus\br{0}$ not in the $z$-axis,
and seek to find a unique solution to
\begin{align}
x_{0}&=\rho \sin\theta\cos\phi \label{e:phi-1} \\
y_{0}&=\rho \sin\theta\sin\phi \label{e:phi-2} \\
z_{0}&=\rho^{3}\cos\theta \label{e:phi-3}
\end{align}
with $(\rho, \phi, \theta)\in\R^{+}\times \R/2\pi\Z\times (0, \pi)$.
Squaring and summing \eqref{e:phi-1}-\eqref{e:phi-2} gives
\begin{equation}\label{e:phi-4}
x_{0}^{2}+y_{0}^{2}=\rho^{2}\sin^{2}\theta
\end{equation}
and the assumption that $p_{0}$ is not in the $z$-axis implies that $x_{0}^{2}+y_{0}^{2}>0$.
Combining \eqref{e:phi-4} and \eqref{e:phi-3} leads to
\[
\cot\theta\csc^{2}\theta=\frac{z_{0}}{(x_{0}^{2}+y_{0}^{2})^{3/2}}
\]
which has a unique solution with $\theta_{0}\in (0, \pi)$
since the derivative of $\cot\theta\csc^{2}\theta$ is everywhere negative
and $\lim_{\theta\to k\pi^{\pm}}=\pm\infty$ for $k\in\Z$.
Substituting this $\theta_{0}$ in \eqref{e:phi-3}
gives a unique $\rho_{0}>0$.
Finally substituting these values of $\rho_{0}$ and $\theta_{0}$
into equations \eqref{e:phi-1}-\eqref{e:phi-2} gives a
unique value of $\phi_{0}\in\R/2\pi\Z$ for which \eqref{e:phi-1}-\eqref{e:phi-3} are satisfied.
This completes the proof that $\Phi_{+}$ and $\Phi_{-}$ are bijective.

To show the $\Phi_{\pm}$ are diffeomorphisms, it
remains
to show that $\Phi_{\pm}$ are immersions.
Again it suffices to show this for $\Phi_{+}$.
For $\theta\notin\br{0, \pi}$ we have that
\[
D\Phi_{+}(\rho, \phi, \theta)=
\begin{bmatrix}
\sin\theta\cos\phi & -\rho \sin\theta\sin\phi & \rho \cos\theta\cos\phi \\
\sin\theta\sin\phi  & \rho \sin\theta\cos\phi  & \rho \cos\theta\sin\phi \\
3\rho^{2}\cos\theta & 0 & -\rho^{3}\sin\theta
\end{bmatrix}
\]
from which we can compute that
\[
\det D\Phi_{+}(\rho, \phi, \theta)
=-\rho^{4}\sin\theta(1+2\cos^{2}\theta) 
\]
which is nonzero for $\theta\notin\br{0, \pi}$.
Meanwhile, in a neighborhood of
$\theta\in\br{0, \pi}$ we can make the change of coordinates
\begin{align*}
X&=\sin\theta\cos\phi \\
Y&=\sin\theta\sin\phi
\end{align*}
to write
\[
\Phi_{+}(\rho, X, Y)=(\rho X, \rho Y, \rho^{3}\sqrt{1-X^{2}-Y^{2}}).
\]
Thus,
\[
D\Phi_{+}(\rho, X, Y)=
\begin{bmatrix}
X & \rho & 0 \\
Y & 0 & \rho \\
3\rho^{2}\sqrt{1-X^{2}-Y^{2}} & -\rho^{3}\frac{X}{\sqrt{1-X^{2}-Y^{2}}} &  -\rho^{3}\frac{Y}{\sqrt{1-X^{2}-Y^{2}}}
\end{bmatrix}
\]
and
\[
\det D\Phi_{+}(\rho, X, Y)
=
\frac{\rho^{4}(3-2X^{2}-2Y^{2})}{\sqrt{1-X^{2}-Y^{2}}}
\]
which is nonzero along the set $X=Y=0$ as required.
Thus $\Phi_{\pm}$ are immersions.

Finally recall that $\lambda_{\pm}$ were defined by
\[
\lambda_{\pm}=\pm dz+\frac{1}{2}(x\,dy-y\,dx).
\]
The 
maps $\Phi_{\pm}:(\rho, \phi, \theta)\in \R^{\pm}\times S^{2}\to(x, y, z)\in\R^{3}$ defined by
\begin{align*}
x&=\pm\rho \sin\theta\cos\phi &
dx&=\pm\bp{\sin\theta\cos\phi \,d\rho+\rho \cos\theta\cos\phi\,d\theta-\rho\sin\theta\sin\phi\,d\phi} \\
y&=\pm\rho \sin\theta\sin\phi &
dy&=\pm\bp{\sin\theta\sin\phi\,d\rho+\rho\cos\theta\sin\phi\,d\theta+\rho\sin\theta\cos\phi\,d\phi} \\
z&=\pm \rho^{3}\cos\theta &
dz&=\pm\bp{3\rho^{2}\cos\theta\,d\rho-\rho^{3}\sin\theta\,d\theta}
\end{align*}
so
\begin{align*}
y\,dx&=\rho\sin^{2}\theta\sin\phi\cos\phi\,d\rho+\rho^{2}\sin\theta\cos\theta\sin\phi\cos\phi\,d\theta-\rho^{2}\sin^{2}\theta\sin^{2}\phi\,d\phi \\
x\,dy&=\rho\sin^{2}\theta\sin\phi\cos\phi\,d\rho+\rho^{2}\sin\theta\cos\theta\sin\phi\cos\phi\,d\theta+\rho^{2}\sin^{2}\theta\cos^{2}\phi\,d\phi
\end{align*}
and hence
\[
x\,dy-y\,dx=\rho^{2}\sin^{2}\theta\,d\phi.
\]
We then find that
\begin{align*}
\Phi^{*}_{\pm}\lambda_{\pm}
&=3\rho^{2}\cos\theta\,d\rho-\rho^{3}\sin\theta\,d\theta+\frac{1}{2}\rho^{2}\sin^{2}\theta\,d\phi \\
&=\rho^{2}\bp{3\cos\theta\,d\rho-\rho\sin\theta\,d\theta+\frac{1}{2}\sin^{2}\theta\,d\phi} \\
&=\rho^{2}\lambda_{1}
\end{align*}
as required.
\end{proof}

We next compute the Reeb vector field of the contact form
$\lambda_{f}=f\lambda_{1}$ on $\R\times S^{2}$.
We start with a general lemma.

\begin{lemma}\label{l:reeb-lemma-app}
Let $(M, \lambda)$ be a contact $3$-manifold, let $f$ be a smooth positive function, and let
$\tl X_{f}$ be the unique section of $\xi=\ker\lambda$ satisfying
\begin{equation}\label{e:x-f}
i_{\Xf}d\lambda=df-df(X_{\lambda})\lambda
\end{equation}
where $X_{\lambda}$ is the Reeb vector field.  Then the Reeb vector field of
the contact form associated to $f\lambda$ is given by
\[
X_{f\lambda}=\tfrac{1}{f}X_{\lambda}+\tfrac{1}{f^{2}}\tl X_{f}.
\]
Moreover, if $\br{v_{1}, v_{2}}$ is a basis for $\xi$, then we have that
\begin{equation}\label{e:lem-basis-formula}
\tl X_{f}=[d\lambda(v_{1}, v_{2})]^{-1}(df(v_{2})v_{1}-df(v_{1})v_{2}).
\end{equation}
\end{lemma}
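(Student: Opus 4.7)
The plan is to split the argument into three short steps: existence/uniqueness of $\tl X_f$, verification of the formula for the Reeb vector field by checking the two defining conditions directly, and verification of the basis formula by pairing $i_{\tl X_f}d\lambda$ with $v_1$ and $v_2$.

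First I would address the existence and uniqueness of $\tl X_f$. Uniqueness is immediate: if two sections of $\xi$ had the same image under $V\mapsto i_V d\lambda$, their difference would lie in $\xi$ and be in the kernel of $d\lambda|_\xi$, which is nondegenerate by the contact condition. For existence, note that the $1$-form $\alpha:=df-df(X_\lambda)\lambda$ annihilates $X_\lambda$ since $\lambda(X_\lambda)=1$, while any $1$-form of the shape $i_V d\lambda$ with $V\in\xi$ also annihilates $X_\lambda$ (because $i_{X_\lambda}d\lambda=0$). Thus both $\alpha$ and the image of $\xi$ under $V\mapsto i_V d\lambda$ live in the $2$-dimensional subspace of $T^*_x M$ annihilating $X_\lambda$; nondegeneracy of $d\lambda|_\xi$ makes $V\mapsto i_V d\lambda$ an isomorphism onto that subspace, so $\tl X_f$ exists.

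Next I would verify that $Y:=\tfrac{1}{f}X_\lambda+\tfrac{1}{f^2}\tl X_f$ is the Reeb vector field of $f\lambda$ by checking the two defining properties. Since $\tl X_f\in\ker\lambda$, we get $(f\lambda)(Y)=1$ at once. For the second condition, expand $d(f\lambda)=df\wedge\lambda+f\,d\lambda$ and compute
\begin{equation*}
i_Y d(f\lambda)=df(Y)\lambda-\lambda(Y)\,df+f\,i_Y d\lambda.
\end{equation*}
Using $i_{X_\lambda}d\lambda=0$ and the defining equation for $\tl X_f$ gives $f\,i_Y d\lambda=\tfrac{1}{f}df-\tfrac{1}{f}df(X_\lambda)\lambda$, while $\lambda(Y)=\tfrac{1}{f}$ and $df(Y)=\tfrac{1}{f}df(X_\lambda)+\tfrac{1}{f^2}df(\tl X_f)$. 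The $df$ terms cancel and the two $df(X_\lambda)\lambda$ terms also cancel, leaving $\tfrac{1}{f^2}df(\tl X_f)\,\lambda$. The key observation here is that $df(\tl X_f)=0$: contracting the defining equation for $\tl X_f$ with $\tl X_f$ itself gives $0=d\lambda(\tl X_f,\tl X_f)=df(\tl X_f)-df(X_\lambda)\lambda(\tl X_f)=df(\tl X_f)$. So $i_Y d(f\lambda)=0$, confirming $Y=X_{f\lambda}$.

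Finally, to establish the basis formula \eqref{e:lem-basis-formula}, write $\tl X_f=a v_1+b v_2$ and pair the defining equation $i_{\tl X_f}d\lambda=df-df(X_\lambda)\lambda$ successively with $v_1$ and $v_2$. Since $v_1,v_2\in\xi=\ker\lambda$, the right-hand side evaluates to $df(v_i)$. On the left, $d\lambda(\tl X_f,v_1)=-b\,d\lambda(v_1,v_2)$ and $d\lambda(\tl X_f,v_2)=a\,d\lambda(v_1,v_2)$, so $a=df(v_2)/d\lambda(v_1,v_2)$ and $b=-df(v_1)/d\lambda(v_1,v_2)$, which is \eqref{e:lem-basis-formula}. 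No step is especially hard; the only subtle point is noticing that the defining equation forces $\tl X_f\in\xi$ to be orthogonal to $df$ with respect to $d\lambda$, which is what makes the candidate formula for $X_{f\lambda}$ work.
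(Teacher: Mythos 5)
Your proof is correct and follows essentially the same route as the paper: the key identity in both is $df(\tl X_f)=0$ (obtained by contracting the defining equation with $\tl X_f$), which makes the $\lambda$-coefficient vanish in $i_Y d(f\lambda)$, and both treatments of the basis formula reduce to the same linear algebra on $\xi$. The only differences are organizational — you add an explicit existence/uniqueness argument for $\tl X_f$ (which the paper leaves implicit) and derive \eqref{e:lem-basis-formula} by solving for the coefficients of $\tl X_f$ in the basis $\{v_1,v_2\}$, whereas the paper takes the formula as the definition and verifies \eqref{e:x-f} by pairing with $X_\lambda$, $v_1$, $v_2$; these are two directions of the same computation.
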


\begin{proof}
Let $\tl X_{f}$ be the unique section of $\xi$ satisfying $i_{\tl X_{f}}d\lambda=df-df(X_{\lambda})\lambda$.
Then
\[
df(\tl X_{f})=(i_{\tl X_{f}})^{2}d\lambda+f(X_{\lambda})i_{\tl X_{f}}\lambda=0
\]
since $\tl X_{f}\in\xi=\ker\lambda$.
We then find that
\begin{align*}
i_{X_{\lambda}}d(f\lambda)
&=i_{X_{\lambda}}df\wedge\lambda +i_{X_{\lambda}}(fd\lambda) \\
&=i_{X_{\lambda}}df\wedge\lambda  \\
&=df(X_{\lambda})\lambda-df
\end{align*}
while
\begin{align*}
i_{\tl X_{f}}d(f\lambda)
&=i_{\tl X_{f}}df\wedge\lambda +i_{\tl X_{f}}(fd\lambda) \\
&=i_{\tl X_{f}}(fd\lambda) \\
&=f\,i_{\tl X_{f}}d\lambda
\end{align*}
so
\begin{align*}
i_{\tfrac{1}{f}X_{\lambda}+\tfrac{1}{f^{2}}\tl X_{f}}d(f\lambda)
&=\tfrac{1}{f}i_{X_{\lambda}}d(f\lambda)+\tfrac{1}{f^{2}}i_{\tl X_{f}}d(f\lambda) \\
&=\tfrac{1}{f}[df(X_{\lambda})\lambda-df]+\tfrac{1}{f^{2}}[f\,i_{\tl X_{f}}d\lambda] \\
&=\tfrac{1}{f}[df(X_{\lambda})\lambda-df+\,i_{\tl X_{f}}d\lambda] \\
&=0
\end{align*}
by definition of $\tl X_{f}$.
Furthermore
\begin{align*}
i_{\tfrac{1}{f}X_{\lambda}+\tfrac{1}{f^{2}}\tl X_{f}}(f\lambda)
&=\tfrac{1}{f}i_{X_{\lambda}}(f\lambda)+\tfrac{1}{f^{2}}i_{\tl X_{f}}(f\lambda) \\
&=i_{X_{\lambda}}\lambda+\tfrac{1}{f}i_{\tl X_{f}}\lambda \\
&=1+0=1.
\end{align*}
Thus $\tfrac{1}{f}X_{\lambda}+\tfrac{1}{f^{2}}\tl X_{f}$ is the Reeb vector field of $f\lambda$ as claimed.

Next, to verify the second claim, we define $\tl X_{f}$ by \eqref{e:lem-basis-formula} and verify that this
$\tl X_{f}$ satisfies \eqref{e:x-f}.  Computing, we have that
\[
i_{\tl X_{f}}d\lambda-df-df(X_{\lambda})\lambda
=[d\lambda(v_{1}, v_{2})]^{-1}[df(v_{2})d\lambda(v_{1}, \cdot)-df(v_{1})d\lambda(v_{2}, \cdot)]
-df-df(X_{\lambda})\lambda.
\]
Using that $i_{v_{i}}\lambda=0$, $i_{X_{\lambda}}\lambda=1$ and $i_{X_{\lambda}}d\lambda=0$
we see that each of the vectors $X_{\lambda}$, $v_{1}$, and $v_{2}$ yields $0$ when evaluated
on the right hand side of this equation above.  Since these vectors form a basis for the tangent space,
it follows that this quantity vanishes on $TM$ and thus \eqref{e:x-f} is satisfied.
\end{proof}

\begin{lemma}[Lemma \ref{l:lambda-f-properties}]\label{l:lambda-f-properties-app}
Recalling the definition
\[
g(\theta)=2\cos^{2}\theta+1=3\cos^{2}\theta+\sin^{2}\theta
\]
from \eqref{e:g-def},
we have for $\theta\notin\br{0, \pi}$:
\begin{itemize}
\item The set
\begin{align*}
\B_{(\rho, \theta, \phi)}
&=\br{(fg)^{-1}(-3\cot\theta\,\dph+\frac{1}{2}\sin\theta\,\drh), 2\rho\csc\theta\,\dph+\dth} \\
&=:\br{v_{1}(\rho, \theta, \phi), v_{2}(\rho, \theta, \phi)}
\end{align*}
is a symplectic basis for
$(\xi_{1}, d\lambda_{f})$.

\item The Reeb vector field $\Xf$ of the contact form $\lambda_{f}$ is given by
\begin{align*}
\Xf=
[gf^{2}]^{-1}\Big[&
(-\rho f_{\rho}-3f_{\theta}\cot\theta+2f)\,\dph  \\
&
+(3\cot\theta f_{\phi}-\frac{1}{2}\sin\theta f_{\rho})\,\dth  \\
&
+(\rho f_{\phi}+\frac{1}{2}\sin\theta f_{\theta}+f\cos\theta)\,\drh
\Big].
\end{align*}
\end{itemize}
\end{lemma}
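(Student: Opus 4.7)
The plan is to verify the two claims by direct but structured computation, leveraging the formula for the Reeb vector field of a conformal rescaling given by Lemma \ref{l:reeb-lemma-app}. For the first claim, I will first check that $\lambda_1(v_i) = 0$ for $i = 1, 2$; this is a one-line computation using $\lambda_1 = 3\cos\theta\,d\rho - \rho\sin\theta\,d\theta + \tfrac{1}{2}\sin^2\theta\,d\phi$, where the key cancellations are $-\tfrac{3}{2}\sin\theta\cos\theta + \tfrac{3}{2}\sin\theta\cos\theta = 0$ for $v_1$ and $\rho\sin\theta - \rho\sin\theta = 0$ for $v_2$. Since $\ker\lambda_f = \ker\lambda_1 = \xi_1$, both $v_i$ lie in $\xi_1$.

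Next I will compute $d\lambda_1 = 2\sin\theta\,d\rho\wedge d\theta + \sin\theta\cos\theta\,d\theta\wedge d\phi$ by differentiating $\lambda_1$ term-by-term. Because $d\lambda_f = df\wedge\lambda_1 + f\,d\lambda_1$ and $\lambda_1$ annihilates both $v_1$ and $v_2$, the wedge term $(df\wedge\lambda_1)(v_1, v_2)$ vanishes identically, so $d\lambda_f(v_1, v_2) = f\,d\lambda_1(v_1, v_2)$. Evaluating $d\lambda_1(v_1, v_2)$ by pairing off the two pieces of $d\lambda_1$ against the two pieces of each $v_i$, the contributions come out to $\frac{\sin^2\theta}{fg} + \frac{3\cos^2\theta}{fg} = \frac{g}{fg} = \frac{1}{f}$, using the definition $g = 3\cos^2\theta + \sin^2\theta$. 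Thus $d\lambda_f(v_1, v_2) = 1$, confirming the symplectic basis claim.

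For the Reeb vector field, I will first compute $X_{\lambda_1}$ by the ansatz $X_{\lambda_1} = a\,\drh + b\,\dth + c\,\dph$ and imposing $\lambda_1(X_{\lambda_1}) = 1$ and $i_{X_{\lambda_1}}d\lambda_1 = 0$; the latter forces $b = 0$ and $2a = c\cos\theta$, and the former then yields $a = \cos\theta/g$, $c = 2/g$, so
\[
X_{\lambda_1} = \tfrac{1}{g}\bp{\cos\theta\,\drh + 2\,\dph}.
\]
I then apply Lemma \ref{l:reeb-lemma-app} with the basis $\{v_1, v_2\}$: since $d\lambda_1(v_1, v_2) = 1/f$ from the previous computation, the auxiliary vector field is
\[
\tl X_f = f\bp{df(v_2)\,v_1 - df(v_1)\,v_2},
\]
and the Reeb field is $X_f = \tfrac{1}{f}X_{\lambda_1} + \tfrac{1}{f^2}\tl X_f$. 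Substituting the explicit expressions for $v_1, v_2$ and expanding $df(v_i)$, the $\dph$, $\dth$, $\drh$ coefficients assemble exactly into the claimed formula after collecting the two contributions over the common denominator $gf^2$.

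The main obstacle is purely computational: keeping track of the six scalar contributions to $df(v_2)\,v_1 - df(v_1)\,v_2$ across the three coordinate directions, and verifying that the mixed $f_\phi$ terms involving $\csc\theta\cot\theta$ cancel in the $\dph$ component (they do, since both $df(v_2)v_1$ and $df(v_1)v_2$ produce the same $-6\rho\csc\theta\cot\theta\,f_\phi$ contribution). No conceptual difficulties arise; the value of the plan is to route the computation through Lemma \ref{l:reeb-lemma-app} and the symplectic normalization $d\lambda_f(v_1, v_2) = 1$, which reduces the amount of direct bookkeeping substantially.
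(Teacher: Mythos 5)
Your proposal is correct and follows essentially the same route as the paper's appendix proof: verify $\lambda_1(v_i)=0$, compute $d\lambda_1(v_1,v_2)=1/f$ to get the symplectic normalization $d\lambda_f(v_1,v_2)=1$, compute $X_{\lambda_1}=g^{-1}(\cos\theta\,\drh+2\,\dph)$, and then feed everything into Lemma \ref{l:reeb-lemma-app} with $\tl X_f = f(df(v_2)v_1 - df(v_1)v_2)$. The only differences are cosmetic (you derive $X_{\lambda_1}$ by ansatz rather than verify a stated formula, and you evaluate $d\lambda_1(v_1,v_2)$ directly rather than via the intermediate $i_{v_1}d\lambda_1 = (1/f)d\theta$).
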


\begin{proof}
The vectors $v_{1}$ and $v_{2}$ are clearly linearly independent for
$\theta\notin\br{0, \pi}$ since
$v_{1}$ has nonzero $\drh$ component and vanishing $\dth$ component, while the opposite 
is true of $v_{2}$.
Recalling that 
\begin{align*}
\lambda_{1}&=3\cos\theta\,d\rho-\rho\sin\theta\,d\theta+\frac{1}{2}\sin^{2}\theta\,d\phi \\
d\lambda_{1}&=(2\sin\theta\,d\rho-\cos\theta\sin\theta\,d\phi)\wedge d\theta
\end{align*}
we immediately find that
\[
\lambda_{1}(v_{1})=(fg)^{-1}\bbr{(3\cos\theta)(\tfrac{1}{2}\sin\theta)+(\tfrac{1}{2}\sin^{2}\theta)(-3\cot\theta)}=0
\]
and
\[
\lambda_{1}(v_{2})=-\rho\sin\theta+(\tfrac{1}{2}\sin^{2}\theta)(2\rho\csc\theta)=0
\]
so
$v_{1}$ and $v_{2}\in\xi_{1}$.
Finally, we have that
\begin{align*}
i_{v_{1}}d\lambda_{1}
&=(fg)^{-1}(\sin^{2}\theta+3\cot\theta\cos\theta\sin\theta)\,d\theta \\
&=(fg)^{-1}(\sin^{2}\theta+3\cos^{2}\theta)\,d\theta \\
&=1/f\,d\theta \\
\end{align*}
and thus
\begin{equation}\label{e:d-lambda-1-w1-w2}
\begin{aligned}
d\lambda_{1}(v_{1}, v_{2})
&=i_{v_{2}}(i_{v_{1}}\lambda_{1}) \\
&=i_{v_{2}}(1/f\,d\theta)\\
&=1/f.
\end{aligned} 
\end{equation}
Since $\lambda_{1}(v_{1})=\lambda_{1}(v_{2})=0$, if follows that
\[
d\lambda_{f}(v_{1}, v_{2})=fd\lambda_{1}(v_{1}, v_{2})=1
\]
and thus
$\br{v_{1}, v_{2}}$ is a symplectic basis for $(\xi_{1}, d\lambda_{f})$ for $\theta\notin\br{0, \pi}$.

Next to compute the Reeb vector field of $\lambda_{f}$ we first observe that
the vector field $X_{1}$ defined by
\[
X_{1}=g(\theta)^{-1}\bp{\cos\theta\,\drh+2\,\dph}.
\]
satisfies
\[
\lambda_{1}(X_{1})=g(\theta)^{-1}\bbr{3\cos^{2}\theta+\sin^{2}\theta}=1
\]
and
\[
i_{X_{1}}d\lambda_{1}
=g(\theta)^{-1}\bbr{2\sin\theta\cos\theta-2\sin\theta\cos\theta}\,d\theta =0
\]
so
$X_{1}$ is the Reeb vector field of $\lambda_{1}$.
According to Lemma \ref{l:reeb-lemma-app}, the Reeb vector field of
$\Xf$ of $\lambda_{f}$ is then given by
\[
\Xf=\tfrac{1}{f} X_{1}+\tfrac{1}{f^{2}}\tl X_{f}
\]
with
\begin{align*}
\tl X_{f}
&=[d\lambda_{1}(v_{1}, v_{2})]^{-1}(df(v_{2})v_{1}-df(v_{1})v_{2}) \\
&=f(df(v_{2})v_{1}-df(v_{1})v_{2})
\end{align*}
where we've applied \eqref{e:d-lambda-1-w1-w2} in the second line.
Computing, we have that
\begin{align*}
f\,df(v_{2})v_{1}
&=g(\theta)^{-1}\bp{2\rho\csc\theta f_{\phi}+f_{\theta}}\bp{-3\cot\theta\,\dph+\frac{1}{2}\sin\theta\,\drh} \\
&=g(\theta)^{-1}\bbr{\bp{-6\rho\csc\theta\cot\theta f_{\phi}-3\cot\theta f_{\theta}}\dph+\bp{\rho f_{\phi}+\frac{1}{2}\sin\theta f_{\theta}}\drh}
\intertext{and}
f\,df(v_{1})v_{2}
&=g(\theta)^{-1}\bp{-3\cot\theta f_{\phi}+\frac{1}{2}\sin\theta f_{\rho}}\bp{2\rho\csc\theta\,\dph+\dth} \\
&=g(\theta)^{-1}\bbr{\bp{-6\rho\csc\theta\cot\theta f_{\phi} +\rho f_{\rho}}\dph+\bp{-3\cot\theta f_{\phi}+\frac{1}{2}\sin\theta f_{\rho}}\dth}.
\end{align*}
Combining the above we conclude that
\begin{align*}
\Xf=
[gf^{2}]^{-1}\Big[&
(-\rho f_{\rho}-3f_{\theta}\cot\theta+2f)\,\dph  \\
&
+(3\cot\theta f_{\phi}-\frac{1}{2}\sin\theta f_{\rho})\,\dth  \\
&
+(\rho f_{\phi}+\frac{1}{2}\sin\theta f_{\theta}+f\cos\theta)\,\drh
\Big]
\end{align*}
as claimed.
\end{proof}

\begin{lemma}\label{l:conley-zehnder-app} 
Let $C\in Sp(1)$ be a symplectic matrix
and let $k\ne 0$ be a constant.
Then the path
$\Psi_{C, k}:[0, 1]\to Sp(1)$ defined by
\[
\Psi_{C, k}(t)=C
\begin{bmatrix}
e^{kt} & 0 \\ 0 & e^{-kt}
\end{bmatrix}
C^{-1}
\]
has Conley--Zehnder index $\pathcz(\Psi_{C, k})=0$.
\end{lemma}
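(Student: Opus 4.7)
The plan is to apply the axiomatic characterization of the Conley--Zehnder index from Theorem \ref{t:conley-zehnder-axioms}. First, note that $\Psi_{C,k}$ starts at the identity and ends at a matrix conjugate to $\mathrm{diag}(e^{k}, e^{-k})$, whose spectrum $\{e^{k}, e^{-k}\}$ does not contain $1$ since $k\neq 0$; thus $\Psi_{C,k}\in \Sigma(1)$ for any $C\in Sp(1)$, and the Conley--Zehnder index is well defined.

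The first step is to reduce to the case $C=I$. Since $Sp(1)$ is path connected (it deformation retracts to $U(1)$), choose any continuous path $C_{s}:[0,1]\to Sp(1)$ with $C_{0}=C$ and $C_{1}=I$. The two-parameter family
\[
\Psi_{C_{s}, k}(t) = C_{s}\,\mathrm{diag}(e^{kt}, e^{-kt})\,C_{s}^{-1}
\]
is a continuous homotopy of paths, each starting at $I$ and ending at a matrix with spectrum $\{e^{k},e^{-k}\}$, so the homotopy stays inside $\Sigma(1)$. By the homotopy invariance axiom, $\pathcz(\Psi_{C,k}) = \pathcz(\Psi_{I,k})$.

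The second (and main) step is to show $\pathcz(\Psi_{I,k})=0$. The key observation is that the paths $\Psi_{I,k}$ and $\Psi_{I,-k}$ are related in two different ways: they are mutually inverse, and they are conjugate by a constant symplectic matrix. Indeed a direct computation gives $\Psi_{I,k}(t)\Psi_{I,-k}(t)=I$, so $\Psi_{I,-k}=\Psi_{I,k}^{-1}$, and the inverse axiom yields $\pathcz(\Psi_{I,-k})=-\pathcz(\Psi_{I,k})$. On the other hand, conjugating the diagonal matrix $\mathrm{diag}(e^{-kt},e^{kt})$ by $J_{0}=\bigl[\begin{smallmatrix}0 & -1\\ 1 & 0\end{smallmatrix}\bigr]$ swaps its diagonal entries, so $J_{0}\,\mathrm{diag}(e^{-kt},e^{kt})\,J_{0}^{-1}=\mathrm{diag}(e^{kt},e^{-kt})$, which means $\Psi_{I,k}=\Psi_{J_{0},-k}$. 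Applying the reduction from the previous paragraph with $C=J_{0}$ then gives $\pathcz(\Psi_{I,k})=\pathcz(\Psi_{J_{0},-k})=\pathcz(\Psi_{I,-k})$. Combining the two identities yields $\pathcz(\Psi_{I,k})=-\pathcz(\Psi_{I,k})$, hence $\pathcz(\Psi_{I,k})=0$, and therefore $\pathcz(\Psi_{C,k})=0$ as claimed.

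There is no serious obstacle here; the only mildly delicate point is making sure the homotopy used in the reduction step really stays inside $\Sigma(1)$, which is immediate because conjugation preserves the spectrum and $\{e^{k},e^{-k}\}$ avoids $1$ throughout. The rest of the argument is a direct application of the inverse and homotopy axioms, together with the observation that conjugation by $J_{0}$ interchanges $k$ and $-k$ at the level of the diagonal model path.
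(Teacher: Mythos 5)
Your proof is correct and takes essentially the same approach as the paper: reduce to $C=I$ by path-connectedness of $Sp(1)$ plus the homotopy axiom, then combine the inverse axiom with a homotopy between $\Psi_{I,k}$ and $\Psi_{I,-k}$ to force $\pathcz(\Psi_{I,k})=-\pathcz(\Psi_{I,k})$. The paper realizes this homotopy explicitly via $R(s\pi/2)\Psi_{I,k}(t)R(-s\pi/2)$, while you observe the identity $\Psi_{I,k}=\Psi_{J_0,-k}$ and re-invoke the $C$-to-$I$ reduction with $C=J_0$; since $R(\pi/2)=J_0$, these are the same deformation.
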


\begin{proof}
We first consider the case
$C=I$, i.e.\ the path given by
\[
\Psi_{I, k}=
\begin{bmatrix}
e^{kt} & 0 \\ 0 & e^{-kt}
\end{bmatrix}.
\]
The path $\Psi_{I, k}$ is easily seen to be homotopic within
$\Sigma(1)$ to its inverse
\[
\Psi_{I, k}^{-1}=
\Psi_{I, -k}=
\begin{bmatrix}
e^{-kt} & 0 \\ 0 & e^{kt}
\end{bmatrix}.
\]
via the homotopy
\[
\Psi_{s}(t)
:=
R(s\pi/2)
\Psi_{I, k}(t)
R(s\pi/2)^{-1}
=
R(s\pi/2)
\Psi_{I, k}(t)
R(-s\pi/2)
\]
where $R(\theta)$ is the rotation matrix
\[
R(\theta)
=
\begin{bmatrix}
\cos\theta & -\sin\theta \\ \sin\theta & \cos\theta
\end{bmatrix}.
\]
The homotopy axiom from Theorem \ref{t:conley-zehnder-axioms}
then implies that
\[
\pathcz(\Psi_{I, k})=\pathcz(\Psi_{I, k}^{-1})
\]
while the inverse axiom implies that
\[
\pathcz(\Psi_{I, k})=-\pathcz(\Psi_{I, k}^{-1}).
\]
We conclude that
\[
\pathcz(\Psi_{I, k})=0.
\]

For $C\ne I$, we can use the fact that the symplectic group is path connected
to find a path $C_{s}\in Sp(1)$ with $C_{0}=C$ and $C_{1}=I$.
We then construct a homotopy $\Psi_{s}\in\Sigma(1)$ defined by
\[
\Psi_{s}(t):=\Psi_{C_{s}, k}(t)=
C_{s}
\begin{bmatrix}
e^{kt} & 0 \\ 0 &  e^{-kt}
\end{bmatrix}
C_{s}^{-1}
\]
with $\Psi_{0}=\Psi_{C, k}$ and $\Psi_{1}=\Psi_{I, k}$.
The homotopy invariance axiom of Theorem \ref{t:conley-zehnder-axioms}
and the result of the previous paragraph then imply
\[
\pathcz(\Psi_{C, k})=\pathcz(\Psi_{I, k})=0
\]
as claimed.
\end{proof}

\begin{lemma}\label{l:function-construction}
For any $\ep>0$, there exists a smooth, positive function $f_{\ep}:\R\to\R^{+}$ satisfying:
\begin{enumerate}
\item $f_{\ep}(x)=x^{2}$ whenever $\abs{x}\ge\ep$,
\item $xf_{\ep}'(x)>0$ for all $x\ne 0$, and
\item $f_{\ep}''(0)>0$.
\end{enumerate}
\end{lemma}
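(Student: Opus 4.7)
The plan is to construct $f_\ep$ by a small smooth perturbation of the function $x^2$, supported inside $[-\ep, \ep]$. More precisely, I would start by choosing a smooth, even cutoff function $\phi:\R\to[0,1]$ with $\phi(x)=1$ for $|x|\le\ep/2$ and $\phi(x)=0$ for $|x|\ge\ep$, chosen so that $\phi$ is nonincreasing on $[0,\ep]$ (and hence nondecreasing on $[-\ep,0]$). Such a $\phi$ is standard. Then set
\[
f_\ep(x):=x^2+\delta\,\phi(x)
\]
for a small constant $\delta>0$ to be determined.

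The verification of the three required properties is then straightforward. For property (1): when $|x|\ge\ep$ we have $\phi(x)=0$, so $f_\ep(x)=x^2$. For property (3): since $\phi\equiv 1$ on $[-\ep/2,\ep/2]$, we have $\phi''(0)=0$, and thus $f_\ep''(0)=2>0$. Positivity of $f_\ep$ is automatic since $f_\ep(x)\ge x^2\ge 0$, with $f_\ep(0)=\delta>0$ covering the only zero of $x^2$.

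The one condition requiring a choice of $\delta$ is property (2). Writing $f_\ep'(x)=2x+\delta\phi'(x)$, we must have this share the sign of $x$ for all $x\ne 0$. On $[-\ep/2,\ep/2]\setminus\{0\}$ we have $\phi'\equiv 0$, so $f_\ep'(x)=2x$ has the same sign as $x$. Outside $[-\ep,\ep]$ we again have $\phi'\equiv 0$, so $f_\ep'(x)=2x$ works. The only issue is on the annular regions $\ep/2\le|x|\le\ep$: there $|2x|\ge\ep$, so choosing
\[
\delta < \frac{\ep}{1+\sup_{x\in\R}|\phi'(x)|}
\]
guarantees $|\delta\phi'(x)|<\ep\le|2x|$, so $f_\ep'(x)$ inherits the sign of $2x$, i.e.\ of $x$.

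There is no real obstacle here — the argument is routine once one fixes the form $f_\ep(x)=x^2+\delta\phi(x)$ with $\phi$ constant near $0$ (which forces $f_\ep''(0)=2$) and $\delta$ small enough to keep $2x$ dominant in the transition region. No deeper input is needed.
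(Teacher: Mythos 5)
Your proof is correct. It differs from the paper's construction in a way worth noting: you add a compactly supported constant bump, $f_\ep(x)=x^2+\delta\phi(x)$, and make $\delta$ small enough for $2x$ to dominate $\delta\phi'(x)$ in the transition annulus. The paper instead uses a cutoff $\beta_\ep$ (with $\beta_\ep\equiv 0$ near $0$, $\beta_\ep\equiv 1$ for $\abs{x}\ge\ep$, and the additional \emph{monotonicity} hypothesis $x\beta_\ep'(x)\ge 0$) and forms the convex combination
\[
f_\ep(x)=\beta_\ep(x)\,x^2+(1-\beta_\ep(x))\bigl(\tfrac{1}{2}x^2+c_\ep\bigr),
\]
choosing the constant $c_\ep$ small rather than the bump height. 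The paper's monotonicity condition on $\beta_\ep'$ plus the smallness of $c_\ep$ together ensure $xf_\ep'(x)>0$; your version replaces the monotonicity requirement with a pure domination-by-smallness argument, which makes the cutoff easier to specify and the verification of condition (2) slightly shorter. Both routes are elementary and equally valid; yours has a marginally cleaner derivative formula ($f_\ep'(x)=2x+\delta\phi'(x)$, with $\phi'$ supported in the annulus), at the minor cost of checking positivity of $f_\ep$ explicitly at $0$, which you do.
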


\begin{proof}
Given $\ep>0$ choose a smooth function
$\beta_{\ep}:\R\to [0,1]$  satisfying
\begin{itemize}
\item $\beta_{\ep}(x)=1$ for $\abs{x}\ge \ep$,
\item $\beta_{\ep}(x)=0$ for $\abs{x}\le \ep/2$, and
\item $x \beta_{\ep}'(x)\ge 0$ for $\abs{x}\in(\ep/2, \ep)$ (and thus all $x\in\R$).
\end{itemize}
Since $\beta_{\ep}'$ is compactly supported
and vanishes for $\abs{x}\le \ep/2$
it follows that the function 
$x\mapsto x^{-1}\beta'(x)$ is smooth,
compactly supported, and thus bounded.
\begin{figure}
\scalebox{.75}{\includegraphics{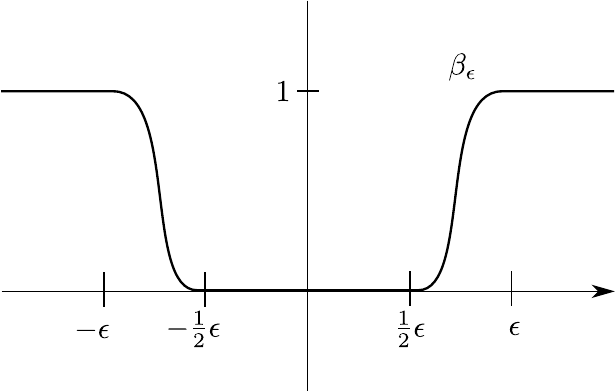}}
\end{figure}
\begin{figure}
\scalebox{.75}{\includegraphics{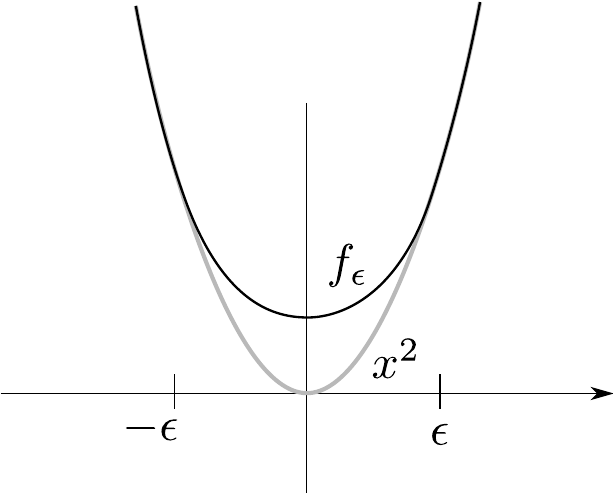}}
\end{figure}
We can thus find a $c_{\ep}>0$ so that
\[
\frac{1}{c_{\ep}}\ge 2 \max \bp{x^{-1}\beta_{\ep}'(x)}
\]
and hence
\begin{equation}\label{e:c-epsilon}
\begin{aligned}
1-x^{-1}\beta_{\ep}'(x) c_{\ep}
&\ge 1-\bp{\max x^{-1}\beta_{\ep}'(x)} c_{\ep} \\
&\ge 1-\frac{1}{2}=\frac{1}{2}
\end{aligned}
\end{equation}
for all $x\in\R$.
Defining
\begin{align*}
f_{\ep}(x)
&=\beta_{\ep}(x)x^{2}+(1-\beta_{\ep}(x))(\tfrac{1}{2} x^{2}+c_{\ep})  \\
&=\tfrac{1}{2}(\beta_{\ep}(x)+1) x^{2}+(1-\beta_{\ep}(x))c_{\ep}
\end{align*}
it's immediately clear that $f_{\ep}$
is smooth, positive, and 
satisfies
the first and third conditions listed in the lemma.

To check
that $f_{\ep}$ satisfies the second condition
we compute using
$x\beta_{\ep}'(x)\ge 0$, $\beta_{\ep}(x)\ge 0$, and \eqref{e:c-epsilon}, and find
\begin{align*}
x f_{\ep}'(x)
&=x\bbr{\tfrac{1}{2}\beta_{\ep}'(x) x^{2}+(\beta_{\ep}(x)+1) x-\beta_{\ep}'(x) c_{\ep}} \\
&=\tfrac{1}{2}[x\beta_{\ep}'(x)] x^{2}+(\beta_{\ep}(x)+1) x^{2}-x\beta_{\ep}'(x) c_{\ep} \\
&\ge x^{2}-x\beta_{\ep}'(x) c_{\ep}  \\
&=x^{2}(1-x^{-1}\beta_{\ep}'(x) c_{\ep}) \\
&\ge \tfrac{1}{2}x^{2}.
\end{align*}
Since $\tfrac{1}{2}x^{2}>0$ for $x\ne 0$, $xf_{\ep}'(x)>0$ for all $x\ne 0$,
and this completes the proof.
\end{proof}

\bibliographystyle{../../hplain5}
\bibliography{../fol-bibdata}

\end{document}